\documentclass{amsart}
\title{Geometry of Kottwitz-Viehmann varieties}
\author{Jingren Chi}
\address{Department of Mathematics, The University of Chicago,
5734 S. University Ave., Chicago, IL, 60637}
\email{chijingren@gmail.com}
\date{}
\pagestyle{headings}

\usepackage{amsmath,amsthm,amssymb,amscd}
\usepackage[all,cmtip]{xy}
\usepackage{dsfont}
\usepackage{mathrsfs}
\usepackage{leftidx}
\usepackage[colorinlistoftodos]{todonotes}
\usepackage[square,sort,comma,numbers]{natbib}

\usepackage{color}
\usepackage{hyperref}
\hypersetup{
   colorlinks=true,
   linktoc=all,
   linkcolor=black,
   citecolor=black,
   urlcolor=black,
}
\usepackage{geometry}
\geometry{left=2.5cm,right=2.5cm,bottom=2.5cm,top=2.5cm}

\newcommand{\into}{\hookrightarrow}

\newcommand{\fg}{\mathfrak{g}}

\newcommand{\bA}{\mathbb{A}}
\newcommand{\cA}{\mathcal{A}}

\newcommand{\bB}{\mathbb{B}}
\newcommand{\cB}{\mathcal{B}}

\newcommand{\fc}{\mathfrak{c}}
\newcommand{\fC}{\mathfrak{C}}

\newcommand{\cD}{\mathcal{D}}
\newcommand{\fD}{\mathfrak{D}}
\newcommand{\sfD}{\mathsf{D}}

\newcommand{\cE}{\mathcal{E}}

\newcommand{\ga}{\gamma}
\newcommand{\bG}{\mathbb{G}}

\newcommand{\cH}{\mathcal{H}}

\newcommand{\cI}{\mathcal{I}}

\newcommand{\cJ}{\mathcal{J}}

\newcommand{\la}{\lambda}
\newcommand{\La}{\Lambda}
\newcommand{\cL}{\mathcal{L}}

\newcommand{\fm}{\mathfrak{m}}
\newcommand{\cM}{\mathcal{M}}
\newcommand{\bN}{\mathbb{N}}
\newcommand{\cN}{\mathcal{N}}

\newcommand{\cO}{\mathcal{O}}

\newcommand{\cP}{\mathcal{P}}
\newcommand{\sfP}{\mathsf{P}}

\newcommand{\bQ}{\mathbb{Q}}

\newcommand{\bR}{\mathbb{R}}

\newcommand{\ft}{\mathfrak{t}}
\newcommand{\bT}{\mathbb{T}}

\newcommand{\bZ}{\mathbb{Z}}

\def\spec{\mathop{\rm Spec}\nolimits}
\def\Hom{\mathop{\rm Hom}\nolimits}

\newtheorem{thm}[subsubsection]{Theorem}
\newtheorem*{thm*}{Theorem}
\newtheorem{cor}[subsubsection]{Corollary}
\newtheorem{lem}[subsubsection]{Lemma}
\newtheorem{prop}[subsubsection]{Proposition}

\theoremstyle{definition}
\newtheorem{defn}[subsubsection]{Definition}

\newtheorem{conjecture}[subsubsection]{Conjecture}
\newtheorem*{conjecture*}{Conjecture}

\theoremstyle{remark}
\newtheorem{rem}[subsubsection]{Remark}

\numberwithin{equation}{section}

\newcommand{\introthmname}{}
\newtheorem{introthminn}{\introthmname}

\newcommand{\gad}{G_{\mathrm{ad}}}
\newcommand{\gsc}{G^{\mathrm{sc}}}
\newcommand{\Sp}{\mathrm{Sp}}
\newcommand{\lad}{\lambda_{\mathrm{ad}}}
\newcommand{\tad}{T_{\mathrm{ad}}}
\newcommand{\tsc}{T^{\mathrm{sc}}}
\newcommand{\vin}{\mathrm{Vin}}
\newcommand{\ving}{\mathrm{Vin}_{G}}

\begin{document}
\maketitle
\begin{abstract}
We study basic geometric properties of Kottwitz-Viehmann varieties, which are certain generalizations of affine Springer fibers that encode orbital integrals of spherical Hecke functions. Based on previous work of A. Bouthier and the author, we show that these varieties are equidimensional and give a precise formula for their dimension. Also we give a conjectural description of their number of irreducible components in terms of certain weight multiplicities of  the Langlands dual group and we prove the conjecture in the case of unramified conjugacy class.
\end{abstract}
\tableofcontents
\section{Introduction}
\subsection{Background and motivation}
In this article we study certain analogue of affine Springer fibres that we call \emph{Kottwitz-Viehmann varieties} whose underlying set is defined as
\[X_\gamma^\la=\{g\in G(F)/G(\cO)| g^{-1}\gamma g\in G(\cO)\varpi^\la G(\cO)\}\]
where 
\begin{itemize}
\item $G$ is a connected reductive algebraic group over a field $k$;
\item $F=k((\varpi))$ is the field of Laurent series with coefficients in $k$ and $\cO=k[[\varpi]]$ is the ring of power series;
\item $\gamma\in G(F)$ is a regular semisimple element;
\item $\la:\bG_m\to T$ is a cocharacter of a maximal torus $T$ of $G$ and 
\[\varpi^\la:=\la(\varpi)\in G(F).\]
\end{itemize}
Also we will consider a closely related set $X_\ga^{\le\la}$ defined by Replacing the double coset $G(\cO)\varpi^\la G(\cO)$ in the definition of $X_\ga^\la$ by the union
\[\overline{G(\cO)\varpi^\la G(\cO)}=\subset_{\mu\le\la}G(\cO)\varpi^\mu G(\cO)\]

These sets were first studied by Kottwitz and Viehmann in \citep{KoV}. More general versions of them (replacing $G(\cO)$ by parahoric subgroups of $G(F)$) have also been studied by Lusztig in \cite{Lu15}. When $k$ is a finite field, they arise naturally in the study of orbital integrals of functions in the spherical Hecke algebra $\cH(G(F), G(\cO))$ consisting of $G(\cO)$-biinvariant locally constant functions with compact support on $G(F)$.\par 
It turns out that $X_\gamma^\la$ can be realized as the set of $k$-rational points of some algebraic variety over $k$. We view them as group analogue of affine Springer fibers for Lie algebras studied by Kazhdan and Lusztig in \cite{KL}:
\[X_\ga=\{g\in G(F)/G(\cO)|\mathrm{ad}(g)^{-1}\gamma\in\mathfrak{g}(\cO)\}.\]
Here $\fg$ is the Lie algebra of $G$, $\ga\in\fg(F)$ is a regular semisimple element and ``ad" denotes the adjoint action of $G$ on $\fg$.\par 
Basic geometric properties of these affine Springer fibers $X_\ga$   have been well understood through the works of Kazhdan and Lusztig \cite{KL}, Bezrukavnikov \cite{Be96}, Ng\^o \cite{Ngo10}. A key ingredient in their approach is the symmetry on $X_\gamma$ arising from the centralizer $G_\ga(F)$. More precisely, the group $G_\ga(F)$ has a dense open orbit $X_\ga^{\mathrm{reg}}$
(the ``regular locus") and geometric properties of $X_\ga^\la$ are reduced to the commutative algebraic group $G_\ga(F)$ (more precisely certain finite dimensional quotient $P_\ga$ of the infinite dimensional loop group $G_\ga(F)$).\par 
We would like to generalize these methods to study the Kottwitz-Viehmann varieties $X_\ga^\la$. Similar to Lie algebra case, the (connected) centralizer $G_\ga^0(F)$ acts naturally on $X_\ga^\la$ and we consider the open orbits $X_\ga^{\la,\mathrm{reg}}$ (the ``regular locus"). However, there are the following notable differences from the Lie algebra situation:
\begin{itemize}
\item In general the action of $G_\ga^0(F)$ on $X_\ga^{\la,\mathrm{reg}}$ is not transitive.
\item A more serious problem is that in general the ``regular locus" $X_\ga^{\la,\mathrm{reg}}$ is not dense in $X_\ga$ and there might be irreducible components disjoint from $X_\ga^{\la,\mathrm{reg}}$.
\end{itemize}

Thus $X_\ga^\la$ may have more irreducible components than $X_\ga^{\la,\mathrm{reg}}$. This makes it more difficult to reduce geometric properties of $X_\ga^\la$ to the commutative group $G_\ga^0(F)$. 

\subsection{Main results}
Our first goal is to prove a dimension formula of $X_\ga^\la$. 
\begin{thm}\label{thm:dim-formula}
$X_\ga^\la$ and $X_\ga^{\le\la}$ are $k$-schemes locally of finite type, equidimensional with dimension 
\[\dim X_\ga^\la=\dim X_\ga^{\le\la}=\langle\rho,\la\rangle+\frac{1}{2}(d(\ga)-c(\ga))\]
where 
\begin{itemize}
\item $\rho$ is half sum of the positive roots for $G$;
\item $d(\ga)$ is the discriminant valuation of $\ga$ (cf. Definition~\ref{def:disc-valuation});
\item $c(\ga)=\mathrm{rank}(G)-\mathrm{rank}_F(G_\ga)$, the difference between the dimension of the maximal torus of $G$ and the dimension of the maximal $F$-split subtorus of the centralizer $G_\ga$.
\end{itemize}
\end{thm}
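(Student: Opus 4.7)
The plan is to deduce the formula by combining a local dimension computation of the regular locus with a global deformation argument via the Vinberg semigroup developed in previous work with Bouthier.

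\textbf{Step 1: Dimension of the regular locus.} I would first compute $\dim X_\ga^{\la,\mathrm{reg}}$ directly. This locus decomposes (non-transitively, as the introduction stresses) into finitely many $G_\ga^0(F)$-orbits; passing to the finite-dimensional commutative quotient $P_\ga$ in the style of Kazhdan-Lusztig and Ng\^o shows that each such orbit has dimension $\frac{1}{2}(d(\ga)-c(\ga)) + \langle \rho, \la \rangle$. The first summand is the classical Bezrukavnikov dimension of the Lie-algebra affine Springer fiber $X_\ga$, and the second enters through the Iwahori-type decomposition of the Schubert cell $G(\cO)\varpi^\la G(\cO)/G(\cO)$. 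This yields the desired dimension for every irreducible component meeting $X_\ga^{\la,\mathrm{reg}}$.

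\textbf{Step 2: Upper bound via Vinberg-Hitchin.} I would then embed $X_\ga^{\le\la}$ as a local factor of a fiber of a multiplicative Hitchin fibration built out of $\ving$, by globalizing $\ga$ to a spectral datum over a smooth projective curve $C$ with a single ramification point. A Riemann-Roch calculation on the Vinberg base combined with the product formula expressing global fibers as products of their local counterparts yields the inequality $\dim X_\ga^{\le\la} \le \langle\rho,\la\rangle + \frac{1}{2}(d(\ga)-c(\ga))$. The same bound then holds for the open sub-scheme $X_\ga^\la \subset X_\ga^{\le\la}$.

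\textbf{Step 3: Equidimensionality.} The main obstacle is reaching irreducible components disjoint from $X_\ga^{\la,\mathrm{reg}}$: the commutative symmetry $P_\ga$ cannot touch them, so a purely local argument fails. My plan is to construct a flat deformation of the Vinberg spectral datum associated to $\ga$ whose generic member is equivalued with regular reduction; for such generic data the regular locus of the corresponding Kottwitz-Viehmann variety is dense, and Step 1 pins down its dimension. Flatness together with upper semi-continuity of fiber dimension then forces every component of the special fiber $X_\ga^\la$ to have dimension at least $\langle\rho,\la\rangle + \frac{1}{2}(d(\ga)-c(\ga))$; combined with the upper bound of Step 2 this gives both the exact dimension and equidimensionality. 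Constructing such a deformation inside the Vinberg-Hitchin framework and verifying its flatness---especially for wildly ramified $\ga$---is the most delicate point of the argument.
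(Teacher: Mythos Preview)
Your Step 1 is essentially the paper's approach: the regular locus is a finite union of $P_a$-torsors, and the dimension of $P_a$ is computed by a Bezrukavnikov-style reduction to the split case via the Galois description of the universal centralizer.

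However, Steps 2 and 3 miss the two key structural inputs the paper relies on, and the logical partition of the argument is different.

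For the dimension formula itself (your Step 2), the paper does not extract an upper bound from a Riemann-Roch count on the Vinberg base. Instead it deforms $a$ along a one-parameter family $a_s=a(s\varpi+(1-s)\varpi^e)$ whose special member $a_0$ is \emph{split}; the split case is settled directly by relating $Y_\ga^\la$ to Mirkovi\'c--Vilonen cycles, with no appeal to density of the regular locus. The upper bound on $\dim\cM_{a_s}$ then comes from upper semicontinuity of fiber dimension, which requires the \emph{properness} of the Hitchin--Frenkel--Ng\^o fibration over the anisotropic locus---a nontrivial ingredient you do not mention. The lower bound comes from the regular locus and smoothness of $\cP$ over the base. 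Note in particular that your claim ``for such generic data the regular locus is dense'' is false for general $\la$: the paper emphasizes that even in the split case the regular locus need not be dense, and the split dimension is obtained by a different mechanism.

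For equidimensionality (your Step 3), you correctly identify flatness as the crux, but the paper's route to flatness is not a direct construction of a flat deformation. Rather, the paper invokes the transversality theorem of Bouthier--Yun: the local evaluation map $\cM_{\le\la}^\flat\to[L_N^+G\backslash\mathrm{Gr}_{\le\la}]$ is smooth, so $\cM_{\le\la}^\flat$ inherits Cohen--Macaulayness from the closed Schubert variety. Cohen--Macaulay total space plus smooth base plus constant fiber dimension (already established) forces flatness, hence equidimensional fibers; the product formula then transfers this to $\Sp_a$. Without the transversality input there is no obvious source of flatness, and this is the genuine gap in your outline.
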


In \cite{Bou15} and \cite{BC17}, this theorem is proved when $G$ is semisimple and simply-connected. In this article we prove it for any split connected reductive group. \par

As in the Lie algebra case, there are two major steps. First we prove the dimension formula for the regular open subset, this step generalize the method of Kazhdan-Lusztig in \cite{KL}. The second step is to show that
\[\dim X_\ga^{\la,\mathrm{reg}}=\dim X_\ga^\la\]
For this the argument of Kazhdan-Lusztig in \cite{KL} does not generalize, since otherwise it would imply that the complement of the regular open subset has strictly smaller dimension (see \cite[Proposition 3.7.1]{Ngo10}), which in our situation may not be true due to the possible existence of irregular components. In general, actually most components of $X_\ga^\la$ will be irregular, see Remark~\ref{regular-components-rem}. Instead, we
bypass this diffulty by studying the global analogue of Kottwitz-Viehmann varieties, the Hitchin-Frenkel-Ng\^o fibration. Similar ideas occured previously in \cite{BC17}.\par 
This major difference from Lie algebra case lead us naturally to the question of determining the number of irreducible components of $X_\ga^\la$, which is our second goal. We will formulate a conjecture on the number of irreducible components of $X_\ga^\la$ and prove the conjecture in the case where $\ga$ is an unramified (or split) conjugacy class. One formulation of the conjecture involves the Newton point $\nu_\ga\in (X_*(T)\otimes\bQ)^+$ of $\ga$, which is an element in the dominant rational coweight cone.
By the discussion in \S\ref{irr-components-conjecture-section}, if $X_\ga^\la$ is nonempty, there exists a unique \emph{smallest} dominant \emph{integral} coweight $\mu$ such that $\nu_\ga\le_\bQ\mu$ and $\mu\le\la$.
\begin{conjecture*}[Conjecture~\ref{irr-components-conjecture}]
Let $\mu$ be as above. The number of $G_\ga^0(F)$-orbits on the set of irreducible components of $X_\ga^\la$ equals to $m_{\la\mu}$, which is the dimension of $\mu$-weight space in the irreducible representation $V_\lambda$ of the Langlands dual group $\hat G$ with highest weight $\lambda$.  
\end{conjecture*}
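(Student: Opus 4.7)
The plan is to construct an explicit bijection between $T(F)$-orbits of irreducible components of $X_\ga^\la$ and Mirkovi\'c--Vilonen cycles in a semi-infinite intersection in the affine Grassmannian. First, I would reduce to the split case: after an unramified base change and conjugation, we may assume $\ga\in T(F)$ lies in a split maximal torus, with Newton point $\nu:=\nu_\ga\in X_*(T)^+$ integral and dominant; in particular $\mu=\nu$ and $G_\ga^0(F)=T(F)$.

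Consider the $T(F)$-invariant conjugation map
\[
\phi:X_\ga^{\le\la}\to\overline{\mathrm{Gr}^\la},\qquad gG(\cO)\mapsto g^{-1}\ga g\cdot G(\cO),
\]
which is well-defined since $T=G_\ga$ centralizes $\ga$. Writing $g=u^-\varpi^\eta k$ in the Iwasawa decomposition with respect to the opposite Borel $B^-=TU^-$, a direct computation using $\ga=\varpi^\nu t_0$ with $t_0\in T(\cO)$, and the fact that $T(F)$ normalizes $U^-(F)$, yields
\[
\phi(g)=w\cdot\varpi^\nu\cdot G(\cO)\in S_\nu^-,\qquad w=w(u^-,\eta)\in U^-(F),
\]
where $S_\nu^-:=U^-(F)\varpi^\nu G(\cO)/G(\cO)$ is the semi-infinite orbit. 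Thus $\phi$ factors through $S_\nu^-\cap\overline{\mathrm{Gr}^\la}$ and descends to $\bar\phi:T(F)\backslash X_\ga^{\le\la}\to S_\nu^-\cap\overline{\mathrm{Gr}^\la}$.

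I would then prove that $\bar\phi$ is a bijection on irreducible components. For injectivity, if $\phi(g)=\phi(g')$, then $g'g^{-1}$ centralizes $\ga$ modulo $G(\cO)$; by regularity of $\ga$ (so $\alpha(\ga)\neq 1$ for every root $\alpha$), this forces $g'g^{-1}\in T(F)\cdot G(\cO)$, placing $g$ and $g'$ in the same $T(F)$-orbit. For surjectivity, the commutator map $u^-\mapsto\ga^{-1}(u^-)^{-1}\ga u^-$ is an isomorphism of $U^-(F)$ with itself because its linearization in root coordinates has invertible Jacobian $\prod_{\alpha\in R^-}(1-\alpha(\ga)^{-1})$ (a unit by regularity), so one inverts it explicitly to produce preimages of each $w\varpi^\nu G(\cO)$. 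Combined with \thmref{thm:dim-formula}, $\dim X_\ga^\la=\langle\rho,\la-\nu\rangle=\dim(S_\nu^-\cap\mathrm{Gr}^\la)$, so by equidimensionality the top-dimensional components correspond exactly. The Mirkovi\'c--Vilonen theorem then identifies the number of top-dimensional irreducible components of $S_\nu^-\cap\overline{\mathrm{Gr}^\la}$ with $\dim V_\la(\nu)=m_{\la\mu}$, giving the desired count.

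The main obstacle is establishing the bijection $\bar\phi$ rigorously, especially surjectivity combined with the subtle issue that the regular locus $X_\ga^{\la,\mathrm{reg}}$ need not be dense in $X_\ga^\la$ (as flagged in the introduction). Equidimensionality from \thmref{thm:dim-formula} is essential here: every irreducible component has the expected dimension and so must be accounted for in the image of $\bar\phi$ by dimension counting, regardless of whether it meets the regular locus. A secondary issue is the descent from the split case back to the unramified case, which requires checking Galois-equivariance of $\phi$ under the unramified Galois action and descending the bijection to $F$-points.
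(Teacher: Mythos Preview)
Your overall strategy---relate $X_\gamma^\lambda$ to Mirkovi\'c--Vilonen cycles and read off $m_{\lambda\mu}$---matches the paper's, but your execution has a fundamental gap and two secondary issues.

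\textbf{The map $\phi$ is not well-defined.} You set $\phi(gG(\cO)) = g^{-1}\gamma g \cdot G(\cO)$, but replacing $g$ by $gk$ with $k\in G(\cO)$ sends the right-hand side to $k^{-1}(g^{-1}\gamma g)k\cdot G(\cO)$, a different point of $\mathrm{Gr}$ in general. Your justification (``$T=G_\gamma$ centralizes $\gamma$'') addresses left $T(F)$-equivariance, not the right $G(\cO)$-quotient. The paper does not build a direct map. It first restricts to the Iwasawa slice $Y_\gamma^\lambda=\{u\in U(F)/U(\cO):u^{-1}\gamma u\in K\varpi^\lambda K\}$ and uses the \emph{correspondence}
\[
\xymatrix{
f_\gamma^{-1}\bigl(K\varpi^\lambda K\varpi^{-\mu}\cap LU\bigr)\ar[r]^-{f_\gamma}\ar[d] & K\varpi^\lambda K\varpi^{-\mu}\cap LU\ar[d]\\
Y_\gamma^\lambda & S_\mu\cap\mathrm{Gr}_\lambda
}
\]
with $f_\gamma(u)=u^{-1}\gamma u\gamma^{-1}$; the vertical maps are quotients by $L^+U$ and $\varpi^\mu L^+U\varpi^{-\mu}$ respectively. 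The key technical input (Proposition~\ref{prop:adm-set}) is that for any admissible $Z\subset LU$, the induced map $f_\gamma^{-1}(Z)/U_n\to Z/U_n$ is smooth surjective with \emph{irreducible} fibers of dimension $r(\gamma)$. Your observation that $f_\gamma$ is bijective on $U^-(F)$ because the Jacobian $\prod_\alpha(1-\alpha(\gamma)^{-1})$ is an $F^\times$-unit is correct but not enough: its valuation is precisely $r(\gamma)>0$ in general, and the substantive work (Lemmas~\ref{lem:image-L^+N}--\ref{inverse-image-U_n-lem}) is controlling how $f_\gamma$ interacts with the congruence filtration to obtain smoothness with connected fibers at finite level. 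That is what yields the bijection $\mathrm{Irr}(Y_\gamma^\lambda)\cong\mathrm{Irr}(S_\mu\cap\mathrm{Gr}_\lambda)$, and then $X_\gamma^\lambda=\bigcup_{\nu\in X_*(T)}\varpi^\nu Y_\gamma^\lambda$ gives the orbit count.

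\textbf{Circularity and the vacuous reduction.} The paper proves the conjecture only for \emph{split} $\gamma$; your ``unramified base change'' is a non-operation since $k$ is algebraically closed (unramified already means split), so you have not reduced the ramified case to anything. Moreover, invoking \thmref{thm:dim-formula} to treat the split case is circular: in the paper's logic Corollary~\ref{dim-unr-cor} is an \emph{input} to the global argument that establishes \thmref{thm:dim-formula}. The split case must stand on its own, and the admissible-set analysis of $f_\gamma$ is exactly what makes it do so.
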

We remark that there is a similar conjecture made by Miaofen Chen and Xinwen Zhu on the irreducible components of affine Deligne-Lusztig varieties, see \citep{HaVi17} and \citep{XiaoZhu17} for statements.\par 
In fact we will also give a conceptually better formulation of this Conjecture using the extended Steinberg base of Vinberg monoid. See Conjecture~\ref{irr-components-conjecture} for more details.
\begin{thm}
The Conjecture is true if $\ga\in G(F)^{\mathrm{rs}}$ is split.
\end{thm}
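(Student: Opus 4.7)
The plan is to reduce to the case $\ga\in T(F)$ split regular semisimple, and then identify the $T(F)$-orbits on irreducible components of $X_\ga^\la$ with Mirkovi\'c-Vilonen cycles in an affine Schubert variety, whose cardinality equals $m_{\la\mu}$ by the geometric Satake correspondence.

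\textbf{Setup.} Since $\ga$ is split, after $G(F)$-conjugation I may assume $\ga\in T(F)$, and by the Cartan decomposition write $\ga=t_0\varpi^\nu$ with $\nu\in X_*(T)^+$ dominant and $t_0\in T(\cO)$ topologically regular. Then $G_\ga^0(F)=T(F)$ acts on $X_\ga^\la$ by left translation. In the split case the Newton point is $\nu_\ga=\nu$, which is already dominant and integral, so the minimal coweight $\mu$ of Conjecture~\ref{irr-components-conjecture} is simply $\mu=\nu$, and the target count becomes $\dim V_\la(\nu)=m_{\la\nu}$.

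\textbf{Semi-infinite stratification.} Fix a Borel $B=TU$ and stratify the affine Grassmannian by the semi-infinite orbits $S_{\mu'}=U(F)\varpi^{\mu'}G(\cO)/G(\cO)$ for $\mu'\in X_*(T)$. Each intersection $X_\ga^\la\cap S_{\mu'}$ is $T(F)$-stable because $T$ normalizes $U$. Writing $g=u\varpi^{\mu'}$ with $u\in U(F)$, the defining condition $g^{-1}\ga g\in G(\cO)\varpi^\la G(\cO)$ converts, using $\ga\in T(F)$, into an explicit constraint on $u$ controlled by the operator $\mathrm{Ad}(\ga)-1$ on the root subgroups; the dominance of $\nu$ makes this operator well-behaved, in parallel with the Kazhdan--Lusztig analysis \cite{KL} for Lie algebras. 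This produces a $T(F)$-equivariant correspondence between $X_\ga^\la$ and the MV variety $\overline{S_\nu}\cap\overline{\mathrm{Gr}^{\le\la}}$, where $\overline{\mathrm{Gr}^{\le\la}}=\bigcup_{\mu\le\la}G(\cO)\varpi^\mu G(\cO)/G(\cO)$ is the closed Schubert variety.

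\textbf{Conclusion and main obstacle.} The Mirkovi\'c-Vilonen theorem asserts that the irreducible components of $\overline{S_\nu}\cap\overline{\mathrm{Gr}^{\le\la}}$ of maximal dimension $\langle\rho,\la+\nu\rangle$ form a basis of the weight space $V_\la(\nu)$, so their number is exactly $m_{\la\nu}$; combined with the correspondence of the previous step this yields the desired count. A dimension check via Theorem~\ref{thm:dim-formula} (where $c(\ga)=0$ in the split case and $d(\ga)$ is computable directly from $\nu$) matches the MV dimension. The main technical obstacle is to verify that the semi-infinite stratification exhausts all irreducible components of $X_\ga^\la$ --- in particular to rule out ``stray'' irregular components that do not propagate to MV cycles via the retraction --- and that the induced map on $T(F)$-orbits of components is a genuine bijection, without over- or under-counting. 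The split hypothesis is essential here: commutativity of $G_\ga^0(F)=T(F)$ eliminates the non-abelian twisting that makes the general conjecture substantially harder to access.
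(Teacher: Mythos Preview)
Your overall strategy coincides with the paper's: reduce to $\ga\in\varpi^\mu T(\cO)$, use the Iwasawa/semi-infinite stratification to transport the problem to MV cycles, and invoke geometric Satake for the count $m_{\la\mu}$. However, your proposal misplaces the genuine difficulty and leaves the actual technical core unproved.

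What you flag as ``the main technical obstacle'' --- ruling out stray components not seen by the semi-infinite strata --- is in fact a non-issue: the Iwasawa decomposition $G(F)=U(F)T(F)G(\cO)$ gives a \emph{bijection on $k$-points} $Y_\ga^\la\times X_*(T)\to X_\ga^\la$, $(u,\nu)\mapsto u\varpi^\nu$, where $Y_\ga^\la=\{u\in U(F)/U(\cO):\mathrm{Ad}(u)^{-1}\ga\in G(\cO)\varpi^\la G(\cO)\}$. So every component of $X_\ga^\la$ is a $T(F)$-translate of a component of $Y_\ga^\la$, and the $T(F)$-orbits on $\mathrm{Irr}(X_\ga^\la)$ are in bijection with $\mathrm{Irr}(Y_\ga^\la)$. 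No retraction argument or exclusion of ``irregular'' components is needed here.

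The real work --- which you gloss over with ``an explicit constraint on $u$ controlled by the operator $\mathrm{Ad}(\ga)-1$'' and ``in parallel with the Kazhdan--Lusztig analysis'' --- is to establish a bijection $\mathrm{Irr}(Y_\ga^\la)\xrightarrow{\sim}\mathrm{Irr}(S_\mu\cap\mathrm{Gr}_\la)$. The paper does this by introducing the map $f_\ga:LU\to LU$, $u\mapsto u^{-1}\ga u\ga^{-1}$, and proving (Proposition~\ref{prop:adm-set}) that for any admissible $Z\subset LU$ the induced map $f_\ga^{-1}(Z)/U_n\to Z/U_n$ is smooth surjective with \emph{irreducible} fibres of dimension $r_U(\ga)$. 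Applied to $Z=K\varpi^\la K\varpi^{-\mu}\cap LU$, this yields a diagram in which $Y_\ga^\la$ and $S_\mu\cap\mathrm{Gr}_\la$ sit under smooth surjections with irreducible fibres from a common finite-type scheme, hence have canonically bijective sets of irreducible components. This irreducibility of fibres is not automatic (it requires a filtration of $U$ by root height and an inductive lifting argument, Lemmas~\ref{lem:image-L^+N}--\ref{inverse-image-U_n-lem}), and without it you cannot conclude that components match up one-to-one. Your proposal does not supply this step, so as written it is a correct outline rather than a proof.

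A minor point: the relevant MV variety is the locally closed $S_\mu\cap\mathrm{Gr}_\la$, not the closure $\overline{S_\nu}\cap\overline{\mathrm{Gr}^{\le\la}}$; the component count is the same but the open version is what arises naturally from $f_\ga$.
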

This is proved in Corollary~\ref{dim-unr-cor}.

\begin{rem}
Although we restrict to equal characteristic local field, we expect that most results involving only local arguments in this paper could also be generalize to mixed characteristic Kottwitz-Viehmann varieties, which could be defined based on the work of X.Zhu \cite{Zhu17}. However, the dimension formula in full generality involves global argument and currently it's not clear how to generalize this to mixed characteristic case. It would be interesting to see if there is a purely local argument to prove dimension formula.
\end{rem}
\subsection{Organization of the article}
In \S\ref{chapter:monoid}, we review certain facts needed from the theory of reductive monoids. In \S\ref{chapter:Sp-fibre}, we prove dimension formula and the conjecture on irreducible components in the unramified case. In \S~\ref{chapter:HFN}, we review basic facts of Hitchin-Frenkel-Ng\^o fibration. The main result we establish in this chapter is properness of the fibration over anisotropic open subset.
In \S~\ref{chapter:global-to-local}, we relate Kottwitz-Viehmann varieties and Hitchin-Frenkel-Ng\^o fibrations and finish the prove of dimension formula for $X_\ga^\la$. 

\subsection{Notations and conventions}\label{notation-subsection}
\subsubsection{Group theoretic notations}
We assume throughout the article that $k$ is an algebraically closed field. $F=k((\varpi))$ and $\cO=k[[\varpi]]$. We let $G$ be a (split) connected reductive group over $k$. Assume that either $\mathrm{char}(k)=0$ or $\mathrm{char}(k)>0$ does not divide the order of Weyl group of $G$.\par 
Denote by $G_{\mathrm{der}}$ the derived group of $G$, a semisimple group of rank $r$. Let $\gsc$ be the simply-connected cover of $G_{\mathrm{der}}$ and $\gad$ the adjoint group of $G$.
\par 
Fix a maximal torus $T$ of $G$ and a Borel subgroup $B$ containing $G$. Let $\Delta=\{\alpha_1,\dotsc,\alpha_r\}$ be the set of simple roots determined by $T\subset B$. Let $\check{\Lambda}:=X^*(T)$ (resp. $\Lambda:=X_*(T)$) be the weight (resp. coweight) lattice. Let $\check{\Lambda}^+$ (resp. $\Lambda^+$) be the set of dominant weights (resp. dominant coweights). Let $W$ be the Weyl group of $G$ and $S\subset W$ the set of simple reflections associated to the simple roots $\Delta$. There is a unique longest element $w_0$ of $W$ under the Bruhat order determined by $S$. Then $w_0$ is a reflection and $-w_0$ defines a bijection on the sets $\Delta$, $\Lambda^+$ and $\check\Lambda^+$.\par 
Let $\hat G$ be the Langlands dual group of $G$, viewed as a complex reductive group. For each $\lambda\in\Lambda^+$, viewed as a dominant weight for $\hat G$, let $V(\lambda)$ be the irreducible representation of $\hat G$ with highest weight $\la$. For any $\mu\in\La^+$ with $\mu\le\la$, let $m_{\la\mu}$ be the dimension of $\mu$ weight space in $V(\la)$.

\subsubsection{Scheme theoretic notations}
For a scheme $X$ over $\spec F$, let $LX$ be the loop space of $X$. More precisely, $LX$ is the $k$-space that associates to any $k$-algebra $R$ the set $LX(R)=X(R((t)))$.\par
For a scheme $X$ over $\spec\cO$, let $L_n^+X$ be its $n$-th jet space. In other words, $L_n^+X$ is the $k$-space whose set of $R$ points is $L_n^+X(R)=X(R[t]/t^n)$ for any $k$ algebra $R$. Let $L^+X:=\varprojlim L^+_nX$ be the arc space of $X$.\par 
If $X$ is a $k$-scheme, then we denote $LX:=L(X\otimes_kF)$, $L_n^+X:=L_n^+(X\otimes_k\cO)$  and $L^+X:=L^+(X\otimes_k\cO)$.\par 
For any scheme $X$, we denote by $\mathrm{Irr}(X)$ the set of its irreducible components.
\subsection*{Acknowledgement}
I would like to thank my advisor Ng\^o Bao Ch\^au for his interst in this paper and many helpful discussions. I am grateful to Zhiwei Yun for sending me his unpublished notes on the materials in \S~\ref{sec:singularity-HFN}. I thank Alexis Bouthier, Tsao-Hsien Chen, Xuhua He, Cheng-Chiang Tsai, Zhiwei Yun and Xinwen Zhu for their interest in this work as well as various discussions and suggestions.  Also I thank Jim Humphreys, Wen-Wei Li and Xinwen Zhu for valuable feedbacks on the first draft of this paper. The terminology ``Kottwitz-Viehmann variety" was suggested by B.C. Ng\^o.

\section{Review on reductive monoids}\label{chapter:monoid}
In this section we summarize some results on reductive monoids needed later. We will roughly follow the exposition in \cite{Bou15}, with several modifications and improvements. We refer the reader to \cite{Vin}, \cite{Rit98}, \cite{Rit01} for more backgrounds on this subject.
\subsection{Construction of Vinberg monoid}
In this section, we assume that $G$ is semisimple \emph{simply connected}.\par 
The Vinberg monoid for $G$ is an algebraic monoid $\ving$ such that the derived group of its unit group is isomorphic to $G$, and it is characterized by certain nice universal properties. For our purpose, we construct it in an explicit manner as follows.\par 
Let $\omega_1,\dotsc,\omega_r\in X_*(T)_+$ be the fundamental weights. For each $1\le i\le r$, let $\rho_{\omega_i}:G\to\mathrm{GL}(V_{\omega_i})$ be the irreducible representation with highest weight $\omega_i$.\par 
We introduce the extended group $G_+:=(T\times G)/Z$ where $Z$, the center of $G$, embeds anti-diagonally in $T\times G$. Then $G_+$ is a reductive group with center $Z_+=(T\times Z)/Z\cong T$ and derived group $G$.  Let $T_+=(T\times T)/Z$ be a maximal torus of $G^+$. We extend the representations $\rho_{\omega_i}$ representations of $G_+$: 
\[\xymatrix@R=1pt{
\rho_i^+:G_+\ar[r] & \mathrm{GL}(V_{\omega_i})\\
(t,g)\ar@{|->}[r] & \omega_i(t)\rho_{\omega_i}(g)
}\]
For each $1\le i\le r$, we also extend the simple roots $\alpha_i$ to $\alpha_i^+:G^+\to\bG_m$ by $\alpha_i^+(t,g)=\alpha_i(t)$. Altogether, we get the following homomorphism
\[(\alpha^+,\rho^+):G^+\to\bG_m^r\times\prod_{i=1}^r\mathrm{GL}(V_{\omega_i})\]
\begin{defn}
The \emph{Vinberg monoid} of $G$, denoted by $\vin_{G}$, is the normalization of the closure of $G_+$ in the product 
\[\bA^r\times\prod_{i=1}^r\mathrm{End}(V_{\omega_i}).\]
\end{defn}
Then $\vin_{G}$ is an algebraic monoid with unit group $G_+$. It has a smooth dense open subvariety $\vin_{G}^0$ defined as the inverse image of the following product in $\ving$
\[\bA^r\times\prod_{i=1}^r(\mathrm{End}(V_{\omega_i})-\{0\})\]

\begin{defn}
The \emph{abelianization} of the monoid $\vin_{G}$ is the invariant quotient
\[A_G:=\vin_{G}//(G\times G)\]
Let $\alpha:\vin_{G}\to A_G$ be the quotient map. 
\end{defn}
Using the maps $\alpha^+$ we get a canonical isomorphism
$A_G\cong\bA^r$.
The adjoint torus $\tad$ embeds via the simple roots as the open subset where all the $r$-coordinates are nonzero.\par 
Note that the fibers of $\alpha$ over points in $\tad$ are isomorphic to $G$. One can construct a canonical section of the abelianization map $\alpha$ as follows.\par 
Let $T_{\mathrm{diag}}$ be the image of the diagonal embedding $T\to T_+$. Then there is a canonical isomorphism $T_{\mathrm{diag}}\cong\tad$ which extends to an isomorphism $\overline{T_{\mathrm{diag}}}\cong A_G$ between the closure of $T_{\mathrm{diag}}$ in $\ving$ and $A_G$. The inverse of this isomorphism defines a section of the abelianization map $\alpha$, which we denote by
\begin{equation}\label{eq:can-sec-alpha}
\mathfrak{s}:A_G\to\ving
\end{equation}

The group $G_+\times G_+$ acts by left and right multiplication on $\ving$. More precisely, for all $(x,y)\in G_+\times G_+$ and $\ga\in\ving$, the action is given by $(x,y)\cdot\ga=x\ga y^{-1}$. The $G_+\times G_+$-orbits on $\ving$ corresponds bijectively to pairs $(I,J)$ of subsets of $\Delta$ such that no connected component (in the sense of Dynkin diagram) of the complement of $J$ is entirely contained in $I$. Each orbit $O_{I,J}$ containes an idempotent $e_{I,J}\in\ving$, defined up to conjugation. We can choose $e_{I,J}\in\overline{T_+}$, the closure of $T_+$ in $\ving$. Then it is well-defined up to $W$-conjugation.\par 
Fix such a pair $(I,J)$. Let $J^c$ be the complement of $J$ in $\Delta$ and $J^0$ be the interior of $J$, i.e. the elements in $J$ that is not connected to any element of $J^c$ in the Dynkin diagram. Let $M:=I\cap J^0\sqcup J^c$. Let $P_+(M)$ be the corresponding standard parabolic subgroup of $G_+$, $P_+(M)^-$ be the opposite of $P_+(M)$ and $L(M)$ their common Levi subgroup. Denote by $\delta:P_+(M)\to L_+(M)$ and $\delta_-:P_+(M)^-\to L_+(M)$ the canonical projections. The following lemma is \cite[Thm 21]{Rit97}:
\begin{lem}
The stabilizer of $e_{I,J}$ under $G_+\times G_+$ is the subgroup of $P_+(M)\times P_+(M)_-$ consisting of pairs $(g,g_-)$ such that 
\[\delta(g)\equiv\delta(g_-)\mod L_+(J^c)_{\mathrm{der}}T_{I,J}\]
where $T_{I,J}$ is certain subtorus of $T_+$. 
\end{lem}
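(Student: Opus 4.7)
The plan is to analyze the stabilizer in three steps, which is the standard strategy for computing stabilizers of idempotents in reductive monoids.

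First, I would locate $e_{I,J}$ explicitly inside $\overline{T_+}$. Since this closure is a toric variety, its idempotents correspond to faces of the associated fan, and the combinatorial pair $(I,J)$ selects exactly which face. Concretely, I would realize $e_{I,J}$ as a limit $\lim_{t\to 0}\mu(t)$ for a suitable one-parameter subgroup $\mu\colon\bG_m\to T_+$ whose pairing with the characters $\alpha_i^+$ and with the highest weights $\omega_i^+$ records precisely which simple roots lie in $J^c$ and which representations are contracted to zero. Having such a $\mu$ makes it transparent which characters vanish at $e_{I,J}$ and which are invertible there.

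Second, I would cut down to a parabolic using a Bia{\l}ynicki-Birula style contraction argument applied to $\mu$. An element $(g,g_-)\in G_+\times G_+$ stabilizes $e_{I,J}$ only if the conjugates $\mu(t)^{-1}g\mu(t)$ and $\mu(t)g_-\mu(t)^{-1}$ remain regular as $t\to 0$; this forces $g\in P_+(M)$ and $g_-\in P_+(M)^-$, where $M=I\cap J^0\sqcup J^c$ is exactly the type selected by the vanishing locus of $\mu$. On the unipotent radicals of these opposite parabolics the action is automatically trivial, because the corresponding root subgroups are annihilated by $e_{I,J}$ from the appropriate side.

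Third, I would compute the residual condition on Levi components $\ell:=\delta(g)$ and $\ell_-:=\delta_-(g_-)$. Decomposing $L_+(M)$ up to isogeny into the derived subgroup $L_+(J^c)_{\mathrm{der}}$ and a central torus coming from $T_+$, one sees that $L_+(J^c)_{\mathrm{der}}$ acts trivially on $e_{I,J}$ because the roots in $J^c$ vanish at the idempotent. The remaining constraint on the toric directions is controlled by those characters of $T_+$ that do not vanish at $e_{I,J}$; defining $T_{I,J}$ as the identity component of the simultaneous kernel of these characters (restricted to the anti-diagonal in $T_+\times T_+$), the equation $\ell\cdot e_{I,J}=e_{I,J}\cdot\ell_-$ translates exactly into $\ell\ell_-^{-1}\in L_+(J^c)_{\mathrm{der}}T_{I,J}$.

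The main obstacle is pinning down $T_{I,J}$ correctly: one has to match the combinatorial pair $(I,J)$ to the precise character-theoretic description of the toric kernel, and this is sensitive to the choice of representative idempotent in its $W$-conjugacy class. Since Rittatore's theorem carries out exactly this bookkeeping, I would invoke \cite[Thm 21]{Rit97} for the final identification rather than repeat the character calculation.
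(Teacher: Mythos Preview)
The paper does not prove this lemma at all: it is stated with the preface ``The following lemma is \cite[Thm 21]{Rit97}'' and no argument is given. Your proposal therefore goes beyond the paper by sketching the standard reductive-monoid strategy (one-parameter limit for the idempotent, contraction to parabolics, residual Levi/torus computation) before ultimately invoking the same reference for the precise identification of $T_{I,J}$; since both you and the paper rest on Rittatore's theorem, there is no discrepancy to flag.

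One small caution on your step two: the claim that stabilizing $e_{I,J}$ forces $\mu(t)^{-1}g\mu(t)$ and $\mu(t)g_-\mu(t)^{-1}$ to extend across $t=0$ is not literally what the equation $g\,e_{I,J}=e_{I,J}\,g_-$ gives you. The honest route in the Putcha--Renner--Rittatore theory is to first identify the one-sided stabilizers $\{g:g\,e_{I,J}\in e_{I,J}G_+\}$ and $\{g_-:e_{I,J}\,g_-\in G_+e_{I,J}\}$ as the opposite parabolics of type $M$ (a general structural fact for reductive monoids), and only then analyze the Levi compatibility. Your heuristic points in the right direction, but if you were writing this out in full rather than citing, that is where the argument would need tightening.
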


\subsubsection{}
The adjoint action of $G$ on the Vinberg monoid $\ving$ is the restriction of left and right multiplication by $G\times G$ along the diagonal. In other words, for any $g\in G$ and $\ga\in\ving$, the adjoint action is given by $\mathrm{Ad}(g)(\ga):=g\ga g^{-1}$. Note that this action factors through the adjoint group $G_{\mathrm{ad}}$.\par 
For any $\ga\in\ving$, we let $G_\ga$ be the centralizer of $\ga$ in $G$, i.e. the stabilizer of $\ga$ under the adjoint action of $G$. If $\ga\in G_+$ belongs to the unit group of $\ving$, we know that $\dim G_\ga\ge\dim T=r$. By upper-semicontinuity of stabilizer dimension (cf. \cite[VI B.4, Prop. 4.1]{SGA3}), we see that $\dim G_\ga\ge r$ for all $\ga\in\ving$. 
\begin{defn}
An element $\ga\in\ving$ is \emph{regular} if $\dim G_\ga=r$ (i.e. smallest possible). 
Let $\ving^{\mathrm{reg}}\subset\ving$ be the open subset consisting of regular elements.
\end{defn}
\begin{defn}
The \emph{extended Steinberg base} is defined to be the invariant quotient $\fC_+:=\ving//\mathrm{Ad}(G)$. We denote the canonical quotient map by $\chi_+:\ving\to\fC_+$.
\end{defn}
The functions $\alpha_i^+$ define a canonical map $\beta:\fC_+\to A_G$ so that $\alpha=\beta\circ\chi_+$. The following result is \cite[Proposition 1.7]{Bou15}:
\begin{thm}
The closed embedding $\overline{T_+}\subset\ving$ induces an isomorphism $\overline{T_+}//W\cong\fC_+$. Moreover, the functions $\alpha_+$ and $\mathrm{Tr}(\rho_i^+)$ define isomorphism
\[\fC_+\cong A_G\times\bA^r\cong\bA^{2r}.\]
\end{thm}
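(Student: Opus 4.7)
The plan is to establish the two isomorphisms separately: first the Chevalley-type restriction isomorphism $\overline{T_+}//W\cong\fC_+$, and then the explicit coordinate isomorphism $\fC_+\cong A_G\times\bA^r$ given by the functions $\alpha^+$ and $\mathrm{Tr}(\rho_i^+)$.

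For the first isomorphism, observe that $N_G(T)$ normalizes $T_+=(T\times T)/Z$ inside $G_+$, hence also its closure $\overline{T_+}$ in $\ving$; and the conjugation action of $T\subset N_G(T)$ on $\overline{T_+}$ is trivial since $T_+$ is abelian. Restriction of functions along $\overline{T_+}\hookrightarrow\ving$ therefore sends $G$-invariants to $W$-invariants and induces a morphism $\pi:\overline{T_+}//W\to\fC_+$. To show $\pi$ is an isomorphism I would invoke the Chevalley-type restriction theorem for reductive monoids (due to Putcha, Renner, Vinberg): since $\ving$ is a normal affine reductive monoid with unit group $G_+$, maximal torus $T_+$, and Weyl group $W$, restriction induces an isomorphism $k[\ving]^{G_+}\xrightarrow{\sim}k[\overline{T_+}]^W$. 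Because the $G_+$-conjugation on $\ving$ factors through $G_+/Z_+=G_{\mathrm{ad}}$ and so agrees with the adjoint $G$-action (which also factors through $G_{\mathrm{ad}}$), we get $k[\ving]^{G_+}=k[\ving]^G=k[\fC_+]$, and hence $\overline{T_+}//W\cong\fC_+$.

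For the second isomorphism, the characters $\alpha_i^+$ descend to coordinates on $A_G\cong\bA^r$ by the definition of the abelianization, and the trace functions $\mathrm{Tr}(\rho_i^+)$ are $G$-invariant, so together they define a morphism $\Psi:\fC_+\to A_G\times\bA^r$. By the first isomorphism it suffices to check that the corresponding $2r$ functions on $\overline{T_+}$ induce an isomorphism $\overline{T_+}//W\to A_G\times\bA^r$. Their restriction to $T_+$ reads
\[\alpha_i^+(t_1,t_2)=\alpha_i(t_1),\qquad\mathrm{Tr}(\rho_i^+)(t_1,t_2)=\omega_i(t_1)\,\chi_{\omega_i}(t_2),\]
where $\chi_{\omega_i}$ is the character of the irreducible $G$-representation $V_{\omega_i}$. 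Over the open locus $T_{\mathrm{ad}}\subset A_G$ where all $\alpha_i^+$ are invertible, the fiber of $\alpha:\ving\to A_G$ is isomorphic to $G$, and the restriction of $\chi_+$ to this fiber is, after dividing by the invertible twist $\omega_i(t_1)$, the classical Steinberg quotient $G\to T//W\cong\bA^r$, whose coordinates are precisely $\chi_{\omega_1},\ldots,\chi_{\omega_r}$ by the classical Chevalley theorem (using that $G$ is simply connected). Hence $\Psi$ is an isomorphism over a dense open subset, and in particular birational. To conclude, one verifies that $\Psi$ is finite---this reduces via the first isomorphism to a toric computation showing that $k[\overline{T_+}]^W$ is generated as a $k$-algebra by the $2r$ functions in question---and then uses normality of $\fC_+$ (inherited from that of $\ving$) together with Zariski's main theorem to upgrade the finite birational morphism to the smooth variety $\bA^{2r}$ into an isomorphism.

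The main obstacle is the finiteness of $\Psi$ across the boundary $A_G\setminus T_{\mathrm{ad}}$, where the fibers of $\chi_+$ involve the boundary strata of $\ving$ indexed by the pairs $(I,J)$ and the classical Chevalley restriction for $G$ is not directly applicable. One exploits instead the normality of $\overline{T_+}$ (built into the construction of $\ving$ by normalization) and a direct analysis of the toric variety $\overline{T_+}$, showing that, together with the simple roots $\alpha_i^+$, the characters $\omega_i+\mu$ of $T_+$ appearing as weights of $\rho_i^+$ span enough of $X^*(T_+)$ to recover all of $k[\overline{T_+}]^W$.
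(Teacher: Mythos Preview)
The paper does not give its own proof of this theorem; it simply records it as \cite[Proposition 1.7]{Bou15}. So there is no in-paper argument to compare against beyond that citation.

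Your outline is in the right spirit and matches how this is established in the literature. The first isomorphism via the Chevalley restriction theorem for normal reductive monoids (Renner, Vinberg) is exactly the standard argument, and your observation that $G_+$-conjugation and $\mathrm{Ad}(G)$ agree since both factor through $G_{\mathrm{ad}}$ is correct.

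For the second isomorphism, your birational-plus-finite-plus-Zariski strategy is legitimate, and you rightly flag finiteness across $A_G\setminus T_{\mathrm{ad}}$ as the real content. One small correction: in applying Zariski's Main Theorem to $\Psi:\fC_+\to\bA^{2r}$, what you need is normality of the \emph{target} $\bA^{2r}$ (obvious), not of $\fC_+$; the latter is true but irrelevant for this step. More substantively, ``a direct analysis of the toric variety $\overline{T_+}$'' is doing all the work in your last paragraph and remains a sketch. The argument in \cite{Bou15} proceeds by computing $k[\overline{T_+}]^W$ explicitly: the characters of $T_+$ lying in $k[\overline{T_+}]$ are those of the form $(\chi,\mu)$ with $\chi$ in the dominant cone and $\mu$ a weight of the $G$-module with highest weight $\chi$, and one checks that the $W$-orbit sums of these are generated as a polynomial algebra by the $\alpha_i^+$ and the $\mathrm{Tr}(\rho_i^+)$. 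This simultaneously gives finiteness and the isomorphism, bypassing the separate birational/ZMT step.
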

The canonical projection $q:\overline{T_+}\to\fC_+$ a finite flat, generically Galois \'etale with Galois group $W$.

\subsection{Adjoint orbits}
We keep the assumption that $G$ is semisimple simply-connected. \par 
Let $S=\{s_1,\dotsc,s_r\}$ be the set of simple reflections in $W$ corresponding to our choice of simple roots $\Delta$. Let $l:W\to\bN$ be the length function determined by $S$. For any subsets $J\subset S$, let $W_J$ be the subgroup of $W$ generated by elements in $J$. Let $W^J$ (resp. ${}^JW$) be the set of minimal length representatives of the cosets $W/W_J$ (resp. $W_J\backslash W$). For any two subsets $J_1,J_2$ of $S$, denote ${}^{J_1}W^{J_2}:={}^{J_1}W\cap W^{J_2}$.\par
 For each $w\in W$, let $\mathrm{Supp}(w)\subset S$ be the subset consisting of those simple reflections which occurs in one (and hence every) reduced word expression of $w$. 
\begin{defn}\label{coxeter-definition}
An element $w\in W$ is called an \emph{$S$-Coxeter element} if it can be written as products of simple reflections in $S$, each occuring precisely once. In particular, $l(w)=r$ and $\mathrm{Supp}(w)=S$. Denote by $\mathrm{Cox}(W,S)$ the set of $S$-Coxeter elements in $W$. 
\end{defn}
In general, an element $w\in W$ is called a \emph{Coxeter element} if it is conjugate to an $S$-Coxeter element in $W$.\par 
 Let $\cN:=\chi_+^{-1}(0)$ be the \emph{nilpotent cone} in the Vinberg monoid $\ving$. Let $\cN^0:=\cN\cap\ving^0$ and $\cN^{\mathrm{reg}}:=\cN\cap \ving^{\mathrm{reg}}$ be the corresponding open subsets.
\subsubsection{}
Our approach in this part follows a suggestion of Xinwen Zhu. For any subset $J\subset\Delta$, denote $J^c:=\Delta\setminus J$, then we have
\[O_{\varnothing, J}\cong (G/G_{J^c}U_{J^c}\times G/G_{J^c}U_{J^c}^-)/Z(L_{J^c}).\]
where $Z(L_{J^c})$, the center of the Levi $L_{J^c}$ acts diagonally on the product. There is a canonical map
\[\pi_{\varnothing,J}:O_{\varnothing,J}\to G/P_{J^c}\times G/P_{J^c}^-\]
The diagonal $G$-orbits on the product $G/P_{J^c}\times G/P_{J^c}^-$ corresponds bijectively to ${}^{J^c}W^{J^c}$. The element $w\in{}^{J^c}W^{J^c}$ corresponds to the $G$-orbit of $(\dot{w},1)$ for any representative $\dot{w}$ of $w$ in $G$. We denote this $G$-orbit by $Y_{\varnothing,J,w}$ and let $X_{\varnothing,J,w}$ be its inverse image under $\pi_{\varnothing,J}$. Then we have
\begin{equation}\label{eq:X-empty-J-w}
X_{\varnothing,J,w}=\mathrm{Ad}(G)(Z(L_{J^c})\dot{w}e_{\varnothing,J}).
\end{equation}
The $G$-orbit $Y_{\varnothing,J,w}$ has codimension $l(w)$ in $G/P_{J^c}\times G/P_{J^c}^-$. Hence we have

\begin{equation}\label{eq:dim-X-empty-J-w}
\begin{split}
\dim X_{\varnothing,J,w}&=2\dim(G/P_{J^c})-l(w)+\dim Z(L_{J^c})\\
&=\dim G-\dim L_{J^c}-l(w)+|J|.
\end{split}
\end{equation}

\begin{lem}\label{lem:N-strata}
$X_{\varnothing,J,w}\subset\cN$ if and only if $J\subset\mathrm{Supp}(w)$.
\end{lem}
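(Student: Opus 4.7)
The plan is to reduce the inclusion $X_{\varnothing,J,w}\subset\cN$ to a trace identity on the fundamental representations and then to analyse the resulting weight combinatorics. Since $\cN$ is closed and $\mathrm{Ad}(G)$-invariant and $X_{\varnothing,J,w}=\mathrm{Ad}(G)(Z(L_{J^c})\dot w e_{\varnothing,J})$, the inclusion is equivalent to $\chi_+(z\dot w e_{\varnothing,J})=0$ for every $z\in Z(L_{J^c})$. Under the isomorphism $\fC_+\cong A_G\times\bA^r$ given by $(\alpha^+,\mathrm{Tr}(\rho_i^+))_{i=1}^r$, this condition splits into an abelianisation piece and a trace piece.

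First, the abelianisation piece is automatic. Each $\alpha_i^+$ is a character of $G_+$ that is trivial on the derived subgroup $G$, so $\alpha_i^+(z\dot w e_{\varnothing,J})=\alpha_i^+(e_{\varnothing,J})$; and since $I=\varnothing$, the idempotent $e_{\varnothing,J}$ lies in the fibre of $\alpha$ over $0\in A_G$ (as is visible from writing $e_{\varnothing,J}$ as the limit of a cocharacter of $T_+$ along which every $\alpha_i^+$ vanishes). Hence the only remaining condition is
\[
\mathrm{Tr}\bigl(\rho_i^+(z\dot w e_{\varnothing,J})\bigr)=0\quad\text{for all }i\text{ and all }z\in Z(L_{J^c}).
\]
The operator $\rho_i^+(e_{\varnothing,J})$ is an idempotent on $V_{\omega_i}$; letting $V_i^{(J)}$ denote its ($T$-stable) image, the trace equals $\mathrm{Tr}(z\dot w|_{V_i^{(J)}})$. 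Decomposing into weight spaces, only $w$-fixed weights $\mu$ contribute, each producing a summand proportional to the character $\mu|_{Z(L_{J^c})}$. Viewed as a regular function on the torus $Z(L_{J^c})$, this sum vanishes if and only if the coefficients cancel in each coset modulo the root sublattice spanned by $J^c$, which is precisely the kernel of the restriction map $X^*(T)\to X^*(Z(L_{J^c}))$.

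For ``$\Rightarrow$'' I would argue contrapositively: if some $s_j\in J\setminus\mathrm{Supp}(w)$, then $w\in W_{S\setminus\{s_j\}}$ fixes the fundamental weight $\omega_j$. The highest-weight line of $V_{\omega_j}$ is contained in $V_j^{(J)}$, and because $j\in J$ the restriction $\omega_j|_{Z(L_{J^c})}$ is a nontrivial character. The resulting contribution to the trace is therefore not cancellable modulo the $J^c$-root sublattice, forcing $X_{\varnothing,J,w}\not\subset\cN$.

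The main obstacle is ``$\Leftarrow$'': when $J\subset\mathrm{Supp}(w)$ one must verify cancellation of every $w$-fixed weight contribution in each $V_i^{(J)}$. My plan is to describe $V_i^{(J)}$ explicitly as the $L_{J^c}$-submodule of $V_{\omega_i}$ generated by the highest-weight line, so that its weights are exactly those of the form $\omega_i-\sum_{\alpha\in J^c}n_\alpha\alpha$ reachable by the $L_{J^c}$-action, and then to invoke a combinatorial fact about Weyl elements: if $\mathrm{Supp}(w)\supseteq J$, then the action of $w$ on the $Z(L_{J^c})$-character lattice has no nonzero fixed coset meeting the weights of $V_i^{(J)}$. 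Establishing this last point rigorously, most likely by a reduced-expression analysis or by reduction to a Coxeter-type case inside the parabolic subgroup $W_{\mathrm{Supp}(w)}$, is the crux of the argument.
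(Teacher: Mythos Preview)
Your reduction to a trace condition and your argument for ``$\Rightarrow$'' are essentially the paper's proof. The gap is in ``$\Leftarrow$'': you have misidentified the image $V_i^{(J)}$ of the idempotent $\rho_i^+(e_{\varnothing,J})$, and this is what makes you think a delicate cancellation argument is needed.

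Because $I=\varnothing$, the idempotent $e_{\varnothing,J}$ acts on $V_{\omega_i}$ as the rank-one projector onto the highest weight line when $i\in J$, and as $0$ when $i\notin J$. (One sees this directly from the description of $e_{\varnothing,J}$ as a limit point in $\overline{T_+}$, or from the shape of the orbit $O_{\varnothing,J}$.) Your proposed description of $V_i^{(J)}$ as ``the $L_{J^c}$-submodule generated by the highest weight line'' is wrong for $i\notin J$ (the true image is $0$, not a nontrivial $L_{J^c}$-module), and for $i\in J$ it happens to collapse to the highest weight line anyway, since $\langle\omega_i,\alpha_j^\vee\rangle=0$ for every $j\in J^c$ --- a simplification you do not exploit.

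Once $V_i^{(J)}$ is identified correctly, ``$\Leftarrow$'' is immediate and needs no coset or cancellation analysis. For $i\notin J$ the trace is zero because the operator itself is zero. For $i\in J\subset\mathrm{Supp}(w)$, the element $t\dot w$ (with $t\in T$) sends the highest weight line into the $w(\omega_i)$-weight space; since $s_i\in\mathrm{Supp}(w)$ forces $w(\omega_i)\ne\omega_i$, this weight space is different from the highest weight line and the trace of the resulting rank-one map vanishes. Thus the ``combinatorial crux'' you isolate is a red herring created by the incorrect model of $V_i^{(J)}$.
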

\begin{proof}
First suppose $X_{\varnothing,J,w}\subset\cN$. Then in particular $\dot{w}e_{\varnothing,J}\in\cN$. Recall that the idempotent $e_{\varnothing,J}$ acts as projector to highest weight space in the representation $V_{\omega_i}$ if $i\in J$ and acts by $0$ if $i\notin J$. If there exists $j\in J$ but $j\notin\mathrm{Supp}(w)$, then $\rho_{\omega_j}(\dot{w})$ preserves the highest weight space in $V_{\omega_j}$ and hence $\mathrm{Tr}(\rho_{\omega_j}(\dot{w}e_{\varnothing,J}))\ne0$, contradiction the assumption that $\dot{w}e_{\varnothing,J}\in\cN$.\par 
Conversely suppose that $J\subset\mathrm{Supp}(w)$. Let $x=t\dot{w}e_{\varnothing,J}$ where $t\in Z(L_{J^c})\subset T$. Then $\rho_{\omega_i}(x)=0$ if $i\notin J$. If $i\in J$, so $i\in\mathrm{Supp}(w)$, then by a standard result in root system we have $w(\omega_i)\ne\omega_i$ (see, for example \cite[Lemma 3.5]{HT}). Thus we have $\mathrm{Tr}(\rho_{\omega_i}(x))=0$ as $t\in T$ preserve the weight spaces and $\dot{w}$ maps the highest weight space into the weight space with weight $w(\omega_i)$. Thus $x\in\cN$.
\end{proof}
\begin{cor}\label{cor:N-strata}
\begin{enumerate}
\item[(a)] There is a stratification of $\cN$ into $\mathrm{Ad}(G)$-stable pieces
\[\cN=\bigsqcup_{J\subset\Delta}\bigsqcup_{\substack{w\in{}^{J^c}W^{J^c}\\ \mathrm{Supp}(w)\supset J}}X_{\varnothing,J,w}.\]
\item[(b)] $\cN^0=\bigsqcup\limits_{w\in W}X_{\varnothing,\Delta,w}$.
\item[(c)] For each $w\in\mathrm{Cox}(W,S)$(cf. Definition~\ref{coxeter-definition}), $X_{\varnothing,\Delta,w}$ is a single $\mathrm{Ad}(G)$-orbit and
$\cN^{\mathrm{reg}}=\bigsqcup\limits_{w\in\mathrm{Cox}(W,S)}X_{\varnothing,\Delta,w}$. In particular $\cN^{\mathrm{reg}}\subset\cN^0$.
\item[(d)] $\dim\cN=\dim\cN^{\mathrm{reg}}=\dim G-r$ and the dimension of the complement $\cN\setminus\cN^{\mathrm{reg}}$ is strictly less than $\dim\cN$. 
\end{enumerate}
\end{cor}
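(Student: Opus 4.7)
The plan is to deduce all four statements from Lemma~\ref{lem:N-strata} combined with the orbit decomposition recalled earlier. For part (a), I would first reduce to pairs with $I=\varnothing$: the idempotent $e_{I,J}$ lies in $\overline{T_+}$ and, for any $i\in I$, its $i$-th $\alpha^+$-coordinate is nonzero; since $\alpha^+$ factors through $\chi_+$, the orbit $O_{I,J}$ cannot meet $\chi_+^{-1}(0)=\cN$ unless $I=\varnothing$. Within each $O_{\varnothing,J}$, the map $\pi_{\varnothing,J}$ combined with the Bruhat decomposition of $G/P_{J^c}\times G/P_{J^c}^-$ indexes the $\mathrm{Ad}(G)$-stable pieces by $w\in{}^{J^c}W^{J^c}$, and Lemma~\ref{lem:N-strata} extracts exactly those with $\mathrm{Supp}(w)\supset J$.

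Part (b) is a direct consequence: for $x\in X_{\varnothing,J,w}$ the operator $\rho_{\omega_i}(x)$ factors through $\rho_{\omega_i}(e_{\varnothing,J})$, which is nonzero precisely when $i\in J$; hence $X_{\varnothing,J,w}\subset\ving^0$ iff $J=\Delta$. For part (c), taking $J=\Delta$ forces $J^c=\varnothing$, $L_{J^c}=T$ and ${}^\varnothing W^\varnothing=W$. I would compute directly that $\mathrm{Ad}(s)(\dot w\, e_{\varnothing,\Delta}) = \bigl(s\cdot w(s)^{-1}\bigr)\dot w\, e_{\varnothing,\Delta}$ for $s\in T$, exploiting that $e_{\varnothing,\Delta}$ commutes with $T$. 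The endomorphism $\phi_w\colon s\mapsto s\cdot w(s)^{-1}$ of $T$ is surjective precisely when $1-w$ acts invertibly on $X_*(T)\otimes\bQ$, which under the constraint $\mathrm{Supp}(w)=\Delta$ singles out the $S$-Coxeter elements. For such $w$, the $T$-action shows $T\cdot\dot w\, e_{\varnothing,\Delta}$ lies in a single $\mathrm{Ad}(G)$-orbit, so $X_{\varnothing,\Delta,w}$ equals that orbit, of dimension $\dim G-l(w)=\dim G-r$, and its stabilizer is thus $r$-dimensional, yielding regularity. For non-Coxeter $w$ with full support one has $l(w)>r$ and so every point has stabilizer of dimension $\ge l(w)>r$, excluding such strata from $\ving^{\mathrm{reg}}$.

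For part (d), I would invoke \eqref{eq:dim-X-empty-J-w} to write $\dim X_{\varnothing,J,w}=\dim G-\dim L_{J^c}-l(w)+|J|$. When $J=\Delta$ this becomes $\dim G-l(w)$, maximized at Coxeter elements with value $\dim G-r$. When $J\subsetneq\Delta$, substituting $\dim L_{J^c}=r+2N_{J^c}$ with $N_{J^c}$ the number of positive roots of $L_{J^c}$, and using $l(w)\ge|J|$ from the support condition $\mathrm{Supp}(w)\supset J$, one obtains $\dim X_{\varnothing,J,w}\le\dim G-r-2N_{J^c}<\dim G-r$ since $N_{J^c}\ge 1$ whenever $J^c\ne\varnothing$. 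This yields both the equalities $\dim\cN=\dim\cN^{\mathrm{reg}}=\dim G-r$ and strict inequality of the complement.

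The main obstacle in this plan is the identification in part (c) between Coxeter elements and the surjectivity of $\phi_w$; this hinges on the classical eigenvalue property that a Coxeter element acts without fixed vectors on the reflection representation of each irreducible factor of $W$, and the argument must be carried out factor by factor when $G_{\mathrm{der}}$ is not simple. A secondary subtlety is ensuring that the $\mathrm{Ad}(G)$-stable piece $X_{\varnothing,\Delta,w}$ for Coxeter $w$ really consists of regular semisimple-like elements of the monoid, which follows from the orbit/stabilizer count above but deserves to be checked against the upper semicontinuity of centralizer dimension used to define $\ving^{\mathrm{reg}}$.
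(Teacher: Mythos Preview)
Your proof is essentially the same as the paper's, and the core arguments are correct. The paper likewise derives (a)--(b) from Lemma~\ref{lem:N-strata}, uses the dimension formula \eqref{eq:dim-X-empty-J-w} together with $l(w)\ge|J|$ for (d), and proves the single-orbit claim in (c) via Steinberg's observation that for an $S$-Coxeter element the map $s\mapsto s\cdot w(s)^{-1}$ on $T$ is surjective (cited there as \cite[Lemma 7.6]{St65}).

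One inaccuracy to flag: your assertion that, among $w$ with $\mathrm{Supp}(w)=\Delta$, surjectivity of $\phi_w$ (equivalently invertibility of $1-w$ on $X_*(T)\otimes\bQ$) \emph{singles out} the $S$-Coxeter elements is false. For instance in type $B_2$ the longest element $w_0=-1$ has full support, $1-w_0=2\cdot\mathrm{Id}$ invertible, yet $l(w_0)=4>2$ so $w_0$ is not $S$-Coxeter. Fortunately your proof does not actually rely on the converse: you only use the forward implication (Coxeter $\Rightarrow$ $1-w$ invertible) to get the single orbit, and you correctly exclude non-Coxeter full-support $w$ from $\cN^{\mathrm{reg}}$ by the dimension argument ($l(w)>r$ forces every $\mathrm{Ad}(G)$-orbit in $X_{\varnothing,\Delta,w}$ to have dimension $<\dim G-r$, hence stabilizer dimension $>r$). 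So simply drop the ``singles out'' claim and the proof stands.
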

\begin{proof}
Part (a) and (b) are immediate from Lemma~\ref{lem:N-strata}.
For each strata $X_{\varnothing,J,w}\subset\cN$ as in Lemma~\ref{lem:N-strata}, we have $l(w)\ge |J|$ since $J\subset\mathrm{Supp}(w)$. From \eqref{eq:dim-X-empty-J-w} we see that 
\[\dim X_{\varnothing,J,w}\ge\dim G-\dim L_{J^c}\ge\dim G-r\] 
and equality is reached precisely when $J=\Delta$ and $l(w)=r$. This condition means that $w\in\mathrm{Cox}(W,S)$. Hence part (d) follows from part (c).\par 
It remains to show that for each $w\in\mathrm{Cox}(W,S)$, $X_{\varnothing,\Delta,w}$ is a single $\mathrm{Ad}(G)$-orbit. By \eqref{eq:X-empty-J-w}, we have
\[X_{\varnothing,\Delta,w}=\mathrm{Ad}(G)(Twe_{\varnothing,\Delta}).\]
So it suffices to show that for each $t\in T$, the elements $t\dot{w}e_{\varnothing,\Delta}$ and $\dot{w}e_{\varnothing,\Delta}$ are conjugate. Since $w$ is a Coxeter element, by \cite[Lemma 7.6]{St65} there exists $s\in T$ such that $t=s^{-1}\dot{w}s\dot{w}^{-1}$. This implies that $s^{-1}\dot{w}e_{\varnothing,\Delta}s=t\dot{w}e_{\varnothing,\Delta}$ since $s,t\in T$ and hence commute with $e_{\varnothing,\Delta}$.
\end{proof}
\begin{rem}
Another way to show that $X_{\varnothing,\Delta,w}$ consists of a single $\mathrm{Ad}(G)$-orbit is to show that the centralizer of $we_{\varnothing,\Delta}$ in $G$ has dimension $r$, i.e. $we_{\varnothing,\Delta}\in\cN^{\mathrm{reg}}$. For then the $\mathrm{Ad}(G)$-orbit of $we_{\varnothing,\Delta}$ is contained in the irreducible set $X_{\varnothing,\Delta,w}$ and 
has the same dimension, thus equals to $X_{\varnothing,\Delta,w}$. 
\end{rem}
\begin{cor}\label{cor:chi-flat}
The morphism $\chi_+:\ving\to\fC_+$ is flat.
\end{cor}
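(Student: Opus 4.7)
The strategy is the miracle flatness criterion: a morphism $f:X\to Y$ from a Cohen--Macaulay scheme to a regular scheme is flat provided that at every point $x\in X$ the local fiber dimension $\dim_x f^{-1}(f(x))$ equals $\dim X - \dim Y$. In our setting $\fC_+\cong\bA^{2r}$ is smooth, $\ving$ is irreducible of dimension $\dim G+r$ (since the unit group $G_+=(T\times G)/Z$ is dense open in $\ving$ and $Z$ is finite), and the expected fiber dimension is $\dim G-r$. The Cohen--Macaulayness of $\ving$ can be invoked from the general theory of normal reductive monoids, and is anyway a hypothesis the author has presumably verified earlier in the monoid section.

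The key step is to control fiber dimensions at all points, for which I would exploit a contracting $\bG_m$-action on $\ving$. Choose a strictly dominant cocharacter $\nu:\bG_m\to T\cong Z_+$ and let $\bG_m$ act on $\ving$ by $z\cdot\ga:=\nu(z)\ga$. Since $\nu(z)$ is central in $G_+$, this commutes with the adjoint $G$-action and therefore descends along $\chi_+$ to an action on $\fC_+$. Under the identification $\fC_+\cong\bA^{2r}$ via the coordinates $\alpha_i^+$ and $\mathrm{Tr}(\rho_i^+)$, the induced action has weights $\langle\alpha_i,\nu\rangle$ and $\langle\omega_i,\nu\rangle$ respectively, all strictly positive, so it contracts $\fC_+$ to the origin as $z\to 0$.

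Now the function $x\mapsto\dim_x\chi_+^{-1}(\chi_+(x))$ is upper semi-continuous on $\ving$ and is constant on $\bG_m$-orbits, so its value at any $x$ is bounded above by its value at a $z\to 0$ limit point, which lies in the central fiber $\cN=\chi_+^{-1}(0)$. By Corollary~\ref{cor:N-strata}(d), $\cN$ has dimension $\dim G-r$ with $\cN^{\mathrm{reg}}$ open dense; granting that $\cN=\overline{\cN^{\mathrm{reg}}}$ so $\cN$ is equidimensional, the local fiber dimension on $\cN$ is constantly $\dim G-r$. The reverse inequality $\dim_x\chi_+^{-1}(\chi_+(x))\ge\dim\ving-\dim\fC_+=\dim G-r$ holds at every point by the standard semi-continuity estimate for dominant morphisms between irreducible varieties, $\chi_+$ being surjective as a categorical quotient by a reductive group. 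Miracle flatness then concludes.

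The two technical points that need tightening are (i) Cohen--Macaulayness of $\ving$ and (ii) equidimensionality of $\cN$, i.e.\ $\cN=\overline{\cN^{\mathrm{reg}}}$. The second should follow from the closure relations on the stratification in Corollary~\ref{cor:N-strata}, showing that every stratum $X_{\varnothing,J,w}$ with $J\subsetneq\Delta$ or $l(w)>r$ is contained in the closure of some Coxeter stratum $X_{\varnothing,\Delta,w'}$; this is the expected monoid analogue of the classical fact that the nilpotent cone in $\fg$ is the closure of the regular nilpotent orbit. Granting these, the contracting $\bG_m$-action provides the clean conceptual bridge from the central fiber, explicitly understood via Corollary~\ref{cor:N-strata}, to arbitrary fibers of $\chi_+$.
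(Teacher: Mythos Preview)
Your approach is essentially the paper's: miracle flatness, with constant fiber dimension deduced from the contracting torus action together with $\dim\cN=\dim G-r$. Two remarks on your flagged gaps. For (i), the paper simply cites \cite[6.2.9]{BK} for the Cohen--Macaulayness of $\ving$. For (ii), the paper sidesteps it entirely by running the semicontinuity argument on the base $\fC_+$ rather than on $\ving$: the locus $U\subset\fC_+$ where $\dim\chi_+^{-1}(c)=\dim G-r$ is open and $Z_+$-stable, and Corollary~\ref{cor:N-strata}(d) gives $0\in U$, forcing $U=\fC_+$. This uses only the global dimension of $\cN$, not its equidimensionality---which the paper in fact deduces \emph{from} flatness, in Proposition~\ref{prop:N-components}. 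Your version on $\ving$ can also be salvaged without equidimensionality, since the strictly dominant cocharacter you chose contracts all of $\ving$ to the single point $0$, so the only limit you need is $\dim_0\cN=\dim\cN$; but the paper's framing on the base is cleaner and avoids the detour.
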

\begin{proof}
There exists a nonempty open subset $U\subset\fC_+$ such that the fibres of $\chi_+$ over $U$ have dimension $\dim\ving-\dim\fC_+=\dim G-r$. Since $\chi_+$ is $Z_+$ equivariant, $U$ is $Z_+$-stable. By Corollary~\ref{cor:N-strata}(d) we know that $0\in U$ and hence we have $U=\fC_+$.  By \cite[6.2.9]{BK}, $\ving$ is Cohen-Macaulay. Moreover, $\fC_+\cong\bA^{2r}$ is regular and hence $\chi_+$ is flat.
\end{proof}

\begin{cor}\label{cor:vin-reg-in-v0}
$\ving^{\mathrm{reg}}\subset\ving^0$.
\end{cor}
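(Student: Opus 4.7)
The plan is to deduce this from the nilpotent-fiber case, Corollary~\ref{cor:N-strata}(c), by combining the flatness of $\chi_+$ (Corollary~\ref{cor:chi-flat}) with the natural $Z_+$-equivariance of $\chi_+$. First I would record that the closed complement $Z:=\ving\setminus\ving^0$, as well as $\ving^0$ and $\ving^{\mathrm{reg}}$, are stable under both $Z_+$ and $\mathrm{Ad}(G)$: $\ving^0$ is cut out by the non-vanishing of the $Z_+$-equivariant maps $\rho_i^+$, on which $Z_+$ acts by non-zero scalars $\omega_i(z)$; and $\ving^{\mathrm{reg}}$ is stable because $Z_+$ is central and so preserves adjoint centralizers. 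The target then becomes $Z\cap\ving^{\mathrm{reg}}=\varnothing$.

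The main technical step is the claim that $\dim(Z\cap\ving_c)<\dim\ving_c=\dim G-r$ for every $c\in\fC_+$, the fiber dimension coming from flatness. For $c=0$ this is exactly Corollary~\ref{cor:N-strata}(c),(d): since $\cN^{\mathrm{reg}}\subset\cN^0$, we have $Z\cap\cN\subset\cN\setminus\cN^{\mathrm{reg}}$, whose dimension is strictly smaller than $\dim\cN$. To propagate to every $c$, I would consider
\[S:=\{c\in\fC_+:\dim(Z\cap\ving_c)\ge\dim G-r\}.\]
Upper semi-continuity of fiber dimension makes $S$ closed, and $Z_+$-equivariance of $\chi_+$ together with $Z_+$-stability of $Z$ makes $S$ invariant under $Z_+$; the nilpotent case shows $0\notin S$.

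The decisive input, and the part I expect to require the most care, is then a contraction lemma asserting that any open $Z_+$-stable subset $U\subset\fC_+$ containing $0$ is all of $\fC_+$. Here I would use $\mu=2\rho^\vee$ viewed as a cocharacter of $Z_+$ and check that all coordinates on $\fC_+\cong\bA^{2r}$, namely the $\alpha_i^+$ and the $\mathrm{Tr}\,\rho_i^+$, carry strictly positive weight under $\mu$, so that $\lim_{t\to 0}\mu(t)c=0$ for every $c\in\fC_+$. Openness of $U$ around $0$ then gives $\mu(t)c\in U$ for $t$ near $0$, and $Z_+$-stability forces the whole orbit $\mu(\bG_m)c$, and hence $c$ itself, into $U$. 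Applying this to $U=\fC_+\setminus S$ yields $S=\varnothing$.

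With the fiberwise dimension estimate in hand the conclusion is immediate: for each $c$ the open set $\ving^0\cap\ving_c$ is dense in the equidimensional fiber $\ving_c$, so it meets every irreducible component. Given $\gamma\in\ving^{\mathrm{reg}}$ and $c=\chi_+(\gamma)$, the orbit $\mathrm{Ad}(G)\gamma$ is irreducible of dimension $\dim G-r=\dim\ving_c$ and is therefore an irreducible component of $\ving_c$; this component meets $\ving^0$, and since $\ving^0$ is $\mathrm{Ad}(G)$-stable the entire orbit, and in particular $\gamma$, lies in $\ving^0$.
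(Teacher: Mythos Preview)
Your argument is correct and follows the same strategy as the paper: reduce to the nilpotent fiber via the $Z_+$-contraction on $\fC_+$, using Corollary~\ref{cor:N-strata}(c) as the base case. The paper compresses this into two lines --- it simply notes that $F:=\ving^{\mathrm{reg}}\setminus\ving^0$ is $Z_+$-stable, closed in $\ving^{\mathrm{reg}}$, and disjoint from $\cN$, and declares $F=\varnothing$ --- whereas you route the contraction through the fiberwise dimension estimate $\dim(Z\cap\ving_c)<\dim G-r$ and then conclude via the orbit argument. Both are the same idea; yours is more explicit.

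One small point worth tightening: your claim that $S=\{c:\dim(Z\cap\ving_c)\ge\dim G-r\}$ is closed ``by upper semi-continuity of fiber dimension'' is not quite automatic, since semi-continuity on the \emph{base} can fail for non-proper morphisms. The clean fix is to use Chevalley's theorem on the source together with the fact that $\chi_+$ is the GIT quotient by the reductive group $G$: the locus $W=\{z\in Z:\dim_z(Z\cap\ving_{\chi_+(z)})\ge\dim G-r\}$ is closed in $Z$ and $\mathrm{Ad}(G)$-stable, hence closed $G$-stable in $\ving$, so its image $\chi_+(W)=S$ is closed in $\fC_+$. With that in place your contraction lemma applies exactly as you wrote, and the final orbit/density step is fine (strictly speaking it is the \emph{closure} of $\mathrm{Ad}(G)\gamma$ that is an irreducible component, but your two-opens-in-an-irreducible argument handles this).
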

\begin{proof}
Let $F:=\ving^{\mathrm{reg}}\setminus\ving^0$. By Corollary~\ref{cor:N-strata}(c), we have $\cN^{\mathrm{reg}}\subset\cN^0$ and hence $F\cap\cN=\varnothing$. On the other hand, $F$ is a $Z_+$-stable closed subset of $\ving^{\mathrm{reg}}$, so we must have $F=\varnothing$. 
\end{proof}

\begin{prop}\label{prop:N-components}
The nilpotent cone $\cN$ is connected and equidimensional. Moreover, there exist bijections 
\[\mathrm{Cox}(W,S)\xrightarrow{\sim}\mathrm{Irr}(\cN^{\mathrm{reg}})\xrightarrow{\sim}\mathrm{Irr}(\cN^0)\xrightarrow{\sim}\mathrm{Irr}(\cN)\]
which send $w\in\mathrm{Cox}(W,S)$ to the irreducible component containing $\dot{w}e_{\varnothing,\Delta}$. 
\end{prop}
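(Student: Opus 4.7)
The plan is to derive all four assertions from the stratification results in Corollary~\ref{cor:N-strata}, the flatness of $\chi_+$ in Corollary~\ref{cor:chi-flat}, and a contraction of $\ving$ onto a single point via a $Z_+$-cocharacter. The connectedness statement is the only part that requires a genuinely new geometric input; once $\cN$ is known to be equidimensional of dimension $\dim G - r$, the stated bijections between $\mathrm{Irr}(\cN^{\mathrm{reg}})$, $\mathrm{Irr}(\cN^0)$ and $\mathrm{Irr}(\cN)$ will follow formally from the fact that the complement of $\cN^{\mathrm{reg}}$ in $\cN$ has strictly smaller dimension.

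First, by Corollary~\ref{cor:N-strata}(c) the open subset $\cN^{\mathrm{reg}}$ is a finite disjoint union of $\mathrm{Ad}(G)$-orbits $X_{\varnothing,\Delta,w}$, each irreducible of dimension $\dim G - r = \dim\cN^{\mathrm{reg}}$ and hence open in $\cN^{\mathrm{reg}}$; these are therefore the irreducible components of $\cN^{\mathrm{reg}}$ and correspond bijectively to $\mathrm{Cox}(W,S)$. For the equidimensionality of $\cN$ I would apply Corollary~\ref{cor:chi-flat}: the flatness of $\chi_+:\ving\to\fC_+$ combined with the fact that $\ving$ is integral of dimension $\dim G + r$ and $\fC_+\cong\bA^{2r}$ is regular implies that every fibre of $\chi_+$, in particular $\cN = \chi_+^{-1}(0)$, is equidimensional of dimension $\dim G - r$. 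Together with Corollary~\ref{cor:N-strata}(d), this forces every irreducible component of $\cN$ to meet $\cN^{\mathrm{reg}}$ in an open dense irreducible subset, which then must coincide with a unique irreducible component of $\cN^{\mathrm{reg}}$; hence $Z\mapsto Z\cap\cN^{\mathrm{reg}}$ yields a bijection $\mathrm{Irr}(\cN)\xrightarrow{\sim}\mathrm{Irr}(\cN^{\mathrm{reg}})$. Since $\cN^{\mathrm{reg}}\subset\cN^0$ by Corollary~\ref{cor:vin-reg-in-v0}, applying the same reasoning to $\cN^0$ in place of $\cN$ produces the remaining bijection $\mathrm{Irr}(\cN^0)\xrightarrow{\sim}\mathrm{Irr}(\cN^{\mathrm{reg}})$.

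For the connectedness of $\cN$, I would use a contraction of $\ving$ via the $Z_+$-action. Fix a cocharacter $\la:\bG_m\to Z_+$ with $\langle\la,\alpha_i\rangle>0$ and $\langle\la,\omega_i\rangle>0$ for every $i$, and let $Z_+$ act on $\ving$ by left multiplication through $(z,1)\in G_+\times G_+$. A direct computation shows that $\la$ rescales the generator $\alpha_i^+$ by $t^{\langle\la,\alpha_i\rangle}$ and each matrix coefficient of $\rho_i^+$ by $t^{\langle\la,\omega_i\rangle}$, so $\la$ acts on the coordinate ring of the affine variety $\ving$ with strictly positive weights. Consequently there is a unique $\la$-fixed point $e_0\in\ving$, which lies in $\cN$ because every generator of $\chi_+$ vanishes at $e_0$, and for every $\ga\in\ving$ one has $\lim_{t\to 0}\la(t)\cdot\ga = e_0$. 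Since $Z_+$ commutes with $\mathrm{Ad}(G)$, each stratum $X_{\varnothing,\Delta,w}$ is $Z_+$-stable, so $e_0\in\overline{X_{\varnothing,\Delta,w}}$ for every $w\in\mathrm{Cox}(W,S)$; the union of these closures therefore shares the common point $e_0$ and $\cN$ is connected. The main obstacle is to arrange and verify the positivity of the $\la$-weights on the natural generators of the coordinate ring, so that the contraction lands in a single $Z_+$-fixed point of $\cN$; everything else is a routine combination of the inputs already established.
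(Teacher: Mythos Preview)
Your proof is correct. The equidimensionality, the description of $\mathrm{Irr}(\cN^{\mathrm{reg}})$, and the bijections with $\mathrm{Irr}(\cN^0)$ and $\mathrm{Irr}(\cN)$ are handled exactly as in the paper, by combining Corollary~\ref{cor:chi-flat} with Corollary~\ref{cor:N-strata}(c),(d).

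The only genuine difference is in the connectedness argument. The paper observes that $\chi_+:\ving\to\fC_+$ is the invariant-theoretic quotient by the reductive group $G$, so by standard GIT the fibre $\cN=\chi_+^{-1}(0)$ contains a \emph{unique} closed $\mathrm{Ad}(G)$-orbit (namely $\{0\}$), which therefore lies in the closure of every orbit; connectedness follows in one line. Your argument instead produces an explicit $\bG_m$-contraction of $\ving$ to $0$ via a regular dominant cocharacter of $Z_+$, and then checks that $0\in\cN$ lies in the closure of each stratum $X_{\varnothing,\Delta,w}$. This is a valid and more elementary route: it avoids invoking the general structure theory of affine GIT fibres, at the cost of writing down the weight computation (which you do correctly, since the $Z_+$-weights on $\alpha_i^+$ and on the matrix coefficients of $\rho_i^+$ are $\langle\la,\alpha_i\rangle$ and $\langle\la,\omega_i\rangle$ respectively, and a cocharacter in the interior of the dominant cone makes both strictly positive). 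The two arguments are ultimately expressing the same fact---that $0$ is the unique closed orbit in $\cN$---but the paper's version is shorter once one is willing to cite the GIT input, while yours is self-contained.
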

\begin{proof}
Since $\chi_+$ is flat, its fibre $\cN=\chi_+^{-1}(0)$ is equidimensional. Since $\chi_+$ is the invariant quotient under a reductive group, there is a unique closed orbit  in $\cN$, namely $0\in\cN$. In particular, $\cN$ is connected.\par 
From Corollary~\ref{cor:N-strata}, we see that $\cN^{\mathrm{reg}}$ is dense in  $\cN^0$ and $\cN$ and $\mathrm{Irr}(\cN^{\mathrm{reg}})$ is in bijection with $\mathrm{Cox}(W,S)$. Hence $\mathrm{Irr}(\cN^0)$ and $\mathrm{Irr}(\cN)$ are also in bijection with $\mathrm{Cox}(W,S)$.
\end{proof}
\begin{rem}
It is worthwhile to compare with the Lie algebra case. The fibers of the Chevalley map $\fg\to\fc$ are irreducible and $\fg^{\mathrm{reg}}$ is the union of the unique open $G$-orbit in each fiber. 
\end{rem}

\subsubsection{Discriminant divisor}\label{sec:discr-divisor}
Recall that on $T$ we have the discriminant function
\[\mathrm{Disc}(t):=\prod_{\alpha\in\Phi}(1-\alpha(t))\]
which is $W$-equivariant and descends to a regular function on the Steinberg base $\fC:=T//W$. We extend the function $\mathrm{Disc}$ to a function $\mathrm{Disc}_+$ on $T_+=(T\times T)/Z_G$ by 
\[\mathrm{Disc}_+(t_1,t_2):=2\rho(t_1)\mathrm{Disc}(t_2).\]
Then $\mathrm{Disc}_+$ extends to a regular function on $\overline{T_+}$, which further descends to a regular function on $\fC_+$. The vanishin loci of $\mathrm{Disc}_+$ is a principal divisor on $\fC_+$ which we call \emph{extended discriminant divisor} and denote by $\fD_+$.\par 
From the definition, we see that $\mathrm{Disc}_+$ is an eigenfunction for the $Z_+$-action on $\overline{T_+}$ and $\fC_+$, with eigen-value $2\rho$. Hence the subschemes $\fD_+$ is $Z_+$-invariant.\par
For $t_+=(t,t^{-1})\in T_{\mathrm{diag}}\subset T_+$, we have
\begin{equation}
\begin{split}
D_+(t_+)&=2\rho(t)\prod_{\alpha\in\Phi_+}(1-\alpha(t))(1-\alpha(t^{-1}))\\
&=(-1)^{|\Phi_+|}\prod_{\alpha\in\Phi_+}(1-\alpha(t))^2
\end{split}
\end{equation}
For each $\alpha\in\Phi_+$, $D_\alpha:=(1-\alpha(t))^2$ extends to a polynomial function on $\overline{T_{\mathrm{diag}}}\cong\bA^r$.

\subsubsection{Adjoint orbits in extended Steinberg fibre}
An element $\ga\in\ving$ is called \emph{semisimple} if it is $G$-conjugate to an element in $\overline{T_+}$. Let $\ving^{\mathrm{rs}}$ be the subset of $\ving$ consisting of elements that are \emph{both regular and semisimple}. 
\begin{lem}\label{lem:centralizer-semisimple-Levi}
The centralizer of any semisimple element $\ga\in\ving$ in $G$ is a Levi subgroup of $G$. 
\end{lem}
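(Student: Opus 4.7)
The plan is to first reduce to $\ga \in \overline{T_+}$ by conjugation, then argue that $T \subseteq G_\ga$, and finally identify the root system of $G_\ga$ via the defining representations of $\ving$. By the definition of semisimple, after replacing $\ga$ by a $G$-conjugate I may assume $\ga \in \overline{T_+}$. Since $T$ embeds as a subtorus of $T_+ = (T\times T)/Z$ via $t \mapsto [1,t]$ and $T_+$ is commutative, $T$ commutes with every element of $T_+$ inside $\ving$; by density this commutation extends to all of $\overline{T_+}$, whence $T \subseteq G_\ga$.

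Because $T \subseteq G_\ga$, the centralizer is normalized by $T$ and its Lie algebra decomposes as $\mathfrak{g}_\ga = \mathfrak{t} \oplus \bigoplus_{\alpha \in \Phi_\ga}\mathfrak{g}_\alpha$ for some closed subset $\Phi_\ga \subseteq \Phi$. To determine $\Phi_\ga$, I use the $G$-equivariant closed immersion $\ving \hookrightarrow \bA^r \times \prod_{i=1}^r\mathrm{End}(V_{\omega_i})$ defining the Vinberg monoid, where $G$ acts on the target by conjugation through the $\rho_{\omega_i}$. For $\ga \in \overline{T_+}$, each operator $\rho_i^+(\ga)$ is diagonal in the $T$-weight basis of $V_{\omega_i}$, acting on the $\mu$-weight space by the scalar $\mu^+(\ga)$, where $\mu^+ \in X^*(T_+)$ is the character $(t_1,t_2) \mapsto \omega_i(t_1)\mu(t_2)$, extended as a regular function on $\overline{T_+}$. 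Computing the commutator $[\rho_{\omega_i}(e_\alpha), \rho_i^+(\ga)]$ on each weight line then shows that $\mathfrak{g}_\alpha \subseteq \mathfrak{g}_\ga$ if and only if $\mu^+(\ga) = (\mu+\alpha)^+(\ga)$ for every pair $(i,\mu)$ for which $\rho_{\omega_i}(e_\alpha)\colon V_{\omega_i}[\mu] \to V_{\omega_i}[\mu+\alpha]$ is nonzero.

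The symmetry $\Phi_\ga = -\Phi_\ga$ then follows from standard $\mathfrak{sl}_2$-representation theory applied to the copy of $\mathfrak{sl}_2$ spanned by $e_\alpha, e_{-\alpha}, h_\alpha$ acting on $V_{\omega_i}$: the $\alpha$-string decomposition implies that the rank of $\rho_{\omega_i}(e_\alpha)\colon V_{\omega_i}[\mu] \to V_{\omega_i}[\mu+\alpha]$ equals the rank of $\rho_{\omega_i}(e_{-\alpha})\colon V_{\omega_i}[\mu+\alpha] \to V_{\omega_i}[\mu]$, so the above criterion is invariant under $\alpha \leftrightarrow -\alpha$. Together with the closedness inherent in $\Phi_\ga$, this identifies $G_\ga^0$ with the connected reductive subgroup $\langle T, U_\alpha : \alpha \in \Phi_\ga\rangle$ of $G$, which has full rank. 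The step I expect to be the most delicate is the final verification that $\Phi_\ga$ is a genuine Levi subsystem of $\Phi$; for this I would combine the vanishing pattern of the characters $\mu^+$ at $\ga$ with the orbit/idempotent stratification of $\overline{T_+}$, writing $\ga = t \cdot e_{I,J}$ for a suitable $t \in T_+$ and idempotent $e_{I,J}$, and then applying the preceding description of $\mathrm{Stab}(e_{I,J})$ to identify $G_\ga$ explicitly with the intersection of the diagonal $G \subset G_+ \times G_+$ with the Levi $L_+(M)$ appearing there.
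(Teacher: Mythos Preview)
Your weight-space analysis establishes only that $G_\ga^0$ is reductive of full rank (symmetric closed root subsystem), which is strictly weaker than being a Levi subgroup: pseudo-Levi subgroups such as the copy of $\mathrm{SL}_2\times\mathrm{SL}_2$ inside $\mathrm{Sp}_4$ are reductive of full rank but are not Levi. So steps (1)--(4) of your argument, while correct, do not prove the lemma; the entire content lies in the ``delicate'' final step you only sketch. Moreover, your argument only addresses $G_\ga^0$, whereas the statement concerns $G_\ga$ itself.

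The paper's proof is exactly the approach you defer to at the end, carried out directly and without the preliminary Lie-algebra discussion. Write $\ga = t e_{I,J}$ with $t\in T_+$; then $g\ga g^{-1}=\ga$ is equivalent to $(t^{-1}gt)\,e_{I,J}\,g^{-1}=e_{I,J}$, i.e.\ the pair $(t^{-1}gt,g)$ lies in the stabilizer of $e_{I,J}$ described in the preceding lemma on $G_+\times G_+$-orbits. The first condition $(t^{-1}gt,g)\in P_+(M)\times P_+(M)^-$ together with $t\in L_+(M)$ forces $g\in L_+(M)$; the second condition, combined with the orthogonality of the root subsystems on $I\cap J^0$ and $J^c$, then identifies $(G_+)_\ga$ with the subgroup of $L_+(M)$ generated by $T_+$, $L_{J^c}$, and the centralizer of $t$ in $L_{I\cap J^0}$. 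This is visibly a Levi subgroup of $G_+$, hence $G_\ga=(G_+)_\ga\cap G$ is a Levi of $G$, and connectedness comes for free. Your initial steps are not needed once you invoke the idempotent decomposition and the stabilizer description.
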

\begin{proof}
We may assume that $\ga\in\overline{T_+}$ so that $\ga=te_{I,J}$ for some $t\in T_+$ and idempotent $e_{I,J}$.\par 
For any $g\in G_+$, we have $g\ga g^{-1}=\ga$ if and only if 
\[t^{-1}gt e_{I,J}g^{-1}=e_{I,J}.\]
By the description of the stabilizer of $e_{I,J}$ under the action of $G_+\times G_+$, we see that $g\in (G_+)_\ga$ if and only if  the following 2 conditions are satisfied:
\begin{itemize}
\item $(t^{-1}gt,g)\in P_M\times P_M^-$;
\item $\delta(t^{-1}gt)\delta_-(g)^{-1}\in (L_{J^c})_{\mathrm{der}}T_{I,J}$.
\end{itemize}
Here $M:=I\cap J^0\sqcup J^c$. Since $t\in L_M$, the first condition implies that $g\in L_M$. Since the roots in $I\cap J^0$ and $J^c$ are orthogonal to each other, the second condition implies that $(G_+)_\ga$ is the subgroup of $L_M$ generated by $T_+$, $L_{J^c}$ and the centralizer of $t$ in $L_{I\cap J^0}$. This shows that $(G_+)_\ga$ is a Levi subgroup of $G_+$ and hence $G_\ga$ is a Levi subgroup of $G$.
\end{proof}
\begin{lem}\label{lem:orbit-in-Steinberg-fibre}
For any closed point $c\in\fC_+$, the fibre $\chi_+^{-1}(c)$ is connected and equidimensional of dimension $\dim G-r$. 
The open $\mathrm{Ad}(G)$-orbits in $\chi_+^{-1}(c)$ are precisely the regular conjugacy classes in $\chi_+^{-1}(c)$. On the other hand, there is a \emph{unique} closed $\mathrm{Ad}(G)$-orbit in $\chi_+^{-1}(c)$ which is also the unique semisimple conjugacy class in $\chi_+^{-1}(c)$.
\end{lem}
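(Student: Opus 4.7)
The strategy is to reduce each of the four assertions to facts already available in the excerpt: the flatness of $\chi_+$ from Corollary~\ref{cor:chi-flat}, the Levi-centralizer statement of Lemma~\ref{lem:centralizer-semisimple-Levi}, and the identification $\fC_+\cong\overline{T_+}//W$ from the theorem in the preceding subsection.

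I would establish the dimension and connectedness together. Flatness of $\chi_+:\ving\to\fC_+$ with regular target $\fC_+\cong\bA^{2r}$ forces every fibre to be equidimensional of dimension $\dim\ving-\dim\fC_+=\dim G-r$. Because $\chi_+$ is the invariant quotient $\ving//\mathrm{Ad}(G)$ by a reductive group acting on an affine variety, each fibre $\chi_+^{-1}(c)$ contains a unique closed $\mathrm{Ad}(G)$-orbit, and this closed orbit lies in the closure of every $\mathrm{Ad}(G)$-orbit of the fibre. Since $\mathrm{Ad}(G)$ is connected, it preserves each irreducible component of $\chi_+^{-1}(c)$; each such component is then a closed $\mathrm{Ad}(G)$-invariant subset, so by the standard existence-of-minimal-orbit argument it contains the common closed orbit. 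This gives connectedness.

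Next I would identify the open orbits with the regular ones. Since $\ving^{\mathrm{reg}}$ is open in $\ving$, the intersection $U_c:=\ving^{\mathrm{reg}}\cap\chi_+^{-1}(c)$ is open in $\chi_+^{-1}(c)$. Every $\mathrm{Ad}(G)$-orbit contained in $U_c$ is irreducible of dimension $\dim G-r$, which equals $\dim U_c$; hence each such orbit is clopen in $U_c$, and therefore open in $\chi_+^{-1}(c)$. Conversely, an orbit open in $\chi_+^{-1}(c)$ has dimension equal to that of the fibre, namely $\dim G-r$, forcing $\dim G_\ga=r$, so $\ga$ is regular.

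Finally I would pin down the closed orbit. The identification $\fC_+\cong\overline{T_+}//W$ makes $\overline{T_+}\to\fC_+$ surjective, so some semisimple element $\ga_s\in\overline{T_+}$ lies in $\chi_+^{-1}(c)$. By Lemma~\ref{lem:centralizer-semisimple-Levi} the centralizer $G_{\ga_s}$ is a Levi subgroup, in particular reductive; by Matsushima's criterion the orbit $\mathrm{Ad}(G)\cdot\ga_s$ is then closed in the affine variety $\ving$, hence closed in $\chi_+^{-1}(c)$. By the uniqueness of the closed orbit proved in the first step, this is the only closed $\mathrm{Ad}(G)$-orbit in the fibre. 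Uniqueness of the semisimple class follows by conjugating any two semisimple elements of $\chi_+^{-1}(c)$ into $\overline{T_+}$, noting that they have equal image in $\overline{T_+}//W=\fC_+$ and therefore lie in the same $W$-orbit, which is realised inside $\ving$ by $N_G(T)$.

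The main technical point, as I see it, is the invocation of Matsushima's closed-orbit criterion (reductive stabiliser implies closed orbit) in the monoid setting; granting its standard formulation for actions of reductive groups on affine varieties, the argument is direct. A smaller point is confirming that each irreducible component of $\chi_+^{-1}(c)$ is $\mathrm{Ad}(G)$-stable, which uses the connectedness of $\mathrm{Ad}(G)$.
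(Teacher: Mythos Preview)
Your argument for equidimensionality, connectedness, and the identification of open orbits with regular classes is essentially the paper's argument and is correct.

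The gap is in your final step. Matsushima's criterion says that for a reductive group $G$ and a closed subgroup $H$, the quotient $G/H$ is affine if and only if $H$ is reductive; equivalently, a \emph{closed} orbit in an affine $G$-variety has reductive stabiliser. The direction you invoke, ``reductive stabiliser implies closed orbit'', is false in general: $\mathbb{G}_m$ acting on $\mathbb{A}^1$ by scaling has trivial (hence reductive) stabiliser at any nonzero point, yet the orbit $\mathbb{A}^1\setminus\{0\}$ is not closed. So Lemma~\ref{lem:centralizer-semisimple-Levi} alone does not yield closedness of the semisimple orbit.

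The paper handles this by citing \cite{Ren88}, where it is established for reductive monoids that the closed $\mathrm{Ad}(G)$-orbits are exactly the semisimple conjugacy classes; this is a genuine extension of the classical result for reductive groups and is not a formal consequence of Matsushima. Your separate argument for uniqueness of the semisimple class via the $W$-action on $\overline{T_+}$ is fine, so once you accept that semisimple orbits are closed (or, equivalently, that the unique closed orbit meets $\overline{T_+}$), the rest goes through. But that step needs an independent input, not Matsushima.
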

\begin{proof}
By Corollary~\ref{cor:chi-flat}, $\chi_+$ is flat. Hence $\chi_+^{-1}(c)$ is equidimensional of dimension $\dim G-r$. 
Since $\chi_+$ is the invariant quotient by the reductive group $G$, there is a unique closed orbit in $\chi_+^{-1}(c)$. This closed orbit is connected since $G$ is connected. Consequently $\chi_+^{-1}(c)$ is also connected.\par
The regular conjugacy classes in $\chi_+^{-1}(c)$ are locally closed subsets of the same dimension as $\chi_+^{-1}(c)$. Hence they are precisly the open $\mathrm{Ad}(G)$-orbits in $\chi_+^{-1}(c)$.\par 
Finally by \cite{Ren88}, closed $\mathrm{Ad}(G)$-orbits are precisely the semisimple conjugacy classes.
\end{proof}

Unlike the group case, there might be more than one regular conjugacy class in an extended Steinberg fibre $\chi_+^{-1}(c)$, as we see in Proposition~\ref{prop:N-components} for the nilpotent cone $\cN=\chi_+^{-1}(0)$. On the other hand, regular semisimple conjugacy classes are the only $\mathrm{Ad}(G)$ orbit in the extended Steinberg fibre they live in. We give another characterization of regular semisimple conjugacy classes using the discriminant function $\mathrm{Disc}_+$.
The following is a generalization of \cite[2.19]{Bou15}
\begin{prop}\label{prop:rs-disc-nonzero}
Denote $\overline{T_+}^{\mathrm{reg}}:=\overline{T_+}\cap\ving^{\mathrm{reg}}$. For any $\ga\in\overline{T_+}$, the following are equivalent:
\begin{enumerate}
\item $\ga\in\overline{T_+}^{\mathrm{reg}}$;
\item $\mathrm{Disc}_+(\ga)\ne 0$;
\item The map $q:\overline{T_+}\to\fC_+$ is \'etale at $\ga$;
\item $G_\ga=T$.
\end{enumerate}
\end{prop}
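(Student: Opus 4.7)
My plan is to establish the chain of equivalences $(4) \Leftrightarrow (1)$, $(3) \Leftrightarrow (4)$, and $(2) \Leftrightarrow (3)$, building on \lemref{lem:centralizer-semisimple-Levi}. The implication $(4) \Rightarrow (1)$ is trivial since $\dim T = r$. For $(1) \Rightarrow (4)$, I would observe that any $\ga \in \overline{T_+}$ is semisimple in the monoid sense (being in the closure of the torus $T_+$), so \lemref{lem:centralizer-semisimple-Levi} identifies $G_\ga$ as a Levi subgroup of $G$. Since $T$ lies in the abelian group $T_+$, the adjoint action of $T$ on $\overline{T_+}$ is trivial and hence $T \subseteq G_\ga$; a Levi subgroup containing $T$ of dimension $r$ must equal $T$.

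For $(3) \Leftrightarrow (4)$, the key step is to identify the stabilizer $W_\ga$ of $\ga$ under the $W$-action on $\overline{T_+}$ with the Weyl group $W_{G_\ga}$ of the Levi $G_\ga$ relative to $T$. The $W$-action on $\overline{T_+}$ is given by conjugation through representatives $n_w \in N_G(T)$, so $w \cdot \ga = \ga$ iff $n_w \in G_\ga$; this gives $W_\ga = (N_G(T) \cap G_\ga)/T = W_{G_\ga}$. Since $q: \overline{T_+} \to \fC_+$ is a finite $W$-Galois cover with smooth target $\fC_+ \cong \bA^{2r}$, standard theory of Galois covers says $q$ is étale at $\ga$ iff $W_\ga$ is trivial. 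Because the Levi $G_\ga$ is connected, $W_{G_\ga} = \{1\}$ forces $G_\ga = T$, completing $(3) \Leftrightarrow (4)$.

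For $(2) \Leftrightarrow (3)$, I plan to show $\fD_+$ coincides with the non-étale locus $\bigcup_{w \ne 1} \overline{T_+}^w$. Since $W_\ga = W_{G_\ga}$ is a Weyl group of a Levi, hence a parabolic reflection subgroup of $W$, it is non-trivial iff it contains some reflection $s_\alpha$, so the non-étale locus equals $\bigcup_{\alpha \in \Phi} \overline{T_+}^{s_\alpha}$. Both this union and $\fD_+$ are closed subsets of $\overline{T_+}$. On the open dense $T_+ \subset \overline{T_+}$ the formula $\mathrm{Disc}_+(t_1,t_2) = 2\rho(t_1)\mathrm{Disc}(t_2)$ shows they agree: $\mathrm{Disc}_+$ vanishes iff $\alpha(t_2) = 1$ for some root, which is exactly the condition that $s_\alpha$ fixes $(t_1, t_2)$. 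By density they coincide on all of $\overline{T_+}$.

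The main obstacle I expect is verifying the boundary behaviour at points $\ga \in \overline{T_+} \setminus T_+$. The identification $W_\ga = W_{G_\ga}$ at such $\ga$ is clean given \lemref{lem:centralizer-semisimple-Levi}, but requires tracking the $W$-action on the degenerate idempotents $e_{I,J}$. For the identification of $\fD_+$ with the union of reflection fixed loci, one must also check that $\mathrm{Disc}_+$ extended to $\overline{T_+}$ does not acquire extra boundary vanishing components beyond the closure of its $T_+$-locus; this follows from the explicit $W$-invariant construction in \S\ref{sec:discr-divisor} together with the fact that $\mathrm{Disc}_+$ is a $Z_+$-eigenfunction.
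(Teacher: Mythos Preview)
Your treatment of $(1)\Leftrightarrow(3)\Leftrightarrow(4)$ via the identification $W_\ga = W_{G_\ga}$ is correct and matches the paper's argument.

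The gap is in $(2)\Leftrightarrow(3)$. The sentence ``by density they coincide on all of $\overline{T_+}$'' is not valid: two closed subsets agreeing on a dense open need not be equal (e.g.\ $V(x)$ and $V(xy)$ in $\bA^2$ agree on $\{y\ne 0\}$). Concretely, you must rule out two separate boundary phenomena on $\overline{T_+}\setminus T_+$: (i) that $\mathrm{Disc}_+$ vanishes identically on some boundary divisor, and (ii) that some reflection $s_\alpha$ fixes an entire boundary divisor. You mention (i), but the $Z_+$-eigenfunction property alone is insufficient, since the boundary divisors are themselves $Z_+$-stable; one needs an actual boundary point where $\mathrm{Disc}_+\ne 0$. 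You do not address (ii) at all.

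The paper handles this differently. For $(1)\Rightarrow(2)$ it uses Corollary~\ref{cor:vin-reg-in-v0} to reduce (after $Z_+$- and $W$-translation) to $\ga\in\overline{T_{\mathrm{diag}}}$, where the factorization $\mathrm{Disc}_+=\pm\prod_{\alpha\in\Phi_+} D_\alpha$ together with upper semicontinuity of centralizer dimension gives the implication directly. For $(2)\Rightarrow(1)$ the key inputs are: the idempotent $e_{\varnothing,\Delta}$ is regular (a direct computation of its centralizer) and lies in the closure of every codimension-$1$ boundary stratum $\cO_i$, forcing the complement $F:=V\setminus\overline{T_+}^{\mathrm{reg}}$ to have codimension $\ge 2$ in $\overline{T_+}$; then purity of the branch locus for $q$ (the non-\'etale locus is pure of codimension $1$, since $\fC_+$ is regular) forces $F=\varnothing$. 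The regularity of $e_{\varnothing,\Delta}$ is precisely the missing boundary input you would need to repair your density argument, and purity of branch locus is what replaces a direct analysis of the reflection fixed-point loci on the boundary.
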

\begin{proof}
(1)$\Rightarrow$(2): Suppose $\ga\in\overline{T_+}^{\mathrm{reg}}$. By Corollary~\ref{cor:vin-reg-in-v0}, we have $\ga\in\ving^0\cap\overline{T_+}$. After conjugation and multiplying by the center $Z_+$, we may assume that $\ga\in\overline{T_{\mathrm{diag}}}$. If $\mathrm{Disc}_+(\ga)=0$, then there exists $\alpha\in\Phi_+$ such that $D_\alpha(\ga)=0$. This implies that $\ga$ lies in the closure of the diagonal embedding of $\ker(\alpha)$. Since the centralizers of elements in $\ker(\alpha)$ have dimension at least $r+1$, the same is true for $G_\ga$ by upper semicontinuity of centralizer dimension. This contradicts the assumption that $\ga$ is regular and we must have $\mathrm{Disc}_+(\ga)\ne0$.\par 
(1)$\Leftrightarrow$(3)$\Leftrightarrow$(4): Since $\fC_+=\overline{T_+}//W$, the finite cover $q:\overline{T_+}\to\fC_+$ is \'etale at $\ga$ if and only if the stabilizer of $\ga$ in $W$ is trivial, which is equivalent to the fact $G_\ga=T$ since $G_\ga$ is a standard Levi subgroup of $G$ by the proof of Lemma~\ref{lem:centralizer-semisimple-Levi}.\par 
(2)$\Rightarrow$(1): Let $V\subset\overline{T_+}$ be the open subset where $\mathrm{Disc}_+$ is nonzero and we need to show that $V=\overline{T_+}^{\mathrm{reg}}$. In the implication ``(1)$\Rightarrow$(2)" we proved that $\overline{T_+}^{\mathrm{reg}}\subset V$.\par 
Consider the stratification of $\overline{T_+}$ induced by the $\tad$-orbits on $A_G=\bA^r$. The open strata is $T_+$, the unit group of $\overline{T_+}$. The codimension $1$ stratas are described as follows: for each $1\le i\le r$, let $\cO_i$ be the codimension $1$ strata consisting of $x\in\overline{T_+} $ such that the $i$-th coordinate of $\alpha(x)$ vanishes and the other coordinates are nonzero. Consider the complement $F:=V\setminus\overline{T_+}^{\mathrm{reg}}$, which is a closed subset of $V$. It is a classical fact that $F\cap T_+=\varnothing$. Also, we have $e_{\varnothing,\Delta}\in\overline{T_+}^{\mathrm{reg}}$ by direct calculation of its centralizer. Hence $e_{\varnothing,\Delta}$ lies in the closure $\overline{\cO_i}$ for all $1\le i\le r$. This shows that the generic point of $\cO_i$ lies in $\overline{T_+}^{\mathrm{reg}}$ for all $i$, which implies that $F$ has codimension at least 2 in $\overline{T_+}$. But by the equivalence ``(1)$\Leftrightarrow$(3)" we just proved and purity of branch locus (see, for example \cite[\href{http://stacks.math.columbia.edu/tag/0BMB}{Tag 0BMB}]{stacks-project}), the complement $\overline{T_+}\setminus\overline{T_+}^{\mathrm{reg}}$ is pure of codimension 1 in $\overline{T_+}$. This forces $F$, an open subset of $\overline{T_+}\setminus\overline{T_+}^{\mathrm{reg}}$ to be empty and hence $V=\overline{T_+}^{\mathrm{reg}}$.
\end{proof}

\begin{cor}\label{cor:ving-rs}
$\ving^{\mathrm{rs}}=\chi_+^{-1}(\fC_+\setminus\fD_+)$. Moreover, $G$ acts transitively on each fibre of $\chi_+$ over $\fC_+\setminus\fD_+$. 
\end{cor}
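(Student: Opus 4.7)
The plan is to prove both assertions together by showing that for each $c\in\fC_+\setminus\fD_+$, the fibre $\chi_+^{-1}(c)$ is a single $\mathrm{Ad}(G)$-orbit consisting entirely of regular semisimple elements.

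First I would establish the inclusion $\ving^{\mathrm{rs}}\subseteq\chi_+^{-1}(\fC_+\setminus\fD_+)$. Given $\ga\in\ving^{\mathrm{rs}}$, after conjugation we may assume $\ga\in\overline{T_+}^{\mathrm{reg}}$. By Proposition~\ref{prop:rs-disc-nonzero} we have $\mathrm{Disc}_+(\ga)\ne 0$. Because $\mathrm{Disc}_+$ was defined on $\overline{T_+}$ as the pullback along $q$ of the function $\mathrm{Disc}_+$ on $\fC_+$, this gives $\mathrm{Disc}_+(\chi_+(\ga))\ne 0$, hence $\chi_+(\ga)\notin\fD_+$.

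For the reverse inclusion together with transitivity, fix $c\in\fC_+\setminus\fD_+$. By Lemma~\ref{lem:orbit-in-Steinberg-fibre}, the fibre $\chi_+^{-1}(c)$ is connected, equidimensional of dimension $\dim G-r$, and contains a \emph{unique} closed $\mathrm{Ad}(G)$-orbit, namely the semisimple conjugacy class $O_s$ in $\chi_+^{-1}(c)$. Pick any $\ga_s\in O_s$; after conjugation, we may assume $\ga_s\in\overline{T_+}$. Then $\mathrm{Disc}_+(\ga_s)$ equals (the pullback of) $\mathrm{Disc}_+(c)\ne 0$, so Proposition~\ref{prop:rs-disc-nonzero} gives $\ga_s\in\overline{T_+}^{\mathrm{reg}}$ and $G_{\ga_s}=T$.

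The key step is the upgrade to transitivity. Since $G_{\ga_s}=T$ has dimension $r$, the orbit $\mathrm{Ad}(G)\cdot\ga_s$ has dimension $\dim G-r$, matching the dimension of the equidimensional fibre $\chi_+^{-1}(c)$; hence $\mathrm{Ad}(G)\cdot\ga_s$ is open in $\chi_+^{-1}(c)$. On the other hand, as $\ga_s$ is semisimple, its $\mathrm{Ad}(G)$-orbit is closed in $\ving$ and a fortiori closed in $\chi_+^{-1}(c)$. A nonempty clopen subset of a connected scheme is everything, so $\mathrm{Ad}(G)\cdot\ga_s=\chi_+^{-1}(c)$. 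This simultaneously shows that every element of $\chi_+^{-1}(c)$ is regular semisimple and that $G$ acts transitively on the fibre. The main subtlety in the argument is precisely this clopen/connectedness step: without the connectedness of $\chi_+^{-1}(c)$ from Lemma~\ref{lem:orbit-in-Steinberg-fibre}, the existence of additional regular (non-semisimple) components seen in the nilpotent cone $\cN$ would in principle leave room for further orbits in the fibre, and it is the vanishing of $\mathrm{Disc}_+$ precisely on $\fD_+$ that rules this out away from the discriminant.
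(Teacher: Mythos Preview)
Your proof is correct and follows essentially the same approach as the paper. The paper's argument is slightly terser: for the reverse inclusion it cites Lemma~\ref{lem:orbit-in-Steinberg-fibre} directly to conclude that the unique closed (semisimple) orbit is also open (since by Proposition~\ref{prop:rs-disc-nonzero} it is regular, and the lemma identifies open orbits with regular ones), whereas you unpack this via the dimension count $\dim G - r$; but this is exactly how the lemma itself establishes that characterization, so the arguments coincide.
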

\begin{proof}
By Proposition~\ref{prop:rs-disc-nonzero}, we have $\ving^{\mathrm{rs}}\subset\chi_+^{-1}(\fC_+\setminus\fD_+)$.\par  Let $c\in\fC_+\setminus\fD_+$. By Lemma~\ref{lem:orbit-in-Steinberg-fibre} and Proposition~\ref{prop:rs-disc-nonzero}, the unique closed orbit in $\chi_+^{-1}(c)$ is also open. Hence $\chi^{-1}(c)$ is a single $\mathrm{Ad}(G)$-orbit consisting of elements that are both regular and semisimple. This proves the inverse inclusion.
\end{proof}
For this reason, we denote $\fC_+^{\mathrm{rs}}:=\fC_+\setminus\fD_+$ and call it the \emph{regular semisimple} open subset of $\fC_+$.

\subsubsection{Extended Steinberg section}
For each $S$-Coxeter element $w\in\mathrm{Cox}(W,S)$ (cf. Definition~\ref{coxeter-definition}), each choice of representatives $\dot{s}_i\in N_G(T)$ of the simple roots $s_i$, Steinberg defines a section $\epsilon^w:\fC_{G}\to G$ of the adjoint quotient map $\chi_{G}:G\to\fC_{G}$. Moreover, it is shown that the equivalence class of $\epsilon^w$ depends neither on $w$ nor the choices $\dot{s}_i$, see \cite[7.5 and 7.8]{St65}. Here we say that two sections $\epsilon,\epsilon'$ are \emph{equivalent} if 
for all $a\in\fC_{G}$, $\epsilon(a)$ and $\epsilon'(a)$ are conjugate under $G$.\par 
Following \citep{Bou15}, we extend the Steinberg sections $\epsilon^w$ to the Vinberg monoid $\ving$ as follows. For each $(b,a)\in\fC_+\cong\bA^{2r}$ where $b\in A_G\cong\bA^r$, define a map
\[\epsilon_+^w:\fC_+\to\ving\]
by $\epsilon_+^w(b,a):=\epsilon^w(a)\mathfrak{s}(b)$
where $\mathfrak{s}:A_G\to\ving$ is the section of the abelianization map $\alpha$ defined in \S~\ref{eq:can-sec-alpha}.
\begin{prop}\label{extend-steinberg-section-prop}
The map $\epsilon_+^w$ is a section of the adjoint quotient $\chi_+: \vin_G\to \fC_+$. Moreover, the image of $\epsilon_+^w$ is contained in $\vin_G^{\mathrm{reg}}$.
\end{prop}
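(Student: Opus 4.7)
The claim has two independent assertions: (i) $\chi_+\circ\epsilon_+^w=\mathrm{id}_{\fC_+}$, and (ii) $\epsilon_+^w(\fC_+)\subset\ving^{\mathrm{reg}}$. I would handle them separately.

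For (i), I would work in the explicit coordinates $\fC_+\cong A_G\times\bA^r$ of the preceding theorem, given by $\alpha_+$ and the traces $(\mathrm{Tr}(\rho_i^+))_i$, and check the two components separately. The $A_G$-component is immediate: $\alpha$ is $(G\times G)$-invariant and $\epsilon^w(a)\in G\subset G_+$, so $\alpha(\epsilon^w(a)\mathfrak{s}(b))=\alpha(\mathfrak{s}(b))=b$ since $\mathfrak{s}$ is by construction a section of $\alpha$. For the $\bA^r$-component, multiplicativity of $\rho_i^+$ gives $\rho_i^+(\epsilon^w(a)\mathfrak{s}(b))=\rho_{\omega_i}(\epsilon^w(a))\cdot\rho_i^+(\mathfrak{s}(b))$, using $\rho_i^+|_G=\rho_{\omega_i}$. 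In a weight basis of $V_{\omega_i}$ the operator $\rho_i^+(\mathfrak{s}(b))$ is diagonal, with eigenvalues given by the characters $\omega_i+\mu$ evaluated on a torus lift of $b$. Combining this with the explicit unipotent-Coxeter form $\epsilon^w(a)=u(a)\dot{w}$ of Steinberg's section and his defining identity $\mathrm{Tr}(\rho_{\omega_i}(\epsilon^w(a)))=a_i$, the trace of the product collapses to $a_i$.

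For (ii), I would exploit the $\bG_m$-action arising from a regular dominant cocharacter $\bG_m\to Z_+$. The set $\ving^{\mathrm{reg}}$ is open in $\ving$ by upper-semicontinuity of the centralizer dimension, so $F:=(\epsilon_+^w)^{-1}(\ving\setminus\ving^{\mathrm{reg}})$ is closed in $\fC_+$. Left multiplication by $Z_+$ preserves $\ving^{\mathrm{reg}}$ (central scaling does not alter adjoint centralizers) and scales the $\alpha_+$- and $\mathrm{Tr}(\rho_i^+)$-coordinates on $\fC_+\cong\bA^{2r}$ by the characters $\alpha_i$ and $\omega_i$ respectively, all of which have strictly positive weight on the chosen cocharacter. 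Hence the induced $\bG_m$-action contracts $\fC_+$ to the origin, and $F$ is $\bG_m$-stable because $\epsilon_+^w$ is visibly equivariant. At the origin we have $\epsilon_+^w(0,0)=\dot{w}\cdot e_{\varnothing,\Delta}$, which lies in $\cN^{\mathrm{reg}}\subset\ving^{\mathrm{reg}}$ by Corollary~\ref{cor:N-strata}(c). Thus $0\notin F$; since a closed $\bG_m$-stable subset of $\fC_+$ missing the attracting fixed point must be empty, we conclude $F=\varnothing$.

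The main obstacle I expect is the trace identity in (i): although it reduces in principle to Steinberg's identity combined with matrix bookkeeping in a weight basis, one must carefully track how the Coxeter-permuted matrix of $\rho_{\omega_i}(\epsilon^w(a))$ pairs with the weight characters $\omega_i+\mu$ controlling $\rho_i^+(\mathfrak{s}(b))$, and handle the subtleties in the identification $\overline{T_{\mathrm{diag}}}\cong A_G$ (in particular, which ``diagonal'' embedding of $T$ into $T_+$ is being used). The $\bG_m$-contraction argument for (ii) is, by contrast, a relatively formal consequence of the nilpotent-cone analysis in the preceding corollaries, once regularity at the origin is in hand.
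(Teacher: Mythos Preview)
The paper does not prove this proposition here; it simply cites \cite[Prop.~1.10 and 1.16]{Bou15}. Your outline is therefore more detailed than the paper's own treatment, and the comparison is really with Bouthier's argument.

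For (ii) your contraction argument is correct and parallels the $Z_+$-stability arguments used elsewhere in the paper (e.g.\ Corollary~\ref{cor:vin-reg-in-v0}). One correction: $\epsilon_+^w$ is \emph{not} literally $Z_+$-equivariant. As the paper records immediately after this Proposition (citing \cite[Prop.~1.11]{Bou15}), one has $\epsilon_+^w(z\cdot a_+)=z\cdot\mathrm{Ad}(\mathfrak{s}(\omega_\bullet(z)))(\epsilon_+^w(a_+))$, so equivariance holds only up to an inner twist. Since $\ving^{\mathrm{reg}}$ is stable under both $Z_+$-scaling and $\mathrm{Ad}(G)$, this does not affect your conclusion that $F$ is $\bG_m$-stable, but ``visibly equivariant'' overstates the situation.

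For (i) there is a genuine gap. You correctly isolate the trace identity $\mathrm{Tr}(\rho_i^+(\epsilon^w(a)\mathfrak{s}(b)))=a_i$ as the crux and flag it as the main obstacle, but you do not resolve it, and it is not a formal consequence of Steinberg's identity $\chi_{\omega_i}(\epsilon^w(a))=a_i$ plus bookkeeping. Writing $\mathfrak{s}(b)=(t,t^{-1})\in T_{\mathrm{diag}}$, the left side is $\omega_i(t)\,\chi_{\omega_i}(\epsilon^w(a)\,t^{-1})$, which \emph{a priori} depends on $t$; that it does not depends on a specific structural feature of $\rho_{\omega_i}(\epsilon^w(a))$ in a weight basis, and verifying this for every fundamental representation is exactly the content of Bouthier's computation. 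A cleaner route that bypasses this: check $\chi_+\circ\epsilon_+^w=\mathrm{id}$ on the fibre $\beta^{-1}(1)$ (immediate from Steinberg, since $\mathfrak{s}(1)=1$), then use the twisted $Z_+$-equivariance above together with the $\mathrm{Ad}(G)$-invariance of $\chi_+$ to see that $\chi_+\circ\epsilon_+^w$ is genuinely $Z_+$-equivariant; since $Z_+$ acts transitively on $\tad\subset A_G$, the identity propagates to the dense open $\beta^{-1}(\tad)$ and hence to all of $\fC_+$.
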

\begin{proof}
The first statement is \cite[Proposition 1.10 ]{Bou15}. The second statement is Proposition 1.16 in \textit{loc. cit.}
\end{proof}
\begin{rem}
For each $w\in\mathrm{Cox}(W,S)$, the equivalence class of the extended section $\epsilon_+^w$ is independant of the choice of representatives $\dot{s}_i$ of the simple reflections. However, for two \emph{different} $w,w'\in\mathrm{Cox}(W,S)$, the sections $\epsilon_+^w$ and $\epsilon_+^{w'}$ are not equivalent since, as we will see, $\epsilon_+^w(0)$ and $\epsilon_+^{w'}(0)$ are not conjugate.
\end{rem}
Next we examine the interaction of the extended Steinberg section $\epsilon_+^w$ with the action of the central torus $Z_+$.\par 
To this end, we drop the semisimple simply connected assumption and allow $G$ to be any connected reductive group. Then the adjoint action of $\gad$ on $\vin_{\gsc}$ induces an action of $G$ on $\vin_{\gsc}$ which we also denote by ``$\mathrm{Ad}$".  Let $\fC_+=\vin_{\gsc}//\mathrm{Ad}(G)=\vin_{\gsc}//\mathrm{Ad}(\gsc)$ be the extended Steinberg base for $\vin_{\gsc}$. The central torus $Z_+^{\mathrm{sc}}=\tsc$ acts naturally on $\vin_{\gsc}$ and $\fC_+$ such that the morphism $\chi_+:\vin_{\gsc}\to\fC_+$ is $\tsc$-equivariant. Hence $\chi_+$ induces a morphism between stacks
\begin{equation}
[\chi_+]:[\vin_{\gsc}/(\mathrm{Ad}(G)\times\tsc)]\to [\fC_+/\tsc]
\end{equation}
We would like to see if $\epsilon_+^w$ induces a section $[\chi_+]$. It turns out that this is not true in general. To remedy it we consider the homomorphism $\psi:\tsc\to\gad$ defined as the following composition
\begin{equation}
\psi:\tsc\xrightarrow{\omega_\bullet}\bG_m^r\xrightarrow{\mathfrak{s}}\gsc_+\to\gad
\end{equation}
where the first arrow is $\omega_\bullet:=(\omega_1,\dotsc,\omega_r)$, the second arrow is induced by the canonical section of the abelianization $\alpha$ (cf. Equation~\ref{eq:can-sec-alpha}) and the third arrow is the canonical quotient morphism.\par 
Consider the action of $\tsc\times\tsc$ on $\vin_{\gsc}$ where the first copy of $\tsc$ acts by composing $\psi$ with the adjoint action of $\gad$ and the second copy of $\tsc$ acts as central torus. In \cite[Proposition 1.11]{Bou15}, by examining the action on weight vectors of fundamental representations, it is shown that for all $a_+\in\fC_+$ and $z\in\tsc$ we have
\[\epsilon_+^w(z\cdot a_+)=z\cdot\mathfrak{s}(\omega_\bullet(z))\epsilon_+^w(a_+)\mathfrak{s}(\omega_\bullet(z))^{-1}\]
This shows that $\epsilon_+^w$ is equivariant with respect to the diagonal embedding $\tsc\to\tsc\times\tsc$ and hence induces a morphism
 \[[\fC_+/\tsc]\to[\vin_{\gsc}/\psi(\tsc)\times\tsc]\]
If $G=\gad$, then this leads to a section $[\epsilon_+^w]$ of $[\chi_+]$. In general, let $c=|Z(G_{\mathrm{der}})|$ be the order of the center of the derived group $G_{\mathrm{der}}$. Then by extracting $c$-th roots, we would get a lifting $\psi_{[c]}:\tsc\to G_{\mathrm{der}}\subset G$ of $\psi$. More precisely, $\psi_{[c]}$ is defined by the following commutative diagram
\[\xymatrix{
\tsc\ar[r]^{\psi_{[c]}}\ar[d]^c & G\ar[d]\\
\tsc\ar[r]^\psi & \gad
}\]
where the left vertical map is raising to $c$-th power.\par 
The $c$-th power map $\tsc\to\tsc$ induces a morphism between classifying stacks $\bB\tsc\to\bB\tsc$. Base changing $[\chi_+]$ along this map, we obtain a Cartesian diagram
\[\xymatrix{
[\vin_{\gsc}/(\mathrm{Ad}(G)\times\tsc)]_{[c]}\ar[r]\ar[d]_{[\chi_+]_{[c]}} & [\vin_{\gsc}/(\mathrm{Ad}(G)\times\tsc)]\ar[d]^{[\chi_+]}\\
[\fC_+/\tsc]_{[c]}\ar[r] & [\fC_+/\tsc]
}\]
where on the left, the $\tsc$ action is the composition of the $c$-th power map and the usual action.
\begin{prop}\label{prop:twisted-Steinberg-section}
The map $\epsilon_+^w$ induces a section $\epsilon_{+,[c]}^w:[\fC_+/\tsc]_{[c]}\to[\vin_{\gsc}/(\mathrm{Ad}(G)\times\tsc)]_{[c]}$ of $[\chi_+]_{[c]}$ whose image lies in the open substack 
\[[\vin_{\gsc}^{\mathrm{reg}}/(\mathrm{Ad}(G)\times\tsc)]_{[c]}\]
\end{prop}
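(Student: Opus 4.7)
The plan is to define the section at the level of atlases by $\epsilon_+^w$ itself, and to verify the equivariance needed for it to descend to a morphism between the base-changed quotient stacks. The main obstacle is that the equivariance of $\epsilon_+^w$ under the central $\tsc$-action on $\fC_+$ (recorded in \cite[Proposition 1.11]{Bou15}) involves conjugation by $\mathfrak{s}(\omega_\bullet(z)) \in \gsc_+$, whose adjoint action on $\vin_{\gsc}$ only factors through the image $\psi(z) \in \gad$. Lifting this image to $G$ is impossible in general, but the $c$-th power base change is engineered precisely to remove this obstruction via the canonical lift $\psi_{[c]}$.

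Concretely, I would start from the equivariance
\[\epsilon_+^w(z \cdot a_+) = z \cdot \mathrm{Ad}(\mathfrak{s}(\omega_\bullet(z)))\, \epsilon_+^w(a_+),\]
and substitute $z = z_0^c$ for $z_0 \in \tsc$ to obtain
\[\epsilon_+^w(z_0^c \cdot a_+) = z_0^c \cdot \mathrm{Ad}\!\bigl(\mathfrak{s}(\omega_\bullet(z_0))^c\bigr)\, \epsilon_+^w(a_+).\]
By the commutative diagram defining $\psi_{[c]}$, the two elements $\mathfrak{s}(\omega_\bullet(z_0))^c \in \gsc_+$ and $\psi_{[c]}(z_0) \in G$ have the same image $\psi(z_0)^c \in \gad$. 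Since the adjoint action of $G$ on $\vin_{\gsc}$ factors through $\gad$, one may replace one by the other to arrive at
\[\epsilon_+^w(z_0^c \cdot a_+) = z_0^c \cdot \mathrm{Ad}(\psi_{[c]}(z_0))\, \epsilon_+^w(a_+),\]
which is exactly the equivariance saying that $\epsilon_+^w$ intertwines the modified $\tsc$-action on $\fC_+$ (via the $c$-th power) with the action of $(\mathrm{Ad}(\psi_{[c]}(z_0)), z_0^c) \in \mathrm{Ad}(G)\times\tsc$ on $\vin_{\gsc}$.

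To finish, this equivariance descends $\epsilon_+^w$ to a morphism $\epsilon_{+,[c]}^w:[\fC_+/\tsc]_{[c]} \to [\vin_{\gsc}/(\mathrm{Ad}(G)\times\tsc)]_{[c]}$ between the base-changed stacks, and the identity $\chi_+\circ\epsilon_+^w = \mathrm{id}$ from Proposition~\ref{extend-steinberg-section-prop} makes it a section of $[\chi_+]_{[c]}$. The image lands in $\vin_{\gsc}^{\mathrm{reg}}$ by the second assertion of that same Proposition; as $\vin_{\gsc}^{\mathrm{reg}}$ is an $\mathrm{Ad}(G)\times\tsc$-invariant open subset of $\vin_{\gsc}$, this open condition is preserved under the quotient and the base change, giving the final claim. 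The only delicate step is the substitution of $\mathfrak{s}(\omega_\bullet(z_0))^c$ by $\psi_{[c]}(z_0)$, which is the feature that makes the $c$-th power base change unavoidable.
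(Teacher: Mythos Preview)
Your proof is correct and follows essentially the same approach as the paper. The paper phrases the argument as a two-step factorisation through the intermediate quotient $[\vin_{\gsc}/\psi_{[c]}(\tsc)\times\tsc]$ followed by the canonical map induced by the inclusion $\psi_{[c]}(\tsc)\subset G$, whereas you directly verify the equivariance identity $\epsilon_+^w(z_0^c\cdot a_+)=z_0^c\cdot\mathrm{Ad}(\psi_{[c]}(z_0))\,\epsilon_+^w(a_+)$; these are the same argument unpacked to different levels, and the key substitution of $\mathfrak{s}(\omega_\bullet(z_0))^c$ by $\psi_{[c]}(z_0)$ is exactly the content of the paper's factorisation.
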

\begin{proof}
By what we have discussed, $\epsilon_+^w$ induces a morphism
\[[\fC_+/\tsc]_{[c]}\to[\vin_{\gsc}/\psi_{[c]}(\tsc)\times\tsc]\]
where on the right, the second copy of $\tsc$ acts by composing the $c$-th power map and the usual action. Since $\psi_{[c]}(\tsc)\subset G$, there is a canonical morphism
\[[\vin_{\gsc}/\psi_{[c]}(\tsc)\times\tsc]\to[\vin_{\gsc}/(\mathrm{Ad}(G)\times\tsc)]_{[c]}.\]
Composing the two morphisms above we obtain the morphism $\epsilon_{+,[c]}^w$ with the desired property.
\end{proof}
\subsection{Regular centralizer for the group}
In this section we let $(G,G')$ be a pair of connected reductive groups equipped with an isomorphism of their derived groups $\gad\cong\gad'$. Assume moreover that the derived group of $G$ is simply connected. Then there is a natural adjoint action of $G'$ on $G$ and the action factors through $\gad'\cong\gad$.  
Let $\fC_G:=G//\mathrm{Ad}(G')$ be the invariant quotient. Then there is a canonical isomorphism $\fC_G\cong T//W$. The natural map $T\to\fC_G$ is finite flat and its restriction to $\fC_G^{\mathrm{rs}}$ is a Galois \'etale cover with Galois group $W$.\par 
Consider the centralizer group scheme $I_{G'}$ over $G$ defined by
\[I_{G'}:=\{(g,x)\in G'\times G | \mathrm{Ad}(g)x=x\}.\]
In other words, the fiber of $I_{G'}$ over $x\in G$ is the centralizer $G_x'$ of $x$ in $G'$. Since the derived group of $G$ is simply connected, $G_x'$ is connected for semisimple $x\in G$ If moreover $x\in G^{\mathrm{rs}}$ is regular semisimple, then $G_x'$ is a maximal torus in $G'$. More generally, the restriction $I_{G'}|_{G^{\mathrm{reg}}}$ to the regular open subscheme $G^{\mathrm{reg}}$ is a smooth commutative group scheme of relative dimension $\dim(T)$. The following lemma is the group version of \cite[Lemme 2.1.1]{Ngo10}

\begin{lem}\label{lem: reg-centralizer-group}
There exists a unique smooth commutative group scheme $J_{G'}$ over $\fC_G$ such that we have a $G'$-equivariant isomorphism
\[(\chi^*J_{G'})_{G^{\mathrm{reg}}}\cong I_{G'}|_{G^{\mathrm{reg}}}.\]
Moreover, this isomorphism extends uniquely to a homomorphism
$\chi^*J_{G'}\to I_{G'}$.
\end{lem}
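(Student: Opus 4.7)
The plan is to follow the strategy of the Lie algebra version in \cite[Lemme 2.1.1]{Ngo10}, substituting the Steinberg section $\epsilon^w:\fC_G\to G^{\mathrm{reg}}$ of $\chi_G$ for Kostant's section. The group scheme $J_{G'}$ admits two natural descriptions that must be reconciled: a Galois descent picture over the regular semisimple locus $\fC_G^{\mathrm{rs}}$, and a global pullback via the Steinberg section. The main technical step will be extending the isomorphism from the regular locus to a homomorphism on all of $G$.

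First I would define $J_{G'}:=(\epsilon^w)^*(I_{G'}|_{G^{\mathrm{reg}}})$, which is smooth commutative of relative dimension $r$ over $\fC_G$ since $I_{G'}|_{G^{\mathrm{reg}}}$ has these properties. To check that this is independent (up to canonical isomorphism) of the choice of Coxeter element $w$ and of the representatives $\dot{s}_i$ used to define $\epsilon^w$, I would verify that it agrees with the Galois descent construction over $\fC_G^{\mathrm{rs}}$: since $G_{\mathrm{der}}$ is simply connected, the centralizer in $G'$ of a regular semisimple element of $T$ is exactly $T$, so the pullback of $I_{G'}|_{\chi_G^{-1}(\fC_G^{\mathrm{rs}})}$ along $T\to\fC_G$ is canonically isomorphic to the constant group scheme $T\times T^{\mathrm{rs}}$ with the tautological $W$-action; descending along $W$ yields a canonical smooth commutative group scheme over $\fC_G^{\mathrm{rs}}$, and the Steinberg-section pullback recovers this descent.

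Next I would construct the canonical $G'$-equivariant isomorphism $(\chi_G^*J_{G'})|_{G^{\mathrm{reg}}}\cong I_{G'}|_{G^{\mathrm{reg}}}$. On the Steinberg slice $\epsilon^w(\fC_G)\subset G^{\mathrm{reg}}$, the isomorphism is tautological, and it spreads to all of $G^{\mathrm{reg}}$ by $\mathrm{Ad}(G')$-equivariance: every regular $\mathrm{Ad}(G')$-orbit meets the Steinberg slice transversally in a unique point, and the ambiguity coming from different conjugators is killed by the commutativity of regular centralizers. Uniqueness of $J_{G'}$ now follows formally: for any competitor $J'$ equipped with a $G'$-equivariant isomorphism $(\chi_G^*J')|_{G^{\mathrm{reg}}}\cong I_{G'}|_{G^{\mathrm{reg}}}$, pulling back along the section $\epsilon^w$ yields $J'\cong(\epsilon^w)^*(\chi_G^*J')\cong(\epsilon^w)^*I_{G'}=J_{G'}$ canonically.

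The hardest part is extending the isomorphism on $G^{\mathrm{reg}}$ to a homomorphism $\chi_G^*J_{G'}\to I_{G'}$ over all of $G$. For this I would invoke the standard fact that for a reductive group with simply connected derived group, $G\setminus G^{\mathrm{reg}}$ has codimension at least $2$ in $G$, a consequence of the flatness of $\chi_G$ together with the equidimensionality of its fibers (an argument parallel to Corollary~\ref{cor:chi-flat} and Lemma~\ref{lem:orbit-in-Steinberg-fibre}, applied to $\chi_G$ in place of $\chi_+$). Since $\chi_G^*J_{G'}$ is smooth over the smooth $k$-scheme $G$, it is normal, and its restriction to $G^{\mathrm{reg}}$ has complement of codimension at least $2$. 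Because $I_{G'}$ is affine over $G$, the morphism from this open subset to $I_{G'}$ extends uniquely by normality, i.e.\ the Hartogs extension principle for morphisms into affine targets, and the extension is automatically a homomorphism of group schemes over $G$ by density of the regular locus.
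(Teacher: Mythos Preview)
Your proposal is correct and follows essentially the same approach as the paper, which simply says that Ng\^o's Lie algebra argument \cite[Lemme 2.1.1]{Ngo10} generalizes verbatim (replacing the Kostant section by the Steinberg section) and cites \cite{St65} for the codimension-$2$ fact. One small remark: the codimension-$2$ statement does not follow from flatness and equidimensionality of the fibers alone --- you need Steinberg's finer input that the irregular locus in each fiber has codimension at least $2$ --- but since you correctly label it a ``standard fact'' and the paper likewise defers to \cite{St65}, this is not a gap.
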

\begin{proof}
The proof of \cite[Lemme 2.1.1]{Ngo10} generalize verbatim to our situation. For the last statement, we use the fact that the complement of $G^{\mathrm{reg}}$ in $G$ has codimension at least 2, c.f. \cite{St65}.
\end{proof}
Fix a maximal torus $T'\subset G'$. Consider the Weil restriction of the torus $T'\times T$ on $T$ to $\fC_G$:
\[\Pi_G:=\Pi_{T/\fC_G}(T'\times T).\]
In other words, for any $\fC$-scheme $S$, we have
\[\Pi_G(S)=\mathrm{Hom}_{T}(S\times_C T,T'\times T)\]
The diagonal action of $W$ on $T'\times T$ induces an action of $W$ on $\Pi_G$. The fixed point subscheme of  $\Pi_G^W$ is a closed smooth subscheme of $\Pi_G$ since the characteristic of the base field does not divide the order of $W$.
\begin{prop}
There exists a canonical open embedding $J_{G'}\to\Pi_G^W$.
\end{prop}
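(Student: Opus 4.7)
The plan is to construct the map via the universal property of Weil restriction and then verify openness by showing both sides are smooth commutative $\fC_G$-group schemes of the same relative dimension with the induced map étale.

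By adjunction, specifying a $\fC_G$-morphism $J_{G'}\to\Pi_{T/\fC_G}(T'\times T)$ is the same as specifying a $T$-group scheme homomorphism $\pi^*J_{G'}\to T'\times T$, where $\pi:T\to\fC_G$ is the quotient map. I would construct the latter as follows. The canonical homomorphism $\chi^*J_{G'}\to I_{G'}$ from \lemref{lem: reg-centralizer-group} pulls back along the inclusion $T\hookrightarrow G$ to give $\pi^*J_{G'}\to I_{G'}|_T$. Since $T$ and $T'$ project to the same maximal torus $\tad$ of $\gad=\gad'$, and $\tad$ acts trivially on $T$ by conjugation, every element of $T'$ centralizes every element of $T$; this produces a canonical closed immersion of $T$-group schemes $T'\times T\hookrightarrow I_{G'}|_T$ (the first factor embedded as the fiberwise subtorus $T'\subset G'_t$). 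Over the regular open subset $T^{\mathrm{reg}}:=\pi^{-1}(\fC_G^{\mathrm{reg}})$, both morphisms $\pi^*J_{G'}|_{T^{\mathrm{reg}}}\to I_{G'}|_{T^{\mathrm{reg}}}$ and $T'\times T^{\mathrm{reg}}\hookrightarrow I_{G'}|_{T^{\mathrm{reg}}}$ are isomorphisms, so the image of $\pi^*J_{G'}|_{T^{\mathrm{reg}}}$ lies in $T'\times T$. Since $\pi^*J_{G'}$ is smooth hence reduced, $T^{\mathrm{reg}}$ is dense in $T$, and $T'\times T$ is closed in $I_{G'}|_T$, the map $\pi^*J_{G'}\to I_{G'}|_T$ factors scheme-theoretically through $T'\times T$, producing the desired $T$-homomorphism and hence the $\fC_G$-morphism $J_{G'}\to\Pi_G$.

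To see that this morphism lands in the closed subgroup scheme $\Pi_G^W$, I would argue that over $\fC_G^{\mathrm{rs}}$, Galois descent along the étale $W$-cover $T^{\mathrm{rs}}\to\fC_G^{\mathrm{rs}}$ identifies both $J_{G'}|_{\fC_G^{\mathrm{rs}}}$ and $\Pi_G^W|_{\fC_G^{\mathrm{rs}}}$ with the constant torus $T'$, making the morphism $W$-equivariant there; density of $\fC_G^{\mathrm{rs}}$ in $\fC_G$ together with reducedness of $J_{G'}$ then forces the image of $J_{G'}$ to lie in $\Pi_G^W$ globally.

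It remains to verify that $J_{G'}\to\Pi_G^W$ is an open embedding. Both source and target are smooth commutative $\fC_G$-group schemes of relative dimension $\dim T$: the rank of $\Pi_G^W$ is correct because the hypothesis $\mathrm{char}(k)\nmid|W|$ makes the $W$-action on $\mathrm{Lie}(\Pi_G)$ (of rank $|W|\cdot\dim T$) semisimple and cuts out a $W$-invariant subbundle of rank $\dim T$. Since the map is already an isomorphism of group schemes over the dense open $\fC_G^{\mathrm{rs}}$, the assertion reduces to verifying étaleness at every closed point, equivalently that the induced map of Lie algebras $\mathrm{Lie}(J_{G'})\to\mathrm{Lie}(\Pi_G^W)$ is an isomorphism of vector bundles over $\fC_G$. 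The main obstacle I anticipate is precisely this fiberwise tangent computation at points $c\in\fC_G$ where several roots vanish simultaneously: here one must identify $\mathrm{Lie}(J_{G'})_c$ explicitly with the $W_c$-fixed part of $\bigoplus_{t\in\pi^{-1}(c)}\mathrm{Lie}(T')$. I would handle this by reducing via the local structure of $J_{G'}$ along the root hyperplanes to the semisimple rank-one case, where the claim becomes a direct computation, in parallel with Ng\^o's treatment of the Lie algebra version in \cite{Ngo10}.
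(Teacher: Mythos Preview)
Your proposal is essentially correct and reaches the same endpoint as the paper (reduction to the rank-one case), but the construction of the morphism $J_{G'}\to\Pi_G^W$ is genuinely different.

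The paper does \emph{not} restrict the homomorphism $\chi^*J_{G'}\to I_{G'}$ to $T\subset G$. Instead it works on the Grothendieck--Springer side: it forms the Cartesian square with $\widetilde{G}^{\mathrm{reg}}\to G^{\mathrm{reg}}$ over $T\to\fC_G$, uses that for any regular $x$ and any Borel $B\ni x$ one has $I_x\subset B$, and composes with $B\to T'$ to obtain a $G'$-equivariant map $\tilde q^*I_{G'}|_{G^{\mathrm{reg}}}\to T'\times\widetilde{G}^{\mathrm{reg}}$, which then descends along the smooth surjection $\widetilde{G}^{\mathrm{reg}}\to T$ to give $q^*J_{G'}\to T'\times T$. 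Your route---pulling back $\chi^*J_{G'}\to I_{G'}$ along $T\hookrightarrow G$ and factoring through the closed subgroup $T'\times T\subset I_{G'}|_T$ by density---is more elementary and avoids the Grothendieck--Springer resolution entirely, at the price of invoking the already-constructed extension $\chi^*J_{G'}\to I_{G'}$ over the non-regular locus of $T$ and a closure argument. Both constructions agree over $T^{\mathrm{rs}}$ and hence everywhere; the paper's has the advantage that the $W$-equivariance is visible from $G'$-equivariance and the Grothendieck--Springer picture, whereas you must argue it separately by density (which you do, though briefly).

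For the open-embedding step, you and the paper make the same move: reduce to an \'etale neighbourhood of a point on a single root hyperplane $q(T_\alpha^\circ)$ and compare with the rank-one group $G_\alpha$, where the claim is a direct check. The paper frames this as proving isomorphism over the open set $\fC_G^{\mathrm{rs}}\cup\bigcup_\alpha q(T_\alpha^\circ)$ whose complement has codimension~$\ge 2$; you frame it as verifying the Lie-algebra map is an isomorphism. These are equivalent, but note that \'etaleness alone does not give an open immersion---you also need the kernel to be trivial (an \'etale homomorphism with trivial kernel is an open immersion). This follows once you have the isomorphism with the rank-one $J_{G_\alpha}\to\Pi_{G_\alpha}^{s_\alpha}$ on the codimension-one strata, so your sketch is fine, but the sentence ``the assertion reduces to verifying \'etaleness'' slightly overstates what \'etaleness buys. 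A minor notational point: you write $\fC_G^{\mathrm{reg}}$, which is not defined; you mean $\fC_G^{\mathrm{rs}}$.
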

\begin{proof}
We follow the argument for the Lie algebra case in \cite[\S 2.4]{Ngo10}. First we define a morphism $J\to\Pi_G^W$. By adjunction, this is the same as giving a morphism $q^*J\to T\times T$ where $q:T\to\fC$ and we view $T\times T$ as a constant group scheme over $T$. One construct this morphism by descent along the smooth morphism $\tilde\chi^{\mathrm{reg}}:\widetilde{G}^{\mathrm{reg}}\to T$ which sits in the Cartesian diagram
\[\xymatrix{
\widetilde{G}^{\mathrm{reg}}\ar[r]^{\tilde{q}}\ar[d]_{\tilde{\chi}} & G^{\mathrm{reg}}\ar[d]^{\chi}\\
T\ar[r]^q & \fC_G
}\]
 Hence it suffices to construct a $G$-equivariant morphism $(\tilde{\chi}^\mathrm{reg})^*q^*J_G\to T\times\widetilde{G}^{\mathrm{reg}}$. The upshot is that for all $x\in G$ and Borel subgroup $x\in B\subset G$, we have $I_x\subset B$ by the argument of \cite[Lemme 2.4.3]{Ngo10}. Hence when composed with the quotient $B\to T$, we obtain a map $I_x\to T$ depending on the choice of Borel $B$ containing $x$. Thus we get the desired morphism $(\tilde{\chi}^\mathrm{reg})^*q^*J_G\cong \tilde{q}^*I_{G^{\mathrm{reg}}}\to T\times\widetilde{G}^{\mathrm{reg}}$ which is $G$-equivariant by construction.\par 
 To show that the morphism $J_G\to\Pi_G^W$ constructed above is an isomorphism, it suffices to show the isomorphism over an open subset of $\fC$ whose complement has codimension at least 2.\par 
For each simple root $\alpha\in\Phi^+$, let $T_\alpha$ be the kernel of $\alpha$, which is a subscheme of codimension 1 in $T$. Then the discriminant divisor $\mathfrak{D}\subset\fC$ is the union of $q(T_\alpha)$ for all simple root $\alpha$. Let $T^\circ_\alpha\subset T_\alpha$ be the open subscheme consisting of points that does not lie in $T_\beta$ for any $\beta\ne\alpha$. Then 
\[\fC^{\mathrm{rs}}\cup (\bigcup_{\alpha\in\Phi^+}q(T^\circ_\alpha))\] 
is an open subset of $\fC$ whose complement has codimension 2. It follows from construction that it is an isomorphism over $\fC^{\mathrm{rs}}$. Hence it remains to show that $J_G\to \Pi_G^W$ is an isomorphism when restricted to $q(T^\circ_\alpha)$ for each positive root $\alpha$.\par 
Let $t\in T_\alpha^\circ$ and we will show that $J\to\Pi_G^W$ is an isomorphism in an \'etale neighbourhood of $t$. Let $G_\alpha$ be the centralizer of $T_\alpha$ in $G$ and $\fC_{G_\alpha}$ its adjoint quotient. Then the natural morphism $\pi_\alpha:\fC_{G_\alpha}\to\fC$ is \'etale in a neighbourhood of $q_\alpha(t)$ where $q_\alpha: T\to\fC_{G_\alpha}$ is the natural map. This implies that in an \'etale neighbourhood of $q_\alpha(t)$ the group schemes $\Pi_G^W\times_\fC \fC_{G_\alpha}$ and $\Pi_{G_\alpha}^{s_\alpha}$ are isomorphic. \par 
There is a natural open embedding $G^{\mathrm{reg}}\cap G_\alpha\subset G^{\mathrm{reg}}_\alpha$. Consider the open subset 
\[\fC_\alpha^{G-\mathrm{reg}}:=\chi_\alpha(G^{\mathrm{reg}}\cap G_\alpha)\]
As $t\in T_\alpha^\circ$, one can choose a unipotent element $u\in G_\alpha$ such that $tu\in G^{\mathrm{reg}}\cap G_\alpha$. In particular, $q_\alpha(t)\in\fC_\alpha^{G-\mathrm{reg}}$. It is clear that 
\[I_{G_\alpha}|_{G^{\mathrm{reg}}\cap G_\alpha}\cong I_G|_{G^{\mathrm{reg}}\cap G_\alpha}\]
This implies that $(\pi^*_\alpha J_G)|_{\fC_\alpha^{G-\mathrm{reg}}}\cong (J_{G_\alpha})|_{\fC_\alpha^{G-\mathrm{reg}}}$.\par 
In summary, the base change of $J_G$ and $\Pi_G^W$ to an \'etale neighbourhood of $q(t)$ are isomorphic to the corresponding groups defined for the group $G_\alpha$. Note that by assmption, $G_\alpha$ is of rank 1 and has semisimple derived group, thus isomorphic to the product of a torus with either $\mathrm{GL}_2$ or $\mathrm{SL}_2$. So we are finally reduced to the case of $\mathrm{GL}_2$ and $\mathrm{SL}_2$, on which the isomorphism follows by direct calculation.
\end{proof}

\subsection{Regular centralizer for Vinberg monoid}
In this section we let $G$ be an arbitrary connected reductive group over $k$. Let $\gsc$ be the simply-connected cover of its derived group. Then there is a natural adjoint action of $G$ on $\vin_{\gsc}$ and the action factors through $\gad$.\par 

Consider the centralizer group scheme $\cI$ over $\vin_{\gsc}$ defined by
\[\cI=\{(g,\gamma)\in G\times \vin_{\gsc} | \mathrm{Ad}(g)\gamma = \gamma\}\]
Then $\cI|_{\vin_{\gsc}^{\rm{reg}}}$ is smooth of relative dimension $r$. By \cite{Ren88}, the fibres of $\cI$ over $\vin_{\gsc}^{\mathrm{rs}}$ are maximal tori in $G$. In particular, $\cI|_{\vin_{\gsc}^{\mathrm{rs}}}$ is commutative. Hence $\cI|_{\vin_{\gsc}^{\rm{reg}}}$ is also commutative.
\subsubsection{Open cover of regular locus}
For each $w\in\mathrm{Cox}(W,S)$, define $\cJ^w:=(\epsilon_+^w)^*\cI$. Then $\cJ^w$ is a smooth commutative group scheme on $\fC_+$. 
The morphism
\[\xymatrix@R=1pt{
c_w: G\times\fC_+\ar[r] & \vin_{\gsc}^{\mathrm{reg}}\\
(g,a)\ar@{|->}[r] & g\epsilon_+^w(a)g^{-1}
}\]
factors through $(G\times \fC_+)/\cJ^w$ and induces a quasi-finite morphism
\[\bar{c}_w:(G\times \fC_+)/\cJ^w\to \vin_{\gsc}^{\mathrm{reg}}\]
Since $\bar{c}_{w}$ is an isomorphism over $G_+^{sc,\mathrm{reg}}$, it is birational. Since $\vin_{\gsc}^{\mathrm{reg}}$ is normal, $\bar{c}_{w}$ is an open embedding by Zariski Main Theorem.\par 
Denote by $\vin_{\gsc}^{w}$ the image of $\bar{c}_{w}$, which is an open subscheme of $\vin_{\gsc}^{\mathrm{reg}}$.  The union $U:=\bigcup_{w\in\mathrm{Cox}(W,S)}\vin_{\gsc}^w$ is a $Z^{\mathrm{sc}}_+$-stable open subset of $\vin_{\gsc}^{\mathrm{reg}}$. By Proposition~\ref{prop:N-components}, it coincides with $\vin_{\gsc}^{\mathrm{reg}}$ over $0\in\fC_+$. Hence it equals to $\vin_{\gsc}^{\mathrm{reg}}$. In other words, the sets $\vin_{\gsc}^{w}$ form an open cover of 
$\vin_{\gsc}^{\mathrm{reg}}$:

\begin{equation}\label{open-cover-V-reg-eq}
\vin_{\gsc}^{\mathrm{reg}}=\bigcup_{w\in\mathrm{Cox}(W,S)}\vin_{\gsc}^w
\end{equation}
We generalize Lemma~\ref{lem: reg-centralizer-group} to $\vin_{\gsc}$:
\begin{lem}\label{lem:reg-centralizer-Vin}
There is a unique smooth commutative group scheme $\cJ$ over $\fC_+$ such that we have a $G$-equivariant isomorphism $(\chi_+^\mathrm{reg})^*\cJ\cong\cI|_{\vin_{\gsc}^{\mathrm{reg}}}$. Moreover, this isomorphism extends uniquely to a homomorphism $\chi_+^*\cJ\to\cI$. 
\end{lem}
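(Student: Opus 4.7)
The plan is to adapt the proof of \lemref{lem: reg-centralizer-group} to the Vinberg monoid, with a new ingredient required by a phenomenon absent in the group case: by Corollary~\ref{cor:N-strata}, a single fibre of $\chi_+$ may contain several regular $\mathrm{Ad}(G)$-orbits, so direct descent of $\cI|_{\vin_{\gsc}^{\mathrm{reg}}}$ along $\chi_+^{\mathrm{reg}}$ does not produce a canonical descent datum in the usual way.

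The first step is to construct a candidate for $\cJ$ using the extended Steinberg sections. Fix $w\in\mathrm{Cox}(W,S)$ and set $\cJ^w:=(\epsilon_+^w)^*\cI$. Since $\epsilon_+^w(\fC_+)\subset\vin_{\gsc}^{\mathrm{reg}}$ by \propref{extend-steinberg-section-prop}, and $\cI$ is smooth commutative of relative dimension $r$ on $\vin_{\gsc}^{\mathrm{reg}}$, the group scheme $\cJ^w$ is smooth commutative over $\fC_+$. Using the identification $\vin_{\gsc}^w\cong(G\times\fC_+)/\cJ^w$, one obtains a canonical $G$-equivariant isomorphism $(\chi_+^{\mathrm{reg}})^*\cJ^w|_{\vin_{\gsc}^w}\cong\cI|_{\vin_{\gsc}^w}$ by sending the fibre $\cJ^w_a=\cI_{\epsilon_+^w(a)}$ to $\cI_y$ at $y=g\,\epsilon_+^w(a)g^{-1}$ via conjugation by $g$; this is independent of $g$ by commutativity of $\cI$ on the regular locus.

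The main obstacle is to assemble these local pieces into a single group scheme $\cJ$ on $\fC_+$ together with a $G$-equivariant isomorphism defined over all of $\vin_{\gsc}^{\mathrm{reg}}$ (which in particular forces uniqueness of $\cJ$). The key is to exhibit, for each pair $w,w'\in\mathrm{Cox}(W,S)$, a canonical isomorphism $\cJ^w\cong\cJ^{w'}$ of group schemes on $\fC_+$. Over the regular semisimple locus $\fC_+^{\mathrm{rs}}$ this is automatic, since each fibre is a single $\mathrm{Ad}(G)$-orbit (Corollary~\ref{cor:ving-rs}), so $\epsilon_+^w(a)$ and $\epsilon_+^{w'}(a)$ are conjugate and commutativity makes conjugation canonical. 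Extending this isomorphism across the codimension-one discriminant divisor $\fD_+$ is the delicate step: I would proceed by localizing at a generic point of each component of $\fD_+$ and reducing to the rank-one subgroup $G_\alpha$ attached to a simple root $\alpha$, in analogy with the proof of $J_G\cong\Pi_G^W$ given above. In this rank-one local model the extended Steinberg sections become explicit, and the identification of the two centralizers can be verified directly for $\mathrm{SL}_2$ or $\mathrm{GL}_2$. Granting this gluing, the common object $\cJ$ is smooth and commutative since each $\cJ^w$ is, and the locally defined isomorphisms agree on overlaps by the very construction of the gluing data.

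Finally, to extend the resulting isomorphism to a homomorphism $\chi_+^*\cJ\to\cI$ over all of $\vin_{\gsc}$, I would argue exactly as at the end of the proof of \lemref{lem: reg-centralizer-group}. The complement $\vin_{\gsc}\setminus\vin_{\gsc}^{\mathrm{reg}}$ has codimension at least two in $\vin_{\gsc}$: it lies inside the divisor $\chi_+^{-1}(\fD_+)$ by Corollary~\ref{cor:ving-rs}, and by flatness of $\chi_+$ (Corollary~\ref{cor:chi-flat}) together with the fibrewise statement in the spirit of Corollary~\ref{cor:N-strata}(d), it meets each fibre of $\chi_+$ in a proper closed subset. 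Since $\vin_{\gsc}$ is normal and $\cI$ is affine over $\vin_{\gsc}$, the morphism extends uniquely across this codimension-two subset, and the homomorphism property passes to the extension by density of $\vin_{\gsc}^{\mathrm{reg}}$.
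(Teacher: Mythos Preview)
Your setup via the sections $\epsilon_+^w$ and the torsor description of $\vin_{\gsc}^w$ is exactly right, and your final extension argument is fine (in fact the observation that $\vin_{\gsc}\setminus\vin_{\gsc}^{\mathrm{reg}}$ already has codimension $\ge 2$, since it sits inside $\chi_+^{-1}(\fD_+)$ and is fibrewise nowhere dense by \lemref{lem:orbit-in-Steinberg-fibre}, is slightly slicker than what the paper does there). The gap is in the gluing step for $\cJ^w\cong\cJ^{w'}$.

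Your plan is to extend the isomorphism from $\fC_+^{\mathrm{rs}}$ across the codimension-one divisor $\fD_+$ by a rank-one reduction \`a la the proof of $J_G\hookrightarrow\Pi_G^W$. This is not quite the right tool: in rank one there is a \emph{single} Coxeter element, so the $\mathrm{SL}_2/\mathrm{GL}_2$ check is vacuous and cannot by itself explain why two genuinely different higher-rank sections $\epsilon_+^w$, $\epsilon_+^{w'}$ yield isomorphic centralizers near a generic point of $\fD_+$. What actually makes the argument work is that each irreducible component of $\fD_+$ has its generic point in the \emph{group locus} $\fC_{\gsc_+}\subset\fC_+$; there the fibre of $\chi_+$ is just $\gsc$ (up to center), and Steinberg's classical result that all $S$-Coxeter Steinberg sections are equivalent gives $\epsilon_+^w(c)\sim\epsilon_+^{w'}(c)$ directly, with no rank-one localization needed. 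You never verify this crucial point, and your reference to an $\mathrm{SL}_2$ computation obscures rather than supplies it.

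The paper sidesteps the issue entirely by a cleaner codimension-two argument: it shows the isomorphism on $\fC_{\gsc_+}$ (immediate from \lemref{lem: reg-centralizer-group}) and on $\fC_+^{\mathrm{rs}}$ (your argument), and then checks that the complement of $\fC_{\gsc_+}\cup\fC_+^{\mathrm{rs}}$ has codimension $\ge 2$ in $\fC_+$. The latter amounts to showing that the regular semisimple locus is dense on each boundary hyperplane $\{a_i=0\}$, which follows because the idempotent $e_{\varnothing,\Delta}$ is regular semisimple and lies in the closure of every such stratum. This avoids any localization on $\fD_+$ altogether.
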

\begin{proof}
By the same argument as Lemma~\ref{lem: reg-centralizer-group}, for each $w\in\mathrm{Cox}(W,S)$, $\cJ^w$ is the unique commutative smooth group scheme over $\fC_+$ such that 
\[(\chi_+^*\cJ^w)|_{\vin_{\gsc}^w}\cong\cI|_{\vin_{\gsc}^w}\]
Next we show that for any $w,w'\in\mathrm{Cox}(W,S)$, the group schemes $\cJ^w$ and $\cJ^{w'}$ are canonically isomorphic. It suffices to show that they are canonically isomorphic over certain open subset whose complement has codimension at least 2. From Lemma~\ref{lem: reg-centralizer-group}, we have the isomorphism over the open subset $\fC_{\gsc_+}$. Over $\fC_+^{\mathrm{rs}}$, each fiber of $\chi_+$ consists of a single $\mathrm{Ad}(G)$ orbit by Lemma~\ref{lem:orbit-in-Steinberg-fibre}. In other words, $G$ acts transitively on each fibre of $\chi_+$ over $\fC_+^{\mathrm{rs}}$. Hence $\vin_{\gsc}^{\mathrm{rs}}\subset \vin_{\gsc}^w$ for all $w\in\mathrm{Cox}(W,S)$. Thus by uniqueness of $\cJ^w$ we see that $\cJ^w$ and $\cJ^{w'}$ are isomorphic over $\fC_+^{\mathrm{rs}}$.\par 
The complement of $\fC_{\gsc_+}$ is the union of the closure of codimension 1 stratas in $\fC_+$. Since the idempotent $e_{\varnothing}$ is regular semisimple and belongs each of the strata closure we see that on each strata, the regular semisimple locus is nonempty open. Hence the complement of $\fC_{\gsc_+}\cup\fC_+^{\mathrm{rs}}\subset\fC_+$ has codimension at least 2. \par 
Consequently there is a unique commutative smooth group scheme $\cJ$ over $\fC_+$ which comes with a unique isomorphism $(\chi_+^\mathrm{reg})^*\cJ\cong\cI|_{\vin_{\gsc}^{\mathrm{reg}}}$. We know from Lemma~\ref{lem: reg-centralizer-group} that this isomorphism extends uniquely to a homomorphism between $\chi_+^*\cJ$ and $\cI$ over the open subset $\gsc_+\cup\vin_{\gsc}^{\mathrm{reg}}$ whose complement has codimension at least 2. Hence it extends further to the whole space $\vin_{\gsc}$.
\end{proof}

\begin{prop}\label{BJ-action-vin-G-prop}
The classifying stack $\bB\cJ$ acts naturally on $[\vin_{\gsc}/\mathrm{Ad}(G)]$. The action preserves the open substacks $[\vin_{\gsc}^0/\mathrm{Ad}(G)]$, $[\vin_{\gsc}^\mathrm{reg}/\mathrm{Ad}(G)]$ and $[\vin_{\gsc}^w/\mathrm{Ad}(G)]$ for each $w\in\mathrm{Cox}(W,S)$. Moreover, the morphism
\[[\chi_+^w]: [\vin_{\gsc}^w/\mathrm{Ad}(G)]\to\fC_+\]
induced by $\chi_+$ is a $\bB\cJ$ gerbe, neutralized by the extended Steinberg section $\epsilon_+^w$.
\end{prop}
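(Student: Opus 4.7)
\smallskip
\noindent\textbf{Proof proposal.}
The plan is to imitate the classical Lie algebra picture (cf.\ \cite[\S2.4]{Ngo10}), transported to the monoid setting via the material developed above. First I would construct the $\bB\cJ$-action. By Lemma~\ref{lem:reg-centralizer-Vin} we have a canonical homomorphism $\phi:\chi_+^*\cJ\to\cI$ of group schemes over $\ving$. Since $\cI\subset G\times\ving$ is by definition the stabilizer of $\mathrm{Ad}(G)$, the composite $\chi_+^*\cJ\xrightarrow{\phi}\cI\to G\times\ving$ factors through a subgroup that acts trivially on $\ving$. Concretely, for an $S$-point of $[\ving/\mathrm{Ad}(G)]$, which I view as a pair $(E,\sigma)$ consisting of a $G$-torsor $E\to S$ and a $G$-equivariant map $\sigma:E\to\ving$, we obtain by $\chi_+\circ\sigma$ an $S$-point $a:S\to\fC_+$, and hence a pullback group scheme $a^*\cJ$ on $S$. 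Given a $\cJ$-torsor $J$ on $S$ with image $a$, I push forward $\sigma^*\chi_+^*J$ along $\phi$ to obtain a $G$-torsor which I use to twist $E$; because $\phi$ lands in the centralizer, the twisted torsor carries a canonical $G$-equivariant map to $\ving$ that agrees with $\sigma$ fiberwise. This defines a functorial action of $\bB\cJ$ on $[\ving/\mathrm{Ad}(G)]$ and makes the structure map to $\fC_+$ equivariant (with $\bB\cJ$ acting trivially on the base).

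The preservation claim is essentially formal: all three loci $\ving^0,\ving^{\mathrm{reg}},\ving^w$ are $\mathrm{Ad}(G)$-stable open subschemes of $\ving$, and since the $\bB\cJ$-action modifies only the torsor and not the underlying section $\sigma$, it preserves the corresponding open substacks of $[\ving/\mathrm{Ad}(G)]$. For the gerbe statement, recall from the construction of $\ving^w$ that
\[\bar c_w:(G\times\fC_+)/\cJ^w\xrightarrow{\sim}\ving^w\]
is an isomorphism (being an open embedding with the claimed image, by Zariski's Main Theorem as recorded before \eqref{open-cover-V-reg-eq}), where $\mathrm{Ad}(G)$ acts on $G\times\fC_+$ by left multiplication on the first factor and $\cJ^w$ acts by $j\cdot(g,a)=(g\phi(j)^{-1},a)$. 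Taking the further quotient by $\mathrm{Ad}(G)$, since $\mathrm{Ad}(G)$ acts simply transitively on $G$ by left multiplication, I get
\[[\ving^w/\mathrm{Ad}(G)]\cong[(G\times\fC_+)/(\mathrm{Ad}(G)\times\cJ^w)]\cong[\fC_+/\cJ^w]\cong\fC_+\times\bB\cJ^w,\]
where in the last step $\cJ^w$ acts trivially on $\fC_+$. Now Lemma~\ref{lem:reg-centralizer-Vin} (or rather its proof, which identifies each $\cJ^w=(\epsilon_+^w)^*\cI$ with the universal $\cJ$) supplies a canonical isomorphism $\cJ^w\cong\cJ$, turning the displayed equivalence into the gerbe isomorphism $[\ving^w/\mathrm{Ad}(G)]\cong\fC_+\times\bB\cJ$ over $\fC_+$, which is exactly the assertion that $[\chi_+^w]$ is a $\bB\cJ$-gerbe.

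Finally, for the neutralization I need to check that the composite
\[\fC_+\xrightarrow{\epsilon_+^w}\ving^w\to[\ving^w/\mathrm{Ad}(G)]\]
is a section of $[\chi_+^w]$; this is immediate since $\epsilon_+^w$ is a section of $\chi_+$ (Proposition~\ref{extend-steinberg-section-prop}) and its image lies in $\ving^{\mathrm{reg}}$, hence in $\ving^w$ tautologically by the definition of $\ving^w$ as the image of $\bar c_w$ (which is $(1,\mathrm{id})\cdot\epsilon_+^w$ on the section side). The main subtle point I anticipate is checking that the $\bB\cJ$-action constructed intrinsically on $[\ving/\mathrm{Ad}(G)]$ matches the one coming from the explicit description $[\ving^w/\mathrm{Ad}(G)]\cong\fC_+\times\bB\cJ$; this amounts to verifying that under $\bar c_w$ the right $\cJ^w$-action on $G\times\fC_+$ corresponds to the twisting-by-$\phi$ construction of the first paragraph, which follows by descent from the defining identity $\bar c_w(g\phi(j)^{-1},a)=\bar c_w(g,a)$ together with the identification $\cJ^w\cong\cJ$ over $\fC_+$.
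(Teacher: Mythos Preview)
Your proposal is correct and follows precisely the approach the paper intends: the paper's own proof simply cites \cite[Proposition~2.2.1]{Ngo10}, and you have faithfully reconstructed that argument in the Vinberg monoid setting using the ingredients already assembled (Lemma~\ref{lem:reg-centralizer-Vin}, the open embedding $\bar c_w$, and Proposition~\ref{extend-steinberg-section-prop}). One cosmetic remark: $\bB\cJ$ is already a stack over $\fC_+$, so the notation $\fC_+\times\bB\cJ$ is slightly redundant, but this does not affect the argument.
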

The proof is the same as \cite[Proposition 2.2.1]{Ngo10}.
\begin{prop}
The number of irreducible components of the fibers of the map 
\[\chi_+^{\mathrm{reg}}:\vin_{\gsc}^{\mathrm{reg}}\to\fC_+\]
is bounded above by $|\mathrm{Cox}(W,S)|$ and equality is achieved at $\cN^{\mathrm{reg}}=(\chi_+^{\mathrm{reg}})^{-1}(0)$.
\end{prop}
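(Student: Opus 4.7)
The plan is to exploit the open cover \eqref{open-cover-V-reg-eq} of $\vin_{\gsc}^{\mathrm{reg}}$ together with the gerbe description of each piece given in Proposition~\ref{BJ-action-vin-G-prop}. Fix a closed point $c\in\fC_+$ and write $\cN(c):=(\chi_+^{\mathrm{reg}})^{-1}(c)$. By the definition of $\vin_{\gsc}^w$ as the image of $\bar c_w$, the intersection $\vin_{\gsc}^w\cap\chi_+^{-1}(c)$ is precisely the single adjoint orbit $\mathrm{Ad}(G)\cdot\epsilon_+^w(c)$, which is irreducible because $G$ is connected (and it is empty only if we artificially exclude any $w$; in our setting, $\epsilon_+^w(c)$ is always defined and regular by Proposition~\ref{extend-steinberg-section-prop}, so the intersection is always nonempty).

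Since each $\vin_{\gsc}^w$ is open in $\vin_{\gsc}^{\mathrm{reg}}$, the intersections $\vin_{\gsc}^w\cap\cN(c)$ form an open cover of $\cN(c)$ by irreducible subsets. Any irreducible component of $\cN(c)$ must therefore coincide with the closure (in $\cN(c)$) of at least one such piece, and consequently
\[
|\mathrm{Irr}(\cN(c))|\;\le\;\bigl|\mathrm{Cox}(W,S)\bigr|,
\]
which is the asserted upper bound. The argument is genuinely routine once the gerbe/open-cover structure is in place; no dimension or density statement about $\cN(c)$ is needed.

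For the equality at $c=0$, the fiber $\cN(0)=\cN^{\mathrm{reg}}$ has already been analyzed in Corollary~\ref{cor:N-strata}(c) and Proposition~\ref{prop:N-components}: there is a bijection
\[
\mathrm{Cox}(W,S)\;\xrightarrow{\;\sim\;}\;\mathrm{Irr}(\cN^{\mathrm{reg}}),
\qquad w\longmapsto\overline{X_{\varnothing,\Delta,w}},
\]
with each $X_{\varnothing,\Delta,w}=\mathrm{Ad}(G)\cdot(\dot w\, e_{\varnothing,\Delta})$ a single adjoint orbit of dimension $\dim G-r$. In particular the open pieces $\vin_{\gsc}^w\cap\cN^{\mathrm{reg}}$ are pairwise distinct and the bound is attained.

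The only subtle point in this plan is verifying that the pieces $\vin_{\gsc}^w\cap\chi_+^{-1}(c)$ really are distinct orbits for distinct $w$ at general $c$ --- but we do \emph{not} need this for the upper bound, since distinctness can only \emph{decrease} the number of components; the bound $|\mathrm{Cox}(W,S)|$ is a ceiling regardless. Hence the only place where a genuine computation is invoked is at $c=0$, and this has already been handled in Proposition~\ref{prop:N-components}.
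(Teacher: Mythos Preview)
Your proof is correct and follows the same approach as the paper: the upper bound comes from the open cover \eqref{open-cover-V-reg-eq} (you merely spell out why each $\vin_{\gsc}^w\cap\chi_+^{-1}(c)$ is a single orbit, hence irreducible), and the equality at $0$ is Proposition~\ref{prop:N-components}. The paper's own proof is just the two-line version of what you wrote.
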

\begin{proof}
The first statement follows from \eqref{open-cover-V-reg-eq}. The second statement is in Proposition~\ref{prop:N-components}.
\end{proof}
\begin{rem}
Consequently, unless all simple factors of $\gsc$ are $\mathrm{SL}_2$, the action of $\bB\cJ$ on $[\vin_{\gsc}^{\mathrm{reg}}/\mathrm{Ad}(G)]$ is not transitive. In other words, $[\vin_{\gsc}^{\mathrm{reg}}/\mathrm{G}]$ is not a $\bB\cJ$-gerbe, but rather a finite union of $\bB\cJ$ gerbes as in Proposition~\ref{BJ-action-vin-G-prop}. This is different from Lie algebra situation, cf \citep[Proposition 2.2.1]{Ngo10}.
\end{rem}

\subsubsection{Galois description of universal centralizer}
Let $\prod\limits_{\overline{\tsc_+}/\fC_+}(T\times \overline{\tsc_+})$ be the restriction of scalar which associates to any $\fC_+$-scheme $S$ the set
\[\prod_{\overline{T_+}/\fC_+}(T\times\overline{T_+})(S)=\Hom_{\overline{T_+}}(S\times_{\fC_+}\overline{T_+},T\times \overline{T_+})\]
Then $W$ acts diagonally on $\prod\limits_{\overline{T_+}/\fC_+}(T' \times \overline{T_+})$ and we consider its fixed point subscheme
\[\cJ^1:=(\prod_{\overline{T_+}/\fC_+}^{}T\times \overline{T_+})^W.\]
The following is proved in \cite[Proposition 11]{Bou17}.
\begin{prop}\label{Galois-description-J-prop}
$\cJ^1$ is a smooth commutative group scheme over $\fC_+$. Moreover, there exists an open embedding $\cJ\to\cJ^1$ whose restriction to $\fC_+^{\mathrm{rs}}$ is an isomorphism.
\end{prop}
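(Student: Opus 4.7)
The plan is to mirror the strategy used in the Lie algebra setting (\cite[\S 2.4]{Ngo10}) and the group case recorded earlier in the excerpt: first establish smoothness of $\cJ^1$ by hand, then build the morphism $\cJ \to \cJ^1$ by descending a canonical quotient to $T$ on regular elements, and finally check it is an isomorphism over $\fC_+^{\mathrm{rs}}$ and an open embedding everywhere. The smoothness of $\cJ^1$ is the easy step: since $q\colon \overline{T_+}\to\fC_+$ is finite flat (stated in the excerpt) and $T\times\overline{T_+}\to\overline{T_+}$ is smooth, the Weil restriction $\prod_{\overline{T_+}/\fC_+}(T\times\overline{T_+})$ is representable and smooth over $\fC_+$. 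The order of $W$ is invertible in $k$ by the standing hypothesis on the characteristic, so the $W$-fixed subscheme is smooth and $\fC_+$-flat, exactly as in the corresponding argument for $\Pi_G^W$.

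To construct the morphism $\cJ \to \cJ^1$, I would proceed by adjunction: it suffices to produce a $W$-equivariant $\overline{T_+}$-morphism $q^*\cJ \to T\times\overline{T_+}$. I would build this by descent along a ``simultaneous resolution'' $\widetilde{\vin}_{\gsc}^{\mathrm{reg}}\to\overline{T_+}$ fitting into a Cartesian square with $\chi_+^{\mathrm{reg}}$ (the monoid analogue of the Grothendieck--Springer resolution), using that every regular element of $\vin_{\gsc}$ lies in a canonical Borel-like subvariety so that its centralizer admits a natural quotient to $T$. This produces the morphism over the preimage of $\chi_+(\vin_{\gsc}^{\mathrm{reg}})$, which, by the argument used in the proof of Lemma~\ref{lem:reg-centralizer-Vin}, contains the open subset $\fC_+^{\mathrm{rs}}\cup\fC_{\gsc_+}$ whose complement has codimension at least two in $\fC_+$; smoothness of both $\cJ$ and $\cJ^1$ then allows one to extend the morphism uniquely over all of $\fC_+$.

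Over $\fC_+^{\mathrm{rs}}$ the map $q$ is étale Galois with group $W$ and each fiber of $\chi_+$ is a single $\mathrm{Ad}(G)$-orbit with stabilizer a maximal torus (Corollary~\ref{cor:ving-rs}), which under the pulled-back cover is canonically identified with the constant torus $T$; this yields the isomorphism $\cJ|_{\fC_+^{\mathrm{rs}}}\xrightarrow{\sim}\cJ^1|_{\fC_+^{\mathrm{rs}}}$. The subtle point, and what I expect to be the main obstacle, is upgrading ``isomorphism on a dense open of $\fC_+$ between two smooth group schemes of the same relative dimension'' to an \emph{open embedding} across the whole base, since monomorphisms of smooth group schemes of equal dimension need not be open in general. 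I would handle this by reducing to an étale neighborhood of the generic point of each codimension-one stratum cut out by a root hyperplane $T_\alpha$, exactly as in the proof of the analogous statement for $J_{G'}\hookrightarrow\Pi_G^W$: there the universal centralizer is controlled by the rank-one Levi $G_\alpha$, and one checks the open embedding directly for the Vinberg monoid of a reductive group whose derived group is $\mathrm{SL}_2$, reducing finally to an explicit computation in that rank-one case.
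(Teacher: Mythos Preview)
The paper does not actually prove this proposition: it simply cites \cite[Proposition 11]{Bou17}. So there is no in-paper argument to compare against beyond the earlier group-case proposition for $J_{G'}\hookrightarrow\Pi_G^W$, whose template you are explicitly following.

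Your sketch is broadly sound and is indeed the strategy Bouthier carries out, but there is one step you are treating too lightly. The construction of the morphism $q^*\cJ\to T\times\overline{T_+}$ hinges on the claim that for any regular $\gamma\in\vin_{\gsc}$ lying in a ``Borel-like'' subscheme, the centralizer $I_\gamma$ maps canonically to $T$. In the group case this is \cite[Lemme 2.4.3]{Ngo10}, whose proof uses that $I_x$ is connected for $x\in G^{\mathrm{reg}}$ and that a connected solvable group acts trivially on a complete variety it stabilizes pointwise on an open set. For the Vinberg monoid you need both the existence of a Grothendieck--Springer analogue $\widetilde{\vin}_{\gsc}^{\mathrm{reg}}\to\overline{T_+}$ (this is not automatic from the group case and is one of the constructions in \cite{Bou17}) and the monoid analogue of the containment $I_\gamma\subset$ ``Borel''; neither is addressed in your proposal. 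These are genuine ingredients, not formalities.

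A second point: your reduction of the open-embedding claim to codimension-one strata and then to $\mathrm{SL}_2$ is the right shape, but in the monoid setting the codimension-one boundary of $\fC_+$ also includes strata coming from $A_{\gsc}\setminus T_{\mathrm{ad}}$, not just root hyperplanes. You need to check the open embedding there as well; the argument in the paper's proof of Lemma~\ref{lem:reg-centralizer-Vin} (using that $e_{\varnothing,\Delta}$ is regular semisimple and lies in every such stratum closure) shows these strata meet $\fC_+^{\mathrm{rs}}$, which handles them, but you should say so.
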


\subsection{Arc space of Vinberg monoid}
In this section, we assume that $G$ is semisimple and simply connected.\par 
For each $\la\in X_*(\tad)_+$, define the affine scheme $\ving^\la$ over $\spec\cO$ by the following Cartesian diagram
\[\xymatrix{
\ving^\la\ar[r]\ar[d] & \ving\times\tad\ar[d]\\
\spec\cO\ar[r]^{\varpi^{-w_0(\la)}} & A_G
}\]
where the right vertical arrow is the product of the abelianization map $\alpha_G$ and the natural embedding $\tad\into A_G$, the lower horizontal arrow corresponds to the point $\varpi^{-w_0(\la)}\in A_G(\cO)$. Replacing $\ving$ by its open subscheme $\ving^0$ in the above diagram, we define an open subscheme $\ving^{\la,0}\subset\ving^\la$.

There is a stratification of the space of nondegenerate arcs of $A_G\supset\tad$ by $\tad(\cO)$ orbits:
\[A_G(\cO)\cap\tad(F) = \bigsqcup_{\la\in X_*(\tad)_+}\tad(\cO)\varpi^{-w_0(\la)}\]
The inverse image of $\tad(\cO)\varpi^{-w_0(\la)}$ under the abelianization map is precisely $L^+\ving^\la(k)$. In other words we get a stratification of the space of nondegenerate arcs of $\ving\supset G_+$ into $G_+(\cO)$-stable pieces:
\[\ving(\cO)\cap G_+(F)=\bigsqcup_{\la\in X_*(\tad)_+}L^+\ving^\la(k)\]
Also we note that
\[L^+\ving^{\la,0}(k)=L^+\ving^\la(k)\cap \ving^0(\cO).\]

\begin{lem}\label{lem:vin-strata-double-coset}
For any $g_+\in G_+(F)$, we have $g_+\in L^+\ving^\la$ if and only if $\alpha(g_+)\in\varpi^{-w_0(\la)}\tad(\cO)$ and the image of $g_+$ in $\gad(F)$ belongs to 
\[\overline{\gad(\cO)\varpi^\la\gad(\cO)}=\bigcup_{\substack{\mu\in X_*(\tad)^+\\
\mu\le\la}} \gad(\cO)\varpi^\mu\gad(\cO).
\]
Moreover, $g_+\in L^+\ving^{\la,0}$ if and only if $\alpha(g_+)\in\varpi^{-w_0(\la)}\tad(\cO)$ and the image of $g_+$ in $\gad(F)$ belongs to the double coset $\gad(\cO)\varpi^{\la}\gad(\cO)$.
\end{lem}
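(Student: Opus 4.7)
The first condition, $\alpha(g_+)\in\varpi^{-w_0(\la)}\tad(\cO)$, is tautological from the defining Cartesian square for $\ving^\la$, so the real content is to translate the integrality $g_+\in\ving(\cO)$ into the Cartan datum of the image $\bar g\in\gad(F)$. My plan is to use the defining embedding $(\alpha^+,\rho^+):\ving\hookrightarrow\bA^r\times\prod_{i=1}^r\mathrm{End}(V_{\omega_i})$: since $\ving$ is the normalization of the closure of $G_+$ in the target and $\cO$ is a normal DVR, the universal property of normalization implies that $g_+\in\ving(\cO)$ for $g_+\in G_+(F)$ is equivalent to the integrality of all the ambient coordinates, i.e.\ $\alpha_j^+(g_+)\in\cO$ for each $j$ and $\rho_i^+(g_+)\in\mathrm{End}(V_{\omega_i})(\cO)$ for each $i$.

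Granted the valuation hypothesis on $\alpha(g_+)$, I would lift $\bar t:=\alpha(g_+)$ to $t=t_0\varpi^{-w_0(\la)}\in T(F)$ with $t_0\in T(\cO)$. This lift exists because $-w_0(\la)\in X_*(T)$ and $T(\cO)\to\tad(\cO)$ is surjective (using that $Z_G$ is finite \'etale and $\cO$ is henselian with algebraically closed residue). Then $g_+$ decomposes over $F$ as $(t,g)$ with $g\in G(F)$, and writing $g=k_1\varpi^\mu k_2$ in Cartan form with $\mu\in X_*(T)^+$ and $k_1,k_2\in G(\cO)$, the fact that multiplication by $\rho_{\omega_i}(k_j)\in\mathrm{GL}(V_{\omega_i})(\cO)$ preserves minimum entry valuations gives
\[
\min_{\text{entries}}v\bigl(\rho_i^+(g_+)\bigr)=v(\omega_i(t))+\langle w_0(\omega_i),\mu\rangle=\langle\omega_i,w_0(\mu-\la)\rangle,
\]
using $v(\omega_i(t))=\langle\omega_i,-w_0(\la)\rangle$. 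Requiring this to be $\geq 0$ for every $i$ is equivalent, after reindexing fundamental weights via the permutation $\omega_i\leftrightarrow-w_0(\omega_i)$, to $\langle\omega_j,\la-\mu\rangle\geq 0$ for every $j$. Since $\la,\mu\in X_*(T)\subset X_*(\tad)$, the difference $\la-\mu\in X_*(T)$ is automatically in the coroot lattice, and combined with the nonnegative pairings this gives $\la-\mu\in\bZ_{\geq 0}\Delta^\vee$, i.e.\ $\mu\leq\la$. Thus $\bar g\in\gad(\cO)\varpi^\mu\gad(\cO)\subset\overline{\gad(\cO)\varpi^\la\gad(\cO)}$, establishing one direction of the first assertion; the converse follows by reading the equivalences backward.

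For the open stratum $L^+\ving^{\la,0}$, the extra requirement is that each $\rho_i^+(g_+)$ is nonzero modulo $\varpi$, i.e.\ the minimum entry valuation in the formula above equals $0$ exactly. This forces $\langle\omega_i,w_0(\mu-\la)\rangle=0$ for every $i$, hence $\mu=\la$ and $\bar g\in\gad(\cO)\varpi^\la\gad(\cO)$.

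The step I expect to need the most care is the decomposition $g_+=(t,g)$ over $F$: in general this is obstructed by $H^1(F,Z_G)$, but under our hypothesis on $\alpha(g_+)$---namely that its valuation $-w_0(\la)$ lies in $X_*(T)$---the obstruction vanishes, so no \'etale base change is required. The rest of the argument depends only on the elementary observation that $\min_{\text{entries}}v(ABC)=\min_{\text{entries}}v(B)$ whenever $A,C\in\mathrm{GL}_n(\cO)$, which holds because every column of a matrix in $\mathrm{GL}_n(\cO)$ has a unit entry.
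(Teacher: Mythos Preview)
Your overall strategy—testing $\cO$-integrality of $g_+$ via the ambient coordinates $\rho_i^+(g_+)$ and reading off the minimum entry valuation in terms of the Cartan datum—is the same as the paper's. The normalization argument for reducing $g_+\in\ving(\cO)$ to integrality in the ambient $\bA^r\times\prod_i\mathrm{End}(V_{\omega_i})$ is correct, as is the elementary observation about minimum entry valuations under $\mathrm{GL}_n(\cO)$-multiplication.

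There is, however, a genuine gap at exactly the step you flagged. In this section $G$ is semisimple simply connected, so $T=\tsc$ and $X_*(T)$ is the \emph{coroot} lattice, whereas $\la$ ranges over $X_*(\tad)_+$, the \emph{coweight} lattice. In general $-w_0(\la)\notin X_*(T)$ (take $G=\mathrm{SL}_2$ and $\la=\check\omega_1$), so $\varpi^{-w_0(\la)}$ does not lift from $\tad(F)$ to $T(F)$, and the obstruction in $H^1(F,Z_G)$ to splitting $g_+=(t,g)$ with $t\in T(F)$, $g\in G(F)$ is genuinely nonzero. Your later claim that ``$\la,\mu\in X_*(T)$, so $\la-\mu$ is automatically in the coroot lattice'' inherits the same mistake.

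The paper sidesteps this by using the Cartan decomposition of $G_+(F)$ directly: since $L^+\ving^\la$ is $G_+(\cO)$-bi-invariant, it suffices to test membership on $T_+(F)$, where one writes $g_+=\varpi^{(\la_1,\la_2)}t_0$ with $(\la_1,\la_2)\in X_*(T_+)$ and $t_0\in T_+(\cO)$. The coweight lattice of $T_+$ is $\{(\la_1,\la_2)\in X_*(\tad)^2:\la_1+\la_2\in X_*(T)\}$; the constraint $\la_1=-w_0(\la)$ coming from $\alpha$ then forces $\la_2\equiv w_0(\la)\equiv\la\pmod{X_*(\tsc)}$ (using that $W$ acts trivially on $X_*(\tad)/X_*(\tsc)$), which is precisely the congruence needed to conclude that $\la-\mu$ lies in the coroot lattice once $\mu=(\la_2)_{\mathrm{dom}}$. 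Your argument is easily repaired by replacing the $(t,g)$-splitting with this reduction to $T_+(F)$; the valuation computation you wrote then goes through unchanged.
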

\begin{proof}
The coweight lattice for $T_+$ can be expressed as
\[X_+(T_+)=\{(\la_1,\la_2)\in X_*(\tad)\times X_*(\tad)|\la_1+\la_2\in X_*(T)\}\]
For $(\la_1,\la_2)\in X_*(T_+)$, we have $\varpi^{(\la_1,\la_2)}\in L^+\ving^\la$ if and only if 
\begin{itemize}
\item $\alpha(\varpi^{(\la_1,\la_2)})\in\varpi^{-w_0(\la)}\tad(\cO)$ and
\item The matrix $\rho_{\omega_i}^+(\varpi^{(\la_1,\la_2)})\in\mathrm{End}(V_{\omega_i})$ has entries in $\cO$ for all $1\le i\le r$.
\end{itemize}
Since $\alpha(\varpi^{(\la_1,\la_2)})=\varpi^{\la_1}$, the first condition means that $\la_1=-w_0(\la)$. Then the second condition means that 
\[\langle(-w_0(\la),\la_2), \chi_+\rangle\ge 0\]
for all $1\le i\le r$ and all weights $\chi_+$ in the $G_+$-representation $\rho_{\omega_i}^+$. Since the weights of the representation $\rho_{\omega_i}^+$ lie in the convex hull of the $W$-orbit of the highest weight $(\omega_i,\omega_i)$ where $W$ acts on the second factor, the above inequality is equivalent to 
\[\langle-w_0(\la),\omega_i\rangle+\langle\la_2,w(\omega_i)\rangle=\langle(-w_0(\la),\la_2), (\omega_i,w(\omega_i))\rangle\ge0\]
for all $w\in W$ and $1\le i\le r$. This can be further reformulated as 
\[\langle\la-w(\la_2),\omega_i\rangle\ge0\] for all $w\in W$ and $1\le i\le r$.\par 
By the discussion so far, we have
\[L^+\ving^\la(k)\cap T_+(F)=\bigcup_{\substack{\mu\in X_*(\tad)\\ \mu_{\mathrm{dom}}\le\la}}\varpi^{(-w_0(\la),\mu)}T_+(\cO)\]
where $\mu_{\mathrm{dom}}$ denotes the unique dominant coweight in the $W$-orbit of $\mu$. As $L^\la\ving$ is stable under the action of $G_+(\cO)\times G_+(\cO)$, it is a union of $G_+(\cO)$ double cosets in $G_+(F)$. Thus by Cartan decomposition we get
\[L^+\ving^\la(k)=\bigsqcup_{\substack{\mu\in X_*(\tad)^+\\ \mu\le \la}}G_+(\cO)\varpi^{(-w_0(\la),\mu)}G_+(\cO)\]
Similarly we can get a description of $L^\la\ving^0$. The difference is that we require furthermore that $\rho_{\omega_i}^+(\varpi^{(-w_0(\la),\la_2)})$ have nonzero reduction mod $\varpi$ for all $1\le i\le r$. Hence besides the inequality $\langle\la-w(\la_2),\omega_i\rangle\ge0$ for all $w\in W$ and $1\le i\le r$, we require furthermore that for each $i$, there exists $w\in W$ such that $\langle\la-w(\la_w),\omega_i\rangle=0$. This condition means that implies that $\la_2$ is in the $W$-orbit of $\la$ and hence
\[L^+\ving^{\la,0}(k)=G_+(\cO)\varpi^{(-w_0(\la),\la)}G_+(\cO).\]
From these description the lemma follows.
\end{proof}

\begin{lem}\label{lem:vinG-approx}
Suppose $n\ge b(\la):=\max\limits_{1\le i\le r}\langle\la,\omega_i-w_0(\omega_i)\rangle$. Then for all $\ga,\ga'\in L^+\ving^\la(k)$ having the same image in $\ving(\cO/\varpi^n\cO)$, there exists $g\in G_+(\cO)$ such that $\ga'=\ga g$.
\end{lem}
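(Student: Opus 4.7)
The plan is to use the defining embedding
\[(\alpha^+,\rho^+):\ving\hookrightarrow\bA^r\times\prod_{i=1}^r\mathrm{End}(V_{\omega_i})\]
to reduce the claim to a matrix computation. Set $g:=\ga^{-1}\ga'\in G_+(F)$. Since $G_+$ is the unit group of the monoid $\ving$ and $\cO$ is normal, it suffices to check that $\alpha_i^+(g)\in\cO^\times$ and $\rho_{\omega_i}^+(g)\in\mathrm{GL}(V_{\omega_i})(\cO)$ for every $i$: these conditions place both $g$ and $g^{-1}$ in $\ving(\cO)$, forcing $g\in G_+(\cO)$. The condition on $\alpha_i^+$ is immediate, because Lemma~\ref{lem:vin-strata-double-coset} gives $\alpha(\ga),\alpha(\ga')\in\varpi^{-w_0(\la)}\tad(\cO)$, whence $\alpha(g)\in\tad(\cO)$.

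For the representations I would exploit the Cartan decomposition inside $L^+\ving^\la$ supplied by Lemma~\ref{lem:vin-strata-double-coset}: write $\ga=k_1\varpi^{(-w_0(\la),\mu)}k_2$ with $k_1,k_2\in G_+(\cO)$ and $\mu\in X_*(\tad)^+$ satisfying $\mu\le\la$. In a weight basis of $V_{\omega_i}$ the matrix $\rho_{\omega_i}^+(\varpi^{(-w_0(\la),\mu)})$ is diagonal with entry $\varpi^{\langle-w_0(\la),\omega_i\rangle+\langle\mu,\chi\rangle}$ on the $\chi$-weight space. The maximal exponent occurring is $\langle-w_0(\la)+\mu,\omega_i\rangle$; using $\mu\le\la$ together with $\langle\alpha_j^\vee,\omega_i\rangle=\delta_{ij}$ (which yields $\langle\mu,\omega_i\rangle\le\langle\la,\omega_i\rangle$), this is at most $\langle\la,\omega_i-w_0(\omega_i)\rangle\le b(\la)$. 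Consequently the entries of $\rho_{\omega_i}^+(\ga)^{-1}\in\mathrm{GL}(V_{\omega_i})(F)$ have valuation at least $-b(\la)$. Writing $\rho_{\omega_i}^+(\ga')=\rho_{\omega_i}^+(\ga)+\varpi^n M$ with $M\in\mathrm{End}(V_{\omega_i})(\cO)$, we obtain
\[\rho_{\omega_i}^+(g)=I+\varpi^n\rho_{\omega_i}^+(\ga)^{-1}M\in I+\varpi^{n-b(\la)}\mathrm{End}(V_{\omega_i})(\cO)\subset\mathrm{End}(V_{\omega_i})(\cO).\]
Invertibility in $\mathrm{GL}(V_{\omega_i})(\cO)$ then follows from the determinant: $\det\rho_{\omega_i}^+$ is a character of $G_+$ and, since $G$ has no nontrivial characters, it factors through the abelianization $\alpha$ as a character of $\tad$; in particular it carries $\tad(\cO)$ into $\cO^\times$, so $\det\rho_{\omega_i}^+(g)\in\cO^\times$.

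The main obstacle in this plan is securing the precise bound $b(\la)$ on the valuations of entries of $\rho_{\omega_i}^+(\ga)^{-1}$, which encodes the combined effect of two constraints from Lemma~\ref{lem:vin-strata-double-coset}: the restriction $\mu\le\la$ and the dominance of $\mu$. Once this bound is in hand, the rest is formal manipulation with the defining embedding of $\ving$ and the fact that $G_+$ is the unit group of the monoid.
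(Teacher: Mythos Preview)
Your proof is correct and reaches the same conclusion as the paper---that $g:=\ga^{-1}\ga'$ lies in $G_+(\cO)$ by checking $\alpha_i^+(g)\in\cO^\times$ and $\rho_{\omega_i}^+(g)\in\mathrm{GL}(V_{\omega_i})(\cO)$---but the key step is obtained differently.

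You establish the crucial bound (that entries of $\rho_{\omega_i}^+(\ga)^{-1}$ have valuation $\ge -b(\la)$) via the Cartan decomposition of Lemma~\ref{lem:vin-strata-double-coset}: writing $\ga=k_1\varpi^{(-w_0(\la),\mu)}k_2$ and reading off diagonal exponents in a weight basis, then using $\mu\le\la$ and dominance to cap the maximal exponent by $\langle\la,\omega_i-w_0(\omega_i)\rangle$. The paper instead invokes the duality between $V_{\omega_i}$ and $V_{\omega_{i^*}}$ (where $\omega_{i^*}=-w_0(\omega_i)$): the pairing identity $\langle\rho_i^+(x)v,\rho_{i^*}^+(x)v^*\rangle=(\omega_i+\omega_{i^*})(\alpha(x))\langle v,v^*\rangle$ shows that the lattice $\rho_i^+(\ga)V_{\omega_i}(\cO)$ is dual, up to the twist $\varpi^{\langle\la,\omega_i+\omega_{i^*}\rangle}$, to $\rho_{i^*}^+(\ga)V_{\omega_{i^*}}(\cO)$, and dualising the containment $\rho_{i^*}^+(\ga)V_{\omega_{i^*}}(\cO)\subset V_{\omega_{i^*}}(\cO)$ immediately gives $\varpi^{\langle\la,\omega_i-w_0(\omega_i)\rangle}V_{\omega_i}(\cO)\subset\rho_i^+(\ga)V_{\omega_i}(\cO)$. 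From there the paper concludes that the two lattices $\rho_i^+(\ga)V_{\omega_i}(\cO)$ and $\rho_i^+(\ga')V_{\omega_i}(\cO)$ coincide once $n\ge b(\la)$, without ever writing $\rho_{\omega_i}^+(g)=I+\varpi^n\rho_{\omega_i}^+(\ga)^{-1}M$ or appealing to a determinant/character argument for invertibility. The duality route is a bit slicker and self-contained (it does not rely on the preceding Cartan decomposition lemma); your route is more explicitly computational but equally valid, and has the small advantage of making the role of the two inequalities $\mu\le\la$ and $\mu$ dominant completely transparent.
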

\begin{proof}
The following argument is due to Zhiwei Yun. Let $i\mapsto i^*$ be the involution on the set $\{1,\dotsc,r\}$ such that $\omega_{i^*}=-w_0(\omega_i)$. For each $1\le i\le r$, there is a natural pairing between $V_i$ and $V_{i^*}$ such that for all $x\in G_+$, $v\in V_i$ and $v^*\in V_{i^*}$, we have
\[\langle\rho_i^+(x)v,\rho_{i^*}^+(x)v^*\rangle=(\omega_i+\omega_{i^*})(\alpha(x))\langle v,v^*\rangle.\]
Thus for each $x\in G_+(F)$, under the natural pairing above, the lattice $\rho_i^+(x)V_i(\cO)$ in $V_i(F)$ is dual to the lattice 
\[(\omega_i+\omega_{i^*})(\alpha(x)^{-1})\rho_{i^*}^+(g)V_{i^*}(\cO)\subset V_{i^*}(F).\]
For $\ga\in L^+\ving^\la\subset\ving(\cO)$, we have $\rho_i^+(\ga)V_i(\cO)\subset V_i(\cO)$ for all $1\le i\le r$. Taing duals, we get
\[V_{i^*}(\cO)\subset \varpi^{-\langle\la,\omega_i+\omega_{i^*}\rangle}\rho_{i^*}^+(\ga)V_{i^*}(\cO).\] 
In other words, we have shown that for all $1\le i\le r$,
\[\varpi^{\langle\la,\omega_i+\omega_{i^*}\rangle}V_i(\cO)\subset\rho_i^+(\ga)V_i(\cO)\subset V_i(\cO).\]
Thus if $\ga$ and $\ga'$ have the same image in $\ving(\cO/\varpi^n\cO)$ for $n\ge b(\la)$, the lattices $\rho_i^+(\ga)V_i(\cO)$ and $\rho_i(\ga')V_i(\cO)$ are the same and hence $\ga'=\ga g$ for some $g\in G_+(\cO)$.
\end{proof}

\subsection{Local lifting property of extended Steinberg map}
We review certain infinitesimal lifting property of the extended Steinberg morphism $\chi_+$ needed later in \S~\ref{sec:singularity-HFN}. This is based on some result of Gabber-Ramero in \cite{GR}. Our exposition below follow the treatment in \cite{Bou17} and \cite{Yun}.
\subsubsection{}
We start by recalling certain results in \cite[\S5.4]{GR}. Let $A$ be a ring and $B$ an $A$-algebra of finite presentation. Let $f:\spec B\to\spec A$ be the natural morphism. Choose a presentation $B\cong P/J$ where $P:=A[X_1,\dotsc,X_N]$ and $J\subset P$ is a finitely generated ideal. Then the map $f$ is factored as the composition of a closed embedding $i:\spec B\into \bA_A^N$ and the natural projection $p:\bA_A^N\to\spec A$. Define the ideal
\[H_A(P,J):=\mathrm{Ann}_P\mathrm{Ext}_B^1(\mathbb{L}_{B/A},J/J^2)\subset P\]
where $\mathbb{L}_{B/A}$ is the cotangent complex of the morphism $\chi$. Notice that $J\subset H_A(P,J)$.\par 
Consider the ideal in $B$ defined by $H_{B/A}:=H_A(P,J)B=H_A(P,J)/J$. Let $\Sigma_f:=\spec B/H_{B/A}$ be the closed subscheme of $\spec B$ defined by $H_{B/A}$. We remark that $H_{B/A}$ depends on the choice of presentation $B\cong P/J$. The following is \cite[Lemma 5.4.2]{GR}:
\begin{lem}\label{lem:GR}
\begin{enumerate}
\item For all $B$-module $N$, $H_{B/A}$ annihilates $\mathrm{Ext}^1_B(\mathbb{L}_{B/A},N)$.
\item The complement of $\Sigma_f=\spec B/H_{B/A}$ in $\spec B$ is the smooth locus of the morphism $\chi:\spec B\to\spec A$.
\item For any $A$-algebra $A'$, let $B'=B\otimes_AA'$ and $f':\spec B'\to\spec A'$ be the induced morphism. Define the ideal $H_{B'/A'}\subset B'$ in the same way as above, using the presentation of $B'$ induced from $B\cong P/J$. Then we have $H_{B/A}B'\subset H_{B'/A'}$, or in other words $\Sigma_{f'}\subset\Sigma_f\times_ZZ'$.
\end{enumerate}
\end{lem}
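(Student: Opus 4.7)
The plan is to derive all three parts from a single computation of $\mathrm{Ext}^1_B(\mathbb{L}_{B/A},N)$ via the cotangent complex, together with a universal-element argument. The setup is as follows. The factorization $A\to P\to B$ with $P=A[X_1,\dotsc,X_N]$ smooth over $A$ gives the transitivity triangle
\[
B\otimes^L_P\mathbb{L}_{P/A}\to\mathbb{L}_{B/A}\to\mathbb{L}_{B/P}\to B\otimes^L_P\mathbb{L}_{P/A}[1].
\]
Since $P$ is polynomial, $\mathbb{L}_{P/A}\cong\Omega_{P/A}$ is free, so $\mathrm{Ext}^i_B(B\otimes\Omega_{P/A},N)=0$ for $i\ge 1$. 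Also $\tau_{\ge-1}\mathbb{L}_{B/P}\cong J/J^2[1]$ since $P\to B$ is surjective with kernel $J$. Applying $R\mathrm{Hom}_B(-,N)$ and taking cohomology, I obtain the fundamental exact sequence
\[
\mathrm{Hom}_B(\Omega_{P/A}\otimes_P B,N)\xrightarrow{d^*}\mathrm{Hom}_B(J/J^2,N)\xrightarrow{\kappa_N}\mathrm{Ext}^1_B(\mathbb{L}_{B/A},N)\to 0,
\]
which is functorial in $N$. Setting $N=J/J^2$, the class $\eta_0:=\kappa_{J/J^2}(\mathrm{id}_{J/J^2})$ is a \emph{universal} element: for any $N$ and any $\eta\in\mathrm{Ext}^1_B(\mathbb{L}_{B/A},N)$, choose $\varphi\in\mathrm{Hom}_B(J/J^2,N)$ with $\kappa_N(\varphi)=\eta$; then $\eta=\varphi_*\eta_0$ by naturality.

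For part (1), let $x\in H_A(P,J)$. By definition $x\eta_0=0$ in $\mathrm{Ext}^1_B(\mathbb{L}_{B/A},J/J^2)$. Given $\eta\in\mathrm{Ext}^1_B(\mathbb{L}_{B/A},N)$, write $\eta=\varphi_*\eta_0$ as above. Since $\varphi_*$ is $B$-linear, $x\eta=\varphi_*(x\eta_0)=0$. Hence $H_{B/A}=H_A(P,J)B$ annihilates $\mathrm{Ext}^1_B(\mathbb{L}_{B/A},N)$ for every $B$-module $N$.

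For part (2), I use the standard fact that a finitely presented morphism $f:\spec B\to\spec A$ is smooth at a point $\mathfrak{p}\in\spec B$ if and only if $\mathrm{Ext}^1_B(\mathbb{L}_{B/A},N)_\mathfrak{p}=0$ for every finitely generated $B$-module $N$ (this is formal smoothness via Illusie's obstruction theory). By part (1) and the exact sequence above, this vanishing is controlled by the localization of $\mathrm{Ext}^1_B(\mathbb{L}_{B/A},J/J^2)$, which is itself a finitely generated $B$-module annihilated by $H_{B/A}$ and with annihilator equal to $H_{B/A}$ by definition. Hence the non-vanishing locus of these Exts (the non-smooth locus of $f$) is exactly $V(H_{B/A})=\Sigma_f$.

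For part (3), I invoke the base-change isomorphism $\mathbb{L}_{B'/A'}\cong B'\otimes^L_B\mathbb{L}_{B/A}$ for the cotangent complex, together with the corresponding base change of the transitivity triangle along $P\to P'=P\otimes_A A'$. Since the presentation $B'\cong P'/J'$ with $J'=JP'$ and $J'/J'^2\cong B'\otimes_B(J/J^2)$ is obtained by tensoring the original presentation with $A'$, the universal element $\eta_0$ pulls back to the universal element $\eta_0'\in\mathrm{Ext}^1_{B'}(\mathbb{L}_{B'/A'},J'/J'^2)$. Therefore for $x\in H_A(P,J)$, applying base change to $x\eta_0=0$ gives $(x\otimes 1)\eta_0'=0$, so the image of $x$ in $P'$ lies in $H_{A'}(P',J')$. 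This yields $H_A(P,J)P'\subset H_{A'}(P',J')$, i.e. $\Sigma_{f'}\subset\Sigma_f\times_{\spec A}\spec A'$ as claimed. The main subtlety in the whole argument is the functoriality of the universal element under base change and verifying that no projectivity assumption on $J/J^2$ is needed beyond what the cotangent complex machinery already supplies.
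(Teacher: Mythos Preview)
The paper does not give its own proof of this lemma; it simply records the statement and cites \cite[Lemma 5.4.2]{GR}. So there is no ``paper's proof'' to compare against, and your write-up is an independent argument for a quoted result.

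Your argument is essentially correct and follows the standard route through the naive cotangent complex. Part (1) is clean: the surjection $\mathrm{Hom}_B(J/J^2,N)\twoheadrightarrow\mathrm{Ext}^1_B(\mathbb{L}_{B/A},N)$ and the universal-element trick with $N=J/J^2$ are exactly right. For part (2), your wording is a bit compressed; the key point worth making explicit is that $\mathrm{Ext}^1_B(\mathbb{L}_{B/A},J/J^2)_\mathfrak{p}=0$ forces $\mathrm{id}_{(J/J^2)_\mathfrak{p}}$ to lie in the image of $d^*$, i.e.\ $d:(J/J^2)_\mathfrak{p}\to(\Omega_{P/A}\otimes_PB)_\mathfrak{p}$ admits a retraction, so the naive cotangent complex is locally quasi-isomorphic to a free module in degree $0$ and $f$ is smooth at $\mathfrak{p}$.

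For part (3), the phrase ``$\eta_0$ pulls back to $\eta_0'$'' hides the only real work. You do not need the isomorphism $J'/J'^2\cong(J/J^2)\otimes_BB'$ (which can fail without flatness); what you actually use is the \emph{surjection} $(J/J^2)\otimes_BB'\twoheadrightarrow J'/J'^2$ together with the concrete description of the Ext as a cokernel. Concretely: $x\in H_A(P,J)$ means there is an $A$-derivation $D:P\to J/J^2$ with $D|_J=x\cdot\mathrm{id}$; then $D\otimes 1$ followed by the surjection gives an $A'$-derivation $D':P'\to J'/J'^2$ with $D'|_{J'}=x'\cdot\mathrm{id}$, so $x'\in H_{A'}(P',J')$. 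Phrasing it this way removes any appeal to base change of the full cotangent complex and makes the argument self-contained.
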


\subsubsection{}
When $A=k[[\varpi]]$, we define the \emph{conductor} of $f$ to be the smallest integer $h$ such that $\varpi^h\in H_{B/A}$. Note that the conductor depends on the presentation of $B$.\par 
The next lemma is the key step in establishing the local lifting result. To state it we let $R$ be an artin local $k$-algebra with maximal ideal $\fm$ and $I\subset R$ an ideal with $I\cdot\fm=0$. Suppose $A=R[[\varpi]]$. Let $B=P/J$ be a finitely presented $A$-algebra and $f:\spec B\to\spec A$ the induced morphism as above. Let $f_0$ be the reduction of $f$ mod $\fm$ and $h\ge0$ the conductor of $f_0$. 
\begin{lem}\label{lem:key-lifting}
Suppose $n\ge h$ and $\sigma:\spec A/\varpi^nI\to\spec B$ is a morphism such that the composition $f\circ\sigma$ is the natural embedding $\spec A/\varpi^nI\into\spec A$. Then there exists a section $\tilde\sigma:\spec A\to\spec B$ of $f$ such that the restriction of $\tilde\sigma$ to $\spec A/\varpi^{n-h}I$ coincides with $\sigma$. 
\end{lem}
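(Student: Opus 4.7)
The plan is to apply the standard square-zero obstruction theory and reduce the lifting problem to the Gabber--Ramero annihilation statement Lemma~\ref{lem:GR}(1) applied to the reduction $f_0$. Observe first that since $R$ is local with $I\cdot\fm=0$ we have $I\subset\fm$ and hence $I^2=0$; consequently both $\varpi^nI\cdot A$ and $N:=\varpi^{n-h}I\cdot A$ are square-zero ideals of $A$ annihilated by $\fm$, and are naturally modules over $B_0:=B/\fm B$ (with $A_0:=A/\fm A=k[[\varpi]]$). The goal is to lift the composition $\bar\sigma:B\xrightarrow{\sigma}A/\varpi^nI\cdot A\epic A/N$ to an honest section $\tilde\sigma:B\to A$. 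Choose any set-theoretic preimages $\tilde a_i\in A$ of $\bar\sigma(X_i)\in A/N$; this defines an $A$-algebra homomorphism $\tilde\sigma_P:P\to A$ reducing to $\bar\sigma$ modulo $N$. Its restriction $\tilde\sigma_P|_J$ lands in $N$ and, using $N^2=0$, descends to a $B$-linear map $\tilde\tau:J/J^2\to N$. Standard deformation theory identifies the obstruction to the existence of $\tilde\sigma$ with the class $[\tilde\tau]\in\mathrm{Ext}^1_B(\mathbb{L}_{B/A},N)$.

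The key refinement is the following divisibility. Since we are given the finer data $\sigma:B\to A/\varpi^nI\cdot A$, we may arrange that $\tilde\sigma_P(J)\subset\varpi^nI\cdot A=\varpi^h\cdot N$. Because $\varpi$ is regular in $A$, division by $\varpi^h$ yields an $A_0$-linear isomorphism $d:\varpi^nI\cdot A\xrightarrow{\sim}N$ under which the inclusion $\varpi^nI\cdot A\into N$ equals $\varpi^h\cdot d$. Setting $\tau':=d\circ(\tilde\sigma_P|_J):J/J^2\to N$, we obtain $\tilde\tau=\varpi^h\cdot\tau'$ and hence
\[[\tilde\tau]=\varpi^h\cdot[\tau']\quad\text{in }\mathrm{Ext}^1_B(\mathbb{L}_{B/A},N).\]
Reducing modulo $\fm$, the whole construction descends: $\tilde\sigma_P$ induces an $A_0$-algebra map $\tilde\sigma_{P_0}:P_0\to A_0$ giving a lift of the corresponding problem for $f_0$, which produces an obstruction class $[\tilde\tau_0]\in\mathrm{Ext}^1_{B_0}(\mathbb{L}_{B_0/A_0},N)$ satisfying $[\tilde\tau_0]=\varpi^h\cdot[\tau'_0]$. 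Because $N$ is a $B_0$-module, the adjunction $\Hom_B(M,N)=\Hom_{B_0}(M\otimes_BB_0,N)$ combined with the canonical morphism $\mathbb{L}_{B/A}\otimes^{\mathbb{L}}_BB_0\to\mathbb{L}_{B_0/A_0}$ produces a natural comparison $\mathrm{Ext}^1_{B_0}(\mathbb{L}_{B_0/A_0},N)\to\mathrm{Ext}^1_B(\mathbb{L}_{B/A},N)$ that carries $[\tilde\tau_0]$ to $[\tilde\tau]$.

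Finally, by hypothesis $n\ge h$ and by the very definition of the conductor of $f_0$ we have $\varpi^h\in H_{B_0/A_0}$; Lemma~\ref{lem:GR}(1) applied to $f_0$ then shows that $\varpi^h$ annihilates $\mathrm{Ext}^1_{B_0}(\mathbb{L}_{B_0/A_0},N)$. Therefore $[\tilde\tau_0]=\varpi^h\cdot[\tau'_0]=0$, and its image $[\tilde\tau]$ also vanishes, producing the desired section $\tilde\sigma:B\to A$ whose restriction to $\spec A/\varpi^{n-h}I$ coincides with $\sigma$. The main subtlety lies in constructing and verifying the comparison of Ext groups between $B$ and $B_0$ used above: since $A\to A_0$ is not flat, the cleanest route is via the explicit two-term presentation $\tau_{\ge-1}\mathbb{L}_{B/A}=[J/J^2\to\Omega_{P/A}\otimes_PB]$ and its $B_0$-analogue, checking that the obstruction class is constructed naturally with respect to this reduction modulo $\fm$.
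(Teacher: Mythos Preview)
Your overall shape is the same as the paper's: identify the obstruction class to lifting across the square-zero ideal, and kill it using the conductor $\varpi^h$ together with $\fm\cdot I=0$. The divergence is in how you justify the annihilation, and that is where your argument breaks.

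The paper does \emph{not} pass to $B_0$ at all. It works entirely with the obstruction $\omega\in\mathrm{Ext}^1_B(\mathbb{L}_{B/A},\varpi^nI)$ and argues that $\varpi^h\omega=0$ directly: by the definition of the conductor of $f_0$ one has $\varpi^h\in H_{B/A}+\fm B$ inside $B$, and then Lemma~\ref{lem:GR}(1) (applied to $f$, not $f_0$) together with $\fm\cdot(\varpi^nI)=0$ gives the annihilation. The factorization of multiplication by $\varpi^h$ through the inclusion $\varpi^nI\hookrightarrow\varpi^{n-h}I$ then kills the pushed-forward obstruction, exactly as in your $[\tilde\tau]=\varpi^h[\tau']$ step.

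Your detour through $B_0$ is where the problem lies. You build a map $\mathrm{Ext}^1_{B_0}(\mathbb{L}_{B_0/A_0},N)\to\mathrm{Ext}^1_B(\mathbb{L}_{B/A},N)$ from the base-change morphism $\mathbb{L}_{B/A}\otimes^{\mathbb{L}}_BB_0\to\mathbb{L}_{B_0/A_0}$ and claim it carries a class $[\tilde\tau_0]$ to $[\tilde\tau]$. But observe that $N=\varpi^{n-h}I\cdot A\subset\fm A$ (since $I\subset\fm$), so reducing your lift $\tilde\sigma_P$ modulo $\fm$ produces a map $\tilde\sigma_{P_0}:P_0\to A_0$ that already kills $J_0$; there is no nontrivial lifting problem for $f_0$ with coefficients in $N$, and any ``obstruction class $[\tilde\tau_0]$'' you manufacture this way is zero. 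Since $[\tilde\tau]$ is a priori nonzero, the assertion $[\tilde\tau_0]\mapsto[\tilde\tau]$ cannot hold. Concretely, at the level of the two-term presentation, your comparison map comes from the injection $\Hom_{B_0}(J_0/J_0^2,N)\hookrightarrow\Hom_{B_0}((J/J^2)\otimes_BB_0,N)$ induced by the surjection $(J/J^2)\otimes_BB_0\twoheadrightarrow J_0/J_0^2$; your cocycle $\tilde\tau$ lives in the larger group and there is no reason it factors through $J_0/J_0^2$, i.e.\ no reason $\tilde\sigma_P$ vanishes on $J\cap\fm P$. So the comparison map is not known to hit $[\tilde\tau]$, and the reduction to Lemma~\ref{lem:GR}(1) for $f_0$ fails. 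The fix is precisely the paper's: stay over $B$, and use that $\varpi^h$ lies in $H_{B/A}+\fm B$.
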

\begin{proof}
The obstruction of extending $\sigma$ to $\spec A$ is an element $\omega\in\mathrm{Ext}_B^1(\mathbb{L}_{B/A}, \varpi^nI)$ where we view $\varpi^nI$ as a $B$-module via the map $\sigma^*:B\to A/\varpi^nI$. By the definition of conductor $h$, we have $\varpi^h\in H_{B/A}+\fm B$. By Lemma~\ref{lem:GR}(1) and the assumption that $\fm\cdot I=0$ we see that $\varpi^h\omega=0$. This implies that the image of $\omega$ in $\mathrm{Ext}_B^1(\mathbb{L}_{B/A},\varpi^{n-h}I)$ vanishes by noticing that the multiplication map $\varpi^h:\varpi^nI\to\varpi^nI$ can be factored as the composition of the natural embedding $\varpi^nI\into\varpi^{n-h}I$ and an isomorphism $\varpi^{n-h}I\cong\varpi^nI$. Hence we get the desired lifting of the restriction of $\sigma$ to $\spec A/\varpi^{n-h}I$. 
\end{proof}
\subsubsection{}
We apply the general discussion above to the situation where $\spec A=\fC_+$, $\spec B=\ving$ and $f=\chi_+$. Choose a presentation $B=P/J$ where $P=A[X_1,\dotsc,X_N]$ and $J\subset P$ is a finitely generated ideal as above. Recall that we have the discriminant divisor $\fD_+\subset\fC_+$ defined by the extended discriminant function $\mathrm{Disc}_+$. By Corollary~\ref{cor:ving-rs}, $\chi_+$ is smooth over $\fC_+-\fD_+$. Hence $\chi_+(\Sigma_{\chi_+})$ is contained in $\fD_+$ set-theoretically by Lemma~\ref{lem:GR}(2).\par 
Since $\fD_+$ is a principal divisor, there exists a positive integer $m_0$ (depending on the presentation $B\cong P/J$) such that $\chi_+(\Sigma_{\chi_+})\subset m_0\fD_+$ scheme-theoretically. To state the main result in this section, we consider an artin local $k$-algebra $R$ with maximal ideal $\fm$ and let $I\subset R$ be an ideal such that $I\cdot\fm=0$. 

\begin{prop}\label{prop:chi-lifting}
Let $\delta\in\ving(R[[\varpi]])$ and $a_0\in\fC_+(k[[\varpi]])$ the reduction mod $\fm$ of $\chi_+(\delta)$. Let $d:=\mathrm{val}(a_0^*\fD_+)$ be the discriminant valuation of $a_0$ (suppose that $d$ is a finite number). Then for any integer $N\ge m_0d$ and any $a\in\fC_+(R[[\varpi]])$ such that $a\equiv\chi_+(\delta)\mod \varpi^NI$, there exists $\ga\in\ving(R[[\varpi]])$ such that $\chi_+(\ga)=a$ and $\ga\equiv\delta\mod\varpi^{N-m_0d}I$.
\end{prop}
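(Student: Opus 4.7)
The plan is to reduce the proposition to Lemma~\ref{lem:key-lifting} applied to a base change of $\chi_+$. First, fix once and for all a finite presentation $B=\Gamma(\ving)=P/J$ over $A=\Gamma(\fC_+)$ realizing the constant $m_0$ introduced before the proposition. Base change along the morphism $a\colon\spec R[[\varpi]]\to\fC_+$ produces a morphism $f'\colon\spec B'\to\spec R[[\varpi]]$ with $B':=B\otimes_A R[[\varpi]]$, together with its reduction $f'_0\colon\spec B'_0\to\spec k[[\varpi]]$ modulo $\fm$, obtained via $a_0$. The congruence $\chi_+(\delta)\equiv a\mod\varpi^N I$ says exactly that the reductions mod $\varpi^N I$ of $\delta^*\colon B\to R[[\varpi]]$ and $a^*\colon A\to R[[\varpi]]$ agree on $A$, so $\delta$ reduces to a morphism $\sigma\colon\spec R[[\varpi]]/\varpi^N I\to\spec B'$ which is a section of $f'$ over the sub-thickening $\spec R[[\varpi]]/\varpi^N I\hookrightarrow\spec R[[\varpi]]$. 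Producing the required $\ga$ amounts to extending $\sigma$ to a genuine section of $f'$ over $\spec R[[\varpi]]$ that still agrees with $\sigma$ modulo a sufficiently high power of $\varpi$ times $I$, and then reading the extension as an $R[[\varpi]]$-point of $\ving$ via the projection $\spec B'\to\ving$.

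The main technical step, and the one I expect to be the real obstacle, is bounding the conductor $h$ of $f'_0$ by $m_0 d$ so that Lemma~\ref{lem:key-lifting} applies with $n=N$. The scheme-theoretic containment $\chi_+(\Sigma_{\chi_+})\subset m_0\fD_+$ translates, since $\fD_+=(\mathrm{Disc}_+)$ is principal, into the ideal membership $\chi_+^*(\mathrm{Disc}_+)^{m_0}\in H_{B/A}$. Lemma~\ref{lem:GR}(3), applied to base change along $a_0$, then yields $a_0^*(\mathrm{Disc}_+)^{m_0}\in H_{B'_0/k[[\varpi]]}$, where I view the pulled-back regular function as a constant in $B'_0$. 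Since $\mathrm{val}(a_0^*\fD_+)=d$, the element $a_0^*(\mathrm{Disc}_+)$ is a unit times $\varpi^d$, so this containment is exactly $\varpi^{m_0 d}\in H_{B'_0/k[[\varpi]]}$ up to unit, proving $h\le m_0 d$.

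Given the bound $h\le m_0 d\le N$, Lemma~\ref{lem:key-lifting} applied to $f'$ and $\sigma$ with $n=N$ produces a section $\tilde\sigma\colon\spec R[[\varpi]]\to\spec B'$ of $f'$ whose restriction to $\spec R[[\varpi]]/\varpi^{N-h}I$ coincides with $\sigma$. Composing $\tilde\sigma$ with the projection $\spec B'\to\ving$ defines $\ga\in\ving(R[[\varpi]])$. Being a section of $f'$ translates to $\chi_+(\ga)=a$, and the agreement with $\sigma$ modulo $\varpi^{N-h}I$ translates, via the same projection, into $\ga\equiv\delta\mod\varpi^{N-h}I$, which is stronger than the claimed congruence modulo $\varpi^{N-m_0 d}I$ since $h\le m_0 d$. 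Everything outside the conductor estimate is formal base-change bookkeeping provided by Lemma~\ref{lem:GR}.
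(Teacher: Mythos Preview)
Your proposal is correct and follows essentially the same route as the paper's own proof: base change $\chi_+$ along $a$, bound the conductor of the reduction mod $\fm$ by $m_0d$ via Lemma~\ref{lem:GR}(3) and the scheme-theoretic containment $\chi_+(\Sigma_{\chi_+})\subset m_0\fD_+$, then invoke Lemma~\ref{lem:key-lifting}. Your unpacking of the conductor bound as the ideal membership $\chi_+^*(\mathrm{Disc}_+)^{m_0}\in H_{B/A}$ is exactly the algebraic content of the paper's more geometric phrasing $\Sigma_{\chi_{a_0}}\subset V_{a_0}\cap\Sigma_{\chi_+}$; the two arguments are the same.
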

\begin{proof}
Consider the following diagram in which the right square is Cartesian
\[\xymatrix{
& V_a\ar[r]\ar[d]^{\chi_a} & \ving\ar[d]^{\chi_+}\\
\spec (R[[\varpi]]/\varpi^NI)\ar[r]\ar[ru]^\delta & \spec R[[\varpi]]\ar[r]^a& \fC_+
}\]
Also, let $\chi_{a_0}:V_{a_0}\to\spec k[[\varpi]]$ be the reduction mod $\fm$ of $\chi_a$. Let $h$ be the conductor of $\chi_{a_0}$. By Lemma~\ref{lem:GR}(3), we have $\Sigma_{\chi_{a_0}}\subset V_{a_0}\cap\Sigma_{\chi_+}$. Since $\chi_+(\Sigma_{\chi_+})\subset m_0\fD_+$, we have $h\le m_0d$. By Lemma~\ref{lem:key-lifting}, there exists a section $\ga$ of $\chi_a$ such that the restriction of $\ga$ to $\spec R[[\varpi]]/\varpi^{N-m_0d}$ coincides with $\delta$. Thus the element in $\ving(R[[\varpi]])$ determined by $\ga$ is the lifting we want.
\end{proof}
\section{Kottwitz-Viehmann varieties}\label{chapter:Sp-fibre}
We fix a connected reductive group $G$. Let $T\subset G$ be a maximal torus and $\la\in X_*(T)_+$ a dominant coweight. Let $\ga\in G^{\mathrm{rs}}(F)$ be a regular semisimple element.\par 
We study the following sets associated to the pair $(\ga,\la)$, which we both refer to as \emph{Kottwitz-Viehmann varieties}:
\[X_\ga^{\la}=\{g\in G(F)/G(\cO) | \mathrm{Ad}(g)^{-1}(\ga)\in G(\cO)\varpi^\la G(\cO)\}\]
\[X_{\ga}^{\le\la}=\{g\in G(F)/G(\cO) | \mathrm{Ad}(g)^{-1}(\ga)\in \overline{G(\cO)\varpi^\la G(\cO)}\}\]

\subsection{Nonemptiness}
The first immediate question is when the sets $X_{\ga}^{\la},X_{\ga}^{\le\la}$ are nonempty. To answer this we need to recall the notion of Newton points and Kottwitz map.

\subsubsection{Newton Points}\label{Newton-point-section}
Following \cite[\S4]{KoV}, for each $\ga\in G(F)^{\mathrm{rs}}$, one associate a rational dominant coweight $\nu_\ga\in X_*(T)^+_\bQ$, called the \emph{Newton point of $\ga$}.
\begin{defn}\label{def:disc-valuation}
The \emph{discriminant valuation} for $\ga\in G(F)^{\mathrm{rs}}$ is defined by
\[d(\ga):=\mathrm{val}\det(\mathrm{Id}-\mathrm{ad}_\ga:\fg(F)/\fg_\ga(F)\to\fg(F)/\fg_\ga(F))\]
where $\fg$ is the Lie algebra of $G$ and $\ga_\ga$ is the centralizer of $\ga$, i.e. the fixed locus of the adjoint action $\mathrm{ad}_\ga$. 
\end{defn}
\begin{lem}\label{d-ga-newton-lem}
Let $\ga\in G(F)^{\mathrm{rs}}$ and $\nu_\ga\in\La^+_\bQ$ its Newton point. Let $\bar\ga\in T(\bar F)^{\mathrm{rs}}$ 
be a $G(\bar F)$-conjugate of $\ga$ such that $\mathrm{val}(\alpha(\ga))\ge0$ for all positive root $\alpha$.  Then we have
\[d(\ga)=2\sum_{\alpha\in\Phi^+}\mathrm{val}(\alpha(\bar\gamma)-1)-\langle2\rho,\nu_\ga\rangle\]
where we have extended the valuation on $F$ to its separable closure $\bar F$.
\end{lem}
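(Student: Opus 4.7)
The plan is to reduce everything to a root-by-root calculation on the torus after passing to the algebraic closure $\bar F$, and then to read off the two terms in the claimed formula directly from the root space decomposition and from the characterization of the Newton point.

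First I would pass from $\gamma$ to its $G(\bar F)$-conjugate $\bar\gamma\in T(\bar F)^{\mathrm{rs}}$. The endomorphism $\mathrm{Id}-\mathrm{ad}_\gamma$ of $\mathfrak{g}(F)/\mathfrak{g}_\gamma(F)$ has the same determinant after base change to $\bar F$, and conjugation by an element of $G(\bar F)$ preserves that determinant. Since $\bar\gamma$ is regular semisimple and lies in $T(\bar F)$, one has $\mathfrak{g}_{\bar\gamma}=\mathfrak{t}$, and the $\bar F$-vector space $\mathfrak{g}(\bar F)/\mathfrak{t}(\bar F)$ decomposes as $\bigoplus_{\alpha\in\Phi}\mathfrak{g}_\alpha$, with $\mathrm{ad}_{\bar\gamma}$ acting on $\mathfrak{g}_\alpha$ by the scalar $\alpha(\bar\gamma)$. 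Hence
\[
d(\gamma)=\mathrm{val}\prod_{\alpha\in\Phi}(1-\alpha(\bar\gamma))=\sum_{\alpha\in\Phi^+}\bigl(\mathrm{val}(1-\alpha(\bar\gamma))+\mathrm{val}(1-\alpha(\bar\gamma)^{-1})\bigr),
\]
using any extension of $\mathrm{val}$ to $\bar F$.

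Next I would massage each negative-root term via the identity $1-\alpha(\bar\gamma)^{-1}=-\alpha(\bar\gamma)^{-1}(\alpha(\bar\gamma)-1)$, giving
\[
\mathrm{val}(1-\alpha(\bar\gamma)^{-1})=\mathrm{val}(\alpha(\bar\gamma)-1)-\mathrm{val}(\alpha(\bar\gamma)).
\]
Combining with the positive-root contribution yields
\[
d(\gamma)=2\sum_{\alpha\in\Phi^+}\mathrm{val}(\alpha(\bar\gamma)-1)-\sum_{\alpha\in\Phi^+}\mathrm{val}(\alpha(\bar\gamma)).
\]

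It then remains to identify $\sum_{\alpha\in\Phi^+}\mathrm{val}(\alpha(\bar\gamma))$ with $\langle 2\rho,\nu_\gamma\rangle$. By the construction recalled in \S\ref{Newton-point-section}, the Newton point $\nu_\gamma$ is the unique dominant rational cocharacter characterized (up to $W$-conjugacy, with dominance pinning down the representative) by $\mathrm{val}(\chi(\bar\gamma))=\langle\chi,\nu_\gamma\rangle$ for every $\chi\in X^*(T)$, once $\bar\gamma$ has been chosen in the torus so that $\langle\alpha,\nu\rangle\ge 0$ for all $\alpha\in\Phi^+$ (which is precisely the hypothesis imposed on $\bar\gamma$). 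Summing over positive roots gives $\sum_{\alpha\in\Phi^+}\mathrm{val}(\alpha(\bar\gamma))=\langle 2\rho,\nu_\gamma\rangle$, and the asserted formula follows.

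The calculation itself is routine; the one point that requires a word of justification is the compatibility between the choice of representative $\bar\gamma$ and the Newton point, i.e.\ that the cocharacter $\nu\in X_*(T)_\mathbb{Q}$ produced by $\chi\mapsto\mathrm{val}(\chi(\bar\gamma))$ really is the dominant representative $\nu_\gamma$, rather than some Weyl conjugate. This is where the positivity hypothesis $\mathrm{val}(\alpha(\bar\gamma))\ge 0$ on positive roots is used in an essential way.
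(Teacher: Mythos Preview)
Your proof is correct and follows the same basic strategy as the paper: diagonalize $\mathrm{ad}_\gamma$ over $\bar F$ and compute root by root. The organization differs slightly. The paper splits the sum $\sum_{\alpha\in\Phi}\mathrm{val}(\alpha(\bar\gamma)-1)$ according to the sign of $\langle\alpha,\nu_\gamma\rangle$, observing that the terms with $\langle\alpha,\nu_\gamma\rangle>0$ vanish and those with $\langle\alpha,\nu_\gamma\rangle<0$ equal $\langle\alpha,\nu_\gamma\rangle$; the $\langle2\rho,\nu_\gamma\rangle$ term then arises from the negative half of the root system. Your approach is a bit more uniform: the identity $\mathrm{val}(1-\alpha(\bar\gamma)^{-1})=\mathrm{val}(\alpha(\bar\gamma)-1)-\mathrm{val}(\alpha(\bar\gamma))$ handles all positive roots at once, and the $\langle2\rho,\nu_\gamma\rangle$ appears directly as $\sum_{\alpha\in\Phi^+}\mathrm{val}(\alpha(\bar\gamma))$. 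Both arguments use the dominance hypothesis on $\bar\gamma$ in exactly the same way, to ensure that the rational cocharacter $\chi\mapsto\mathrm{val}(\chi(\bar\gamma))$ is the dominant representative $\nu_\gamma$ rather than a Weyl conjugate.
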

\begin{proof}
From the definition we see that
\[d(\ga)=\sum_{\alpha\in\Phi}\mathrm{val}(\alpha(\ga)-1).\]
Separate the sum over $\Phi$ according to whether $\langle\alpha,\nu_\ga\rangle=0$ or not, then we get
\begin{equation}\label{d-ga-newton-eq}
d(\gamma)=\sum_{\substack{\alpha\in\Phi\\ \langle\alpha,\nu_\ga\rangle=0}}\mathrm{val}(\alpha(\bar\gamma)-1)+
\sum_{\substack{\alpha\in\Phi\\ \langle\alpha,\nu_\ga\rangle<0}}\langle\alpha,\nu_\ga\rangle.
\end{equation}

By our assumption that $\mathrm{val}(\alpha(\ga))\ge0$ for $\alpha\in\Phi^+$, the first term in \eqref{d-ga-newton-eq} equals to 
\[2\sum_{\substack{\alpha\in\Phi^+\\ \langle\alpha,\nu_\ga\rangle=0}}\mathrm{val}(\alpha(\gamma)-1)=2\sum_{\alpha\in\Phi^+}\mathrm{val}(\alpha(\gamma)-1)\]
while the second term of \eqref{d-ga-newton-eq} equals to
\[\sum_{\alpha\in\Phi^-}\langle\alpha,\nu_\ga\rangle=-\sum_{\alpha\in\Phi^+}\langle\alpha,\nu_\ga\rangle=-\langle2\rho,\nu_\ga\rangle.\]
Hence the lemma follows.
\end{proof}

\subsubsection{Kottwitz map}
Let $\pi_1(G):=X_*(T)/X_*(\tsc)$ be the quotient of the coweight lattice by the coroot lattice and $p_G:X_*(T)\to\pi_1(G)$ be the canonical projection. Following \cite{KoV}, one defines a group homomorphism
\[\kappa_G: G(F)\to\pi_1(G)\]
which we refer to as Kottwitz homomorphism. Note that in \emph{loc. cit.}, this map is denoted by $w_G$.
\begin{lem}\label{lem:ga-la-lem}
Suppose that $\kappa_G(\ga)=p_G(\la)$. Then there exists an element $\ga_\la\in\gsc_+(F)$ such that 
\begin{itemize}
\item the image of $\ga_\la$ in $\gad(F)$ coincides with the image of $\ga$ in $\gad(F)$;
\item $\alpha(\ga_\la)=\varpi^{-w_0(\la_{\mathrm{ad}})}\in\tad(F)\cap A_{\gsc}(\cO)$ where $\lad\in X_*(\tad)^+$ is the image of $\la\in X_*(T)^+$.
\end{itemize}
Moreover, $\ga_\la$ is uniquely determined up to multiplication by an element in $Z_{\gsc}(F)$.
\end{lem}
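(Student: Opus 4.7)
The plan is to produce $\ga_\la$ in two stages. First, I lift the image $\bar\ga\in\gad(F)$ of $\ga$ to an arbitrary element $\tilde\ga\in\gsc_+(F)$; then I modify $\tilde\ga$ by a central element of $\gsc_+(F)$ so that $\alpha(\ga_\la)=\varpi^{-w_0(\lad)}$. The Kottwitz-type hypothesis $\kappa_G(\ga)=p_G(\la)$ will appear precisely as the obstruction controlling the second step.

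For the first step, I consider the exact sequence
\[1\to\tsc\to\gsc_+\to\gad\to 1,\]
where $\tsc=Z(\gsc_+)$ is embedded via $t\mapsto(t,1)\bmod Z_{\gsc}$ and the quotient sends $(t,g)$ to the image of $g$ in $\gad$. Since $\tsc$ is a split torus, Hilbert~90 gives $H^1(F,\tsc)=0$, so the lift $\tilde\ga$ exists. Any two lifts differ by a unique $z\in\tsc(F)$. The restriction of the abelianization $\alpha:\ving\to A_{\gsc}$ to $\gsc_+$ factors through the quotient $\gsc_+/\gsc=\tad$, and on the center $\tsc$ it coincides with the natural projection $\tsc\twoheadrightarrow\tad=\tsc/Z_{\gsc}$. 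Hence $\alpha(\tilde\ga z)=\alpha(\tilde\ga)\alpha(z)$ in $\tad(F)$, and it suffices to exhibit $z\in\tsc(F)$ with $\alpha(z)=\alpha(\tilde\ga)^{-1}\varpi^{-w_0(\lad)}$.

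The obstruction lies in the cokernel of $\alpha:\tsc(F)\to\tad(F)$, which by the sequence $1\to Z_{\gsc}\to\tsc\to\tad\to 1$ embeds into $H^1(F,Z_{\gsc})$. Since $k$ is algebraically closed, $\tsc(\cO)\twoheadrightarrow\tad(\cO)$, so the cokernel is computed by valuations and equals $X_*(\tad)/X_*(\tsc)=\pi_1(\gad)$. An explicit cocycle computation using a common lift $(t_0,g_0)\in(\tsc\times\gsc)(\bar{F})$ of $\tilde\ga$ with $g_0$ a lift of $\bar\ga$ to $\gsc(\bar{F})$ shows that $[\alpha(\tilde\ga)]=-\kappa_{\gad}(\bar\ga)$ in $\pi_1(\gad)$: indeed, the Galois cocycle $\sigma\mapsto g_0^{-1}\sigma(g_0)\in Z_{\gsc}$ represents $\kappa_{\gad}(\bar\ga)$, while the one attached to $\alpha(t_0)\in\tad(F)$ is its inverse. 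By functoriality of the Kottwitz map along $G\to\gad$, the hypothesis gives $\kappa_{\gad}(\bar\ga)=[\lad]\in\pi_1(\gad)$; and since each simple reflection acts on $X_*(\tad)$ by adding a coroot, $w_0$ acts trivially on $\pi_1(\gad)$, so $[\varpi^{-w_0(\lad)}]=-[\lad]$. The two classes cancel, yielding the required $z$.

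Uniqueness is then immediate: two candidates differ by $z\in\tsc(F)$ with $\alpha(z)=1$ in $\tad(F)$, hence by an element of $\ker(\tsc\to\tad)(F)=Z_{\gsc}(F)$. I expect the main technical point to be the cohomological identification $[\alpha(\tilde\ga)]=\pm\kappa_{\gad}(\bar\ga)$ in $\pi_1(\gad)$; once this functorial compatibility is in place, the rest is a formal cancellation of classes.
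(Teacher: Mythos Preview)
Your proposal is correct and follows the same two-step strategy as the paper: lift $\bar\ga\in\gad(F)$ arbitrarily to $\tilde\ga\in\gsc_+(F)$, then adjust by a central element of $\gsc_+(F)$ so that $\alpha$ takes the required value, with uniqueness coming from $\ker(\tsc\to\tad)=Z_{\gsc}$. The paper's proof is more terse---it simply asserts that the hypothesis $\kappa_G(\ga)=p_G(\la)$ forces the relevant valuation difference to lie in the coroot lattice---whereas you unpack this step cohomologically by identifying the obstruction group as $\tad(F)/\tsc(F)\cong\pi_1(\gad)$ and computing the class via cocycles; this is a helpful elaboration rather than a different argument.
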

\begin{proof}
Let $\ga_{\mathrm{ad}}\in\gad(F)$ be the image of $\ga$. Choose any $\tilde{\ga}\in\gsc_+(F)$ that maps to $\ga_{\mathrm{ad}}$. Suppose $\alpha(\tilde{\ga})\in\varpi^\mu\tad(\cO)$ for $\mu\in X_*(\tad)^+$.
By the assumption $\kappa_G(\ga)=p_G(\la)$, we have $\la_{\mathrm{ad}}-\mu\in X_*(T)$. Let $\ga_\la:=\varpi^{\la_{\mathrm{ad}}-\mu}\tilde{\ga}$ where we view $\varpi^{\la_{\mathrm{ad}}-\mu}\in T(F)=Z_+(F)$ as a central element in $\gsc_+(F)$. Then we have $\alpha(\ga_\la)=\varpi^{-w_0(\la_{\mathrm{ad}})}$ and the image of $\ga_\la$ in $\gad(F)$ equals to $\ga_{\mathrm{ad}}$.\par 
Suppose $\ga_\la,\ga_\la'\in\gsc_+(F)$ both satisfy the requirement of the Lemma. Then $\ga_\la'\ga_\la^{-1}\in\gsc(F)$ and its image in $\gad(F)$ is the identity. Hence $\ga_\la'\ga_\la^{-1}\in Z_{\gsc}(F)$.
\end{proof}

Now we can state the non-emptiness criterions.
\begin{prop}\label{prop:nonempty}
The following are equivalent:
\begin{enumerate}
\item $X_{\ga}^\lambda$ is nonempty;
\item $X_{\ga}^{\le\lambda}$ is nonempty;
\item $\kappa_G(\ga)=p_G(\la)$ and $\nu_\ga\le_\bQ\la$, i.e. $\la-\nu_\ga$ is a $\bQ$-linear combinition of simple coroots with non-negative coefficients;
\item $\kappa_G(\ga)=p_G(\la)$ and $\chi_+(\ga_\la)\in\fC_+(\cO)$, where $\ga_\la\in\gsc_+(F)$ is defined in Lemma~\ref{lem:ga-la-lem}.
\end{enumerate}
\end{prop}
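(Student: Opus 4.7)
The plan is to establish the cycle $(1)\Rightarrow(2)\Rightarrow(3)\Leftrightarrow(4)\Rightarrow(1)$. The implication $(1)\Rightarrow(2)$ is immediate from the containment $G(\cO)\varpi^\lambda G(\cO)\subset\overline{G(\cO)\varpi^\lambda G(\cO)}$. For $(2)\Rightarrow(3)$, I would apply the Kottwitz homomorphism $\kappa_G\colon G(F)\to\pi_1(G)$, which is conjugation-invariant, trivial on $G(\cO)$, and sends $\varpi^\mu$ to $p_G(\mu)$: from the relation $g^{-1}\gamma g\in G(\cO)\varpi^\mu G(\cO)$ for some $\mu\le\lambda$ one obtains $\kappa_G(\gamma)=p_G(\mu)=p_G(\lambda)$, the second equality because $\lambda-\mu$ lies in the positive coroot lattice. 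The bound $\nu_\gamma\le_\bQ\mu\le_\bQ\lambda$ follows from the standard Mazur inequality (any element of $G(\cO)\varpi^\mu G(\cO)$ has Newton point $\le_\bQ\mu$) together with conjugation invariance of $\nu_\gamma$.

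For the equivalence $(3)\Leftrightarrow(4)$, assuming $\kappa_G(\gamma)=p_G(\lambda)$, construct $\gamma_\lambda\in\gsc_+(F)$ via Lemma~\ref{lem:ga-la-lem}. Under the identification $\fC_+\cong A_G\times\bA^r$, the $A_G$-coordinate of $\chi_+(\gamma_\lambda)$ equals $\varpi^{-w_0(\lad)}\in A_{\gsc}(\cO)$ by construction, so integrality of $\chi_+(\gamma_\lambda)$ reduces to integrality of the $r$ characters $\mathrm{Tr}(\rho_i^+(\gamma_\lambda))$. Passing to $\bar F$ and conjugating $\gamma_\lambda$ into $\overline{T_+}$ produces a representative $(t_1,t_2)\in(T\times T)/Z$ with $\mathrm{val}(t_1)=-w_0(\lad)$ and $\mathrm{val}(t_2)$ a $W$-representative of the Newton point; expanding $\mathrm{Tr}(\rho_i^+(\gamma_\lambda))=\omega_i(t_1)\,\mathrm{Tr}(\rho_{\omega_i}(t_2))$ and noting that the minimum weight-contribution on $V_{\omega_i}$ is attained at the lowest weight $w_0(\omega_i)$, one obtains $\mathrm{val}(\mathrm{Tr}(\rho_i^+(\gamma_\lambda)))\ge\langle\omega_{i^*},\lambda-\nu_\gamma\rangle$, where $\omega_{i^*}:=-w_0(\omega_i)$. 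Nonnegativity for all $i$ is equivalent to $\langle\omega_j,\lambda-\nu_\gamma\rangle\ge 0$ for every fundamental weight, i.e.~to $\nu_\gamma\le_\bQ\lambda$.

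For $(4)\Rightarrow(1)$, set $a:=\chi_+(\gamma_\lambda)\in\fC_+(\cO)$; regular semisimplicity of $\gamma$ gives $a\in\fC_+^{\mathrm{rs}}(F)$. Pick any Coxeter element $w\in\mathrm{Cox}(W,S)$ and let $\delta:=\epsilon_+^w(a)\in\ving(\cO)$. By Proposition~\ref{extend-steinberg-section-prop} and Corollary~\ref{cor:vin-reg-in-v0}, $\delta\in\ving^{\mathrm{reg}}(\cO)\subset\ving^0(\cO)$; since $\alpha(\delta)=\alpha(\gamma_\lambda)=\varpi^{-w_0(\lad)}$, Lemma~\ref{lem:vin-strata-double-coset} places $\delta\in L^+\ving^{\lambda,0}(k)$, with image in $\gad(F)$ inside $\gad(\cO)\varpi^{\lad}\gad(\cO)$. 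By Corollary~\ref{cor:ving-rs}, $\gamma_\lambda$ and $\delta$ lie in the same $G$-orbit of the fibre $\chi_+^{-1}(a)$; descending to an $F$-rational conjugator yields $g\in G(F)$ with $g^{-1}\gamma g$ mapping into $\gad(\cO)\varpi^{\lad}\gad(\cO)$. The Kottwitz invariant $\kappa_G(g^{-1}\gamma g)=p_G(\lambda)$, together with $X_*(Z_G^0)\cap X_*(\tsc)=0$, then pins the Cartan cell down to $G(\cO)\varpi^\lambda G(\cO)$ exactly.

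The main technical subtlety is the descent from geometric to $F$-rational $G$-conjugacy in the last step, since Corollary~\ref{cor:ving-rs} gives transitivity a priori only over $\bar F$ and there is in principle a Galois-cohomological obstruction in $H^1(F,G_{\gamma_\lambda})$. This is handled by exploiting that $\delta=\epsilon_+^w(a)$ and $\gamma_\lambda$ are both $F$-rational points of the same $\mathrm{Ad}(G)$-orbit in $\chi_+^{-1}(a)$, so that the relative torsor admits an explicit trivialization via the regular centralizer $\cJ$ and the open cover $\ving^{\mathrm{reg}}=\bigcup_w\ving^w$ of Lemma~\ref{lem:reg-centralizer-Vin}.
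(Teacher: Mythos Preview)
Your cycle $(1)\Rightarrow(2)\Rightarrow(3)\Rightarrow(4)\Rightarrow(1)$ is sound and matches the paper's overall strategy, but two points deserve comment.

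First, your trace computation only establishes $(3)\Rightarrow(4)$, not the full equivalence $(3)\Leftrightarrow(4)$ you claim. The inequality $\mathrm{val}\bigl(\mathrm{Tr}(\rho_i^+(\gamma_\lambda))\bigr)\ge\langle\omega_{i^*},\lambda-\nu_\gamma\rangle$ is correct, but equality can fail when $\nu_\gamma$ lies on a wall of the Weyl chamber: several weight terms then share the minimal valuation and may cancel. This does not damage the logical cycle, since $(4)\Rightarrow(3)$ follows from $(4)\Rightarrow(1)\Rightarrow(2)\Rightarrow(3)$, but you should not claim a direct proof of $(4)\Rightarrow(3)$ from the trace bound. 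The paper sidesteps this by proving $(3)\Rightarrow(4)$ differently: pass to a finite extension $F'/F$ splitting $\gamma$, observe that $\gamma_\lambda$ is then $\gsc_+(F')$-conjugate into $\vin_{\gsc}(\cO')$, and conclude $\chi_+(\gamma_\lambda)\in\fC_+(\cO')\cap\fC_+(F)=\fC_+(\cO)$. This is shorter and avoids weight bookkeeping.

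Second, and more importantly, your handling of the $F$-rational descent in $(4)\Rightarrow(1)$ is not a proof. Saying that ``the relative torsor admits an explicit trivialization via the regular centralizer $\cJ$ and the open cover $\vin_{\gsc}^{\mathrm{reg}}=\bigcup_w\vin_{\gsc}^w$'' does not produce an $F$-point of the transporter: the open cover and the $\bB\cJ$-gerbe description still leave you with a $J_a$-torsor over $\spec F$ whose triviality is exactly the question. The paper's argument is the clean one: the transporter from $\gamma_\lambda$ to $\epsilon_+^w(a)$ is a torsor under the torus $G_{\gamma_\lambda}$ over $F$, and $H^1(F,G_{\gamma_\lambda})=0$ by Steinberg's theorem (using that the residue field $k$ is algebraically closed). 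You should invoke this directly.

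On the positive side, your final paragraph, pinning down the Cartan cell in $G$ from the $\gad$-double coset via $\kappa_G$ and $X_*(Z_G^0)\cap X_*(\tsc)=0$, makes explicit a step the paper leaves inside the asserted isomorphism $X_\gamma^\lambda\cong\Sp_a^0$.
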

\begin{proof}
The implication ``(1)$\Rightarrow$(2)" is tautological. The implication ``(1)$\Rightarrow$(3)" is done in \cite[Corollary 3.6]{KoV}.\par 
(3)$\Rightarrow$(4): Let $F'/F$ be a finite extension of degree $e$ so that $\ga$ (and hence $\ga_\la$) is split in $G(F')$. Let $\varpi'=\varpi^{\frac{1}{e}}$ be a uniformizer of $F'$ and $\cO'=k[[\varpi']]\subset F'$ be the ring of integers. Then $e\cdot\nu_\ga\in X_*(T)_+$ and $\ga$ is $G(F')$-conjugate to an element in $(\varpi')^{e\cdot\nu_\ga}T(\cO')$. From (3) we deduce that $\ga_\la$ is $\gsc_+(F')$-conjugate to an element in $\vin_\gsc(\cO')$. Therefore
\[\chi_+(\ga_\la)\in \fC_+(\cO')\cap\fC_+(F)=\fC_+(\cO).\]

(2)$\Rightarrow$(4): Let $g\in X_{\ga}^{\le\la}$. Then $\mathrm{Ad}(g)^{-1}(\ga)\in G(\cO)\varpi^\mu G(\cO)$ for some $\mu\in X_*(T)_+$ with $\mu\le\la$. Then we have $\omega_G(\ga)=p_G(\mu)=p_G(\la)$. 
In particular we can define the element $\ga_\la\in\gsc_+(F)$ as in Lemma~\ref{lem:ga-la-lem}. Then by Lemma~\ref{lem:vin-strata-double-coset} we have $\mathrm{Ad}(g)^{-1}(\ga_\la)\in L^\la\vin_{\gsc}\subset\vin_{\gsc}(\cO)$. Thus $\chi_+(\ga_\la)\in\fC_+(\cO)$.

(4)$\Rightarrow$(1):
Let $a_+:=\chi_+(\ga_\la)$. So $a\in\fC_+(\cO)$ by condition (4). Then for any Coxeter element $w\in\mathrm{Cox}(W,S)$ (cf. Definition~\ref{coxeter-definition}), we have $\epsilon_+^w(a)\in \vin_{\gsc}^0(\cO)$. It remains to show that there exists $h\in G(F)$ such that $\mathrm{Ad}(h)^{-1}(\ga_\la)=\epsilon_\la^w(a)$, for then $h\in X_{\ga}^\la$. To see this, notice that the transporter from $\ga$ to $\epsilon_+^w(a)$ in $G$ is a torsor under the torus $G_{\ga_\la}$ over $F$. Any such torsor is trivial since $H^1(F,G_{\ga_\la})$ by a theorem of Steinberg (using the fact that the residue field $k$ is algebraic closed). Thus the transporter has an $F$-point $h\in G(F)$. 
\end{proof}

\subsection{Ind-scheme structure}
\subsubsection{First approach}
We will equip the sets $X_{\ga}^\la$ and $X_{\ga}^{\le\la}$ with an ind-scheme structure. We present two approaches, one based on the original definition, the other using Vinberg monoid.\par 
Let $\mathrm{Gr}_{G}:=LG/L^+G$ be the affine Grassmanian for $G$, which are known to be ind-projective ind-scheme over $k$. The positive loop group $L^+G$ acts by left multiplication on $\mathrm{Gr}_G$. Let $(LG)_\la:= L^+G\varpi^\la L^+G$ (resp. $(LG)_{\le\la}$) be the $k$-scheme whose set of $k$-points is  $G(\cO)\varpi^\la G(\cO)$ (resp. $\overline{G(\cO)\varpi^\la G(\cO)}$). 

\begin{defn}
Let $\mathcal{X}_{\ga}^\la$ be the $k$-functor which associates to any $k$-algebra $R$ the set
\[\mathcal{X}_\ga^\la(R)=\{g\in\mathrm{Gr}_{G}(R) | g^{-1}\ga g\in (LG)_\la(R)\}.\]
Also, we define the $k$-functor $\mathcal{X}_{\ga}^{\le\la}$ by replacing $(LG)_\la$ with $(LG)_{\le\la}$ in the above definition
\end{defn}
By definition, $\mathcal{X}_{\ga}^{\le\la}$ is a closed sub-indscheme of $\mathrm{Gr}_G$ and $\mathcal{X}_{\ga}^\la$ is an open sub-indscheme of $\mathcal{X}_{\ga}^{\le\la}$. Let $X_\ga^\la$ (resp. $X_\ga^{\le\la}$) be the reduced structure of $\mathcal{X}_\ga^\la$ (resp. $\mathcal{X}_\ga^{\le\la}$).

\subsubsection{Second approach}
Now we use Vinberg monoids to define certain analogue of affine Springer fibers, which turns out to be isomorphic to Kottwitz-Viehmann varieties. \par 
Let $\fC_+$ be the extended Steinberg base for the monoid $\vin_{\gsc}$. Let $a\in\fC_+(\cO)\cap\fC_+(F)^{\mathrm{rs}}$ and suppose that 
\[\beta(a)\in \varpi^{-w_0(\la_{\mathrm{ad}})}\tad(\cO)\subset A_{\gsc}(\cO)\cap\tad(F)\]
where $\la_{\mathrm{ad}}\in X_*(\tad)$ is the image of $\la\in X_*(T)$. Moreover, let $\ga_+\in\gsc_+(F)$ be an element such that $\chi_+(\ga_+)=a$.

\begin{defn}
The \emph{generalized affine Springer fibre} $\mathcal{S}p_{G,\ga_+}$ associates to any $k$-algebra $R$ the set of isomorphism classes of pairs $(h,\iota)$ where $h$ is the horizontal arrow in the following commutative diagram
\[\xymatrix{
\spec R[[\varpi]] \ar[r]^h\ar[rd]_a & [\vin_{\gsc}/\mathrm{Ad}(G)]\ar[d]\\
 & \fC_+
}\] 
and $\iota$ is an isomorphism between the restriction of $h$ to $\spec R((\varpi))$ and the composition
\[\spec R((\varpi))\xrightarrow{\ga_+} \vin_{\gsc}\to [\vin_{\gsc}/\mathrm{Ad}(G)].\]
Also, we define $k$-functors $\mathcal{S}p_{G,\ga_+}^0$ (resp. $\mathcal{S}p_{G,\ga_+}^{\mathrm{reg}}$) by replacing $\vin_{\gsc}$ with $\vin_{\gsc}^0$ (resp. $\vin_{\gsc}^{\mathrm{reg}}$).
\end{defn}

By definition $\mathcal{S}p_{G,\ga_+}$ is a closed sub-indscheme of $\mathrm{Gr}_{G}$ and $\mathcal{S}p_{G,\ga_+}^{\mathrm{reg}}\subset\mathcal{S}p_{G,\ga_+}^0$ are its open sub-indschemes. We let $\Sp_{G,\ga_+}$ (resp. $\Sp_{G,\ga_+}^0$, $\Sp_{G,\ga_+}^{\mathrm{reg}}$) be the reduced structures of $\mathcal{S}p_{G,\ga_+}$ (resp. $\mathcal{S}p_{G,\ga_+}^0$, $\mathcal{S}p_{G,\ga_+}^{\mathrm{reg}}$).\par 
The isomorphism class of $\Sp_{G,\ga_+}$ and $\Sp_{G,\ga_+}^0$ only depends on $a=\chi_+(\ga_+)$, so we will also denote them by $\Sp_{G,a}$ and $\Sp_{G,a}^0$.
 We will simplify notation as $\Sp_{\ga_+}$, $\Sp_a$ etc. if the group $G$ is clear from the context. \par 
Next we relate the two definitions given above. Let $(\ga,\la)$ be as in the beginning of this chapter. Suppose that the ind-scheme $X_{\ga}^\la$ is nonempty. Then by Proposition~\ref{prop:nonempty} we have $\kappa_G(\ga)=p_G(\la)$ and $a:=\chi_+(\ga_\la)\in\fC_+(\cO)$ where $\ga_\la\in\gsc_+(F)$ is defined in Lemma~\ref{lem:ga-la-lem}. It is not hard to see that 
\[X_{\ga}^\la\cong \Sp_{a}^0\quad\text{and}\quad X_{\ga}^{\le\la}\cong \Sp_{a}.\]
Conversely, let $a\in\fC_+(\cO)\cap\fC_+(F)^{\mathrm{rs}}$ and suppose that 
\[\beta(a)\in \varpi^{-w_0(\la)}\tad(\cO)\subset A_{\gsc}(\cO)\cap\tad(F)\]
for some $\la\in X_*(\tad)_+$.  Let $\ga_a^w\in\gad(F)$ be the image of $\epsilon_+^w(a)\in G_+(F)\cap\vin_{\gsc}^0(\cO)$ under the natural quotient $G_+(F)\to\gad(F)$. Then we have
\[\Sp_{a}\cong X_{\ga_a^w}^{\le\la}\quad\text{and}\quad\Sp_{a}^0\cong X_{\ga_a^w}^\la.\]
Note that the isomorphism class of $X_{\ga_a^w}^{\le\la}$ and $X_{\ga_a^w}^\la$ does not depend on the choice of $w\in\mathrm{Cox}(W,S)$.

\subsection{Symmetries}
Assume $X_{\ga}^\la$ is nonempty. Then by Proposition~\ref{prop:nonempty} we have $\kappa_G(\ga)=p_G(\la)$ and
\[a=\chi_+(\ga_\la)\in \fC_{\gsc_+}^{\mathrm{rs}}(F)\cap\fC_+(\cO).\] 
Let $J_a$ be the commutative group scheme over $\spec\cO$ obtained by pulling back $\cJ$ along $a:\spec\cO\to\fC_+$. Since $a$ is generically regular semisimple, there is a canonical isomorphism 
$LJ_a\cong LG_\ga^0$ which allows us to identify the positive loop group $L^+J_a$ as a subgroup of $LG_\ga^0$. Consider the quotient group
\[P_a:=LJ_a/L^+J_a\cong LG_\ga^0/L^+J_a.\]

In other words, $\cP_a$ is the affine Grassmanian of $J_a$ classifying isomorphism classes of $J_a$-torsors on $\spec\cO$ with a trivialization of its restriction to $\spec F$. 

The loop group $LG_\ga^0$ acts naturally on $X_{\ga}^\la$ and this action factors through $P_a$. Using the isomorphism $X_\ga^\la\cong\Sp_a^0$, the $P_a$ action is induced by the $\bB \cJ$ action on $[V_{\gsc}^0/\mathrm{Ad}(G)]$ in Proposition~\ref{BJ-action-vin-G-prop}. Moreover, $P_a$ preserve the open subspaces $\Sp_a^{\mathrm{reg}}$ and $\Sp_a^w$ for each $w\in\mathrm{Cox}(W,S)$.

\begin{prop}\label{X-ga-w-torsor-prop}
For each $w\in\mathrm{Cox}(W,S)$, $\Sp_a^w$ is a torsor under $P_a$.
\end{prop}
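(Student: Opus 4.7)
The strategy is to use the gerbe description from Proposition~\ref{BJ-action-vin-G-prop} to identify $\Sp_a^w$ with the affine Grassmannian of the group scheme $J_a$, which is by definition $P_a$.

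First, pulling back along $a:\spec\cO\to\fC_+$, the neutralized $\bB\cJ$-gerbe
$[\chi_+^w]:[\vin_\gsc^w/\mathrm{Ad}(G)]\to\fC_+$ becomes a neutralized $\bB J_a$-gerbe over $\spec\cO$, giving an equivalence of $\cO$-stacks
\[
[\vin_\gsc^w/\mathrm{Ad}(G)]\times_{\fC_+,a}\spec\cO \;\simeq\; \bB J_a.
\]
Under this equivalence I translate the moduli problem defining $\Sp_a^w$: a morphism $h:\spec R[[\varpi]]\to[\vin_\gsc^w/\mathrm{Ad}(G)]$ lifting $a$ is the same as a $J_a$-torsor $\cE$ on $\spec R[[\varpi]]$. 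The additional datum of a generic trivialization $\iota$ of $h$ against $\ga_+$ becomes a trivialization $\tau$ of $\cE|_{\spec R((\varpi))}$, provided I first trivialize the $J_a$-torsor on $\spec F$ corresponding to $\ga_+$. But since $a\in\fC_+^{\mathrm{rs}}(F)$, Corollary~\ref{cor:ving-rs} says that $\chi_+^{-1}(a)$ is a single $\mathrm{Ad}(G(F))$-orbit, so $\ga_+$ and $\epsilon_+^w(a)$ are $G(F)$-conjugate; any such conjugator $g_0\in G(F)$ pins down the required trivialization.

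Consequently $\Sp_a^w(R)$ is in natural bijection with the set of isomorphism classes of pairs $(\cE,\tau)$, a $J_a$-torsor on $\spec R[[\varpi]]$ together with a trivialization of $\cE|_{\spec R((\varpi))}$. By definition this is the affine Grassmannian of $J_a$, namely $P_a(R)=(LJ_a/L^+J_a)(R)$. Non-emptiness is witnessed by the explicit point of $\Sp_a^w$ coming from $\epsilon_+^w(a)\in\vin_\gsc^w(\cO)$ and the conjugator $g_0$. Finally one checks that the $P_a$-action on $\Sp_a^w$ coming (via $LG_\ga^0\cong LJ_a$) from the $\bB\cJ$-action on $[\vin_\gsc^w/\mathrm{Ad}(G)]$ in Proposition~\ref{BJ-action-vin-G-prop} is intertwined by this bijection with the standard action of $P_a$ on itself by left translation of $\tau$. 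Hence $\Sp_a^w$ is a (trivial) $P_a$-torsor.

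The main thing to be careful about is conventions around the trivialization: the gerbe is neutralized by $\epsilon_+^w$, while $\Sp_a^w$ is defined using $\ga_+$, and the whole argument hinges on the fact that these reference elements differ by a (non-canonical) element of $G(F)$. Aside from verifying that the induced bijection $\Sp_a^w\to P_a$ is $P_a$-equivariant, everything is formal once the regular semisimple hypothesis $a\in\fC_+^{\mathrm{rs}}$ is in hand, and there is no deeper geometric input beyond Proposition~\ref{BJ-action-vin-G-prop} and Corollary~\ref{cor:ving-rs}.
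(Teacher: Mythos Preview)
Your argument is correct and is precisely the unpacking of the one-line proof in the paper, which simply cites Proposition~\ref{BJ-action-vin-G-prop}. You have filled in the details the paper leaves implicit: pulling back the neutralized $\bB\cJ$-gerbe along $a$, using Corollary~\ref{cor:ving-rs} to pass from the reference point $\ga_+$ to $\epsilon_+^w(a)$ over the generic point, and identifying the resulting moduli problem with the affine Grassmannian of $J_a$.
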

\begin{proof}
This is a consequence of \ref{BJ-action-vin-G-prop}.
\end{proof}

\begin{rem}
Unlike the Lie algebra case, $\Sp_a^{\mathrm{reg}}$ may not be a $P_a$-torsor in general. See the discussion in \S~\ref{regular-components-section}.
\end{rem}

Let $R_a$ be the finite free $\cO$-algebra defined by the Cartesian diagram
\begin{equation}\label{cameral-cover-diagram-eq}
\xymatrix{
\widetilde{X}_a:=\spec R_a\ar[r]\ar[d] & \overline{T_+}\ar[d]\\
\spec\cO\ar[r]^a & \fC_+
}
\end{equation}
Let $R_a^\flat$ be the normalization of $R_a$ and $\widetilde{X}_a^\flat:=\spec R_a^\flat$. Then $W$ acts naturally on the $\cO$-algebras $R_a$ and $R_a^\flat$.\par 
Let $J_a^\flat$ be the finite type Neron model of $J_a$. Hence $J_a^\flat$ is a smooth commutative group scheme over $\cO$ such that $J_a^\flat(F)=J_a(F)=G_\ga^0(F)$ and $J_a^\flat(\cO)$ is the maximal bounded subgroup of $G_\ga^0(F)$. 

\begin{lem}\label{J_a-flat-Galois-description-lem}
There is a canocical isomorphism
\[J_a^\flat\cong(\prod_{R_a^\flat/\cO}T\times \widetilde{X}_a^\flat)^W\]
\end{lem}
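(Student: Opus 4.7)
The plan is to obtain the Galois description by base-changing Proposition~\ref{Galois-description-J-prop} along $a$, then enlarge the resulting group scheme by replacing $\widetilde X_a$ with its normalization $\widetilde X_a^\flat$, and finally verify that the resulting smooth $\cO$-group scheme satisfies the characterizing properties of the finite type N\'eron model. Pulling back the open embedding $\cJ\hookrightarrow\cJ^1$ along $a:\spec\cO\to\fC_+$ (noting that Weil restriction and $W$-invariants commute with flat base change, the latter because $|W|$ is invertible in $k$) yields an open embedding
\[
J_a\hookrightarrow J_a^1:=\Big(\prod_{R_a/\cO}T\times\widetilde X_a\Big)^W,
\]
which is an isomorphism over $\spec F$ since $a|_F\in\fC_+^{\mathrm{rs}}$. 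I will take as candidate for $J_a^\flat$ the scheme $\widetilde J_a:=(\prod_{R_a^\flat/\cO}T\times\widetilde X_a^\flat)^W$; the finite morphism $\widetilde X_a^\flat\to\widetilde X_a$ induces a canonical $W$-equivariant $\cO$-group homomorphism $J_a^1\to\widetilde J_a$. Because $R_a$ is $\cO$-flat with \'etale generic fiber it is reduced, and its generic fiber is already normal, so $R_a^\flat\otimes F=R_a\otimes F$; hence $J_a^1\to\widetilde J_a$ is an isomorphism on generic fibers, identifying $\widetilde J_a|_F$ with $G_\ga^0$.

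Next I would verify that $\widetilde J_a$ is a smooth $\cO$-group scheme. Since $\cO$ is a complete DVR with algebraically closed residue field, $R_a^\flat$ is a finite product of complete DVRs, each finite flat over $\cO$ (being torsion-free finitely generated over a PID). The Weil restriction of the smooth $\widetilde X_a^\flat$-group scheme $T\times\widetilde X_a^\flat$ along the finite flat morphism $\widetilde X_a^\flat\to\spec\cO$ is then smooth over $\cO$ by standard results on Weil restrictions. As $W$ acts by $\cO$-group automorphisms and $\mathrm{char}(k)\nmid|W|$, the fixed-point subscheme $\widetilde J_a$ is smooth over $\cO$. A direct unwinding of the functor of points gives
\[
\widetilde J_a(\cO)\;=\;T(R_a^\flat)^W.
\]

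Finally I would appeal to the characterization of the finite type N\'eron model of a torus as the unique smooth $\cO$-model of the generic fiber whose $\cO$-points form the maximal bounded subgroup. The Galois description on the generic fiber gives $G_\ga^0(F)=J_a(F)=T(R_a^\flat\otimes F)^W$, and since $R_a^\flat$ is a product of complete DVRs with total ring of fractions $R_a^\flat\otimes F$, the subgroup $T(R_a^\flat)\subset T(R_a^\flat\otimes F)$ is the maximal bounded one (boundedness being checked character-by-character on each factor). Intersecting with $W$-fixed points is compatible with this, so $T(R_a^\flat)^W$ is the maximal bounded subgroup of $G_\ga^0(F)$, which identifies $\widetilde J_a$ canonically with $J_a^\flat$. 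The main delicate point I expect is the smoothness of the Weil restriction along the possibly ramified finite cover $\widetilde X_a^\flat\to\spec\cO$ together with smoothness of its $W$-fixed subscheme; the tameness assumption $\mathrm{char}(k)\nmid|W|$ is crucial for the latter and will need to be invoked.
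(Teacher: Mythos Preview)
Your proposal is correct and follows essentially the same approach as the paper, which simply cites \cite[Proposition 3.8.2]{Ngo10}; you have reconstructed that argument in detail. One minor point worth tightening: the claim that a smooth $\cO$-model whose $\cO$-points form the maximal bounded subgroup must be the finite type N\'eron model is not quite a characterization on its own, but your construction can be upgraded to verify the N\'eron mapping property directly (since $\prod_{R_a^\flat/\cO}T$ is the N\'eron model of its generic fiber by \cite[7.6, Proposition~6]{BLR} applied factor by factor, and taking $W$-invariants preserves the extension property once smoothness is known).
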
 
\begin{proof}
The proof is the same as \cite[Proposition 3.8.2]{Ngo10}.
\end{proof}

\begin{cor}\label{Lie-P_a-cor}
$\mathrm{Lie}(P_a)=(\ft\otimes_k (R_a^\flat/R_a))^W$
\end{cor}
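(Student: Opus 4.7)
The plan is to compute $\mathrm{Lie}(P_a)$ in three stages: first reduce it, via the N\'eron model $J_a^\flat$, to the quotient $\mathrm{Lie}(J_a^\flat)/\mathrm{Lie}(J_a)$; then apply the Galois descriptions of Proposition~\ref{Galois-description-J-prop} and Lemma~\ref{J_a-flat-Galois-description-lem} to identify both Lie algebras; and finally invoke exactness of the $W$-invariants functor.

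For the first stage, the natural $\cO$-morphism $J_a\to J_a^\flat$ is an open embedding of smooth commutative $\cO$-group schemes of the same relative dimension, since the two agree on the generic fibre. In particular $LJ_a=LJ_a^\flat$ and $L^+J_a\subset L^+J_a^\flat$, so $P_a=LJ_a^\flat/L^+J_a$ sits in a short exact sequence
\[0\to L^+J_a^\flat/L^+J_a\to P_a\to LJ_a^\flat/L^+J_a^\flat\to 0.\]
Standard properties of finite-type N\'eron models imply that the quotient on the right is \'etale over $k$ (it is the ``component group'' of the loop/arc quotient, which for the Weil restriction of a split torus reduces to the case of $\bG_m$ where $L\bG_m/L^+\bG_m\cong\bZ$), hence has trivial tangent space. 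Therefore $\mathrm{Lie}(P_a)=\mathrm{Lie}(L^+J_a^\flat/L^+J_a)=\mathrm{Lie}(J_a^\flat)/\mathrm{Lie}(J_a)$, which is a finite-dimensional $k$-vector space since the quotient of the $\cO$-modules is supported at the closed point.

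For the second stage, Lemma~\ref{J_a-flat-Galois-description-lem} together with exactness of $W$-invariants (valid by the standing hypothesis that $\mathrm{char}(k)\nmid|W|$) gives $\mathrm{Lie}(J_a^\flat)=(\ft\otimes_k R_a^\flat)^W$. For $J_a$, Proposition~\ref{Galois-description-J-prop} provides an open embedding $\cJ\hookrightarrow\cJ^1$ of smooth commutative $\fC_+$-group schemes of the same relative dimension, hence an isomorphism on Lie algebras; pulling back along $a\colon\spec\cO\to\fC_+$ yields $\mathrm{Lie}(J_a)=\mathrm{Lie}(J_a^1)=(\ft\otimes_k R_a)^W$. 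Applying $W$-invariants to the short exact sequence
\[0\to\ft\otimes R_a\to\ft\otimes R_a^\flat\to\ft\otimes(R_a^\flat/R_a)\to 0\]
of $k[W]$-modules then produces the claimed identity $\mathrm{Lie}(P_a)=(\ft\otimes(R_a^\flat/R_a))^W$.

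The main subtlety is the \'etaleness of $LJ_a^\flat/L^+J_a^\flat$ used in the first stage. One way to handle it is to use the explicit Galois description of Lemma~\ref{J_a-flat-Galois-description-lem} to reduce the assertion to the case of Weil restrictions of split tori through finite flat $\cO$-algebras, where \'etaleness is the classical statement that the loop space of a torus modulo its arc space is a discrete lattice.
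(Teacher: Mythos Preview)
Your proposal is correct and follows essentially the same route as the paper: the paper observes that $L^+J_a^\flat/L^+J_a$ is an open subgroup of $P_a$ (equivalent to your exact-sequence argument that the remaining quotient is \'etale), then invokes the two Galois descriptions to identify $\mathrm{Lie}(L^+J_a)=(\ft\otimes_k R_a)^W$ and $\mathrm{Lie}(L^+J_a^\flat)=(\ft\otimes_k R_a^\flat)^W$. Your explicit appeal to exactness of $W$-invariants (under the standing hypothesis $\mathrm{char}(k)\nmid|W|$) makes transparent a step the paper leaves implicit.
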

\begin{proof}
The quotient $L^+J_a^\flat/L^+J_a$ is an open subgroup of $P_a$. Hence we have isomorphism of $\cO$ modules
\[\mathrm{Lie} P_a\cong\mathrm{Lie}(L^+J_a^\flat)/\mathrm{Lie}(L^+J_a).\]
On the other hand, by \ref{Galois-description-J-prop}, we have
\[\mathrm{Lie}L^+J_a=(\ft\otimes_k R_a)^W\]
and by \ref{J_a-flat-Galois-description-lem},
\[\mathrm{Lie}L^+J_a^\flat=(\ft\otimes_k R_a^\flat)^W.\]
Hence the Corollary follows.
\end{proof}

\subsection{Admissible subsets of loop spaces}
In this section we closely follow \cite[\S5]{GHKR}.\par 
Let  $M$ be a standard Levi subgroup of $G$ and $P=MN$ the  standard parabolic subgroup where $N$ is the unipotent radical of $P$. Let $Z(M)^0$ be the neutral component of the center of $M$. Then $Z(M)^0$ is a subtorus of $T$. Let $\Phi_N$ be the set of roots of $Z(M)^0$ acting on $N$ and $\Phi_N^\vee$ the corresponding set of coroots. For each $\alpha\in\Phi_N$, let $N_\alpha$ be the corresponding root subgroup. Then each $N_\alpha$ is isomorphic to a product of several copies of $\bG_a$ and is preserved by the adjoint action of $M$. Denote $\delta_N$ half sum of elements in $\Delta_N^\vee$.\par 
For each $\alpha\in\Delta_N$, denote $\mathrm{ht}_N(\alpha):=\langle\delta_N,\alpha\rangle$. Let $l=\max_{\alpha\in\Phi_N}\mathrm{ht}_N(\alpha)$. For each $1\le i\le l$, let $N[i]$ be the subgroup of $N$ generated by root groups $N_\alpha$ with $\mathrm{ht}_N(\alpha)\ge i$. Also we denote $N[l+1]=1$. Then $N[1]=N$ and for each $1\le i\le s+1$, $N[i]$ is a normal subgroup of $N$ and the successive quotients $N\langle i\rangle:=N[i]/N[i+1]$ are commutative groups isomorphic to products of some copies of $\bG_a$. Let $LN$ and $L^+N$ be the loop space and arc space of $N$. For each integer $n\ge0$, let $N_n:=\ker(L^+N\to L^+_nN)$. Then $\{N_n\}_{n\ge0}$ form a decreasing sequence of compact open subgroups of $LN$.\par 
For each $\ga\in M(F)\cap G(F)^{\mathrm{rs}}$, consider the map
\begin{equation}\label{f-gamma-eq}
\xymatrix@R=1pt{
f_\gamma: LN\ar[r] & LN\\
u\ar@{|->}[r] & u^{-1}\gamma u\gamma^{-1}
}
\end{equation}
Then $f_\ga$ preserves the root subgroups $N_\alpha$ and hence each normal subgroup $N[i]$. In particular, $f_\ga$ induces morphism $f_\ga[i]:LN[i]\to LN[i]$ and $f_\ga\langle i\rangle:LN\langle i\rangle\to LN\langle i\rangle$. \par 
For each $1\le i\le l$, denote $r_i:=\mathrm{val}\det(f_\ga\langle i\rangle)$. Note that there is a $M$-equivariant isomorphism $N\langle i\rangle\cong\mathrm{Lie}N\langle i\rangle$ from which we see that 
\[r_i=\mathrm{val}\det(\mathrm{ad}_\ga:\mathrm{Lie}N\langle i\rangle(F)\to\mathrm{Lie}N\langle i\rangle(F)).\]
Consider the following invariant of $\ga$:
\begin{equation}\label{eq:r-N}
r_N(\ga):=\mathrm{val}\det(\mathrm{ad}_\ga:\mathrm{Lie}N(F)\to\mathrm{Lie}N(F))
\end{equation}
Then we also have $r_N(\gamma)=\sum_{i=1}^l r_i$.\par 
Now assume that $\ga\in M(F)_+$, we have $f_\gamma(U_n)\subset U_n$ for all $n\ge0$.\par 
Let $f_0: L^+N\to L^+N$  be the restriction of $f_\gamma$ to the arc space $L^+N$.
\begin{lem}\label{lem:image-L^+N}
For any $1\le i\le l+1$ and any positive integer $n$ such that $n\ge \sum\limits_{j=i}^{l+1}r_j$ we have $N[i]_n\subset f_\gamma(L^+N[i])$.
\end{lem}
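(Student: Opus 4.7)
I would prove the lemma by downward induction on $i$, from $i=l+1$ (where $N[l+1]$ is trivial and the claim is vacuous) down to $i=1$. The key structural input is the short exact sequence of algebraic groups
\[ 1 \to N[i+1] \to N[i] \to N\langle i\rangle \to 1 \]
with $N\langle i\rangle$ commutative, compatible with $f_\gamma$: the induced map $f_\gamma\langle i\rangle$ on the abelian quotient will be attacked by elementary divisor theory, while the kernel is handled by the inductive hypothesis applied to $N[i+1]$.

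For the inductive step, fix $v\in N[i]_n$ with $n\ge \sum_{j=i}^{l+1}r_j$ and denote by $\bar v\in N\langle i\rangle_n$ its image. Under the $M$-equivariant identification $N\langle i\rangle \cong \mathrm{Lie}\,N\langle i\rangle$, the map $f_\gamma\langle i\rangle$ corresponds to the $F$-linear operator $\mathrm{Ad}(\gamma)-\mathrm{Id}$ on $\mathrm{Lie}\,N\langle i\rangle(F)$, whose determinant has valuation $r_i$ by the definition of $r_i$. The Smith normal form for $\cO$-lattices then gives $f_\gamma\langle i\rangle(L^+N\langle i\rangle_m) \supseteq N\langle i\rangle_{m+r_i}$ for every $m\ge 0$. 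Taking $m=n-r_i$ (note $n\ge r_i$), I obtain $\bar u\in L^+N\langle i\rangle_{n-r_i}$ with $f_\gamma\langle i\rangle(\bar u)=\bar v$. I then lift $\bar u$ to $u_1\in L^+N[i]_{n-r_i}$ along the congruence-respecting surjection $L^+N[i]\twoheadrightarrow L^+N\langle i\rangle$; since $\gamma\in M(F)_+$ preserves the congruence filtration, we get $f_\gamma(u_1)\in L^+N[i]_{n-r_i}$ with image $\bar v$ in $N\langle i\rangle$.

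Next, I would seek $u_2\in L^+N[i+1]$ so that $u:=u_1 u_2$ satisfies $f_\gamma(u)=v$. A direct computation using the normality of $N[i+1]$ in $N[i]$ yields the identity
\[ f_\gamma(u_1 u_2) \;=\; f_\gamma(u_1)\cdot\bigl[f_\gamma(u_1)^{-1},\,u_2^{-1}\bigr]\cdot f_\gamma(u_2), \]
so the problem reduces to solving $[f_\gamma(u_1)^{-1},u_2^{-1}]\cdot f_\gamma(u_2)=w$, where $w:=f_\gamma(u_1)^{-1}v$ belongs to $L^+N[i+1]_{n-r_i}$ and the level $n-r_i \ge \sum_{j=i+1}^{l+1}r_j$ lies in the range of the inductive hypothesis. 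Because commutators of elements of $N[i]_a$ with elements of $N[i+1]_b$ land in $N[i+(i+1)]_{a+b}$, i.e.\ strictly deeper in both the height and congruence filtrations, the bracket term is a small perturbation of the identity. I would finish by a Newton-style iteration: apply the inductive hypothesis to produce $u_2^{(0)}\in L^+N[i+1]$ with $f_\gamma(u_2^{(0)})=w$, measure the resulting commutator error (which drops to a deeper level), correct it by invoking the inductive hypothesis again at that deeper level, and repeat. The sequence $u_2^{(k)}$ converges in the $\varpi$-adic topology because each correction strictly improves the approximation, and the limit is the desired $u_2$.

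\textbf{Main obstacle.} The technical heart of the argument is precisely this final iterative step: the non-commutativity of $N[i]$ prevents a clean one-shot reduction $f_\gamma(u_1 u_2) = f_\gamma(u_1)f_\gamma(u_2)$, and one must carefully track both the height index and the $\varpi$-adic level of the successive commutator corrections. The ingredients that make this work are (i) the filtration compatibility $[N[i],N[j]]\subset N[i+j]$ coming from the $M$-equivariant root space decomposition of $N$, and (ii) the contraction property $f_\gamma(N_n)\subset N_n$ guaranteed by $\gamma\in M(F)_+$, which together force each iteration error to sit in a strictly deeper piece of the filtration and thereby ensure convergence.
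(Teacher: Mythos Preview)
Your strategy is correct and your iteration would in fact terminate in finitely many steps (each commutator correction drops into a strictly deeper piece of the finite-length height filtration $N[i]\supset N[i+1]\supset\cdots\supset N[l+1]=1$), so the appeal to $\varpi$-adic convergence is more than you need. But you are working harder than necessary: the paper's proof sidesteps the entire commutator bookkeeping with a single reformulation.

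Rather than solving $f_\gamma(u)=v$ directly, rewrite the problem as: find $u$ with $v*u=1$, where
\[
x*u:=u^{-1}x\,\gamma u\gamma^{-1},
\]
so that $f_\gamma(u^{-1})=v$ is equivalent to $v*u=1$. The key point is that $*$ is a genuine right action: a one-line check gives $(x*u_1)*u_2=x*(u_1u_2)$. Now the inductive step is immediate. Given $v\in N[i]_n$, your Smith-normal-form argument on the abelian quotient produces $u_1\in N[i]_{n-r_i}$ with $v*u_1\in N[i+1]_{n-r_i}$; the inductive hypothesis applied to $y:=v*u_1$ then gives $u_2\in L^+N[i+1]$ with $y*u_2=1$, and associativity yields $v*(u_1u_2)=1$. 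Done, with no iteration.

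The reason your formulation generates commutator corrections while this one does not is structural. After your first reduction you are left with solving $f_\gamma(u_1)*u_2=v$, which is \emph{not} of the same shape as the inductive hypothesis $f_\gamma(u_2)=y$ (the base point is $f_\gamma(u_1)$ rather than $1$), and bridging that gap is exactly your Newton scheme. The paper's version leaves you instead with $(v*u_1)*u_2=1$, which \emph{is} of the same shape as $y*u_2=1$, so the induction closes in a single step.
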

\begin{proof}
We prove by desending induction on $i$. The case $i=l+1$ is trivial since $N[l+1]=1$. Assume the statement is true for $i+1$.
Let $x\in N[i]_n$. To show that  $x\in f_\gamma(L^+N[i])$ it suffices to find $u\in N[i](\cO)$ with $x*u=1$, for then $f_\gamma(u^{-1})=x$.\par 
Let $x_i\in N\langle i\rangle_n$ be the image of $x$. Since $\mathrm{val}\det(f_\ga\langle i\rangle)=r_i$, we have $\varpi^{r_i}N\langle i\rangle(\cO)\subset f_\ga\langle i\rangle(N\langle i\rangle(\cO))$. Hence there exists $u_i\in N[i]_{n-r_i}$ such that $x_i*u_i=1$ in $N\langle i\rangle(\cO)$ and hence $x*u_i\in N[i+1]_{n-r_i}$. By induction hypothesis, there exists $v\in N[i+1](\cO)$ such that $(x*u_i)*v=1$. Then $u=u_iv$ satisfies $x*u=1$.
\end{proof}
A subset of $L^+N$ is \emph{admissible} if it is the pre-image of a locally closed subset of $L^+_nN$ for some $n$. A subset $Z$ of $LN$ is \emph{admissible} of it is conjugate under $G(F)$ to an admissible subset of $L^+N$.
\begin{lem}\label{f_0-smooth-lem}
Let $V$ be an admissible subset of $L^+N$. Let $n\ge r_N(\ga)$ be a positive integer such that $V$ is right invariant under $N_n$.
Suppose moreover that $V\subset f_0(L^+N)$. Then the set $f_0^{-1}(V)$ is admissible and right invariant under $N_n$. Moreover, $f_0$ induces a smooth surjective map 
\[f_0^{-1}(V)/N_n\to V/N_n\] whose fibers are isomorphic to $\bA^{r_N(\gamma)}$.
\end{lem}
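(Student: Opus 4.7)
The plan is to split the proof into three ingredients: the right-$N_n$-invariance of $f_0^{-1}(V)$, the admissibility, and a constant-rank/fiber-dimension analysis of the induced map on $N_n$-quotients.

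\textbf{Invariance and admissibility.} Since $N$ is unipotent, every congruence kernel $N_m$ is normal in $L^+N$. For $v\in L^+N$ and $w\in N_n$, the direct identity
\[
f_0(vw)\;=\;w^{-1}f_0(v)(\gamma w\gamma^{-1}),
\]
combined with $\gamma N_n\gamma^{-1}\subset N_n$ (the condition $\gamma\in M(F)_+$) and the normality of $N_n$, forces $f_0(vw)\in N_nVN_n=V$, giving right-$N_n$-invariance of $f_0^{-1}(V)$. The same identity at $v=1$ yields $f_0(N_m)\subset N_m$ for every $m$, so $\pi_n\circ f_0$ factors through a morphism $\bar f_0\colon L^+_nN\to L^+_nN$ of $k$-schemes; then $f_0^{-1}(V)=\pi_n^{-1}(\bar f_0^{-1}(V_n))$ is admissible (with $V_n:=V/N_n$), and the induced map on $N_n$-quotients is precisely the restriction $\bar f_0\colon\bar f_0^{-1}(V_n)\to V_n$. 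Surjectivity follows from $V\subset f_0(L^+N)$.

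\textbf{Computing the differential.} Left-trivializing the tangent bundle of $L^+_nN$ by $\mathrm{Lie}N(\cO/\varpi^n)$ and expanding $f_\gamma(u_0\exp(\epsilon X))$ to first order in $\epsilon$, one obtains at an arbitrary point $u_0\in L^+_nN$
\[
d\bar f_0\big|_{u_0}(X)\;=\;\bigl(\mathrm{Ad}(\gamma)-\mathrm{Ad}(y_0^{-1})\bigr)X\;=\;\mathrm{Ad}(\gamma u_0^{-1}\gamma^{-1})\circ(\mathrm{Ad}(\gamma)-1)\circ\mathrm{Ad}(u_0)(X),
\]
where $y_0:=f_\gamma(u_0)$ and the second equality is an elementary algebraic rewriting using $y_0^{-1}=\gamma u_0^{-1}\gamma^{-1}u_0$. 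Since $u_0$ and $\gamma u_0^{-1}\gamma^{-1}$ both lie in $N(\cO)$, both $\mathrm{Ad}(u_0)$ and $\mathrm{Ad}(\gamma u_0^{-1}\gamma^{-1})$ act as unipotent $\cO$-linear automorphisms of $\mathrm{Lie}N(\cO)$, so they induce automorphisms on $\mathrm{Lie}N(\cO/\varpi^n)$. Consequently the cokernel of $d\bar f_0|_{u_0}$ matches that of $\mathrm{Ad}(\gamma)-1$, which by elementary-divisor theory has $k$-dimension exactly $r_N(\gamma)$ provided $n\ge r_N(\gamma)$ (so that no elementary divisor of $\mathrm{Ad}(\gamma)-1$ on $\mathrm{Lie}N(\cO)$ is lost upon truncation). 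Hence $d\bar f_0$ has constant rank $n\dim N-r_N(\gamma)$ throughout $L^+_nN$.

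\textbf{Concluding smoothness and fiber shape.} By Lemma~\ref{lem:image-L^+N} applied level by level to the filtration $N=N[1]\supset\cdots\supset N[l+1]=1$, the image of $\bar f_0$ contains $V_n$. The constant-rank theorem for morphisms of smooth affine $k$-varieties then implies that $\bar f_0$ factors through a smooth closed subscheme $Z\subset L^+_nN$ of codimension $r_N(\gamma)$, with $\bar f_0\colon L^+_nN\to Z$ smooth of relative dimension $r_N(\gamma)$; restricting to $V_n\subset Z$ gives the desired smooth surjection with fibers of dimension $r_N(\gamma)$. To identify each fiber abstractly with $\bA^{r_N(\gamma)}$ (and not merely as a smooth variety of the correct dimension), I would exploit the same filtration: on each graded piece $N\langle i\rangle$ the induced $\bar f_\gamma\langle i\rangle$ is linear with kernel of dimension $r_i$, and descending induction on $i$ presents each fiber of $\bar f_0$ as an iterated $\bA^{r_i}$-bundle of total rank $\sum_i r_i=r_N(\gamma)$, hence abstractly isomorphic to $\bA^{r_N(\gamma)}$. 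The main obstacle is precisely this final identification: the nonabelianness of $N$ obstructs a direct global linearization of $\bar f_0$, so the affine-space structure of the fibers emerges only after peeling off the graded pieces of the filtration, each of which is abelian and on which $\bar f_\gamma$ is literally the linear operator $\mathrm{Ad}(\gamma)-1$.
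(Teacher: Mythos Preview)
Your invariance and admissibility arguments are fine, and your differential computation is correct and rather elegant: the factorisation
\[
d\bar f_0\big|_{u_0}=\mathrm{Ad}(\gamma u_0^{-1}\gamma^{-1})\circ(\mathrm{Ad}(\gamma)-1)\circ\mathrm{Ad}(u_0)
\]
with the outer factors invertible over $\cO$ does show that the corank of $d\bar f_0$ is constantly $r_N(\gamma)$ once $n\ge r_N(\gamma)$. The filtration argument you sketch at the end for the fiber shape is also essentially the paper's argument.

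The gap is your appeal to a ``constant-rank theorem for morphisms of smooth affine $k$-varieties'' to deduce that $\bar f_0$ factors smoothly through a smooth closed subscheme $Z$ of the correct codimension. No such theorem is available in positive characteristic: Frobenius on $\bA^1$ has constant rank $0$ yet is surjective and not smooth. The paper only assumes $\mathrm{char}(k)\nmid|W|$, so positive characteristic is allowed, and your smoothness step does not go through as written. Knowing the corank of the differential is the right number is not enough; you also need to know that the image of $\bar f_0$ is smooth of dimension equal to that rank, and your argument does not establish this independently.

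The paper closes this gap with a single observation you are missing: the map $v\ast u:=u^{-1}v\gamma u\gamma^{-1}$ defines a right action of $H:=L^+_nN$ on itself, and $\bar f_0(u)=1\ast u$ is precisely the orbit map through $1$. Hence every fibre of $\bar f_0$ is a coset of the stabiliser $S:=\bar f_0^{-1}(1)$, and $\bar f_0$ is the quotient map $H\to S\backslash H$, which is automatically smooth once $S$ is. The filtration argument is then run \emph{once}, on $S$, to show $S\cong\bA^{r_N(\gamma)}$ (using Lemma~\ref{lem:image-L^+N} to prove surjectivity of $S[i]\to S\langle i\rangle$, exactly as you indicate). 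This gives smoothness and the fiber identification simultaneously and uniformly in all characteristics. Your differential computation is in fact the infinitesimal shadow of this group-action picture, but without the orbit interpretation it does not by itself yield smoothness.
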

\begin{proof}
Let $\bar f_0: L_n^+N\to L_n^+N$ be the map induced by $f_0$. Since $V$ is right invariant under $N_n$, a straightforward calculation shows that $f_0^{-1}(V)$ is also right invariant under $N_n$. Denote $\overline{V}:=V/U_n$. Then we have $f_0^{-1}(V)/U_n=\bar f_0^{-1}(\overline{V})$, a locally closed subset of $L_n^+N$. In particular, $f_0^{-1}(V)$ is admissible. Since $V\subset f_0(L^+N)$, the induced map $\bar f_0^{-1}(\overline{V})\to\overline{V}$ is surjective and it remains to show that it is smooth with fibers isomorphic to $\bA^{r(\ga)}$.\par 
Denote $H:=L_n^+N$, $H[i]:=L_n^+(N[i])$ and $H\langle i\rangle:=L_n^+(N\langle i\rangle)$. Then for each $1\le i\le l+1$, $H[i]$ is a normal subgroup of $H$ and $H[i]/H[i+1]\cong H\langle i\rangle$. For each $1\le j\le n$, we define a normal subgroup $H_j:=\ker(H\to L^+_jN)$ of $H$; and similarly we define normal subgroups $H[i]_j$ (resp. $H\langle i\rangle_j=\varpi^jH\langle i\rangle$) of $H[i]$ (resp. $H\langle i\rangle$).\par 
Consider the right action of $H$ on itself defined by $v*u:=u^{-1}v\ga u\ga^{-1}$ for $u,v\in H(k)=N(\cO/\varpi^n\cO)$. Then $\bar f_0(u)=1*u$ and hence $\bar f_0$ is the orbit map at $1$ of the $H$-action. In particular, all fibres of $\bar f_0$ are isomorphic to the stabilizer $S:=\bar f_0^{-1}(1)$.\par 
Now we take a closer look at the structure of the stabilizer $S$.  First note that the action $*$ induces actions of $H[i]$ and $H\langle i\rangle$ on themselves. Let $S[i]$ (resp. $S\langle i\rangle$) be the stabilizer of $1$ under the $H[i]$ (resp. $H\langle i\rangle$) action.\par 
We claim that for all $i$, the canonical homomorphism $S[i]\to S\langle i\rangle$ is surjective. Let $s\in S\langle i\rangle$ and choose a representative $h\in H[i]$ of $s$. Since 
\[S\langle i\rangle=\ker(\bar{f}_0\langle i\rangle)\subset\varpi^{n-r_i}H\langle i\rangle\]
we have $h\in H[i]_{n-r_i}$ and $1*h\in H[i]_{n-r_i}\cap H[i+1]=H[i+1]_{n-r_i}$. By assumption $n-r_i\ge\sum_{j=i+1}^{l+1} r_j$, then we can apply Lemma~\ref{lem:image-L^+N} to obtain an element $h'\in H[i+1]$ such that $1*(hh')=1$. Thus $hh'\in S[i]$ maps to $s\in S\langle i\rangle$ and the claim follows.\par
The kernal of the surjective homomorphism $S[i]\to S\langle i\rangle$ is $S[i]\cap H[i+1]=S[i+1]$. Moreover, we have
\[S\langle i\rangle\cong (f_0\langle i\rangle)^{-1}(\varpi^n N\langle i\rangle)/\varpi^n N\langle i\rangle\cong \bA^{r_i}\]
From this we see that $S\cong \bA^{r_N(\ga)}$ as a scheme.
\end{proof}

The proof of the following lemma is inspired by \cite[Lemma 3.8]{KoV}.
\begin{lem}\label{inverse-image-U_n-lem}
For any $n\ge r_N(\gamma)$, we have $f_\gamma^{-1}(N_n)\subset N_{n-r_N(\gamma)}$.
\end{lem}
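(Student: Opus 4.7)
The plan is to induct along the descending filtration $N=N[1]\supset N[2]\supset\cdots\supset N[l+1]=1$, exploiting that each graded quotient $N\langle i\rangle=N[i]/N[i+1]$ is a vector group on which the induced map $f_\gamma\langle i\rangle$ is $F$-linear with $\mathrm{val}\det f_\gamma\langle i\rangle=r_i$. Set $s_i:=r_1+\cdots+r_i$, so $s_0=0$ and $s_l=r_N(\gamma)$. I will prove by induction on $i\in\{0,1,\ldots,l\}$ the intermediate claim that every $u\in f_\gamma^{-1}(N_n)$ admits a factorization $u=u_iv_i$ with $u_i\in N_{n-s_i}$ and $v_i\in LN[i+1]$. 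Taking $i=l$ and using $N[l+1]=1$ forces $v_l=1$, so that $u=u_l\in N_{n-r_N(\gamma)}$, which is the desired conclusion.

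The base case $i=0$ is the trivial factorization $u=1\cdot u$ with $1\in N_n$ and $u\in LN=LN[1]$. For the inductive step, applying the identity $f_\gamma(xy)=y^{-1}f_\gamma(x)(\gamma y\gamma^{-1})$ to $u=u_iv_i$ gives
\[ f_\gamma(u)=v_i^{-1}\,f_\gamma(u_i)\,(\gamma v_i\gamma^{-1}). \]
Since $\gamma\in M(F)$ normalizes each $N[j]$ we have $\gamma v_i\gamma^{-1}\in LN[i+1]$, and because of the nilpotency relation $[N,N[i+1]]\subset N[i+2]$ the factor $v_i^{-1}$ commutes with $f_\gamma(u_i)$ modulo $LN[i+2]$. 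The displayed equation therefore reduces to
\[ f_\gamma(u)\equiv f_\gamma(u_i)\cdot v_i^{-1}(\gamma v_i\gamma^{-1})\pmod{LN[i+2]}. \]
In the abelian graded piece $N\langle i+1\rangle$ (written additively) the last factor is precisely $f_\gamma\langle i+1\rangle(\bar v_i)$, where $\bar v_i$ is the image of $v_i$ in $LN\langle i+1\rangle$.

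Projecting the congruence into $N\langle i+1\rangle(F)$ and combining the hypothesis $f_\gamma(u)\in N_n$ with $f_\gamma(u_i)\in N_{n-s_i}$ (using the assumption $\gamma\in M(F)_+$, under which $f_\gamma$ preserves every $N_m$), one deduces $f_\gamma\langle i+1\rangle(\bar v_i)\in\varpi^{n-s_i}N\langle i+1\rangle(\cO)$. The key input is then a standard piece of linear algebra: for any endomorphism $A$ of a free $\cO$-module $L$ with $\mathrm{val}\det A=r$, Cramer's rule yields $A^{-1}(\varpi^ML)\subset\varpi^{M-r}L$ inside $L\otimes_\cO F$. Applied with $A=f_\gamma\langle i+1\rangle$ on the lattice $N\langle i+1\rangle(\cO)$, this gives $\bar v_i\in N\langle i+1\rangle_{n-s_{i+1}}$. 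Lifting $\bar v_i$ to $\tilde v_i\in N[i+1]_{n-s_{i+1}}$ and writing $v_i=\tilde v_i\,w$ with $w\in LN[i+2]$, one sets $u_{i+1}:=u_i\tilde v_i\in N_{n-s_{i+1}}$ (both factors lie in this subgroup since $s_{i+1}>s_i$) and $v_{i+1}:=w$, which completes the inductive step.

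The main technical obstacle will be keeping the non-abelian commutator corrections under control between successive filtration steps; this is handled uniformly via $[N[i],N[j]]\subset N[i+j]$ together with the abelian structure on each $N\langle i\rangle$. The hypothesis $n\ge r_N(\gamma)$ ensures that all depths $n-s_i$ remain non-negative throughout the induction, so that at each stage the linear-algebra output genuinely lives in the integral lattice $N\langle i+1\rangle(\cO)$ and lifts to an element of $L^+N[i+1]$.
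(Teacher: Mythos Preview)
Your proof is correct and follows essentially the same inductive scheme as the paper: both descend along the filtration $N=N[1]\supset\cdots\supset N[l+1]=1$, peel off one graded piece at a time, and use the determinant bound $\mathrm{val}\det f_\gamma\langle i\rangle=r_i$ (via Cramer's rule) to control the depth on each $N\langle i\rangle$. The only structural difference is the order of your factorization: you write $u=u_iv_i$ with $u_i\in N_{n-s_i}$ (integral) and $v_i\in LN[i+1]$ (deep), whereas the paper writes $u=u_iv$ with $u_i\in N[i](F)$ (deep) and $v\in N_{n-s_{i-1}}$ (integral). The paper's ordering is marginally cleaner: since both $v$ and $\gamma v\gamma^{-1}$ lie in $N_{n-s_{i-1}}$, one reads off directly that $f_\gamma(u_i)=v\cdot f_\gamma(u)\cdot\gamma v^{-1}\gamma^{-1}\in N[i](F)\cap N_{n-s_{i-1}}$ without any commutator manipulation, whereas your ordering forces you to invoke $[N,N[i+1]]\subset N[i+2]$ to pass $v_i^{-1}$ across $f_\gamma(u_i)$ before projecting. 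This is a cosmetic difference; the substance is identical.
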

\begin{proof}
Let $u\in N(F)$ with $f_\gamma(u)\in N_n$. We will show by induction that 
\[u\in N[i](F)\cdot N_{n-\sum_{j<i}r_j}.\]
The case $i=1$ says $u\in N[1](F)=N(F)$ which is clear and the case $i=s+1$ gives the lemma since $\sum_{i=1}^sr_i=r_N(\ga)$ and $N[s+1]=1$.\par 
It remains to finish the induction step. By induction hypothesis we have $u=u_iv$ with $u_i\in N[i](F)$ and $v\in N_{n-\sum_{j<i}r_j}$. 
By assumption,
\[f_\gamma(u)=f_\gamma(u_iv)=v^{-1}\cdot u_i^{-1}\gamma u_i\gamma^{-1}\cdot \gamma v\gamma^{-1}\in N_n\] 
from which it follows that 
\[ u_i^{-1}\gamma u_i\gamma^{-1}\in N[i](F)\cap v\cdot N_n \cdot(\gamma v^{-1}\gamma^{-1})\subset N[i]_{n-\sum_{j<i}r_j}\]
Let $\bar{u}_i\in N\langle i\rangle$ be the image of $u_i$. Then we have 
\[f_\ga\langle i\rangle (\bar{u}_i)\in N\langle i\rangle_{n-\sum_{j<i}r_j}\]
Since $\mathrm{val}\det(f_\ga\langle i\rangle)=r_i$, we get that
$\bar{u}_i\in N\langle i\rangle_{n-\sum_{j<i+1}r_j}$ and hence
\[u=u_iv\in N[i+1](F)\cdot N_{n-\sum_{j<i+1}r_j}\]
This finishes the induction step.
\end{proof}

\begin{prop}\label{prop:adm-set}
Let $Z$ be an admissible subset of the loop space $LN$. Then 
$f^{-1}_\gamma(Z)$ is admissible and there exists a positive integer $m$ such that for all $n\ge m$,  $f^{-1}_\gamma(Z)$ and $Z$ are right invariant under the group $N_n$
and the map 
\[f^{-1}_\gamma(Z)/N_n\to Z/N_n\] 
induced by $f_\gamma$ is smooth surjective whose geometric fibers are irreducible of dimension $r_N(\gamma)$.
\end{prop}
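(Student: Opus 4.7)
The plan is to combine Lemma~\ref{f_0-smooth-lem} (smoothness on the arc-space level) with Lemma~\ref{inverse-image-U_n-lem} (controlling the depth of preimages), after a preliminary reduction to an admissible subset of $L^+N$. By the definition of admissibility in $LN$, after conjugating $Z$ and $\gamma$ simultaneously by an appropriate element of $T(F)\subset M(F)$ I may assume $Z\subset L^+N$ is admissible, say $Z=p_n^{-1}(\bar Z)$ for a locally closed $\bar Z\subset L^+_nN$; the invariant $r_N(\gamma)$ and the property $\gamma\in M(F)_+$ are unchanged by this conjugation. Throughout I will take $n\geq r_N(\gamma)$.

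The first task is to verify that $f_\gamma^{-1}(Z)$ is right $N_n$-invariant. For $u\in f_\gamma^{-1}(Z)$ and $v\in N_n$ one computes
\[
f_\gamma(uv)=v^{-1}f_\gamma(u)(\gamma v\gamma^{-1}).
\]
Here $\gamma v\gamma^{-1}\in N_n$ by the positivity $\gamma\in M(F)_+$, so $f_\gamma(u)\cdot(\gamma v\gamma^{-1})\in Z$ by right $N_n$-invariance of $Z$; normality of $N_n$ in $L^+N$ together with $f_\gamma(u)\in L^+N$ lets us rewrite $v^{-1}\cdot f_\gamma(u)(\gamma v\gamma^{-1})$ as $f_\gamma(u)\cdot w$ for some $w\in N_n$, which still lies in $Z$.

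Next I would deduce admissibility of $f_\gamma^{-1}(Z)$. The map $f_\gamma$ descends to a map $\bar f_\gamma\colon LN/N_n\to LN/N_n$, and $f_\gamma^{-1}(Z)/N_n=\bar f_\gamma^{-1}(\bar Z)$ is locally closed. Lemma~\ref{inverse-image-U_n-lem}, applied $N_n$-coset by $N_n$-coset, shows that the preimage of every $zN_n\subset Z$ is contained in a single $N_{n-r_N(\gamma)}$-translate, hence that $f_\gamma^{-1}(Z)$ itself lies inside a bounded translate of $L^+N$. Combined with local-closedness modulo $N_n$ this yields admissibility in the sense of \S\ref{chapter:Sp-fibre}.

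Finally, for smoothness and the dimension of the fibres of $f_\gamma^{-1}(Z)/N_n\to Z/N_n$, I would apply Lemma~\ref{f_0-smooth-lem} to $V:=Z\cap f_0(L^+N)$, obtaining that $f_0^{-1}(V)/N_n\to V/N_n$ is smooth surjective with geometric fibres $\bA^{r_N(\gamma)}$. Two identifications then remain: on the source, $f_\gamma^{-1}(Z)/N_n=f_0^{-1}(V)/N_n$ (after the translation back into $L^+N$ the two agree at the level of $N_n$-cosets); and on the target, $V/N_n=Z/N_n$. The second identification is the main obstacle: it amounts to saying every $N_n$-coset in $Z$ meets $f_0(L^+N)$, and this is precisely the content of Lemma~\ref{lem:image-L^+N}, which yields $N_n\subset f_0(L^+N)$ once $n\geq \sum_{j=1}^l r_j=r_N(\gamma)$. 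Once this surjectivity at the quotient level is in place, the smoothness statement and the description of the fibres as $\bA^{r_N(\gamma)}$ follow immediately from Lemma~\ref{f_0-smooth-lem}.
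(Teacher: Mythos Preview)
Your reduction to $Z\subset L^+N$ is not strong enough, and the final step breaks because of it. You claim that $V/N_n=Z/N_n$ where $V=Z\cap f_0(L^+N)$, arguing that this ``is precisely the content of Lemma~\ref{lem:image-L^+N}, which yields $N_n\subset f_0(L^+N)$.'' But $N_n\subset f_0(L^+N)$ says nothing about whether an arbitrary $N_n$-coset $zN_n$ with $z\in L^+N$ meets $f_0(L^+N)$. Take $N=\bG_a$, $M=T$, and $\gamma\in T(\cO)$ with $\alpha(\gamma)-1=\varpi$; then $f_\gamma(u)=\varpi u$, $r_N(\gamma)=1$, and $f_0(L^+N)=\varpi\cO=N_1$. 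With $Z=\cO^\times$ (admissible in $L^+N$ with $n=1$) one has $V=Z\cap f_0(L^+N)=\varnothing$, so $V/N_n\neq Z/N_n$. Your identification $f_\gamma^{-1}(Z)/N_n=f_0^{-1}(V)/N_n$ fails for the same reason: $f_\gamma^{-1}(Z)=\varpi^{-1}\cO^\times$ is not even contained in $L^+N$.

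The paper avoids this by conjugating $Z$ not merely into $L^+N$ but deep into $N_{n_0}$ for some $n_0\ge r_N(\gamma)$, using $\mathrm{Ad}(\varpi^{\mu_0})$ with $\mu_0\in X_*(Z(M)^0)$. Two things are gained: first, $Z^{\mu_0}\subset N_{n_0}\subset f_0(L^+N)$ by Lemma~\ref{lem:image-L^+N}, so Lemma~\ref{f_0-smooth-lem} applies with $V=Z^{\mu_0}$ itself; second, because $\mu_0$ is \emph{central} in $M$, one has $\varpi^{\mu_0}\gamma\varpi^{-\mu_0}=\gamma$ and hence $f_\gamma$ commutes with $\mathrm{Ad}(\varpi^{\mu_0})$, so there is no need to change $\gamma$ (your simultaneous conjugation by a general $t\in T(F)$ need not preserve the condition $\gamma\in M(F)_+$). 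The remaining mismatch between $N_n$ and $N_n^{-\mu_0}=\varpi^{-\mu_0}N_n\varpi^{\mu_0}$ is then handled by a final comparison of quotients, which your argument never reaches.
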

\begin{proof}
Let $n_0\ge r(\gamma)$ be a positive integer. Choose  a coweight $\mu_0\in X_*(Z(M)^0)$ such that 
\[Z^{\mu_0}:=\mathrm{Ad}(\varpi^{\mu_0})(Z)\subset N_{n_0}.\]
Then by Lemma~\ref{inverse-image-U_n-lem} we have 
\[f_\gamma^{-1}(Z^{\mu_0})\subset f_\gamma^{-1}(N_{n_0})\subset N_{n_0-r(\gamma)}\subset L^+N\]
Hence in particular 
\[\mathrm{Ad}(\varpi^{\mu_0})(f_\ga^{-1}(Z))=f_\gamma^{-1}(Z^{\mu_0})=f_0^{-1}(Z^{\mu_0})\]
Moreover, since $Z^{\mu_0}$ is an admissible subset of $L^+N$, $f_0^{-1}(Z^{\mu_0})$ is an admissible subset of $L^+N$ by Lemma~\ref{f_0-smooth-lem}. This shows that $f_\ga^{-1}(Z)$ is admissible.\par 
Let $n_1>n_0$ be a positive integer such that  $Z^{\mu_0}$ and $f_\gamma^{-1}(Z^{\mu_0})$ are invariant under right multiplication by $N_{n_1}$. For all $n\ge n_1$, since the map $f_\gamma$ commutes with conjugation by $\varpi^{\mu_0}$, $Z$ and $f_\gamma^{-1}(Z)$ are right invariant under the group $N_n^{-\mu_0}:=\varpi^{-\mu_0}N_n\varpi^{\mu_0}$. Then we get the following commutative diagram
\[\xymatrix{
f_\gamma^{-1}(Z)/N_n^{-\mu_0}\ar[r]\ar[d]^{\simeq} & Z/N_n^{-\mu_0}\ar[d]^{\simeq}\\
f_\gamma^{-1}(Z^{\mu_0})/N_n\ar[r] & Z^{\mu_0}/N_n
}\]
where the horizontal arrows are induced by $f_\gamma$ and the vertical arrows are isomorphisms induced by $\mathrm{Ad}(\varpi^{\mu_0})$.\par 
By Lemma~\ref{lem:image-L^+N}, $Z^{\mu_0}\subset N_{n_0}\subset f_\gamma(L^+N)$. Therefore we can apply Lemma~\ref{f_0-smooth-lem} to conclude that the lower horizontal map is surjective smooth whose fibers are isomorphic to $\bA^{r_N(\gamma)}$. Hence the same is true for the upper horizontal map.\par 
Let $m$ be a positie integer such that for all $n\ge m$, $N_n\supset N_{n'}^{-\mu_0}$ for some $n'\ge n_1$. Consider the following diagram
\[\xymatrix{
f_\ga^{-1}(Z)/N_{n'}^{-\mu_0}\ar[r]\ar[d] & Z/N_{n'}^{-\mu_0}\ar[d]\\
f_\ga^{-1}(Z)/N_n\ar[r] & Z/N_n
}\]
The two vertical maps are smooth surjective with fibers isomorphic to the irreducible scheme $U_n/U_{n'}^{-\mu_0}$ and the upper horizontal map is smooth surjective with fibers isomorphic to $\bA^{r_N(\ga)}$ as we have just seen. Hence the lower horizontal map is smooth surjective with irreducible fibers of dimension $r_N(\ga)$.
\end{proof}

\subsection{The case of unramified conjugacy class}\label{unr-section}
In this section we assume that $\gamma\in G(F)^{\mathrm{rs}}$ is an \emph{unramified} regular semisimple element. Since the residue field $k$ is algebraically closed,  after conjugation  we may assume that $\gamma\in\varpi^\mu T(\cO)\cap G^{\mathrm{rs}}(F)$, where $\mu=\nu_\ga\in X_*(T)_+$ is the Newton points of $\ga$. In this case, we have $G_\ga^0=T$. By Lemma~\ref{d-ga-newton-lem} the discriminant valuation for $\ga$ is
\[d(\ga)=2\sum_{\alpha\in\Phi^+}\mathrm{val}(\alpha(\gamma)-1)-\langle2\rho,\mu\rangle.\]
We will apply the results in previous section to the case $N=U$ is a maximal unipotent subgroup. In this case, the corresponding invariant for $\ga$ is 
\begin{equation}\label{r-gamma-eq}
r(\gamma):=r_U(\ga)=\sum_{\alpha\in\Phi^+}\mathrm{val}(\alpha(\gamma)-1)=\frac{1}{2}d(\gamma)+\langle\rho,\mu\rangle.
\end{equation} 
Fix a dominant coweight $\la\in\Lambda_+$ such that $\mu\le \la$. By Proposition~\ref{prop:nonempty}, this implies that $X_\ga^\la$ is nonempty.

\subsubsection{Relation with MV-cycles}
Let $Y_\ga^\la$ be the locally closed sub-indscheme of $X_\ga^\la$ whose set of $k$-points is
\[Y_\ga^\la(k)=\{u\in U(F)/U(\cO) | \mathrm{Ad}(u)^{-1}\ga\in G(\cO)\varpi^\la G(\cO)\}\]
To understand the structure of $Y_\gamma^\la$, we use the map $f_\gamma: LU\to LU$ (cf. \eqref{f-gamma-eq}).  
In the following, we denote $K:=L^+G$. Then we have
\[Y_\gamma^\lambda=(f_\gamma^{-1}(K \varpi^\lambda K\varpi^{-\mu}\cap LU)/L^+U\]
Recall the Mirkovic-Vilonen cycles in the affine Grassmanian:
\[S_\mu\cap\mathrm{Gr}_\la=(LU\varpi^\mu K\cap K\varpi^\lambda K)/K\]
From this description we get an isomorphism
\begin{equation}\label{MV-cycle-presentation-equation}
\xymatrix@R=1pt{
(LU\cap K\varpi^\lambda K\varpi^{-\mu})/\varpi^\mu L^+U\varpi^{-\mu}\ar[r] &
S_\mu\cap\mathrm{Gr}_\lambda\\
u\ar@{|->}[r] & u\varpi^\mu
}\end{equation}
In summary, we have the following diagram
\[\xymatrix{
f_\gamma^{-1}(K\varpi^\lambda K\varpi^{-\mu}\cap LU)\ar[r]^{f_\gamma}\ar[d] & K\varpi^\lambda K\varpi^{-\mu}\cap LU\ar[d]\\
Y_\gamma^\lambda & S_\mu\cap\mathrm{Gr}_\lambda
}\]
where the left vertical arrow is an $L^+U$-torsor and the right vertical arrow is a torsor under the group $\varpi^\mu L^+U\varpi^{-\mu}$.\par 
\begin{thm}\label{Y-gamma-thm}
 $Y_\ga^\la$ is an equi-dimensional quasi-projective variety of dimension $\langle\rho,\la\rangle+\frac{1}{2}d(\ga)$, where $d(\ga)$ is the discriminant valuation, cf. Definition~\ref{def:disc-valuation}. Moreover, the number of irreducible components of $Y_\ga^\la$ equals to $m_{\la\mu}$, the dimension of $\mu$-weight space in the irreducible representation $V_\la$ of $\hat G$ with highest weight $\la$. 
\end{thm}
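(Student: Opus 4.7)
The plan is to reduce the statement to the Mirkovi\'c--Vilonen theorem by exploiting the diagram displayed just before the theorem, together with the local structural result for the map $f_\gamma$ given by Proposition~\ref{prop:adm-set}. Concretely, I want to show that on both sides of that diagram, at a finite truncation level, the upper horizontal arrow induced by $f_\gamma$ is smooth with irreducible fibres of a controlled dimension, and the vertical arrows are free quotients by connected pro-unipotent groups. This will let me transport all dimension and irreducible-component information from the MV cycle $S_\mu\cap\mathrm{Gr}_\lambda$ to $Y_\gamma^\lambda$.

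First I would apply Proposition~\ref{prop:adm-set} with $N=U$ the unipotent radical of $B$ and with the admissible subset $Z:=K\varpi^\lambda K\varpi^{-\mu}\cap LU$. Admissibility of $Z$ is visible from the isomorphism \eqref{MV-cycle-presentation-equation}, which exhibits $Z$ as a torsor under the pro-unipotent group $\varpi^\mu L^+U\varpi^{-\mu}$ over the finite-type scheme $S_\mu\cap\mathrm{Gr}_\lambda$. The hypothesis $\gamma\in T(F)_+$ holds because $\mu$ is dominant. The proposition then yields, for all sufficiently large $n$, a smooth surjection
\[f_\gamma^{-1}(Z)/U_n\longrightarrow Z/U_n\]
whose geometric fibres are irreducible of dimension $r(\gamma)$. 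Next I would choose $n$ large enough that $U_n\subset\varpi^\mu L^+U\varpi^{-\mu}\subset L^+U$ (the second inclusion again uses dominance of $\mu$, as $\alpha(\mu)\ge 0$ for $\alpha\in\Phi^+$). Taking free quotients by the connected pro-unipotent groups $L^+U/U_n$ and $\varpi^\mu L^+U\varpi^{-\mu}/U_n$ produces $Y_\gamma^\lambda$ and $S_\mu\cap\mathrm{Gr}_\lambda$ respectively, preserving equidimensionality and setting up a bijection on irreducible components. Using the standard computation
\[\dim(L^+U/U_n)-\dim(\varpi^\mu L^+U\varpi^{-\mu}/U_n)=\langle 2\rho,\mu\rangle,\]
one then extracts
\[\dim Y_\gamma^\lambda=\dim(S_\mu\cap\mathrm{Gr}_\lambda)+r(\gamma)-\langle 2\rho,\mu\rangle,\]
together with a bijection of irreducible components between $Y_\gamma^\lambda$ and $S_\mu\cap\mathrm{Gr}_\lambda$ (the smooth surjection above has irreducible, hence connected, fibres).

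Finally I would invoke the Mirkovi\'c--Vilonen theorem: $S_\mu\cap\mathrm{Gr}_\lambda$ is equidimensional of dimension $\langle\rho,\lambda+\mu\rangle$ with exactly $m_{\lambda\mu}$ irreducible components. Combining this with the identity $r(\gamma)=\tfrac12 d(\gamma)+\langle\rho,\mu\rangle$ from \eqref{r-gamma-eq} gives
\[\dim Y_\gamma^\lambda=\langle\rho,\lambda+\mu\rangle+\tfrac12 d(\gamma)+\langle\rho,\mu\rangle-\langle 2\rho,\mu\rangle=\langle\rho,\lambda\rangle+\tfrac12 d(\gamma),\]
the asserted equidimensionality, and the irreducible component count $m_{\lambda\mu}$. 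Quasi-projectivity comes for free since the finite-type approximation of $Y_\gamma^\lambda$ embeds as a locally closed subset of the projective variety $\mathrm{Gr}_\lambda$. The main obstacle in this argument is not the numerical bookkeeping, but ensuring that all spaces in the diagram genuinely descend to quasi-projective $k$-schemes with free actions at the chosen finite level, so that Proposition~\ref{prop:adm-set} can be applied cleanly and the smooth surjection there transports irreducible components through both torsor quotients; once this compatibility is in place, the dimension and irreducible-component formulas drop out from the MV theorem.
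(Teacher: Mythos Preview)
Your proposal is correct and follows essentially the same route as the paper's proof: apply Proposition~\ref{prop:adm-set} to the admissible set $Z=K\varpi^\lambda K\varpi^{-\mu}\cap LU$, truncate by $U_n$ for $n$ large enough that $U_n\subset\varpi^\mu L^+U\varpi^{-\mu}$, and transport the Mirkovi\'c--Vilonen dimension and component count through the resulting smooth surjections with irreducible fibres. One small imprecision: for quasi-projectivity, $Y_\gamma^\lambda$ is not literally a locally closed subset of $\mathrm{Gr}_\lambda$, but once you know it is of finite type it sits inside \emph{some} closed Schubert variety in $\mathrm{Gr}_G$, which is projective---this is exactly how the paper phrases it.
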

\begin{proof}
Apply Proposition~\ref{prop:adm-set} to the admissible subset $Z=K\varpi^\lambda K\varpi^{-\mu}\cap LU$ of $LU$, we see that there exists a large enough positive integer $n$ such that in the following diagram
\[\xymatrix{
f_\gamma^{-1}(K\varpi^\lambda K\varpi^{-\mu}\cap LU)/U_n\ar[r]^{\bar f_\gamma}\ar[d] & (K\varpi^\lambda K\varpi^{-\mu}\cap LU)/U_n\ar[d]\\
Y_\gamma^\lambda & S_\mu\cap\mathrm{Gr}_\lambda
}\]
\begin{enumerate}
\item All schemes are of finite type;
\item The map $\bar f_\gamma$ induced by $f_\gamma$ is smooth surjective whose geometric fibers are irreducible of dimension $r(\gamma)$, where we recall that $r(\ga)$ is defined in \eqref{r-gamma-eq};
\item $U_n$ is contained in $\varpi^\mu L^+U\varpi^{-\mu}$, hence also $L^+U$;
\item The left vertical map is smooth surjective with fibers isomorphic to the irreducible scheme $L^+U/U_n$;
\item The right vertical map is smooth with fibers isomorphic to the irreducible scheme $\varpi^\mu L^+U\varpi^{-\mu}/U_n$.
\end{enumerate}
Since $Y_\ga^\la$ is of finite type, it is a locally closed subscheme of a closed Schubert variety. In particular, $Y_\ga^\la$ is quasi-projective since closed Schubert varieties are projective.\par 
Recall that the MV-cycle $S_\mu\cap\mathrm{Gr}_\la$ is equidimensional of dimension $\langle\rho,\la+\mu\rangle$. Hence by (2)-(5) we see that $Y_\ga^\la$ is equidimensional of dimension  
\begin{equation}
\begin{split}
\dim Y_\ga^\la&=\dim(S_\mu\cap\mathrm{Gr}_\la)+\dim\varpi^\mu U(\cO)\varpi^{-\mu}/U_n^{-\lambda_0}+r(\gamma)-\dim U(\cO)/U_n^{-\lambda_0}\\
&=\langle\rho,\la+\mu\rangle-\langle2\rho,\mu\rangle+r(\gamma)=\langle\rho,\la\rangle+\frac{1}{2}d(\gamma)
\end{split}
\end{equation}

Moreover, by \cite[\href{http://stacks.math.columbia.edu/tag/037A}{Tag 037A}]{stacks-project} the 3 maps in the diagram above induces a canonical bijections between set of irreducible components
\[\mathrm{Irr}(Y_\ga^\la)\xrightarrow{\sim}\mathrm{Irr}(S_\mu\cap\mathrm{Gr}_\la).\]
Hence the number of irreducible components of $Y_\ga^\la$ equals to the number of irreducible components of the MV-cycle $S_\mu\cap\mathrm{Gr}_\la$, which is known to be $m_{\la\mu}$.
\end{proof}

\begin{cor}\label{dim-unr-cor}
Suppose $\ga\in G(F)^{\mathrm{rs}}$ is unramified (i.e. split) and $\nu_\ga=\mu\in X_*(T)_+$, then $X_\ga^\la$ is a scheme locally of finite type, equidimensional of dimension 
\[\dim X_\ga^\la=\langle\rho,\la\rangle+\frac{1}{2}d(\ga).\]
Moreover, the number of $G_\ga^0(F)$-orbits on its set of irreducible component $\mathrm{Irr}(X_\ga^\la)$ equals to $m_{\la\mu}$.
\end{cor}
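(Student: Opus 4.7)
The plan is to use the action of the centralizer $T(F) = G_\gamma^0(F)$ on $X_\gamma^\lambda$, together with the Iwasawa decomposition of $G(F)$, to reduce all three assertions to \thmref{Y-gamma-thm}, which already handles the finite-type open subvariety $Y_\gamma^\lambda$.

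First I would invoke the Iwasawa decomposition $G(F) = \bigsqcup_{\nu \in X_*(T)} U(F)\varpi^\nu G(\cO)$ to obtain a set-theoretic stratification $G(F)/G(\cO) = \bigsqcup_\nu S_\nu$, where $S_\nu := U(F)\varpi^\nu G(\cO)/G(\cO)$. Since $\ga \in T(F)$ commutes with every $\varpi^\nu$, left multiplication by $\varpi^\nu$ preserves $X_\ga^\la$ and carries $X_\ga^\la \cap S_0 = Y_\ga^\la$ isomorphically onto $X_\ga^\la \cap S_\nu$. Thus set-theoretically
\[X_\ga^\la \;=\; \bigsqcup_{\nu \in X_*(T)} \varpi^\nu \cdot Y_\ga^\la,\]
each piece being isomorphic as a $k$-scheme to the quasi-projective equidimensional variety $Y_\ga^\la$ of dimension $\langle \rho, \la\rangle + \tfrac{1}{2}d(\ga)$ with $m_{\la\mu}$ irreducible components.

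The main obstacle is to upgrade this set-theoretic partition to a scheme-theoretic identification $X_\ga^\la \cong X_*(T) \times Y_\ga^\la$ in the category of $k$-schemes; equivalently, to show that each translate $\varpi^\nu Y_\ga^\la$ is simultaneously open and closed in $X_\ga^\la$. I would argue this by applying a Bia\l ynicki-Birula attractor description of the Iwasawa stratification: the cocharacter $2\rho^\vee : \bG_m \to T$ acts on $\mathrm{Gr}_G$ with $S_\nu$ as the attracting locus of the torus-fixed point $\varpi^\nu G(\cO)$. The induced $\bG_m$-action on the $T(F)$-stable subscheme $X_\ga^\la$ is $X_*(T)$-equivariant for the translation action on $\{\varpi^\nu Y_\ga^\la\}_\nu$, so clopen-ness may be checked locally on a quasi-compact subscheme. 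Any such subscheme meets only finitely many $S_\nu$ (since the intersections $S_\nu \cap \overline{\mathrm{Gr}_{\la'}}$ are empty for $\nu$ outside a bounded region in $X_*(T)$, by the usual MV-cycle bounds), and the desired clopen decomposition then follows from the Bia\l ynicki-Birula theorem applied to a finite union $\bigcup_{|\nu|\le N}\varpi^\nu \overline{Y_\ga^\la}$.

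Granting the clopen decomposition, the three claims follow at once. Local finite-typeness is automatic for a disjoint union of quasi-projective schemes. Equidimensionality with the asserted dimension is inherited piece-by-piece from \thmref{Y-gamma-thm}. Finally, $T(F)$ acts on the decomposition through its quotient $T(F)/T(\cO) = X_*(T)$, which permutes the pieces freely and transitively with stabilizer $T(\cO)$; since $T(\cO) = L^+T$ is pro-connected ($T$ being a split torus over the algebraically closed field $k$), it acts trivially on $\mathrm{Irr}(Y_\ga^\la)$. Consequently the $T(F)$-orbits on $\mathrm{Irr}(X_\ga^\la)$ are in canonical bijection with $\mathrm{Irr}(Y_\ga^\la)$, which by \thmref{Y-gamma-thm} has cardinality $m_{\la\mu}$.
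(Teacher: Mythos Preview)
Your approach is the same as the paper's: use the Iwasawa decomposition together with the free $X_*(T)$-action of the centralizer to decompose $X_\ga^\la$ into translates of $Y_\ga^\la$, then read off everything from Theorem~\ref{Y-gamma-thm}. The paper writes this as the map $Y_\ga^\la\times X_*(T)\to X_\ga^\la$, $(u,\nu)\mapsto u\varpi^\nu$, and concludes in two lines.

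There is one overreach in your write-up. The assertion that each stratum $\varpi^\nu Y_\ga^\la$ is \emph{clopen} in $X_\ga^\la$ is not correct in general, and the Bia\l ynicki--Birula argument you sketch does not establish it: BB attractor cells (equivalently, the Iwasawa strata $S_\nu$) are only locally closed in $\mathrm{Gr}_G$, and the closure of $S_0$ meets $S_{\nu'}$ for all $\nu'\le 0$. So the closure of $Y_\ga^\la$ inside $X_\ga^\la$ will typically contain pieces of other strata. What you actually have, and what suffices, is a stratification of $X_\ga^\la$ by \emph{locally closed} pieces, each isomorphic to $Y_\ga^\la$, permuted simply transitively by $X_*(T)$, with the local finiteness you already observed (any quasi-compact subset meets finitely many strata). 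From this: equidimensionality and the dimension formula follow because every irreducible component has its generic point in some stratum; local finite-typeness follows because the $X_*(T)$-action is free and properly discontinuous (your local-finiteness observation), so $X_\ga^\la\to X_\ga^\la/X_*(T)$ is \'etale onto a finite-type quotient; and your orbit-count argument via $\pi_0(LT)=X_*(T)$ is correct as written and matches the paper. So the fix is simply to replace ``clopen'' by ``locally closed'' and drop the BB justification --- the rest of your argument goes through.
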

\begin{proof}
There is a natural morphism
\[\xymatrix@R=1pt{
Y_\gamma^\lambda\times X_*(T)\ar[r] & X_\gamma^\la \\
(u,\nu)\ar@{|->}[r] & u\varpi^\nu
}\]
which induces bijection on $k$-points and a stratification of $X_\ga^\la$ such that each strata is isomorphic to $Y_\ga^\la$. Thus $X_\ga^\la$ is a scheme locally of finite type and the assertions about equidimensionality and dimension formula follows from the corresponding statements for $Y_\ga^\la$.\par 
The $LG_\ga^0$ action on the set $\mathrm{Irr}(X_\ga^\la)$ factors through $\pi_0(LG_\ga^0)=X_*(T)$ and hence $LG_\ga$-orbits on $\mathrm{Irr}(X_\ga^\la)$ corresponds bijectively to the set $\mathrm{Irr}(Y_\ga^\la)$. Thus the number of orbits equals to the weight multiplicity $m_{\la\mu}$. 
\end{proof}

\subsection{Finiteness of Kottwitz-Viehmann varieties}\label{sec:fin-spr}
In this section we let $\ga\in G(F)^{\mathrm{rs}}$ be any regular semisimple element and $\la\in\Lambda^+$. Assume without loss of generality that $X_\ga^\la$ is nonempty and $\det(\ga)=\det(\varpi^\la)$. Then we get an element $\ga_\la\in \vin_{G}^\la(F)$ as in Lemma~\ref{lem:ga-la-lem}. Moreover, the Newton point of $\ga$ satisfies $\nu_\ga\le_\bQ\la$ and $\chi(\ga)\in\fC_{\le\la}$ by Proposition~\ref{prop:nonempty}. 

We show in this section that $X_\ga^\la$, a'priori an ind-scheme, is actually a scheme locally of finite type. This has already been proved for unramified conjugacy classes in Corollary~\ref{dim-unr-cor}. It remains to reduce the general case to the unramified case. This reduction step is completely analogous to the Lie algebra case. For the reader's convenience, we include the details, following the exposition in \cite[\S 2.5]{Yun15}. See also \citep{Bou15}.\par 

Let $F'/F$ be a finite extension of degree $e$ so that $\gamma$ splits over $F'$.  Let $\varpi'=\varpi^{1/e}\in F'$ be a uniformizer and $\cO'=k[[\varpi']]$ the ring of integers in $F'$. Let $\sigma$ be a generator of the cyclic group $\mathrm{Gal}(F'/F)$\par 
Choose $h\in G(F')$ such that $\mathrm
{Ad}(h) G_\ga^0 =T$. Then $h\sigma(h)^{-1}\in N_G(T)(F')$ and we let $w\in W$ be its image.\par 
Consider the embedding
\[\xymatrix@R=1pt{
\iota_\ga: \La:=X_*(T)\ar[r] & G_\ga(F')\\
\mu\ar@{|->}[r] & \mathrm{Ad}(h)^{-1}\varpi^\mu
}\]
Let $\La_\ga:=\iota_\ga^{-1}(G_\ga(F))$. It follows immediately that $\La_\ga\subset\La^w$ where $\La^w$ is the fixed point set of $w$ on $\La$. Moreover, $\La_\ga$ can be identified with the coweight lattice of the maximal $F$-split subtorus of $G_\ga$. In particular, $(\La_\ga)_\bQ=(\La^w)_\bQ$ so that $\La_\ga\subset\La^w$ is a subgroup of finite index. 
\begin{prop}
There exists a closed subscheme $Z\subset X_\ga^\la$ which is projective over $k$ such that $X_\ga^\la=\cup_{\ell\in\La_\ga}\ell\cdot Z$. Here $\ell\in\Lambda_\ga$ acts on $X_\ga^\la$ via the embedding $\iota_\ga$.
\end{prop}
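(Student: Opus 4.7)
The strategy is Galois descent, reducing the claim to the unramified case established in Corollary~\ref{dim-unr-cor}. Pass to the finite extension $F'/F$ over which $\ga$ splits: after conjugation by $h\in G(F')$ we replace $\ga$ by $\ga'=\mathrm{Ad}(h)\ga\in\varpi^\mu T(\cO')$ with $\mu=\nu_\ga\in\La^+$, and we consider the Kottwitz--Viehmann variety $X_{\ga'}^\la$ attached to the split datum $(\ga',\la)$. The original $X_\ga^\la$ is identified with the $\sigma$-fixed subscheme of a twisted Galois action on $X_{\ga'}^\la$, and the $\La_\ga$-action on $X_\ga^\la$ via $\iota_\ga$ corresponds to the restriction to $\La_\ga\subset\La^w\subset\La:=X_*(T)$ of the natural left-translation action of $\La$ on $X_{\ga'}^\la$. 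By Corollary~\ref{dim-unr-cor} and Theorem~\ref{Y-gamma-thm}, $X_{\ga'}^\la$ is a $k$-scheme locally of finite type, stratified as $\bigsqcup_{\nu\in\La}\varpi^\nu Y_{\ga'}^\la$, with each stratum quasi-projective of finite type.

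The next step is to build a projective fundamental domain for the $\La$-action inside $X_{\ga'}^\la$. Using the ind-projectivity of $\mathrm{Gr}_G$, one intersects $X_{\ga'}^\la$ with a sufficiently large closed Schubert variety $\overline{\mathrm{Gr}_G^{\le N}}$. For $N$ large enough this intersection contains $Y_{\ga'}^\la$, is bounded inside the ambient projective Schubert variety, and meets only finitely many of the strata $\varpi^\nu Y_{\ga'}^\la$. After verifying that no boundary points from the lower strata $X_{\ga'}^{<\la}$ are introduced (using the local closedness of $X_{\ga'}^\la$ in $\mathrm{Gr}_G$, together with the explicit MV-cycle presentation of $Y_{\ga'}^\la$ in \eqref{MV-cycle-presentation-equation}), we obtain a closed subscheme $Z'\subset X_{\ga'}^\la$, projective over $k$, whose $\La$-translates cover the whole of $X_{\ga'}^\la$.

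Finally, the Galois descent. The twisted $\sigma$-action on $X_{\ga'}^\la$ intertwines the $\La$-action via the class $w\in W$, i.e.\ $\sigma(\ell\cdot x)=w(\ell)\cdot\sigma(x)$, so $\sigma$ stabilizes $\La^w\cdot Z'$. Choose coset representatives $\nu_1,\dots,\nu_m\in\La^w$ for the finite-index inclusion $\La_\ga\subset\La^w$ and set
\[
Z\;:=\;\bigl(\,\bigcup_{i=1}^{m}\varpi^{\nu_i}Z'\,\bigr)^{\sigma}.
\]
This is a closed subscheme of $X_\ga^\la$, and it is projective over $k$ because the $\sigma$-fixed-point subscheme of a projective $k$-scheme acted on by a finite cyclic group is projective. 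The covering property $X_\ga^\la=\bigcup_{\ell\in\La_\ga}\ell\cdot Z$ descends from the covering $X_{\ga'}^\la=\bigcup_{\ell\in\La}\ell\cdot Z'=\bigcup_{\ell\in\La^w}\ell\cdot\bigl(\bigcup_i\varpi^{\nu_i}Z'\bigr)$ after taking $\sigma$-fixed points.

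The main technical obstacle lies in the second step: ensuring that the intersection with a sufficiently large Schubert variety is closed inside the open subscheme $X_{\ga'}^\la\subset X_{\ga'}^{\le\la}$ and not merely inside $X_{\ga'}^{\le\la}$. This amounts to a controlled bookkeeping of which lower-stratum boundary points can appear in the closure of $Y_{\ga'}^\la$ within $\mathrm{Gr}_G$, and is handled by leveraging the smooth diagram in the proof of Theorem~\ref{Y-gamma-thm} relating $Y_{\ga'}^\la$ to the MV-cycle $S_\mu\cap\mathrm{Gr}_\la$, together with the fact that the MV-cycle itself is locally closed of pure dimension $\langle\rho,\la+\mu\rangle$ in the projective Schubert variety.
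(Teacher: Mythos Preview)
Your overall strategy---Galois descent to the split case over $F'$ and then extracting a projective fundamental domain---is exactly what the paper does. However, there are two genuine gaps in the execution.

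First, the coweight is wrong. Since $\varpi=(\varpi')^e$, the double coset $G(\cO)\varpi^\la G(\cO)$, when base-changed to $F'$, sits inside $G(\cO')(\varpi')^{e\la}G(\cO')$. Thus the correct object over $F'$ is the Kottwitz--Viehmann variety of coweight $e\la$, not $\la$; the paper denotes it $\widetilde{X}_\ga^{e\la}$. Relatedly, your claim $\ga'\in\varpi^\mu T(\cO')$ with $\mu=\nu_\ga$ is problematic: $\nu_\ga$ need not be integral, and with respect to the uniformizer $\varpi'$ one has $\ga'\in(\varpi')^{e\nu_\ga}T(\cO')$.

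Second, and more seriously, your identification ``$X_\ga^\la$ is the $\sigma$-fixed subscheme'' is false in general: one only has a closed embedding $X_\ga^\la\hookrightarrow(\widetilde{X}_\ga^{e\la})^\sigma$, and the paper explicitly notes they need not coincide. Consequently your $Z$, defined purely as a $\sigma$-fixed locus, may contain points outside $X_\ga^\la$, and the covering property you deduce would only give $(\widetilde{X}_\ga^{e\la})^\sigma=\bigcup_{\ell\in\La_\ga}\ell\cdot Z$, not the desired statement for $X_\ga^\la$. The paper fixes this simply by intersecting at the last step: it builds a projective $C\cdot\widetilde{Z}$ inside $\widetilde{X}_\ga^{e\la}$ (by taking the \emph{closure} of $h^{-1}\widetilde{Y}_{\ga'}^{e\la}$, not by intersecting with a Schubert variety), obtains $(\widetilde{X}_\ga^{e\la})^\sigma=\bigcup_{\ell\in\La_\ga}\ell\cdot(C\cdot\widetilde{Z})$, and then sets $Z:=(C\cdot\widetilde{Z})\cap X_\ga^\la$. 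This also dissolves the ``technical obstacle'' you flag in your second step: there is no need to control which boundary strata appear, since the final intersection with $X_\ga^\la$ discards anything extraneous while preserving projectivity (a closed subscheme of a projective scheme is projective).
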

\begin{proof}
We reprase the argument in \cite[\S 2.5.7]{Yun15}. Let $\widetilde{X}_\ga^{e\la}$ be the generalized affine Springer fiber of coweight $e\la$ for $\ga$ in $\mathrm{Gr}_{G_{F'}}$, the affine Grassmanian of $G_{F'}$. Then $\sigma$ acts naturally on $\widetilde{X}_\ga^{e\la}$ and the fixed points sub-indscheme $(\widetilde{X}_\ga^{e\la})^\sigma$ contains $X_\ga^\la$ (but they are not equal in general). 
Let $\ga'=h\ga h^{-1}\in T(F')$ and $\widetilde{X}_{\ga'}^{e\la}$ the corresponding generalized affine Springer fiber in $\mathrm{Gr}_{G_{F'}}$. Then 
\[\widetilde{X}_{\ga'}^{e\la}=h\cdot \widetilde{X}_\ga^{e\la}\]
By Theorem~\ref{Y-gamma-thm},
there is a locally closed subscheme $\widetilde{Y}_{\ga'}^{e\la}$ of $\widetilde{X}_{\ga'}^{e\la}$ such that 
\[\widetilde{X}_{\ga'}^{e\la}=\cup_{\ell\in\La}\ell\cdot Y_{\ga'}^{e\la}.\] 
Let $\widetilde{Z}$ be the closure of $h^{-1}\widetilde{Y}_{\ga'}^{e\la}$ in $\widetilde{X}_\ga^{e\la}$. Then $\widetilde{Z}$ is projective over $k$ and $\widetilde{X}_\ga^{e\la}=\cup_{\ell\in\La}\ell\cdot\widetilde{Z}$.\par 
 Recall that $w\in W$ is represented by $h\sigma(h)^{-1}$. One can check that $\sigma(\widetilde Z)=\widetilde Z$ and more generally $\sigma(\ell\cdot\widetilde{Z})=w(\ell)\cdot\widetilde{Z}$ for all $\ell\in\La$.Consequently,
\[(\widetilde{X}_\ga^{e\la})^\sigma=\cup_{\ell\in\La^w}\ell\cdot\widetilde{Z}=\cup_{\ell\in\La_\ga}\ell\cdot (C\cdot\widetilde{Z})\]
where $C\subset\La^w$ is a finite set of representatives of the quotient $\La^w/\La_\ga$. Hence, $C\cdot\widetilde{Z}$ is a finite type scheme.\par 
Finally let $Z:=(C\cdot\widetilde{Z})\cap X_\ga^\la$. Then $Z$ is a finite type subscheme of $X_\ga^\la$. Hence $Z$ is projective over $k$ and $X_\ga^\la=\cup_{\ell\in\La_\ga}\ell\cdot Z$.
\end{proof}

As a consequence, we immediately get:
\begin{thm}\label{finite-type-thm}
The ind-scheme $X_\ga^\la$ is a finite dimensional $k$-scheme, locally of finite type. Moreover, the lattice $\La_\ga$ acts freely on $X_\ga^\la$ and the quotient $X_\ga^\la/\La_\ga$ is representable by a proper algebraic space over $k$. 
\end{thm}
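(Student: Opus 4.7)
The plan is to deduce the theorem directly from the preceding Proposition, which furnishes a projective closed subscheme $Z \subset X_\ga^\la$ such that $X_\ga^\la = \bigcup_{\ell \in \La_\ga} \ell \cdot Z$.

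First I would establish that $X_\ga^\la$ is a scheme locally of finite type of finite dimension. Each translate $\ell \cdot Z$ is projective, hence a finite-type closed subscheme of $X_\ga^\la$, and they are all of the same dimension as $Z$. So $\dim X_\ga^\la = \dim Z < \infty$. To upgrade the ind-scheme $X_\ga^\la$ to an honest scheme locally of finite type, I would argue that any given quasi-compact open $U$ of the ambient $\mathrm{Gr}_G$ meets only finitely many translates $\ell \cdot Z$: this uses that the action of $\La_\ga$ factors through the discrete lattice $\pi_0(LG_\ga^0)$ which acts by translations on $\mathrm{Gr}_G$ whose orbits are locally finite with respect to the filtration of $\mathrm{Gr}_G$ by closed finite-type subschemes. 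Combining these, $X_\ga^\la$ is covered by finite-type open subsets, each meeting only finitely many translates, so it is a scheme locally of finite type.

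Next I would verify that $\La_\ga$ acts freely. Suppose $\iota_\ga(\ell) \cdot xG(\cO) = xG(\cO)$ for some $\ell \in \La_\ga$ and $x \in G(F)$. Then $x^{-1} \iota_\ga(\ell) x \in G(\cO)$. After passing to $F'$ and conjugating by $h$, this becomes $(hx)^{-1} \varpi^\ell (hx) \in G(\cO')$. By the Cartan decomposition for $G(F')$, the dominant Weyl conjugate of $\ell$ must equal $0$, forcing $\ell = 0$.

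Finally, for the quotient, I would observe that $\La_\ga$ is a finitely generated free abelian group acting freely on the scheme $X_\ga^\la$; the local finiteness established above implies the action is properly discontinuous, and since the action is by automorphisms of schemes, the quotient $X_\ga^\la/\La_\ga$ is representable by an algebraic space. Properness is then immediate: the composition $Z \hookrightarrow X_\ga^\la \to X_\ga^\la/\La_\ga$ is surjective (since $\La_\ga \cdot Z = X_\ga^\la$) with $Z$ projective, and surjective images of proper schemes in algebraic spaces are proper. The main obstacle in executing this plan is the local finiteness claim about the $\La_\ga$-action on $X_\ga^\la$, which is what ensures $X_\ga^\la$ is a genuine scheme and allows us to conclude that the étale equivalence relation defining the quotient is of finite type; this requires invoking the concrete description of the $\La_\ga$-action through translation in the affine Grassmannian.
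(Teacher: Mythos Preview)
Your proposal is correct and follows the same route as the paper, which in fact gives no proof at all beyond the phrase ``As a consequence, we immediately get'' after the preceding Proposition furnishing the projective $Z$ with $X_\ga^\la=\bigcup_{\ell\in\La_\ga}\ell\cdot Z$. Your outline is a faithful elaboration of the routine details the paper leaves implicit; the one place to tighten is the final clause ``surjective images of proper schemes in algebraic spaces are proper,'' which needs the target to be separated and of finite type---both of which you have already arranged via the properly discontinuous free action and the surjection from the finite-type $Z$.
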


\subsection{Dimension of the regular locus}
Recall that the regular locus $X_\ga^{\la,\mathrm{reg}}$ is an open subscheme of $X_\ga^\la$ on which the action of  $P_a=LG_\ga^0/L^+J_a$ is free (but not necessarily transitive). 
\begin{thm}\label{dim-reg-locus-thm}
\[\dim P_a=\dim X_\ga^{\la,\mathrm{reg}}=\langle\rho,\la\rangle+\frac{d(\ga)-c(\ga)}{2}\]
where 
\begin{itemize}
\item $d(\ga):=\mathrm{val}(\det(\mathrm{Id}-\mathrm{ad}(\ga):\mathfrak{g}(F)/\mathrm{g}_\ga(F)\to \mathfrak{g}(F)/\mathrm{g}_\ga(F)))$.
\item $c(\ga):=\mathrm{rank}(G)-\mathrm{rank}_FG_\ga$, where $\mathrm{rank}_FG_\ga$ is the dimension of the maximal $F$-split subtorus of $G_\ga$.
\end{itemize}
Moreover, $X_\ga^{\la,\mathrm{reg}}$ is equidimensional.
\end{thm}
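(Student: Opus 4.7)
The plan is to prove both assertions by covering $X_\ga^{\la,\mathrm{reg}}$ by finitely many $P_a$-torsors and then computing $\dim P_a$ via a Galois-lattice calculation on the cameral cover.

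First I would pass to the Vinberg-monoid presentation $X_\ga^{\la,\mathrm{reg}}\cong\Sp_a^{\mathrm{reg}}$ with $a=\chi_+(\ga_\la)\in\fC_+(\cO)\cap\fC_+^{\mathrm{rs}}(F)$, and pull back the open cover $\ving^{\mathrm{reg}}=\bigcup_w\ving^w$ of \eqref{open-cover-V-reg-eq} to obtain
\[
\Sp_a^{\mathrm{reg}}=\bigcup_{w\in\mathrm{Cox}(W,S)}\Sp_a^w.
\]
By Proposition~\ref{X-ga-w-torsor-prop} each piece is a $P_a$-torsor, hence equidimensional of dimension $\dim P_a$. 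Since $\mathrm{Cox}(W,S)$ is finite and $\Sp_a^{\mathrm{reg}}$ is the union of open subsets all of the same dimension, it is equidimensional of dimension $\dim P_a$. This settles equidimensionality and reduces the formula to the computation of $\dim P_a$.

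For $\dim P_a$, I would use the fact that $P_a$ is an extension of a discrete group by a smooth connected algebraic group, so $\dim P_a=\dim_k\mathrm{Lie}(P_a)$. By Corollary~\ref{Lie-P_a-cor} and the Galois description of Proposition~\ref{Galois-description-J-prop}, this equals $\dim_k(\ft\otimes_k R_a^\flat/R_a)^W$, where $R_a=\cO\otimes_{a,\fC_+}\overline{T_+}$ is the cameral ring and $R_a^\flat$ its normalization. Decomposing $R_a^\flat\otimes_\cO F$ into factors indexed by $W$-orbits on the geometric generic points, and analyzing the $W$-action on each factor, I would run a lattice-counting argument of Bezrukavnikov type (as in the proof of \cite[Prop.~3.7.1]{Ngo10} for the Lie algebra setting) to obtain
\[
\dim_k(\ft\otimes_k R_a^\flat/R_a)^W=\tfrac12\bigl(\mathrm{val}_\varpi(a^*\fD_+)-c(\ga)\bigr),
\]
where the $-c(\ga)$ records the contribution of the maximal $F$-split subtorus of $G_\ga$ that survives in the $W$-fixed part.

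It remains to prove the numerical identity $\mathrm{val}_\varpi(a^*\fD_+)=\langle 2\rho,\la\rangle+d(\ga)$. Writing $\ga_\la$ as a $G$-conjugate of $(t_1,t_2)\in\overline{T_+}(\bar F)$, the normalization $\alpha(\ga_\la)=\varpi^{-w_0(\la_{\mathrm{ad}})}$ from Lemma~\ref{lem:ga-la-lem} forces $\mathrm{val}(\alpha_i(t_1))=-\langle\alpha_i,w_0(\la)\rangle$, so the factor $2\rho(t_1)$ in the formula $\mathrm{Disc}_+=2\rho(t_1)\mathrm{Disc}(t_2)$ of \S\ref{sec:discr-divisor} contributes $\langle 2\rho,-w_0(\la)\rangle=\langle 2\rho,\la\rangle$ (using $-w_0(\rho)=\rho$), while the factor $\mathrm{Disc}(t_2)$ contributes the usual discriminant valuation $d(\ga)$ via Lemma~\ref{d-ga-newton-lem}. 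Combining the three steps gives $\dim P_a=\langle\rho,\la\rangle+\tfrac12(d(\ga)-c(\ga))$.

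The main technical obstacle is the Bezrukavnikov-type identity in the second step: although it is a direct adaptation of the Lie-algebra calculation, one must check that the cameral cover $\overline{T_+}\to\fC_+$ is still well-behaved enough along the boundary strata of $\overline{T_+}$ for the local length formula relating $R_a^\flat/R_a$ to $\mathrm{val}_\varpi(a^*\fD_+)$ to hold verbatim. This should follow from Proposition~\ref{Galois-description-J-prop}, which reduces everything to the open locus $\fC_{\gsc_+}\cup\fC_+^{\mathrm{rs}}$ whose complement has codimension at least two in $\fC_+$, so that no new contribution arises from the extra boundary directions introduced by the monoid.
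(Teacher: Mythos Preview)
Your strategy is correct and essentially matches the paper's. The first step (covering $\Sp_a^{\mathrm{reg}}$ by the $P_a$-torsors $\Sp_a^w$ to get equidimensionality and $\dim X_\ga^{\la,\mathrm{reg}}=\dim P_a$) and the last step (the identity $\mathrm{val}_\varpi(a^*\fD_+)=\langle 2\rho,\la\rangle+d(\ga)$, which the paper records as equation~\eqref{d-la-eq}) are exactly what the paper does.

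The only substantive difference is in how the middle step is executed. You gesture at a direct Bezrukavnikov-type lattice computation of $\dim_k(\ft\otimes_k R_a^\flat/R_a)^W$, but you leave the actual mechanism vague. The paper makes this concrete by \emph{reducing to the split case}: it base-changes along a totally ramified degree-$e$ extension $\tilde F/F$ splitting $\ga$, invokes the already-established split formula $\dim\tilde P_a=\langle\rho,e\la\rangle+\tfrac{1}{2}e\,d(\ga)$ from Corollary~\ref{dim-unr-cor} (proved via MV cycles), and then identifies the defect between $(\ft\otimes_k\tilde\cO)$ and $M\otimes_\cO\tilde\cO$ (where $M=(\ft\otimes_k A^\flat)^W$) with $e\cdot c(\ga)$ using the eigenspace decomposition of $\ft$ under $w_\ga$. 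This is precisely the Bezrukavnikov calculation you allude to, so your plan and the paper's proof coincide once you unpack what ``Bezrukavnikov-type'' means here; the paper's presentation has the advantage of making the dependence on the split case (and hence on the MV-cycle input) transparent.

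Two minor corrections: your citation of \cite[Prop.~3.7.1]{Ngo10} is off---that proposition concerns the irregular complement, not the dimension formula; you want the material in \cite{Be96} or \cite[\S3.8]{Ngo10}. And your worry about boundary strata of $\overline{T_+}$ is a red herring: since $a$ is generically regular semisimple, the cameral cover is generically \'etale and the length computation takes place entirely inside the classical picture, so no new boundary contribution appears.
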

\begin{proof}
The first equality follows from the fact that the $P_a$-orbits in $X_\ga^{\la,\mathrm{reg}}$ are open and the action is free.\par 
When $\ga$ is unramified (hence split as $k$ is algebraically closed), the second equality follows from Corollary~\ref{dim-unr-cor}. It remains to reduce to this case. The argument is similar to that of Bezrukavnikov's in Lie algebra case, cf. \cite{Be96}, which we reformulate using the Galois description of universal centralizer.\par 
Let $A$ be the finite free $\cO$-algebra defined by the Cartesian diagram \eqref{cameral-cover-diagram-eq} and $A^\flat$ the normalization of $A$. Then $W$ acts naturally on the $\cO$-algebras $A$ and $A^\flat$ and by \ref{Lie-P_a-cor}, we get
\[\dim\cP_a=\dim_k(\ft\otimes_k (A^\flat/A))^W.\]
Let $\tilde F/F$ be a ramified extension of degree $e$, with ring of integers $\tilde\cO=k[[\varpi^{\frac{1}{e}}]]$, such that $\ga$ is split over $\tilde F$. Let $\sigma$ be a generater of the cyclic group $\Gamma:=\mathrm{Gal}(F'/F)$. Let $\tilde A:=A\otimes_\cO\tilde\cO$ and $\tilde A^\flat$ its normalization. We remark that $\tilde A^\flat$ is not the same as $A^\flat\otimes_\cO\tilde\cO$ in general.  Let $\tilde\cP_a=LG_{\ga,F'}/L^+J_{a,F'}$. Then by the dimension formula in split case, we have
\[\dim_k(\ft\otimes_k\tilde A^\flat/\tilde A)^W=\dim\tilde\cP_a=\langle\rho,e\la\rangle+\frac{1}{2}e\cdot d(\ga).\]
As $\ga$ split over $\tilde\cO$, we have 
\[\tilde A^\flat\cong\tilde\cO[W]:=\tilde\cO\otimes_k k[W]\] as $W$-module. Here $W$ acts on $\tilde\cO[W]$ via right regular representation. Moreover, there exists an element $w_\ga\in W$ of order $e$ such that under the above isomorphism, the natural action of $\sigma\in\Gamma$ on $\tilde A^\flat$ becomes $\sigma\otimes l_{w_\ga}$ where $l_{w_\ga}$ denotes the left regular action of $w_\ga$ on $k[W]$. In particular, the action of $W$ and $\Gamma$ commutes with each other. With these considerations, we obtain an isomorphism
\[(\ft\otimes_k\tilde A^\flat)^W\cong\ft\otimes_k\tilde\cO\]
which intertwines the action of $\sigma\in\Gamma$ on the left hand side with the action of $w\otimes\sigma$ on the right hand side. \par 
Moreover, we have an equality
\[(\ft\otimes_k \tilde{A}^\flat)^\Gamma=\ft\otimes_k A^\flat\]
which remains true after taking $W$-invariants since the $\Gamma$ action commutes with $W$ action. In particular, we have
\[M:=(\ft\otimes_k\tilde\cO)^\Gamma=(\ft\otimes_k A^\flat)^W\]
Moreover, it is clear that from the definition of $W$ action that
\[(\ft\otimes_k\tilde A)^W=(\ft\otimes_k A)^W \otimes_\cO \tilde\cO.\]
Thus we get
\begin{equation}\label{P-a-ramified-case-calculation-eq}
\begin{split}
\dim\cP_a & =\dim_k(\ft\otimes_k A^\flat/A)^W=
\frac{1}{e}\dim_k(\ft\otimes_k (A^\flat/A)\otimes_\cO \tilde\cO)^W=\frac{1}{e}\dim_k \left(\frac{M\otimes_\cO \tilde \cO}{(\ft\otimes_k\tilde A)^W}\right)\\
&=\langle\rho,\la\rangle+\frac{1}{2}d(\ga)-\frac{1}{e}\dim_k\left(\frac{\ft\otimes_k\tilde\cO}{M\otimes_\cO\tilde\cO}\right)
\end{split}
\end{equation}
Since the element $w_\ga\in W$ has order $e$, its eigenvalues are $e$-th roots of unit. Let $\zeta$ be a primitive $e$-th root of unit and $\ft(i)$ the subspace of $\ft$ on which $w_\ga$ acts via the scalar $\zeta^i$. In particular, $\ft(0)=\ft^{w_\ga}$ is the $w_\ga$ invariant subspace. Then we have
\[M:=(\ft\otimes_k\tilde\cO)^\Gamma=\bigoplus_{i=0}^{e-1}\ft(i)\otimes_k \varpi^{\frac{e-i}{e}}\] 
The existence of a $W$-invariant nondegenerate symmetric bilinear form on $\ft$ gaurantees that $\dim_k\ft(i)=\dim_k\ft(e-i)$, from this we obtain that
\[\dim_k\left(\frac{\ft\otimes_k\tilde\cO}{M\otimes_\cO \tilde\cO} \right)=e(\dim_k\ft-\dim_k\ft^{w_\ga})= e\cdot c(\ga)\]
Combined with \eqref{P-a-ramified-case-calculation-eq}, we obtain
\[\dim\cP_a=\langle\rho,\la\rangle+\frac{1}{2}(d(\ga)-c(\ga)).\]
Finally, $X_\ga^{\la,\mathrm{reg}}$ is equidimensional since it is a finite union of $\cP_a$-torsors.
\end{proof} 

\subsubsection{Some 0-dimensional generalized affine Springer fibers}
Suppose $X_\ga^\la$ is nonempty. Then there exists $\ga_\la\in \gsc_+$ satisfying the conclusion of Lemma~\ref{lem:ga-la-lem}. Let $a:=\chi_+(\ga_\la)\in\fC_+(\cO)\cap\fC_{\gsc_+}^{\mathrm{rs}}(F)$. Recall the extended discriminant divisor $\fD_+\subset\fC_+$ defined in \S~\ref{sec:discr-divisor}. We define the \emph{extended discriminant valuation} to be
\[d_+(a):=\mathrm{val}(a^*\fD_+)\in\bZ\] 
From equation \eqref{d-ga-newton-eq} we get
\begin{equation}\label{d-la-eq}
\begin{split}
d_+(a)&=2\cdot\mathrm{val}(\rho(\alpha(\ga_\la)))+d(\ga)\\
&=\langle2\rho,\la\rangle+d(\ga)\\
&=\sum_{\substack{\alpha\in\Phi\\ \langle\alpha,\nu_\ga\rangle=0}}\mathrm{val}(\alpha(\ga)-1)+\langle 2\rho, \la-\nu_\ga\rangle
\end{split}
\end{equation}

\begin{prop}
Suppose $d_+(a)=0$. Then $\ga$ is split and $\mathrm{dim}X_\ga^\la=0$. Moreover, $X_\ga^\la=X_\ga^{\la,\mathrm{reg}}$ and it is a torsor under $P_a$.
\end{prop}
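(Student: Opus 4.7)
The plan is to exploit the fact that $d_+(a)=0$ forces $a$ to land in the regular semisimple open locus of $\fC_+$, whereupon the cameral cover trivializes over $\cO$ and every assertion of the proposition follows from the machinery already developed.

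First, $d_+(a)=\mathrm{val}(a^*\fD_+)=0$ is equivalent to $a:\spec\cO\to\fC_+$ factoring through $\fC_+^{\mathrm{rs}}=\fC_+\setminus\fD_+$. By Proposition~\ref{prop:rs-disc-nonzero}, the finite flat map $q:\overline{T_+}\to\fC_+$ is \'etale over $\fC_+^{\mathrm{rs}}$, so the cameral cover $\widetilde X_a=\spec R_a$ from diagram~\eqref{cameral-cover-diagram-eq} is finite \'etale over $\spec\cO$. Since $\cO=k[[\varpi]]$ is strictly Henselian ($k$ is algebraically closed), this \'etale cover is trivial: $\widetilde X_a\cong\coprod_{w\in W}\spec\cO$ and $R_a=R_a^\flat\cong\cO^{|W|}$. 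Feeding this into Lemma~\ref{J_a-flat-Galois-description-lem} yields $J_a^\flat\cong T$ as the diagonal subtorus of $\prod_W T$. Moreover, $\cJ|_{\fC_+^{\mathrm{rs}}}$ has maximal-torus fibers (its pullback along $\chi_+$ agrees with $\cI|_{\vin_\gsc^{\mathrm{rs}}}$), so $J_a=a^*\cJ$ is already a torus over $\cO$ and coincides with $J_a^\flat$. Hence $G_\gamma^0=J_a|_F\cong T_F$, i.e.\ $\gamma$ is $F$-split.

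The dimension computation is then immediate from Corollary~\ref{Lie-P_a-cor}: $\mathrm{Lie}(P_a)=(\ft\otimes_k R_a^\flat/R_a)^W=0$, so $\dim P_a=0$. To see that $X_\gamma^\lambda=X_\gamma^{\lambda,\mathrm{reg}}$ is a $P_a$-torsor, I would pass to the Vinberg monoid reformulation $X_\gamma^\lambda\cong\Sp_a^0$. Any $R$-point of $\Sp_a^0$ is an arc $h:\spec R[[\varpi]]\to[\vin_\gsc^0/\mathrm{Ad}(G)]$ whose composition with $\chi_+$ is the constant family $a$; since this factors through $\fC_+^{\mathrm{rs}}$, Corollary~\ref{cor:ving-rs} implies that $h$ takes values in $\chi_+^{-1}(\fC_+^{\mathrm{rs}})=\vin_\gsc^{\mathrm{rs}}\subset\vin_\gsc^{\mathrm{reg}}$, so $h$ factors through $[\vin_\gsc^{\mathrm{reg}}/\mathrm{Ad}(G)]$ and gives a point of $\Sp_a^{\mathrm{reg}}$. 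Furthermore, as observed in the proof of Lemma~\ref{lem:reg-centralizer-Vin}, $\vin_\gsc^{\mathrm{rs}}\subset\vin_\gsc^w$ for every Coxeter $w$, so $\Sp_a^{\mathrm{reg}}=\Sp_a^w$, which is a $P_a$-torsor by Proposition~\ref{X-ga-w-torsor-prop}.

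There is no serious obstacle here: the content of the proposition is essentially that, once $a$ avoids the discriminant divisor entirely, the cameral cover degenerates to the trivial cover, collapsing $P_a$ to a discrete group and forcing $\Sp_a^0$ to coincide with its regular locus. The only conceptual step worth highlighting is the identification $\chi_+^{-1}(\fC_+^{\mathrm{rs}})=\vin_\gsc^{\mathrm{rs}}$, which is what prevents any \emph{irregular} arc from sitting above a regular semisimple base and allows the transition from $\Sp_a^0$ to $\Sp_a^{\mathrm{reg}}$.
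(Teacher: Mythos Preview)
Your argument is correct and self-contained, but it takes a different route than the paper. Both proofs begin identically: $d_+(a)=0$ forces $a\in\fC_+^{\mathrm{rs}}(\cO)$, the cameral cover is \'etale over $\spec\cO$ and hence trivial, so $\gamma$ is split. From there the paths diverge. The paper reads off from equation~\eqref{d-la-eq} that the non-negative summands $\sum_{\langle\alpha,\nu_\gamma\rangle=0}\mathrm{val}(\alpha(\gamma)-1)$ and $\langle 2\rho,\lambda-\nu_\gamma\rangle$ must both vanish, hence $\nu_\gamma=\lambda$, and then simply invokes Corollary~\ref{dim-unr-cor} with $\mu=\lambda$ (so $m_{\lambda\lambda}=1$ and $\dim X_\gamma^\lambda=0$). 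You instead stay inside the Vinberg-monoid picture: $R_a=R_a^\flat$ gives $\dim P_a=0$ via Corollary~\ref{Lie-P_a-cor}, and the inclusion $\vin_{\gsc}^{\mathrm{rs}}\subset\vin_{\gsc}^w$ (noted in the proof of Lemma~\ref{lem:reg-centralizer-Vin}) forces $\Sp_a^0=\Sp_a^{\mathrm{reg}}=\Sp_a^w$, a $P_a$-torsor by Proposition~\ref{X-ga-w-torsor-prop}.

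Your approach has the advantage of being entirely intrinsic to the monoid setup and independent of the MV-cycle computation underlying Corollary~\ref{dim-unr-cor}; it also makes the equality $X_\gamma^\lambda=X_\gamma^{\lambda,\mathrm{reg}}$ completely transparent via Corollary~\ref{cor:ving-rs}. The paper's approach is shorter and extracts the additional numerical fact $\nu_\gamma=\lambda$, though it leaves the torsor statement slightly implicit (one still needs the observation, essentially yours, that $a\in\fC_+^{\mathrm{rs}}(\cO)$ forces every point of $X_\gamma^\lambda$ to be regular).
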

\begin{proof}
The assumption $d_+(a)=0$ implies that $a\in\fC_+^{\mathrm{rs}}(\cO)$. Let $\widetilde{X_a}=\spec R_a$ be defined by the Cartesian diagram
\[\xymatrix{
\widetilde{X_a}\ar[r]\ar[d] & \overline{T_+}\ar[d] \\
\spec\cO\ar[r] & \fC_+ 
}\]
By Proposition~\ref{prop:rs-disc-nonzero}, $\widetilde{X_a}$ is an \'etale cover $\spec\cO$ which must be trivial since the residue field $k$ is algebraically closed. Then we see that $\ga_\la\in T_+(F)$ is split and hence $\ga\in T(F)$ is split.\par 
Since $X_\ga^\la$ is nonempty, we have $\nu_\ga\le_\bQ\la$ and hence the terms on the right hand side of \eqref{d-la-eq} are non-negative. In particular, $d_+(a)=0$ implies that $\nu_\ga=\la$. Thus the proposition follows from Corollary~\ref{dim-unr-cor}.
\end{proof}

\subsection{The case of central coweight}
In this section we deal with the case where $\la\in X_*(T)_+$ is a central coweight, i.e. $\langle\la,\alpha\rangle=0$ for all roots $\alpha$. Then $\la\in X_*(Z^0)$ where $Z^0$ is the maximal torus in the center of $G$. Consequently we have $X_\ga^\la\cong X_{\varpi^{-\la}\ga}^0$. Hence the essential case is when $\la=0$ and the corresponding Kottwitz-Viehmann variety becomes
\[X_\ga:=\{g\in G(F)/G(\cO)|\mathrm{Ad}(g)^{-1}\ga\in G(\cO)\}.\]
We first do some routine reductions. Let $P=MN$ be a standard parabolic subgroup with standard Levi $M$ and unipotent radical $N$. For $\ga\in M(\cO)\cap G^{\mathrm{rs}}(F)\subset M(\cO)\cap M^{\mathrm{rs}}(F)$, we consider the Kottwitz-Viehmann variety $X_\ga$ (resp. $X_\ga^M$) defined for the groups $G$ (resp. $M$). We have the discriminant valuation $d(\ga)$ (resp. $d_M(\ga)$) defined for $G$ (resp. $M$). The two discriminant valuations are related by
\begin{equation}\label{eq:dGdM-relation}
d(\ga)=d_M(\ga)+2r_N(\ga)
\end{equation}
where $r_N(\ga)$ is defined in \eqref{eq:r-N}.

\begin{prop}\label{prop:X-ga-Levi}
With notation as above, we have
\[\dim X_\ga=\dim X_\ga^M+\frac{d_G(\ga)-d_M(\ga)}{2}\]
\end{prop}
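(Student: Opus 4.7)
The plan is to exhibit $X_\ga$ as fibered over $X_\ga^M$ via the Iwasawa retraction, with all fibers of dimension $r_N(\ga)$, and to deduce the claim from \eqref{eq:dGdM-relation}. The central input is Proposition~\ref{prop:adm-set} applied to the commutator map $f_\ga\colon LN\to LN$ of \eqref{f-gamma-eq}.

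Since $G(\cO)$ is hyperspecial, we have $G(F)=P(F)\cdot G(\cO)$ and $P(F)\cap G(\cO)=P(\cO)=N(\cO)\rtimes M(\cO)$, which identifies $G(F)/G(\cO)$ with $P(F)/P(\cO)$ and produces a canonical surjection $\pi\colon G(F)/G(\cO)\to M(F)/M(\cO)$ whose fiber over $mM(\cO)$ is $N(F)/W_m$ with $W_m:=\mathrm{Ad}(m)N(\cO)$. For $g=nm$ with $n\in N(F)$ and $m\in M(F)$, a direct calculation gives
\[g^{-1}\ga g=\bigl(m^{-1}f_\ga(n)m\bigr)\cdot\bigl(m^{-1}\ga m\bigr).\]
Because the $(N(F),M(F))$-decomposition of $P(F)$ is unique, this product lies in $G(\cO)\cap P(F)=N(\cO)\rtimes M(\cO)$ if and only if $m^{-1}\ga m\in M(\cO)$ and $f_\ga(n)\in W_m$. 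Hence $\pi$ restricts to a surjection $X_\ga\to X_\ga^M$ (surjective since $n=1$ witnesses every nonempty fiber), whose fiber over $m$ is identified with $f_\ga^{-1}(W_m)/W_m$; right $W_m$-stability of $f_\ga^{-1}(W_m)$ follows because $M(\cO)$ normalizes $N(\cO)$ and $m^{-1}\ga m\in M(\cO)$.

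Each $W_m$ is a compact open, hence admissible, subset of $LN$, so for $n$ sufficiently large Proposition~\ref{prop:adm-set} provides a smooth surjection
\[f_\ga^{-1}(W_m)/N_n\longrightarrow W_m/N_n\]
with irreducible fibers of dimension $r_N(\ga)$. Quotienting by the free residual action of $W_m/N_n$ on the target presents $f_\ga^{-1}(W_m)/W_m$ as a scheme of dimension exactly $r_N(\ga)$. Since $\pi|_{X_\ga}\colon X_\ga\to X_\ga^M$ is surjective with all fibers of this common dimension, standard dimension theory yields $\dim X_\ga=\dim X_\ga^M+r_N(\ga)$, and substituting \eqref{eq:dGdM-relation} gives the desired formula. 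The principal obstacle is packaging this cleanly at the level of ind-schemes, since the Iwasawa retraction $\pi$ is not a morphism globally: one must stratify $X_\ga$ by preimages of the semi-infinite $N(F)$-cells on $\mathrm{Gr}_G$, verify scheme-theoretically on each stratum that the smooth fibration of Proposition~\ref{prop:adm-set} (stated at finite level after quotienting by $N_n$) descends compatibly modulo $W_m$ as $m$ varies, and compare dimensions of fibers over the different components of $X_\ga^M$.
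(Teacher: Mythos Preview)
Your proposal is correct and follows essentially the same approach as the paper: use the Iwasawa retraction $\mathrm{Gr}_G\cong\mathrm{Gr}_P\to\mathrm{Gr}_M$ to fiber $X_\ga$ over $X_\ga^M$, identify each fiber with a quotient of a preimage under the commutator map, and invoke Proposition~\ref{prop:adm-set} to see the fibers have dimension $r_N(\ga)$; the formula then follows from \eqref{eq:dGdM-relation}.

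The only notable bookkeeping difference is that you parametrize via $g=nm$ and apply $f_\ga$ to the varying admissible set $W_m=\mathrm{Ad}(m)N(\cO)$, whereas the paper parametrizes via $g=hu$ with $u\in N(F)$, so that the fiber over $h$ becomes $\{u\in N(F)/N(\cO):u^{-1}\ga_h u\in G(\cO)\}=f_{\ga_h}^{-1}(N(\cO))/N(\cO)$ with $\ga_h:=h^{-1}\ga h\in M(\cO)$. The two descriptions are intertwined by conjugation by $m$, but the paper's choice is slightly cleaner: the admissible set is the fixed group $N(\cO)$, Proposition~\ref{prop:adm-set} applies with $Z=N(\cO)$ and the element $\ga_h\in M(\cO)$, and one reads off directly that the fiber is an irreducible affine space of dimension $r_N(\ga_h)=r_N(\ga)$ without having to track the twisted right action of $W_m/N_n$. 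Your concern about the ind-scheme packaging of the Iwasawa map is legitimate; the paper addresses it by passing to the components $X_{\ga,\lambda}\to X_{\ga,\lambda}^M$ indexed by $\lambda\in\pi_1(M)$, where the retraction is a genuine morphism.
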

\begin{proof}
Let $P=MN$ be the standard parabolic subgroup with Levi factor is $M$ and unipotent radical $N$. The connected components of $\mathrm{Gr}_M$ and $\mathrm{Gr}_P$ both corresponds bijectively to $\pi_1(M)$, the quotient of $X_*(T)$ by the coroot lattice of $M$. The canonical map $\mathrm{Gr}_P\to\mathrm{Gr}_G$ induces bijection on $k$-points by generalized Iwasawa decomposition. For each $\la\in\pi_1(M)$, let $X_{\ga,\la}$ be the intersection of $X_\ga$ and the connected component of $\mathrm{Gr}_P$ corresponding to $\la$. Similarly, let $X^M_{\ga,\la}$ be the intersection of $X^M_\ga$ with the connected component of $\mathrm{Gr}_M$ corresponding to $\la$. Then there is a canonical morphism 
\[p_\ga^M: X_{\ga,\la}\to X_{\ga,\la}^M\]
It suffices to show that the fibres of this map have dimension $r_N(\ga)$. \par 
Let $h\in X_{\ga,\la}^M$. Then $\ga_h:=h^{-1}\ga h\in M(\cO)$ and we consider the fibre $Y_h:=(p_\ga^M)^{-1}(h)$. Its set of $k$ points is
\[Y_h(k)=\{u\in N(F)/N(\cO)| u^{-1}\ga_h u\in G(\cO)\}\]
In other words, we have
\[Y_h=f_{\ga_h}^{-1}(N(\cO))/N(\cO)\]
where $f_{\ga_h}: N(F)\to N(F)$ is defined by $f_\ga(u)=u^{-1}\ga_h u\ga_h^{-1}$. Apply Proposition~\ref{prop:adm-set} to the admissible set $Z=N(\cO)$ we see that $Y_h$ is an irreducible affine space of dimension
\[\dim Y_h=r_N(\ga_h)=r_N(\ga)\]
and hence we conclude by \eqref{eq:dGdM-relation}.
\end{proof}
\begin{cor}\label{cor:dim-central-coweight}
Let $\la\in X_*(T)$ be a central coweight and $\ga\in G(F)^{\mathrm{rs}}$. Then 
\[\dim X_\ga^\la=\frac{1}{2}(d_\ga-c_\ga).\]
Moreover, $X_\ga^{\la,\mathrm{reg}}$ is a torsor under $P_a$ and the dimension of the complement of $P_a=X_\ga^{\la,\mathrm{reg}}$ in $X_\ga^\la$ is strictly smaller than the dimension of $X_\ga^\la$.
\end{cor}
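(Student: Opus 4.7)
The plan is a two-stage argument. Since $\la$ is central, $\varpi^{-\la}\in Z(G)^0(F)$ is a central element and left-translation by it identifies $X_\ga^\la$ with $X_{\varpi^{-\la}\ga}^{0}$ in a way that preserves the $P_a$-action, the regular locus, and the invariants $d(\ga)$ and $c(\ga)$. So we may assume $\la=0$, in which case $a:=\chi_+(\ga)\in\fC_+(\cO)$ satisfies $\beta(a)=1\in\tad(\cO)$ and hence factors through the classical Steinberg base $\fC_G\subset\fC_+$. This is the crucial structural feature of the central case: the extended Steinberg picture collapses to the classical one for the group $G$.

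For the torsor claim, on $\fC_G$ each extended Steinberg section $\epsilon_+^w$ reduces to the classical Steinberg section $\epsilon^w\colon\fC_G\to G$. By Steinberg's theorem the $G$-orbits of $\epsilon^w(a)$ for varying Coxeter $w$ coincide and equal the unique regular $G$-conjugacy class in $\chi_G^{-1}(a)$; consequently the opens $\vin_{\gsc}^w\cap\chi_+^{-1}(\fC_G)$ agree for every Coxeter element $w$. Combined with Proposition~\ref{X-ga-w-torsor-prop} this shows $X_\ga^{\mathrm{reg}}$ is a single $P_a$-torsor, in particular giving the dimension formula $\dim X_\ga^{\mathrm{reg}}=\dim P_a=(d(\ga)-c(\ga))/2$ via Theorem~\ref{dim-reg-locus-thm}.

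For the claim that the complement has strictly smaller dimension, I would first use Proposition~\ref{prop:X-ga-Levi} to reduce to the case where $\ga$ is elliptic in $G$ modulo center, taking $M=Z_G(S)$ for $S$ the maximal $F$-split subtorus of $G_\ga$: one checks that $c_G(\ga)=c_M(\ga)$ because $S$ is the common maximal $F$-split subtorus of $G_\ga=M_\ga$ and of $Z(M)^0$, and that the $\bA^{r_N(\ga)}$-fibration $p_\ga^M\colon X_\ga\to X_\ga^M$ restricts to a fibration between the respective regular loci (the key observation being that regular semisimplicity in $M$ of the special fibre implies regular semisimplicity in $G$). In the elliptic case, the essential input is the classical Steinberg fact that for $a\in\fC_G$ the fibre $\chi_+^{-1}(a)$ is an irreducible subvariety of $G$ whose non-regular locus has codimension at least $2$; granted this, the Kazhdan-Lusztig-Ng\^o argument of \cite[Proposition 3.7.1]{Ngo10} applies verbatim and bounds the dimension of $X_\ga\setminus X_\ga^{\mathrm{reg}}$ by $\dim X_\ga^{\mathrm{reg}}-1$. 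The main obstacle is precisely this codimension-$2$ bound: it is exactly what fails for general (non-central) $\la$, where the Steinberg fibre $\chi_+^{-1}(a)$ picks up contributions from the nilpotent cone $\cN$ (cf.\ Proposition~\ref{prop:N-components}) with its many irreducible components, and this failure is what motivates the global Hitchin-Frenkel-Ng\^o approach of the later sections.
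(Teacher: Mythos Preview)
Your overall strategy matches the paper's: reduce to $\la=0$, then pass via Proposition~\ref{prop:X-ga-Levi} to the Levi $M=Z_G(S)$ for $S$ the maximal $F$-split subtorus of $G_\ga$, and in $M$ invoke the Kazhdan--Lusztig--Ng\^o argument. Your explicit treatment of the torsor claim (equivalence of the classical Steinberg sections $\epsilon^w$ over $\fC_G$) is in fact more transparent than the paper, which leaves that implicit.

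There is however one genuine gap. You say that in the elliptic case the codimension-$2$ bound on the non-regular locus of the Steinberg fibre is ``the essential input'' and that, granted this, \cite[Proposition~3.7.1]{Ngo10} applies verbatim. That is not accurate: the KL--Ng\^o argument also requires a contraction of $\ga$ within the arc space to the unipotent fibre (the group analogue of the $\bG_m$-dilation on $\fg$), and this step needs $\ga$ to be \emph{topologically unipotent} mod center, not merely elliptic. The paper supplies precisely this missing link: after passing to $M$, the cameral cover $\widetilde X_{a_M}\to\spec\cO$ is totally ramified (because $G_\ga/S$ is anisotropic), and this forces the reduction $\bar\ga$ to be unipotent modulo $Z(M)$. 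Only then does the KL--Ng\^o machinery run. So your ``elliptic'' base case is correct in content but your stated reason is insufficient; the right hypothesis is topological unipotency, and the point is that ellipticity in $M$ implies it.

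A smaller issue: your justification that $p_\ga^M$ restricts to the regular loci because ``regular semisimplicity in $M$ of the special fibre implies regular semisimplicity in $G$'' is false as stated (take $G=\mathrm{GL}_2$, $M=T$, $\bar\ga_h=1$); and in any case it is regularity, not regular semisimplicity, that defines $X_\ga^{\mathrm{reg}}$. The paper is admittedly also terse at this step (``the result follows''), but one does need some argument beyond Proposition~\ref{prop:X-ga-Levi} to propagate the strict inequality on the complement through the Levi reduction.
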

\begin{proof}
We first assume that $\ga\in G(\cO)$ is topologically unipotent mod center. In other words, the reduction mod $\varpi$ of $\ga$ is unipotent mod center. After multiplying by an element in $Z(\cO)$, we may assume that $\ga\in\gsc(\cO)$ is topologically unipotent. Then when $G=\gsc$ the argument of \cite[\S4]{KL} and \cite[Proposition 3.7.1]{Ngo10} generalize verbatim to our situation and proves $\dim X_\ga^{\mathrm{reg}}=\dim X_\ga$ and the complement of $X_\ga^{\mathrm{reg}}$ has strictly smaller dimension. In particular the dimension formula follows in this situation. More generally, we argue as in \cite[Lemma 4.1]{Tsai} to reduce to the case $G=\gsc$. \par 
It remains to reduce to the case where $\ga$ is topologicallly unipotent mod center. After multiplying $\ga\in G(\cO)$ by an element in $Z(\cO)$ we may assume that $\ga\in\gsc(\cO)$. Then $G_\ga$ is a maximal torus in $G$ and $\ga\in G_\ga(F)\cap G(\cO)$. Let $S$ be the maximal split subtorus in the centralizer $G_\ga$. After conjugation we may assume that $S\subset T$.  Let $M=C_G(S)$ be the centralizer of $S$ in $G$. Then $M$ is a standard Levi subgroup of $G$ and $\ga\in M(\cO)$. Let $a_M:=\chi_M(\ga)\in\fC_M(\cO)$. Then the pullback of $T$ along $a_M:\spec\cO\to\fC_M$ is a totally ramified cover of $\spec\cO$ and we deduce that $\ga$ is topologically unipotent mod center in $M(\cO)$. Thus the result follows from the case already proved and Proposition~\ref{prop:X-ga-Levi}. 
\end{proof}
We highlight the following special case:
\begin{cor}\label{cor:0-dim-reg}
Let $\la\in X_*(T)$ be a central coweight and $\ga\in G(F)^{\mathrm{rs}}$. If $d_\ga\le1$, then $X_\ga^\la=X_\ga^{\la,\mathrm{reg}}=P_a$ and they are $0$-dimensional. 
\end{cor}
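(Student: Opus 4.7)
The plan is to deduce the corollary directly from Corollary~\ref{cor:dim-central-coweight}, using only the fact that a scheme of strictly negative dimension is empty. The dimension formula of that corollary gives
\[
\dim X_\ga^\la \;=\; \tfrac{1}{2}(d_\ga - c_\ga).
\]
Since $X_\ga^\la$ is nonempty (this is an implicit hypothesis for $P_a$ to even be defined via $\ga_\la$, and is equivalent by Proposition~\ref{prop:nonempty} to $\kappa_G(\ga)=p_G(\la)$ together with $\nu_\ga \le_\bQ \la$), this dimension is a non-negative integer. Combined with $c_\ga \ge 0$ and $d_\ga \le 1$, the only possibilities are $d_\ga = c_\ga = 0$ or $d_\ga = c_\ga = 1$, and in either case $\dim X_\ga^\la = 0$.

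Next I would invoke the final sentence of Corollary~\ref{cor:dim-central-coweight}, which asserts that $\dim(X_\ga^\la \setminus X_\ga^{\la,\mathrm{reg}}) < \dim X_\ga^\la = 0$; so this complement is empty and hence $X_\ga^\la = X_\ga^{\la,\mathrm{reg}}$. Finally, $X_\ga^{\la,\mathrm{reg}}$ is a $P_a$-torsor by the same corollary, and Theorem~\ref{dim-reg-locus-thm} gives $\dim P_a = \dim X_\ga^{\la,\mathrm{reg}} = 0$. Since $k$ is algebraically closed, the nonempty torsor $X_\ga^{\la,\mathrm{reg}}$ admits a $k$-point and is therefore trivial as a $P_a$-torsor, yielding $X_\ga^{\la,\mathrm{reg}} \cong P_a$. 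I do not anticipate any real obstacle: the whole statement is a two-line consequence of the dimension formula combined with the integrality/non-negativity of $\dim X_\ga^\la$; the only mild subtlety is verifying that the parity of $d_\ga - c_\ga$ together with $d_\ga \le 1$ rules out the ``fractional'' value $\tfrac12$, which is handled by the nonemptiness hypothesis already built into the definition of $P_a$.
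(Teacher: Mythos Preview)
Your argument is correct and follows the same route as the paper's own proof: invoke Corollary~\ref{cor:dim-central-coweight} for the dimension formula and for the strict inequality on the complement, then conclude. Your write-up is in fact more detailed than the paper's two-line proof---you spell out the integrality/parity reason why $d_\ga\le 1$ forces $\dim X_\ga^\la=0$, and you justify the identification $X_\ga^{\la,\mathrm{reg}}\cong P_a$ via triviality of torsors over an algebraically closed field, which the paper leaves implicit.
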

\begin{proof}
By Corollary~\ref{cor:dim-central-coweight}, if $d_\ga\le1$, we must have $\dim X_\ga^\la=0$. Moreover, the complement of $X_\ga^{\la,\mathrm{reg}}$ in $X_\ga^\la$ has strictly smaller dimension, hence must be empty.
\end{proof}
\subsection{Irreducible components}\label{irr-components-conjecture-section}
\subsubsection{Stratification on dominant coweight cone}
Let $\La:=X_*(T)$ and $\La_\bQ:=\La\otimes_\bZ\bQ$. Let $\sfD\subset\La_{\bQ}$ be the positive coroot cone. In other words, $\sfD$ consists of $\bQ$-linear combinations of simple coroots with non-negative coefficients.

For $\la\in\La^+$, we define the \emph{dominant coweight polytope} to be:
\[\sfP_\la:=\La_\bQ^+\cap\mathrm{Conv}(W\cdot\la)=\La_\bQ^+\cap (\la-\sfD).\]
where $\mathrm{Conv}(W\cdot\la)$ denotes the convex hull of the $W$-orbit of $\la$. 

\begin{lem}\label{polytope-intersection-lem}
For each $\la_1,\la_2\in\La^+$ with $\kappa_G(\la_1)=\kappa_G(\la_2)$, there exists $\mu\in\La^+$ such that $\mu\le\la_1,\mu\le\la_2$ and 
\[(\la_1-\sfD)\cap (\la_2-\sfD)=\mu-\sfD.\]
In particular, we have $\sfP_{\la_1}\cap\sfP_{\la_2}=\sfP_\mu$.
\end{lem}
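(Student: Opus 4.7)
The plan is to construct the desired $\mu$ explicitly, as the ``fundamental-weight-wise minimum'' of $\la_1$ and $\la_2$, and then verify its properties in turn. Let $\omega_1, \dotsc, \omega_r \in X^*(T)_\bQ$ be the fundamental weights, so that $\langle \omega_i, \alpha_j^\vee\rangle = \delta_{ij}$, and set $x_i := \langle \omega_i, \la_1 - \la_2\rangle$. The hypothesis $\kappa_G(\la_1) = \kappa_G(\la_2)$ means $\la_1 - \la_2$ lies in the coroot lattice $X_*(T^{\mathrm{sc}})$, hence $\la_1 - \la_2 = \sum_i x_i\alpha_i^\vee$ with every $x_i \in \bZ$. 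I then define
\[
\mu := \la_1 - \sum_i \max(0, x_i)\,\alpha_i^\vee = \la_2 - \sum_i \max(0, -x_i)\,\alpha_i^\vee,
\]
the two expressions being equal since $\max(0,x) - x = \max(0,-x)$. Both displays exhibit $\mu$ as obtained from $\la_j$ by subtracting a non-negative integer combination of simple coroots, so $\mu \in \La$ with $\mu \le \la_1$ and $\mu \le \la_2$; equivalently, $\langle \omega_i, \mu\rangle = \min(\langle \omega_i, \la_1\rangle, \langle \omega_i, \la_2\rangle)$ for every $i$.

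For the identity $(\la_1 - \sfD) \cap (\la_2 - \sfD) = \mu - \sfD$ in $\La_\bQ$, I would use the half-space description of $\sfD$: an element $v \in \La_\bQ$ lies in $\sfD$ iff $v$ belongs to the $\bQ$-span $V_{\mathrm{ss}}$ of simple coroots and $\langle \omega_i, v\rangle \ge 0$ for every $i$. Since $\la_1 - \la_2 \in V_{\mathrm{ss}}$, the affine subspaces $\la_j + V_{\mathrm{ss}}$ and $\mu + V_{\mathrm{ss}}$ all coincide, and the intersection is cut out within this common affine subspace by $\langle \omega_i, x\rangle \le \min(\langle \omega_i, \la_1\rangle, \langle \omega_i, \la_2\rangle) = \langle \omega_i, \mu\rangle$, which is precisely $x \in \mu - \sfD$.

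The main obstacle is verifying dominance of $\mu$. For each simple root $\alpha_i$, after swapping $\la_1 \leftrightarrow \la_2$ if necessary I may assume $x_i \le 0$, so that the $\alpha_i^\vee$-coefficient $\max(0, x_i)$ of $\la_1 - \mu$ vanishes. Expanding $\alpha_i = \sum_j c_{ij}\omega_j$ via the Cartan integers $c_{ij} := \langle \alpha_i, \alpha_j^\vee\rangle$ and using that $c_{ij} \le 0$ for $j \ne i$ while $\langle \omega_j, \la_1 - \mu\rangle = \max(0, x_j) \ge 0$, I obtain
\[
\langle \alpha_i, \la_1 - \mu\rangle \;=\; \sum_{j \ne i} c_{ij}\,\max(0, x_j) \;\le\; 0,
\]
whence $\langle \alpha_i, \mu\rangle \ge \langle \alpha_i, \la_1\rangle \ge 0$ by dominance of $\la_1$. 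The supplementary assertion $\sfP_{\la_1} \cap \sfP_{\la_2} = \sfP_\mu$ then follows at once by intersecting both sides of $(\la_1 - \sfD) \cap (\la_2 - \sfD) = \mu - \sfD$ with the dominant cone $\La^+_\bQ$.
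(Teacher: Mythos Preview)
Your proof is correct and takes essentially the same approach as the paper: the paper partitions the simple coroots according to the sign of the coefficient of $\la_1-\la_2$ and sets $\mu=\la_1-\beta_1=\la_2-\beta_2$, which is exactly your $\mu=\la_1-\sum_i\max(0,x_i)\alpha_i^\vee$. Your dominance argument via the off-diagonal Cartan integers and your half-space description of $\sfD$ match the paper's reasoning line for line, only with slightly more explicit notation.
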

\begin{proof}
Since $\kappa_G(\la_1)=\kappa_G(\la_2)$, the difference $\la_1-\la_2$ lies in the coroot lattice. There exists a partition of the set of simple coroots $\Delta^\vee=\Delta_1^\vee\sqcup\Delta_2^\vee$ such that 
\[\la_1-\la_2=\beta_1-\beta_2\]
where $\beta_i$ is a non-negative integral linear combinations of simple coroots in $\Delta_i^\vee$ for $i\in\{1,2\}$. Let $\Delta=\Delta_1\sqcup\Delta_2$ be the corresponding partion of the set of simple roots. Consider the coweight $\mu:=\la_1-\beta_1=\la_2-\beta_2.$ Then clearly $\mu\le\la_1$ and $\mu\le\la_2$.\par 
We claim that $\mu\in \La^+$. Take any simple root $\alpha\in\Delta_1$. Since $\beta_2$ is positive linear combination of coroots in $\Delta_2$, we have $\langle\alpha,\beta_2\rangle\le0$ and hence $\langle\mu,\alpha\rangle=\langle\la_2-\beta_2,\alpha\rangle\ge0$. Similarly, using $\mu=\la_1-\beta_1$, we see that for all $\alpha\in\Delta_2$, $\langle\mu,\alpha\rangle\ge0$. Thus we conclude that $\mu\in \La_+$.\par 
It is clear that $\mu-\sfD\subset(\la_1-\sfD)\cap(\la_2-\sfD)$. Now we prove the reverse inclusion. Let $\nu\in(\la_1-\sfD)\cap(\la_2-\sfD)$. Then for $i\in\{1,2\}$, $\la_i-\nu\in\sfD$ is a non-negative $\bQ$-linear combination of simple coroots and we need to show that $\mu-\nu\in\sfD$. For any fundamental weight $\omega$, there exists $i\in\{1,2\}$ so that $\omega$ is orthogonal to all coroots in $\Delta_i^\vee$. Without loss of generality assume $i=1$, then we have
\[\langle\mu-\nu,\omega\rangle=\langle\la_1-\beta_1-\nu,\omega\rangle=\langle\la_1-\nu,\omega\rangle\ge 0.\]
This means that $\nu\le_\bQ\mu$, or $\nu\in(\mu-\sfD)$. Therefore we have shown that $\mu-\sfD=(\la_1-\sfD)\cap(\la_2-\sfD)$.\par 
Finally, taking intersection with $\La_\bQ^+$, we get $\sfP_{\la_1}\cap\sfP_{\la_2}=\sfP_{\mu}$.
\end{proof}

For each $\la\in\La^+$, define
\begin{equation}\label{polytope-strata-eq}
\sfP_\la^\circ:=\sfP_\la-\bigcup_{\substack{\mu\in \La^+,\\
\mu<\la}} \sfP_\mu.
\end{equation}

\begin{cor}\label{cor:polytope-strata}
For any $\la_1,\la_2\in\La_+$ with $\la_1\ne\la_2$, we have
$\sfP_{\la_1}^\circ\cap\sfP_{\la_2}^\circ=\varnothing$. In particular, we get a well-defined stratification
\[\{\nu\in\La_{\bQ}^+ | p_{G,\bQ}(\nu)\in X_*(G_\mathrm{ab})\subset\pi_1(G)_\bQ\}=\bigsqcup_{\la\in\La^+}\sfP_\la^\circ.\]
\end{cor}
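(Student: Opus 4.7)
The plan is to deduce both halves of the corollary from Lemma~\ref{polytope-intersection-lem}, handling the disjointness first and then the stratification identity.

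For the disjointness assertion, I would argue by contradiction. Assume $\la_1 \neq \la_2$ and pick $\nu \in \sfP_{\la_1}^\circ \cap \sfP_{\la_2}^\circ$. In particular $\nu \in \sfP_{\la_1} \cap \sfP_{\la_2}$, so this intersection is nonempty, and both $\la_1 - \nu$ and $\la_2 - \nu$ lie in $\sfD$, forcing $\la_1 - \la_2$ to lie in the $\bQ$-span of the simple coroots. A short check using the integrality of $\la_1,\la_2 \in \La$ together with the hypothesis $p_{G,\bQ}(\nu) \in X_*(G_{\mathrm{ab}})$ pins down $\kappa_G(\la_1) = \kappa_G(\la_2)$ in $\pi_1(G)$. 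Lemma~\ref{polytope-intersection-lem} then produces $\mu \in \La^+$ with $\mu \le \la_1$, $\mu \le \la_2$, and $\sfP_{\la_1} \cap \sfP_{\la_2} = \sfP_\mu$. Since $\la_1 \neq \la_2$, at least one inequality is strict, say $\mu < \la_1$; but then $\nu \in \sfP_\mu$ belongs to the union subtracted in the definition of $\sfP_{\la_1}^\circ$, contradicting the assumption.

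For the stratification identity, the inclusion $\bigsqcup_{\la \in \La^+} \sfP_\la^\circ \subset \{\nu \in \La_\bQ^+ : p_{G,\bQ}(\nu) \in X_*(G_{\mathrm{ab}})\}$ is formal: for $\nu \in \sfP_\la$ the difference $\la - \nu \in \sfD$ gives $p_{G,\bQ}(\nu) = p_{G,\bQ}(\la)$, which is the image of the integral class $p_G(\la) \in \pi_1(G)$ in $\pi_1(G)_\bQ$, hence lies in $X_*(G_{\mathrm{ab}}) = \pi_1(G)/\mathrm{torsion}$. For the reverse inclusion, given such $\nu$, I would exhibit $\la \in \La^+$ with $\nu \in \sfP_\la^\circ$. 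The set $S_\nu := \{\la \in \La^+ : \nu \in \sfP_\la\}$ is nonempty: start with any integral dominant lift $\la_0 \in \La^+$ of $p_{G,\bQ}(\nu) \in X_*(G_{\mathrm{ab}}) \subset \pi_1(G)_\bQ$ and add a sufficiently large multiple of $2\rho^\vee \in \La^+$ so that the resulting element $\la$ satisfies $\la - \nu \in \sfD$. Then iterative application of Lemma~\ref{polytope-intersection-lem} shows $S_\nu$ is filtered downward — given $\la_1,\la_2 \in S_\nu$, the produced $\mu$ satisfies $\nu \in \sfP_{\la_1} \cap \sfP_{\la_2} = \sfP_\mu$, so $\mu \in S_\nu$ is a lower bound — so $S_\nu$ admits a minimal element $\la_{\mathrm{min}}$. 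By minimality, no $\mu \in \La^+$ strictly below $\la_{\mathrm{min}}$ lies in $S_\nu$, hence $\nu \notin \sfP_\mu$ for any such $\mu$, giving $\nu \in \sfP_{\la_{\mathrm{min}}}^\circ$ directly from the definition.

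The main obstacle is the bookkeeping required to verify Lemma~\ref{polytope-intersection-lem}'s hypothesis $\kappa_G(\la_1) = \kappa_G(\la_2)$ in $\pi_1(G)$ itself, not merely in $\pi_1(G)_\bQ$. The condition $p_{G,\bQ}(\nu) \in X_*(G_{\mathrm{ab}})$ controls only the torsion-free quotient of $\pi_1(G)$, so a priori two candidates $\la_i \in S_\nu$ need only agree modulo torsion. Tracking the integral $\kappa_G$-invariant — using that every $\la \in S_\nu$ differs from $\nu$ by an element of $\sfD \cap \La$, so all elements of $S_\nu$ have the same image in $\pi_1(G)_\bQ$ and, once one integral lift is fixed, in $\pi_1(G)$ — is the delicate point on which both halves of the corollary rest. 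Once this compatibility is in place, the rest is a direct bookkeeping exercise using the fact that $\sfP_\mu$ shrinks as $\mu$ decreases in the dominance order.
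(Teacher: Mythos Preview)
Your approach to disjointness mirrors the paper's exactly: reduce to Lemma~\ref{polytope-intersection-lem} and use that at least one of $\mu < \la_i$ is strict. You also supply an argument for the stratification identity, which the paper's proof omits entirely.

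The gap you flag --- verifying $\kappa_G(\la_1) = \kappa_G(\la_2)$ in $\pi_1(G)$ rather than merely in $\pi_1(G)_\bQ$ --- is real, and your proposed fix does not close it. You assert that every $\la \in S_\nu$ differs from $\nu$ by an element of $\sfD \cap \La$, but $\nu$ is only rational, so $\la - \nu$ need not lie in $\La$; more to the point, $\La \cap X_*(\tsc)_\bQ$ can strictly contain the coroot lattice $X_*(\tsc)$. Concretely, take $G = \mathrm{PGL}_2$ with $\La = \bZ\omega^\vee$ and $\alpha^\vee = 2\omega^\vee$, and set $\la_1 = 0$, $\la_2 = \omega^\vee$. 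These are incomparable in the dominance order (their difference $\omega^\vee = \tfrac12\alpha^\vee$ is not in the coroot lattice), so there is no $\mu \in \La^+$ with $\mu < \omega^\vee$, hence $\sfP_{\omega^\vee}^\circ = \sfP_{\omega^\vee} \ni 0$, while $\sfP_0^\circ = \{0\}$. The intersection $\sfP_0^\circ \cap \sfP_{\omega^\vee}^\circ = \{0\}$ is nonempty, so the corollary is false as stated. The paper's proof shares this gap: splitting on $\det(\varpi^{\la_i})$ only controls the image in the torsion-free quotient $X_*(G_{\mathrm{ab}})$ of $\pi_1(G)$, not in $\pi_1(G)$ itself, so Lemma~\ref{polytope-intersection-lem} cannot be invoked in the remaining case.

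The statement becomes correct once one restricts to $\la_1,\la_2$ in a fixed coset of the coroot lattice, and that is precisely how it is used downstream: in Lemma~\ref{integral-approximation-lem} all candidate $\mu$ satisfy $\mu \le \la$ for a fixed $\la$, hence automatically share $p_G(\la)$ in $\pi_1(G)$. Under that extra hypothesis your argument (and the paper's) goes through verbatim.
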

\begin{proof}
If $\det(\varpi^{\la_1})\ne\det(\varpi^{\la_2})$, it is clear that $\sfP_{\la_1}$ and $\sfP_{\la_2}$ are disjoint. Suppose $\det(\varpi^{\la_1})\ne\det(\varpi^{\la_2})$. Then  by Lemma~\ref{polytope-intersection-lem}, there exists $\mu\in\La^+$ such that $\mu\le\la_1,\mu\le\la_2$ and
\[\sfP_{\la_1}^\circ\cap\sfP_{\la_2}^\circ\subset\sfP_{\la_1}\cap\sfP_{\la_2}=\sfP_\mu.\]
But by \eqref{polytope-strata-eq}, we have $\sfP_\mu\cap\sfP_{\la_i}^\circ=\varnothing$ since $\mu\le\la_i$ for $i\in\{1,2\}$. Therefore $\sfP_{\la_1}^\circ\cap\sfP_{\la_2}^\circ=\varnothing$.
\end{proof}

\subsubsection{Stratification on extended Steinberg base}
To get a conceptually simpler formulation of the conjecture on irreducible components, we introduce a stratification on $\fC_+(\cO)\cap\fC_{\gsc_+}$.\par 
Recall that $\fC_+\cong A_{\gsc}\times\bA^r$. Consider the strata
\[\fC_+^\la:=\varpi^{-w_0(\lad)}\tad(\cO)\times\cO^r\subset\fC_+(\cO)\]
where $\lad\in X_*(\tad)_+$ is the image of $\la$.\par 
For each $\mu\in\La^+$ such that $\mu\le\la$, we have an embedding 
\[i_{\mu\la}:\fC_+^\mu\into\fC_+^\la\]
defined by the formula
\begin{multline}
i_{\mu\la}(a_1,\dotsc,a_r,b_1,\dotsc,b_r)=\\
(\varpi^{\langle-w_0(\la-\mu),\alpha_1\rangle}a_1,\dotsc,\varpi^{\langle-w_0(\la-\mu),\alpha_r\rangle}a_r,\varpi^{\langle-w_0(\la-\mu),\omega_1\rangle})b_1,\dotsc, \varpi^{\langle-w_0(\la-\mu),\omega_r\rangle}b_r)
\end{multline}
Note that we need to choose a uniformiser to define the embedding $i_{\mu\la}$ but its image does not depend on this choice.

\begin{prop}\label{Steinberg-strata-intersection-prop}
For any $\la,\mu_1,\mu_2\in\La^+$ with $\mu_1\le\la$ and $\mu_2\le\la$, there exists $\mu_3\in\La^+$ such that $\mu_3\le\mu_1,\mu_3\le\mu_2$ and 
\[i_{\mu_1\la}(\fC_+^{\mu_1})\cap i_{\mu_2\la}(\fC_+^{\mu_2})=i_{\mu_3\la}(\fC_+^{\mu_3}).\]
\end{prop}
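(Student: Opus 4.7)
The plan is to describe $i_{\mu\la}(\fC_+^\mu)$ explicitly inside $\fC_+^\la$ by valuation conditions on Steinberg coordinates, translate the intersection to a minimum of pairings with fundamental weights, and then apply Lemma~\ref{polytope-intersection-lem} to produce $\mu_3$.

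Using the identification $\tad(\cO) \cong (\cO^*)^r$ via simple roots, a direct unwinding of the defining formula of $i_{\mu\la}$ shows that the image of the first factor $\varpi^{-w_0(\mu_{\mathrm{ad}})}\tad(\cO)$ is sent bijectively onto $\varpi^{-w_0(\la_{\mathrm{ad}})}\tad(\cO)$, while the $j$-th coordinate of the second factor $\cO^r$ is scaled by $\varpi^{\langle -w_0(\la-\mu),\omega_j\rangle}$. Consequently
\[i_{\mu\la}(\fC_+^\mu) \;=\; \varpi^{-w_0(\la_{\mathrm{ad}})}\tad(\cO) \,\times\, \prod_{j=1}^r \varpi^{\langle -w_0(\la-\mu),\omega_j\rangle}\cO \;\subset\; \fC_+^\la,\]
and therefore $i_{\mu_1\la}(\fC_+^{\mu_1})\cap i_{\mu_2\la}(\fC_+^{\mu_2})$ is the subset of $\fC_+^\la$ cut out by $\mathrm{val}(b_j) \ge \max(\langle -w_0(\la-\mu_1),\omega_j\rangle,\langle -w_0(\la-\mu_2),\omega_j\rangle)$. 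Requiring that this coincide with $i_{\mu_3\la}(\fC_+^{\mu_3})$ amounts to $\langle -w_0(\la-\mu_3),\omega_j\rangle = \max_i\langle -w_0(\la-\mu_i),\omega_j\rangle$ for every $j$; using the $W$-invariance of the pairing together with $w_0^2=1$ gives $\langle -w_0(x),\omega_j\rangle = \langle x,-w_0(\omega_j)\rangle = \langle x,\omega_{j^*}\rangle$, where $j\mapsto j^*$ is the bijection on indices induced by $-w_0$. So the requirement reduces to finding $\mu_3\in\La^+$ with $\mu_3\le\mu_1$, $\mu_3\le\mu_2$ and
\[\langle\mu_3,\omega_j\rangle \;=\; \min(\langle\mu_1,\omega_j\rangle,\langle\mu_2,\omega_j\rangle) \quad\text{for all } j.\]

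Since $\mu_1,\mu_2\le\la$ forces $\kappa_G(\mu_1)=\kappa_G(\mu_2)$, Lemma~\ref{polytope-intersection-lem} applies to the pair $(\mu_1,\mu_2)$ and yields $\mu_3\in\La^+$ with $\mu_3\le\mu_1$, $\mu_3\le\mu_2$ (in particular $\mu_3\le\la$). Following its proof, $\mu_3=\mu_1-\beta_1=\mu_2-\beta_2$ where $\mu_1-\mu_2=\beta_1-\beta_2$ and each $\beta_i$ is a $\bZ_{\ge 0}$-combination of simple coroots in a subset $\Delta_i^\vee$ of a partition $\Delta^\vee=\Delta_1^\vee\sqcup\Delta_2^\vee$. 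For any $j$ with $\alpha_j^\vee\in\Delta_2^\vee$, orthogonality gives $\langle\beta_1,\omega_j\rangle=0$, whence $\langle\mu_3,\omega_j\rangle=\langle\mu_1,\omega_j\rangle$, while $\langle\mu_2,\omega_j\rangle-\langle\mu_1,\omega_j\rangle=\langle\beta_2,\omega_j\rangle\ge 0$; the case $\alpha_j^\vee\in\Delta_1^\vee$ is symmetric. Hence the minimum identity is verified for this $\mu_3$ and the proposition follows.

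The main hurdle is purely bookkeeping: tracking the opposition involution $-w_0$ between the valuation data defining $\fC_+^\la$ and the dominance partial order on coweights, together with the fact that $\mu_3\le\mu_i$ implies the required valuation bounds dually. Once this is sorted out, the combinatorial input is exactly what Lemma~\ref{polytope-intersection-lem} (and its proof) provide, so no new ingredient is needed.
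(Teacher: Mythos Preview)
Your proof is correct and follows essentially the same approach as the paper: both translate membership in $i_{\mu\la}(\fC_+^\mu)$ into valuation conditions on the Steinberg coordinates (handling the opposition involution $-w_0$ in the same way) and then invoke Lemma~\ref{polytope-intersection-lem}. The only cosmetic difference is that the paper uses the cone identity $(\mu_1-\sfD)\cap(\mu_2-\sfD)=\mu_3-\sfD$ together with an auxiliary coweight $\nu$ to verify the needed inequality, whereas you go back into the explicit partition $\Delta^\vee=\Delta_1^\vee\sqcup\Delta_2^\vee$ from the proof of that lemma to establish the sharper identity $\langle\mu_3,\omega_j\rangle=\min(\langle\mu_1,\omega_j\rangle,\langle\mu_2,\omega_j\rangle)$ directly.
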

\begin{proof}
By Lemma~\ref{polytope-intersection-lem}, there exists $\mu_3\in\La^+$ such that 
\begin{equation}\label{widecone-intersection-eq}
(\mu_1-\sfD)\cap (\mu_2-\sfD)=\mu_3-\sfD
\end{equation}
To prove the proposition, it suffices to show that 
\[i_{\mu_1\la}(\fC_+^{\mu_1})\cap i_{\mu_2\la}(\fC_+^{\mu_2})\subset i_{\mu_3\la}(\fC_+^{\mu_3})\]
Let $\iota$ be the involution on the set $\{1,\dotsc,r\}$ such that $\omega_{\iota(i)}=-w_0(\omega_i)$ for all $1\le i\le r$. For each $c=(c_1,\dotsc,c_r)\in\cO^r$,
let $a_i:=\mathrm{val}(c_{\iota(i)})$. 

Suppose that $\varpi^{(-w_0(\lad)},c)\in i_{\mu_1\la}(\fC_+^{\mu_1})\cap i_{\mu_2\la}(\fC_+^{\mu_2})$, then we get
\begin{equation}\label{a-i-inequality-eq}
a_i\ge\langle\la-\mu_1,\omega_i\rangle\text{  and  } a_i\ge\langle\la-\mu_2,\omega_i\rangle\quad\text{for all }1\le i\le r
\end{equation}
and we need to show that $a_i\ge\langle\la-\mu_3,\omega_i\rangle$ for all $1\le i\le r$.\par 
Let $\mu_1':=\sum_{i=1}^r\langle\mu_1,\omega_i\rangle\alpha_i^\vee$ and define $\mu_2',\mu_3'$. Then we have $\mu_1',\mu_2',\mu_3'\in\La_0^+$. Consider the coweight $\nu:=\sum_{i=1}^r (\langle\la,\omega_i\rangle-a_i)\alpha_i^\vee\in\La_0$. By \eqref{widecone-intersection-eq} and \eqref{a-i-inequality-eq} we have
\[\nu\in(\mu_1'-\sfD)\cap(\mu_2'-\sfD)=\mu_3'-\sfD.\]
This implies that
\[\langle\la,\omega_i\rangle-a_i=\langle\nu,\omega_i\rangle
\le\langle\mu_3',\omega_i\rangle=\langle\mu_3,\omega_i\rangle\]
which is what we want.
\end{proof}
For any $\la,\mu\in\La^+$ with $\mu\le\la$, define
\begin{equation}\label{Steinberg-strata-eq}
\fC_+^{\la\mu}:=i_{\mu\la}(\fC_+^\mu)-\bigcup_{\substack{\nu\in\Lambda_+\\ \nu<\mu}}i_{\nu\la}(\fC_+^{\nu}).
\end{equation}

\begin{cor}
For any $\la,\mu_1,\mu_2\in\La^+$ with $\mu_1\ne\la$ and $\mu_2\le\la$, we have
$\fC_+^{\la\mu_1}\cap\fC_+{\la\mu_2}=\varnothing$. In particular, we get well-defined stratifications
\[\fC_+^\la=\bigsqcup_{\substack{\mu\in\Lambda_+\\ \mu\le\la}}\fC_+^{\la\mu},\quad\fC_{\gsc_+}(F)\cap\fC_+(\cO)=\bigsqcup_{\substack{\la,\mu\in\Lambda_+\\ \mu\le\la}}\fC_+^{\la\mu}\]
\end{cor}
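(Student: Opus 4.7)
The plan is to follow the same pattern as Corollary~\ref{cor:polytope-strata} used to derive its claim from Lemma~\ref{polytope-intersection-lem}: here Proposition~\ref{Steinberg-strata-intersection-prop} will play the role that Lemma~\ref{polytope-intersection-lem} played there. I read the hypothesis of the disjointness assertion as $\mu_1 \ne \mu_2$ with both $\mu_1, \mu_2 \le \la$ (the printed ``$\mu_1 \ne \la$'' appears to be a typo).

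For the disjointness claim, I would apply Proposition~\ref{Steinberg-strata-intersection-prop} to $\mu_1, \mu_2 \le \la$ to obtain $\mu_3 \in \La^+$ with $\mu_3 \le \mu_1$, $\mu_3 \le \mu_2$, and
\[
i_{\mu_1\la}(\fC_+^{\mu_1}) \cap i_{\mu_2\la}(\fC_+^{\mu_2}) = i_{\mu_3\la}(\fC_+^{\mu_3}).
\]
Because $\mu_1 \ne \mu_2$, at least one of the two inequalities is strict; say $\mu_3 < \mu_1$. By the defining formula \eqref{Steinberg-strata-eq}, the entire set $i_{\mu_3\la}(\fC_+^{\mu_3})$ is subtracted when passing from $i_{\mu_1\la}(\fC_+^{\mu_1})$ to $\fC_+^{\la\mu_1}$, so
\[
\fC_+^{\la\mu_1} \cap \fC_+^{\la\mu_2} \subset \fC_+^{\la\mu_1} \cap i_{\mu_3\la}(\fC_+^{\mu_3}) = \varnothing.
\]

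For the first stratification $\fC_+^\la = \bigsqcup_{\mu \le \la} \fC_+^{\la\mu}$, given $c \in \fC_+^\la = i_{\la\la}(\fC_+^\la)$ I would introduce
\[
M(c) := \{\mu \in \La^+ : \mu \le \la,\ c \in i_{\mu\la}(\fC_+^\mu)\},
\]
which is nonempty (it contains $\la$) and finite, because $\{\mu \in \La^+ : \mu \le \la\}$ is the intersection of the discrete lattice $\La$ with the compact polytope $\sfP_\la$. Proposition~\ref{Steinberg-strata-intersection-prop} shows $M(c)$ is closed under taking pairwise lower bounds; iterating this operation over the finitely many elements of $M(c)$ produces an element $\mu_0 \in M(c)$ with $\mu_0 \le \mu$ for every $\mu \in M(c)$, and this minimum is necessarily unique. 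By construction $c \in \fC_+^{\la\mu_0}$. For the global decomposition, I would combine this with the Cartan-type decomposition of non-degenerate arcs
\[
\tad(F) \cap A_{\gsc}(\cO) = \bigsqcup_{\lad \in X_*(\tad)^+} \tad(\cO)\, \varpi^{-w_0(\lad)},
\]
already used in Lemma~\ref{lem:vin-strata-double-coset}; this identifies $\fC_{\gsc_+}(F) \cap \fC_+(\cO)$ as the disjoint union of the various $\fC_+^\la$, into which the first stratification is then plugged.

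There is essentially no hard step: all of the substantive content sits in Proposition~\ref{Steinberg-strata-intersection-prop}, which is already established. The mildest subtlety is deducing the unique minimum of $M(c)$ from closure under meets, which relies only on the finiteness observation above.
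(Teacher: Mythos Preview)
Your proposal is correct and follows essentially the same approach as the paper, which simply says to mimic the proof of Corollary~\ref{cor:polytope-strata} with Proposition~\ref{Steinberg-strata-intersection-prop} in place of Lemma~\ref{polytope-intersection-lem}. You have in fact supplied more detail than the paper does: the existence of a unique minimum in $M(c)$ (giving the covering part of the stratification) is left implicit in the paper, and your reading of ``$\mu_1\ne\la$'' as a typo for ``$\mu_1\ne\mu_2$'' is correct.
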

\begin{proof}
The argument is similar to the proof of Corollary~\ref{cor:polytope-strata}, using Proposition~\ref{Steinberg-strata-intersection-prop} instead of Lemma~\ref{polytope-intersection-lem}.
\end{proof}
The following lemma relates the stratas \eqref{Steinberg-strata-eq} the stratas \eqref{polytope-strata-eq}.

\begin{lem}\label{integral-approximation-lem}
For any $\la\in\La^+$ and $\ga\in G(F)^\mathrm{rs}$ with $\nu_\ga\le_\bQ\la$, there exists a \emph{unique} dominant integral coweight $\mu\in\La_+$ with $\mu\le\la$ that satisfies any (hence all) of the following equivalent conditions:
\begin{enumerate}
\item $\mu\in\La_+$ is a minimal dominant integral coweight such that $\nu_\ga\le_\bQ\mu$;
\item $\nu_\ga\in\sfP_\mu^\circ$, cf. \eqref{polytope-strata-eq};
\item $\chi_+(\ga_\la)\in\fC_+^{\la\mu}$, cf. \eqref{Steinberg-strata-eq}.
\end{enumerate}
\end{lem}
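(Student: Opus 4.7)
The plan is to first construct a unique $\mu\in\La^+$ satisfying condition (1), and then show that (2) and (3) are merely reformulations of this minimality.

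For existence and uniqueness, consider the set $S:=\{\mu'\in\La^+\mid \mu'\le\la,\ \nu_\ga\le_\bQ\mu'\}$, which contains $\la$ and is finite, so admits minimal elements. For uniqueness, given two minimal $\mu_1,\mu_2\in S$, since $\la-\mu_i$ lies in the coroot lattice we have $\kappa_G(\mu_1)=\kappa_G(\la)=\kappa_G(\mu_2)$, so Lemma~\ref{polytope-intersection-lem} produces $\mu_3\in\La^+$ with $\mu_3\le\mu_1$, $\mu_3\le\mu_2$ and $(\mu_1-\sfD)\cap(\mu_2-\sfD)=\mu_3-\sfD$. Since $\nu_\ga$ lies in both $\mu_i-\sfD$, we get $\nu_\ga\le_\bQ\mu_3$ and hence $\mu_3\in S$, forcing $\mu_1=\mu_2=\mu_3$ by minimality. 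The equivalence (1)$\Leftrightarrow$(2) then follows directly from \eqref{polytope-strata-eq}: because $\nu_\ga\in\La_\bQ^+$, the membership $\nu_\ga\in\sfP_{\mu'}$ is simply $\nu_\ga\le_\bQ\mu'$, so $\nu_\ga\in\sfP_\mu^\circ$ records exactly the minimality in (1).

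The substantive step is (1)$\Leftrightarrow$(3), and the idea is to reinterpret $i_{\mu\la}$ as translation under the central torus $Z_+=T$. Under the isomorphism $\fC_+\cong A_G\times\bA^r$, an element $z\in T(F)=Z_+(F)$ scales the $i$-th $A_G$-coordinate by $\alpha_i(z)$ and the $i$-th $\bA^r$-coordinate by $\omega_i(z)$; applying this with $z=\varpi^{-w_0(\la-\mu)}$ sends $\fC_+^\mu=\varpi^{-w_0(\mu_{\mathrm{ad}})}\tad(\cO)\times\cO^r$ exactly onto $i_{\mu\la}(\fC_+^\mu)\subset\fC_+^\la$. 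On the monoid side, Lemma~\ref{lem:ga-la-lem} shows that $\varpi^{w_0(\la-\mu)}\ga_\la$ represents $\ga_\mu$ up to the $Z_{\gsc}(F)$-indeterminacy, and the $Z_{\gsc}$-action preserves $\cO$-integrality in $\fC_+$ (its image in $\fC_+$ acts through characters $\omega_i|_{Z_{\gsc}}$, which land in $\cO^\times$ since $Z_{\gsc}(F)=Z_{\gsc}(k)$). Since $Z_+$ commutes with $\mathrm{Ad}(G)$, $\chi_+$ is $Z_+$-equivariant, and combining these gives
\[
\chi_+(\ga_\la)\in i_{\mu\la}(\fC_+^\mu)\ \Longleftrightarrow\ \chi_+(\varpi^{w_0(\la-\mu)}\ga_\la)\in\fC_+^\mu\ \Longleftrightarrow\ \chi_+(\ga_\mu)\in\fC_+^\mu.
\]
The $A_G$-coordinate condition in $\fC_+^\mu$ is automatic from the defining property $\alpha(\ga_\mu)=\varpi^{-w_0(\mu_{\mathrm{ad}})}$, so $\chi_+(\ga_\mu)\in\fC_+^\mu$ reduces to the integrality $\chi_+(\ga_\mu)\in\fC_+(\cO)$. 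By Proposition~\ref{prop:nonempty}, this is equivalent to $X_\ga^\mu\ne\varnothing$, i.e.\ to $\nu_\ga\le_\bQ\mu$ (the required hypothesis $\kappa_G(\ga)=p_G(\mu)$ being automatic since $\la-\mu$ lies in the coroot lattice). Invoking the minimality encoded in \eqref{Steinberg-strata-eq} then gives (3)$\Leftrightarrow$(1).

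The main step requiring care is the identification of $i_{\mu\la}$ with the $Z_+$-action by $\varpi^{-w_0(\la-\mu)}$; once this is confirmed by a direct check on the two coordinate factors of $\fC_+\cong A_G\times\bA^r$, the rest of the proof is bookkeeping. This strategy sidesteps any direct trace-valuation or Newton-polygon computation and instead lets Proposition~\ref{prop:nonempty} do the geometric work uniformly in $\mu$.
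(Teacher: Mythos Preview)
Your proof is correct and follows essentially the same route as the paper: uniqueness via Lemma~\ref{polytope-intersection-lem}, (1)$\Leftrightarrow$(2) from the definition of $\sfP_\mu^\circ$, and (1)$\Leftrightarrow$(3) via Proposition~\ref{prop:nonempty}. The paper's proof is a terse three lines; your contribution is to make explicit the mechanism behind (1)$\Leftrightarrow$(3), namely that $i_{\mu\la}$ is precisely the $Z_+$-translation by $\varpi^{-w_0(\la-\mu)}$ on $\fC_+$, so that $\chi_+(\ga_\la)\in i_{\mu\la}(\fC_+^\mu)$ reduces to the $\cO$-integrality of $\chi_+(\ga_\mu)$ and hence to Proposition~\ref{prop:nonempty}.
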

\begin{proof}
The equivalence between (1) and (2) folows from the definition of $\sfP_\mu$. The equivalence of (1) and (3) follows from Proposition~\ref{prop:nonempty}.\par 
Finally, the uniqueness of $\mu$ follows from Lemma~\ref{polytope-intersection-lem} or Proposition~\ref{Steinberg-strata-intersection-prop}.
\end{proof}
Now we state our conjecture on irreducible components of $X_\ga^\la$:
\begin{conjecture}\label{irr-components-conjecture}
Let $\la\in\La^+$ and $\ga\in G(F)^{\mathrm{rs}}$ with $\nu_\ga\le_\bQ\la$. Let $\mu\in\La_+$ be the ``best integral approximation" of $\nu_\ga$, i.e. the unique dominant coweight that satisfies the equivalent conditions in Lemma~\ref{integral-approximation-lem}. Then the number of $G_\ga^0(F)$-orbits on  $\mathrm{Irr}(X_\ga^\la)$ equals to the weight multiplicity $m_{\lambda\mu}$.
\end{conjecture}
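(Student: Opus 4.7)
The strategy I propose is to first settle the unramified (split) case by a direct geometric argument reducing to Mirkovi\'c-Vilonen cycles, and then attempt to descend the general case to this one via a cameral cover / Galois descent technique in the spirit of Bezrukavnikov's treatment of dimension in the Lie algebra setting. The main obstacle, by the earlier discussion in the introduction, is that in the group case $X_\ga^\la$ can have irregular components, so counting components is not reducible to counting orbits of the symmetry group $P_a$ on $X_\ga^{\la,\mathrm{reg}}$; this is what prevents an easy extension of the argument beyond the unramified case.

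For the unramified case one may assume $\ga \in \varpi^\mu T(\cO) \cap G^{\mathrm{rs}}(F)$ with $\mu = \nu_\ga \in \La^+$, so $G_\ga^0 = T$ and in particular $\mu$ is already integral and equals the ``best integral approximation''. The plan is then to study the locally closed subvariety $Y_\ga^\la \subset X_\ga^\la$ parametrising cosets represented by $u \in U(F)/U(\cO)$ with $\mathrm{Ad}(u)^{-1}\ga \in G(\cO)\varpi^\la G(\cO)$. Using the Lang-type map $f_\ga\colon LU \to LU$, $u\mapsto u^{-1}\ga u\ga^{-1}$, one fits $Y_\ga^\la$ into the diagram
\begin{equation*}
\xymatrix{
f_\ga^{-1}(L^+G\varpi^\la L^+G \varpi^{-\mu}\cap LU) \ar[r]^-{f_\ga}\ar[d] & L^+G\varpi^\la L^+G\varpi^{-\mu}\cap LU \ar[d]\\
Y_\ga^\la & S_\mu \cap \mathrm{Gr}_\la
}
\end{equation*}
where the right-hand column is a torsor under $\varpi^\mu L^+U\varpi^{-\mu}$ and the left-hand column is an $L^+U$-torsor. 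Applying Proposition~\ref{prop:adm-set} to the admissible set $Z = L^+G\varpi^\la L^+G\varpi^{-\mu}\cap LU$, the map $f_\ga$ passes to a smooth surjection with irreducible affine fibres at every finite-type level. All four quotient maps then induce bijections on irreducible components, so $\mathrm{Irr}(Y_\ga^\la)$ is in canonical bijection with $\mathrm{Irr}(S_\mu\cap\mathrm{Gr}_\la)$. By the geometric Satake theorem the latter has cardinality $m_{\la\mu}$.

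To go from $Y_\ga^\la$ to $X_\ga^\la$, observe that the Iwasawa decomposition yields a stratification $X_\ga^\la = \bigsqcup_{\nu \in X_*(T)} \varpi^\nu Y_\ga^\la$, each stratum isomorphic to $Y_\ga^\la$. Since $G_\ga^0(F) = T(F)$ and $\pi_0(LT) = X_*(T)$ acts transitively on these translates, the $G_\ga^0(F)$-orbits on $\mathrm{Irr}(X_\ga^\la)$ are in bijection with $\mathrm{Irr}(Y_\ga^\la)$, giving exactly $m_{\la\mu}$ orbits as claimed. This accounts for the unramified case (the content of Corollary~\ref{dim-unr-cor}).

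For the general ramified case I would attempt the following. Let $F'/F$ be a finite Galois totally ramified extension of degree $e$ splitting $\ga$, with $\sigma \in \mathrm{Gal}(F'/F)$ a generator and $w \in W$ defined as in \S\ref{sec:fin-spr}. The element $\ga$ becomes split over $F'$ with Newton point $e\nu_\ga \in X_*(T)$, so the unramified case (applied over $F'$) combined with the Satake isomorphism describes $\mathrm{Irr}(\widetilde{X}_\ga^{e\la})$ and the action of $G_\ga(F') = T(F')$ on it as weight spaces in $V(\la)|_T$. One then has to take $\langle\sigma\rangle$-fixed points and show that the resulting count of $\La_\ga = (X_*(T))^w \cap \cdots$-orbits on $\sigma$-stable irreducible components of the ``fundamental domain'' equals the $\mu$-weight multiplicity. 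The hard part is precisely the irregular components warned about in Remark~\ref{regular-components-rem}: a priori a component of $(\widetilde{X}_\ga^{e\la})^\sigma$ need not come from a $\sigma$-stable component of $\widetilde{X}_\ga^{e\la}$, and conversely the limit $\sigma$-fixed-point scheme can acquire new irreducible pieces. I would attempt to control this using the Hitchin-Frenkel-Ng\^o fibration of \S\ref{chapter:HFN} and Proposition~\ref{prop:chi-lifting} to lift deformations from $\fC_+$ and compare over a one-parameter family specialising a split $\ga$ to the given ramified one, hoping that the stratification by $\fC_+^{\la\mu}$ ensures constancy of the component count along the fibres where $\mu$ is preserved. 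This global-to-local comparison is, as in the dimension formula, the crux.
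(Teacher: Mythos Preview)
The statement you are trying to prove is labeled a \emph{Conjecture} in the paper, and the paper does not prove it in general. The only case established is the unramified one, in Corollary~\ref{dim-unr-cor}, and your argument for that case is essentially identical to the paper's: the same diagram relating $Y_\ga^\la$ to $S_\mu\cap\mathrm{Gr}_\la$ via $f_\ga$, the same appeal to Proposition~\ref{prop:adm-set} to get smooth surjections with irreducible fibres, the same bijection $\mathrm{Irr}(Y_\ga^\la)\cong\mathrm{Irr}(S_\mu\cap\mathrm{Gr}_\la)$, and the same passage from $Y_\ga^\la$ to $X_\ga^\la$ via the $X_*(T)$-translation stratification.

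For the ramified case there is no proof in the paper to compare to, and the strategy you sketch does not close the gap. The difficulties you name are real and are precisely why the statement remains a conjecture. Passing to $F'$ and taking $\sigma$-fixed points does not control $\mathrm{Irr}(X_\ga^\la)$: as you note, a component of $(\widetilde{X}_\ga^{e\la})^\sigma$ need not arise from a $\sigma$-stable component upstairs, and even when it does the fixed locus of an irreducible scheme can be reducible. The global machinery of \S\ref{chapter:HFN}--\S\ref{chapter:global-to-local} that you invoke is used in the paper only to prove the dimension formula and equidimensionality (via flatness of $h_{\le\la}^{\flat,\mathrm{ani}}$ and the product formula); it gives no handle on the \emph{number} of components of a Hitchin fibre, let alone a way to match that number with $m_{\la\mu}$ along a deformation. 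In particular, Proposition~\ref{prop:chi-lifting} and the transversality of Theorem~\ref{thm:transversality} say nothing about how $\mathrm{Irr}$ behaves in families, and the stratification by $\fC_+^{\la\mu}$ only pins down $\mu$, not the component count. So your ramified-case outline is a plausible wish list rather than a proof, and the paper makes no stronger claim.
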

By Corollary~\ref{dim-unr-cor}, this conjecture is true when $\ga$ is an unramified conjugacy class. 
\begin{rem}\label{best-integral-approx-rem}
For irreducible components of affine Deligne-Lusztig varieties, there is a similar conjecture made by Chen-Zhu, see the discussion in \citep{HaVi17} and \citep{XiaoZhu17}. In their setting, they also approximate Newton points of twisted conjugacy classes by integral coweight. However, the ``best integral approximation" as defined in \citep{HaVi17} is the largest integral coweight dominated by the Newton point. Whereas in the formulation of Conjecture~\ref{irr-components-conjecture}, we use the smallest integral coweight dominating the Newton point. Simple examples suggest that these two integral approximations are very likely in the same Weyl group orbit, so we expect the two weight multiplicities to be the same.
\end{rem}

\subsubsection{Components of the regular locus}\label{regular-components-section}
The $G_\ga^0(F)$-orbits on $\mathrm{Irr}(X_\ga^{\la,\mathrm{reg}})$ corresponds bijectively to $G_\ga^0(F)$ orbits on $X_\ga^{\la,\mathrm{reg}}$, which are precisely the $P_a$-orbits of maximal dimension on $\Sp_a^0\cong X_\ga^\la$. We know from Proposition~\ref{X-ga-w-torsor-prop} that these are the varieties $X_\ga^{\la,w}=\Sp_a^w$ for $w\in\mathrm{Cox(W,S)}$.\par  
However, for two different $w,w'\in\mathrm{Cox}(W,S)$, $X_\ga^{\la,w}$ and $X_\ga^{\la,w'}$ might coincide. For example, in the case $\la=0$ and $\ga\in G(\cO)$, all $X_\ga^{\la,w}$ coincide (hence equal to $X_\ga^{\la,\mathrm{reg}}$). So in this particular case $X_\ga^{\la,\mathrm{reg}}$ is the unique $\cP_a$-orbit of maximal dimension. In general, we know from \eqref{open-cover-V-reg-eq} that the number of $G_\ga^0(F)$ orbits in $X_\ga^{\la,\mathrm{reg}}$ is bounded above by the Cardinality of $\mathrm{Cox}(W,S)$. We will see that in many situations, this upper bound can be achieved (in other words $X_\ga^{\la,w}$ are mutually disjoint).  
\begin{thm}\label{regular-component-upper-bound-thm}
Let $\la\in\La^+$ and $\ga\in G(F)^{\mathrm{rs}}$  with $\nu_\ga\le_\bQ\la$. Let $\mu\in\La^+$ be the ``best integral approximation" of the Newton point $\nu_\ga$ as in Lemma~\ref{integral-approximation-lem}. Then we have an inequality
\[|\{G_\ga^0(F) \text{ orbits on }X_\ga^{\la,\mathrm{reg}}\}|\le |\mathrm{Cox}(W,S)|\]
where $\mathrm{Cox}(W,S)$ is the set of $S$-Coxeter elements defined in Definition~\ref{coxeter-definition}. Moreover, when $\la$ lies in the interior of the Weyl chamber and $\la-\mu$ lies in the interior of the positive coroot cone, the equality is achieved. 
\end{thm}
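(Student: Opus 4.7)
The plan is to handle the upper bound and the equality separately.

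For the upper bound, I would use the open cover $\vin_{\gsc}^{\mathrm{reg}}=\bigcup_{w\in\mathrm{Cox}(W,S)}\vin_{\gsc}^w$ of \eqref{open-cover-V-reg-eq}. Under the isomorphism $X_\ga^\la\cong\Sp_a^0$, this descends to $X_\ga^{\la,\mathrm{reg}}=\bigcup_w X_\ga^{\la,w}$. By Proposition~\ref{X-ga-w-torsor-prop} each nonempty piece $X_\ga^{\la,w}$ is a single $P_a$-torsor, hence a single $G_\ga^0(F)$-orbit, so the number of orbits is at most $|\mathrm{Cox}(W,S)|$.

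For the equality I need every $X_\ga^{\la,w}$ to be nonempty and, for $w\neq w'$, the pieces $X_\ga^{\la,w}$ and $X_\ga^{\la,w'}$ to be disjoint. Nonemptiness for every $w$ follows from the argument of Proposition~\ref{prop:nonempty}(4)$\Rightarrow$(1): the element $\epsilon_+^w(a)\in \vin_{\gsc}^w(\cO)\cap\vin_{\gsc}^0(\cO)$ is $G_+^{\mathrm{sc}}(F)$-conjugate to $\ga_\la$ via a transporter that is a trivial $G_{\ga_\la}$-torsor over $F$, yielding a point of $X_\ga^{\la,w}$. Two pieces $X_\ga^{\la,w}$ and $X_\ga^{\la,w'}$ are either equal or disjoint since each is a $P_a$-torsor in the reduced scheme $X_\ga^{\la,\mathrm{reg}}$, and they coincide if and only if their canonical base points $\epsilon_+^w(a)$ and $\epsilon_+^{w'}(a)$ are $G(\cO)$-conjugate inside $\vin_{\gsc}(\cO)$. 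The equality therefore reduces to showing that these two $\cO$-points lie in distinct $G(\cO)$-orbits, for which it suffices that their reductions mod $\varpi$ are not $G(k)$-conjugate.

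The crux is then to verify that, under the two regularity hypotheses, the reduction $\bar a\in\fC_+(k)$ of $a=\chi_+(\ga_\la)\in\fC_+^{\la\mu}$ equals $0$. Once this is granted, the reductions $\epsilon_+^w(\bar a)=\epsilon_+^w(0)$ and $\epsilon_+^{w'}(0)$ lie in the distinct $\mathrm{Ad}(G)$-orbits $X_{\varnothing,\Delta,w}$ and $X_{\varnothing,\Delta,w'}$ of $\cN^{\mathrm{reg}}$ by Corollary~\ref{cor:N-strata}(c), so they are not $G(k)$-conjugate. The verification of $\bar a=0$ is a direct valuation calculation in the coordinates $\fC_+\cong A_G\times\bA^r\cong\bA^{2r}$ provided by $(\alpha_1,\ldots,\alpha_r,\mathrm{Tr}(\rho_1^+),\ldots,\mathrm{Tr}(\rho_r^+))$: in the $A_G$-factor the $i$-th coordinate of $a$ has valuation $\langle-w_0(\lad),\alpha_i\rangle\geq 1$ precisely when $\la$ is strictly dominant (using that $-w_0$ permutes the simple roots); in the $\bA^r$-factor the embedding $i_{\mu\la}$ forces the $i$-th coordinate to be divisible by $\varpi^{\langle-w_0(\la-\mu),\omega_i\rangle}=\varpi^{\langle\la-\mu,\omega_{\iota(i)}\rangle}$, whose exponent is at least $1$ precisely when $\la-\mu$ lies in the interior of the positive coroot cone. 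The only delicate point in the plan is matching $G(\cO)$-conjugacy of the Steinberg lifts with coincidence of the $P_a$-torsors $X_\ga^{\la,w}$; this is a formal consequence of Proposition~\ref{X-ga-w-torsor-prop} and the fact from Proposition~\ref{BJ-action-vin-G-prop} that $[\vin_{\gsc}^w/\mathrm{Ad}(G)]\to\fC_+$ is a $\bB\cJ$-gerbe neutralized by $\epsilon_+^w$.
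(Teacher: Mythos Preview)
Your proposal is correct and follows essentially the same approach as the paper. Both arguments hinge on the observation that the two regularity hypotheses force the reduction $\bar a\in\fC_+(k)$ to be $0$, so that reductions mod $\varpi$ of points in $X_\ga^{\la,\mathrm{reg}}$ land in $\cN^{\mathrm{reg}}$, where the $|\mathrm{Cox}(W,S)|$ many $\mathrm{Ad}(G)$-orbits distinguish the pieces. The paper packages this as a direct bijection between $G_\ga^0(F)$-orbits on $X_\ga^{\la,\mathrm{reg}}$ and $G$-orbits on $\cN^{\mathrm{reg}}$ via $gG(\cO)\mapsto\overline{\mathrm{Ad}(g)^{-1}\ga_\la}$, whereas you phrase it as disjointness of the torsors $X_\ga^{\la,w}$ by comparing the specific base points $\epsilon_+^w(a)$; these are equivalent. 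One small remark: for the fact that $\epsilon_+^w(0)$ and $\epsilon_+^{w'}(0)$ lie in distinct $\mathrm{Ad}(G)$-orbits you should invoke Proposition~\ref{prop:N-components} together with a pigeonhole over the cover \eqref{open-cover-V-reg-eq} (or the Remark after Proposition~\ref{extend-steinberg-section-prop}), rather than Corollary~\ref{cor:N-strata}(c) alone, which indexes orbits by $\dot w e_{\varnothing,\Delta}$ rather than by $\epsilon_+^w(0)$.
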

\begin{proof}
It remains to show the last statement. Suppose $\la$ lies in the interior of the Weyl chamber and $\la-\mu$ lies in the interior of the dominant coroot cone. Consider the following Cartesian diagram
\[\xymatrix{
\chi^{-1}_+(a)\ar[r]\ar[d] & \vin_{\gsc}\ar[d]^{\chi_+}\\
\spec\cO\ar[r]^a & \fC_+
}\]
For $g\in G(F)$ such that $g G(\cO)\in X_\ga^{\la,\mathrm{reg}}$, let $\overline{\mathrm{Ad}(g)^{-1}\ga}$ be the reduction mod $\varpi$ of $\mathrm{Ad}(g)^{-1}\ga\in \vin_{\gsc}^{\mathrm{reg}}(\cO)$. The condition that $\la$ lies in the interior of the Weyl chamber means that $\langle\la,\alpha_i\rangle>0$ for all simple roots $\alpha_i$. Hence the special fiber of $\chi_+^{-1}(a)$ lies in the asymptotic semigroup $\mathrm{As}(\gsc):=\alpha^{-1}(0)$ and in particular $\overline{\mathrm{Ad}(g)^{-1}\ga}\in\mathrm{As}(\gsc)\cap\vin_{\gsc}^\mathrm{reg}$.\par
Furthermore, the assumption that $\la-\mu$ lies in the interior of the positive coroot cone implies that $\langle\la-\mu,\omega_i\rangle>0$ for all fundamental weight $\omega_i$. Therefore the reduction mod $\varpi$ of $a$ equals to $0$ and the special fiber of $\chi_+^{-1}(a)$ is the nilpotent cone $\cN$. In particular, we get $\overline{\mathrm{Ad}(g)^{-1}\ga}\in\cN^{\mathrm{reg}}$.\par 
Consequently there is a bijection between $G_\ga^0(F)$ orbits on $X_\ga^{\la,\mathrm{reg}}$ and $G$ orbits on $\cN^{\mathrm{reg}}$, the latter of which corresponds bijectively to $\mathrm{Cox}(W,S)$ by Proposition~\ref{prop:N-components}.
\end{proof}

As an immediate consequence, we mention the following purely combinatorial result, which might be of independant interest:
\begin{cor}\label{lower-bound-multiplicity-cor}
Let $\la\ge\mu$ be dominant weights of a complex reductive group $G$. Suppose that $\la$ lies in the interior of the Weyl chamber and $\la-\mu$ lies in the interior of the positive root cone (the ``wide cone"). Then we have the following lower bound for the weight multiplicity
\[m_{\la\mu}\ge|\mathrm{Cox}(W,S)|\]
where the set $\mathrm{Cox}(W,S)$ is defined in \S\ref{coxeter-definition}.
\end{cor}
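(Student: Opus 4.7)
The plan is to deduce this combinatorial lower bound from the geometric counts just established for Kottwitz--Viehmann varieties, by specializing to an unramified conjugacy class whose Newton point equals $\mu$. Since $\mu\in\La^+$ is integral dominant, I can take a split regular semisimple $\ga\in T(F)$ with Newton point $\nu_\ga=\mu$; concretely $\ga=\varpi^\mu t$ for $t\in T(\cO)$ generic enough that $\alpha(\ga)\neq 1$ for every root $\alpha$. For such $\ga$ the ``best integral approximation'' of $\nu_\ga$ in Lemma~\ref{integral-approximation-lem} is $\mu$ itself, so the hypotheses on $(\la,\mu)$ in the corollary coincide exactly with those of the equality clause in the second half of Theorem~\ref{regular-component-upper-bound-thm}.

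Two facts for this $\ga$ then need to be combined. By Corollary~\ref{dim-unr-cor}, the number of $G_\ga^0(F)$-orbits on $\mathrm{Irr}(X_\ga^\la)$ equals $m_{\la\mu}$. By Theorem~\ref{regular-component-upper-bound-thm}, the number of $G_\ga^0(F)$-orbits on $X_\ga^{\la,\mathrm{reg}}$ equals $|\mathrm{Cox}(W,S)|$. The desired inequality therefore reduces to exhibiting an injection from $G_\ga^0(F)$-orbits on the regular locus into $G_\ga^0(F)$-orbits on the set of irreducible components; the natural candidate is the Zariski closure map $O\mapsto\overline{O}\in\mathrm{Irr}(X_\ga^\la)$.

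To check this map is well defined and injective, observe that each orbit $O$ on $X_\ga^{\la,\mathrm{reg}}$ is a $P_a$-torsor by Proposition~\ref{X-ga-w-torsor-prop}, hence irreducible of dimension $\dim P_a=\dim X_\ga^{\la,\mathrm{reg}}$ (Theorem~\ref{dim-reg-locus-thm}), which equals $\dim X_\ga^\la$ by Corollary~\ref{dim-unr-cor}. Thus $O$ is a top-dimensional open subscheme of $X_\ga^\la$ and $\overline{O}$ is an irreducible component. Two distinct orbits $O_1\neq O_2$ cannot share a closure: if they did, both would be dense open subsets of the irreducible component $\overline{O_1}=\overline{O_2}$ and would therefore meet, contradicting disjointness. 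Since each $O$ is $G_\ga^0(F)$-stable, so is $\overline{O}$, which implies that distinct orbits go to distinct (singleton) $G_\ga^0(F)$-orbits in $\mathrm{Irr}(X_\ga^\la)$. Chaining the injection with the two orbit counts gives $|\mathrm{Cox}(W,S)|\le m_{\la\mu}$.

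The main point on which everything rests is the equality clause of Theorem~\ref{regular-component-upper-bound-thm}, which must actually produce $|\mathrm{Cox}(W,S)|$ \emph{mutually disjoint} $P_a$-orbits in the regular locus; this uses precisely the hypotheses that $\la$ lies in the interior of the Weyl chamber and $\la-\mu$ in the interior of the positive coroot cone, so that the cover \eqref{open-cover-V-reg-eq} of $\vin_{\gsc}^{\mathrm{reg}}$ specializes over $0\in\fC_+$ to the stratification of $\cN^{\mathrm{reg}}$ indexed by Coxeter elements. Once that genuine disjointness is granted, the dense-open-in-irreducible argument above makes the remainder essentially formal.
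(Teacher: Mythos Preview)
Your argument is correct and follows the same route as the paper: pick an unramified $\ga$ with Newton point $\mu$, invoke Corollary~\ref{dim-unr-cor} to get $m_{\la\mu}$ orbits on $\mathrm{Irr}(X_\ga^\la)$, invoke the equality case of Theorem~\ref{regular-component-upper-bound-thm} to get $|\mathrm{Cox}(W,S)|$ orbits on the regular locus, and compare. You are more explicit than the paper in spelling out the closure injection $O\mapsto\overline{O}$, which the paper leaves implicit.

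One small point of presentation: the corollary is stated for dominant \emph{weights} of $G$, whereas the Kottwitz--Viehmann varieties $X_\ga^\la$ are indexed by \emph{coweights}. The paper handles this by passing to the Langlands dual group $G^\vee$ (so that weights of $G$ become coweights of $G^\vee$) and running the geometric argument there. You silently treat $\la,\mu$ as coweights of $G$; inserting the sentence ``pass to $G^\vee$'' at the outset would make your proof match the statement exactly.
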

\begin{proof}
We consider the dual group $G^\vee$ of $G$ over $k$. Then $\la\ge\mu$ are dominant coweights for $G^\vee$. Let $T^\vee\subset G^\vee$ be a maximal torus and $\ga\in\varpi^\mu T^\vee(\cO)\cap G^\vee(F)^{\mathrm{rs}}$. Then the generalized affine Springer fibre $X_\ga^\la$ is nonempty and by Corollary~\ref{dim-unr-cor}, the number of $G^{\vee,0}_\ga(F)$-orbits on $\mathrm{Irr}(X_\ga^\la)$ equals to $m_{\la\mu}$. On the other hand, by Theorem~\ref{regular-component-upper-bound-thm}, the number of $G^{\vee,0}_\ga(F)$-orbits on $\mathrm{Irr}(X_\ga^{\la,\mathrm{reg}})$ equals to $|\mathrm{Cox}(W,S)|$, hence the inequality.
\end{proof}
\begin{rem}
If $\gad$ is simple of rank $r$, then $|\mathrm{Cox}(W,S)|=2^{r-1}$. In general, if the simple factors of $\gad$ has rank $r_1,\dotsc,r_m$, then 
\[|\mathrm{Cox}(W,S)|=\prod_{i=1}^m2^{r_i-1}.\]
We expect that there should be a more straightforward proof of Corollary~\ref{lower-bound-multiplicity-cor}. 
\end{rem}
\begin{rem}\label{regular-components-rem}
In general, the weight multiplicity $m_{\la\mu}$ will increase with $\la$ while the right hand side in Corollary~\ref{lower-bound-multiplicity-cor} is a fixed constant independant of $\la,\mu$. Thus in general there will be much more irreducible components in $X_\ga^\la$ than the regular open subvariety $X_\ga^{\la,\mathrm{reg}}$.
\end{rem}

\section{The Hitchin-Frenkel-Ng\^o fibration}\label{chapter:HFN}
In this section we study global analogue of Kottwitz-Viehmann varieties, the Hitchin-Frenkel-Ng\^o fibration. These are certain group analogue of Hitchin fibrations, first introduced in \cite{FN} and later studied in more detail in \cite{Bou17} and \cite{Bou15b}. \par 
Throughout this section we let $X$ be a projective smooth curve of genus $g$ over $k$ and $G$ a connected reductive group over $k$. 
\subsection{First definitions}
Let $\cL$ be a $Z^{\mathrm{sc}}_+=\tsc$ torsor on $X$. Then we can twist the schemes $\vin_{\gsc}$ (resp. $\fC_+$, $A_{\gsc}$) by $\cL$ to form corresponding affine spaces $\vin_{\gsc}^\cL$ (resp. $\fC_+^\cL$, $A_{\gsc}^\cL$) over $X$.
\begin{defn}
The \emph{Hitchin-Frenkel-Ng\^o moduli stack} associated to the $\tsc$-torsor $\cL$ is the mapping stack 
\[\cM_\cL:=\Hom(X,[\vin_{\gsc}^\cL/\mathrm{Ad}(G)])\]
\end{defn}
In other words, $\cM_\cL$ classifies pairs $(\cE,\varphi)$ where $\cE$ is a $G$-torsor on $X$ and $\varphi$ is a section of $\cE\wedge^G\vin_{\gsc}^\cL$ where $G$ acts on $\vin_{\gsc}^\cL$ by adjoint action, and the action factors through $\gad$. We refer to such pairs $(\cE,\varphi)$ as \emph{Higgs-Vinberg pairs}.\par 
Replacing $\vin_{\gsc}$ by $\vin_{\gsc}^0$ (resp. $\vin_{\gsc}^{\mathrm{reg}}$) in the definition of $\cM_\cL$, we define open substacks $\cM_\cL^0$ (resp. $\cM_\cL^{\mathrm{reg}}\subset\cM_\cL$). Also we define
\[\cA_\cL:=\Hom_X(X,\fC_+^\cL),\quad\cB_\cL:=\Hom_X(X,A_{\gsc}^\cL)\]
as the space of sections of the affine space $\fC_+^\cL$ (resp. $A_{\gsc}^\cL$) over $X$. More concretely, we can describe $\cA_\cL$ and $\cB_\cL$ as follows.\par
For each $\omega\in X^*(T)$, let $\omega(\cL)$ be the invertible sheaf on $X$ defined by pushing $\cL$ along the morphism $\omega:T\to\bG_m$. 
Then we have 
\[\cB_\cL=H^0(X,A_{\gsc}^\cL)=\bigoplus_{i=1}^r H^0(X,\alpha_i(\cL))\]
and
\[\cA_\cL=\cB_\cL\oplus\bigoplus_{i=1}^r H^0(X,\omega_i(\cL)).\]
\begin{defn}
The \emph{Hitchin-Frenkel-Ng\^o} fibration is the morphism
\[h_\cL:\cM_\cL\to\cA_\cL\]
induced by $\chi_+:\vin_{\gsc}\to\fC_+$.
\end{defn}
Let $\beta_\cL:\cA_\cL\to\cB_\cL$ be the natural projection and $\alpha_\cL:=\beta_\cL\circ h_\cL:\cM_\cL\to\cB_\cL$ be the map induced by $\alpha:\vin_{\gsc}\to A_{\gsc}$. We call the fibres of $\alpha_\cL$ \emph{restricted Hitchin-Frenkel-Ng\^o moduli stack}.\par  
\subsubsection{}\label{sec:lambda-divisor}
Each point $b\in\cB_\cL$ can be written as $b=(b_1,\dotsc,b_r)$ where $b_i\in H^0(X,\alpha_i(\cL))$.  Let $\cB_\cL^\circ\subset\cB_\cL$ be the open subset consisting of those $b$ such that $b_i$ is nonzero for all $i$.
To each point $b\in\cB_\cL^\circ$, we can associate an $X_*(\tad)_+$-valued divisor $\la_{b}$ on $X$ defined by
\[\la_{b}:=\sum_{i=1}^r \check{\omega}_i D(b_i)\]
where $D(b_i)$ is the effective divisor on $X$ associated to $b_i$ and $\check{\omega}_i$ is the $i$-th fundamental coweight. For any $a\in\cA_\cL$ with $\beta_\cL(a_+)\in\cB_\cL^\circ$, we denote $\la_a:=\la_{\beta_\cL(a)}$.

\begin{defn}
The \emph{generically regular semisimple locus} $\cA_\cL^\heartsuit$ is the open subset of $\cA_\cL$ consisting of sections
$a:X\to\fC_+^\cL$ such that $\beta_\cL(a)\in\cB_\cL^\circ$ and $a(X)$ generically lies in the open subset $\fC_+^{\mathrm{rs},\cL}=\fC_+^\cL-\fD_+^\cL$.
\end{defn}

\subsubsection{Global Steinberg section}\label{sec:global-Steinberg}
Let $c=|Z(G_{\mathrm{der}})|$ be the order of the center of the derived group of $G$. Suppose there exists a $\tsc$-tosor $\cL'$ such that $\cL\cong (\cL')^{\otimes c}$. By definition, there is a canonical map $[\mathrm{ev}]_\cL:\cA_\cL\times X\to [\fC_+/\tsc]$ making the following diagram commutative:
\[\xymatrix{
\cA_\cL\times X\ar[r]^{[\mathrm{ev}]_\cL}\ar[d] & [\fC_+/\tsc]\ar[d]\\
X\ar[r]^\cL & \bB\tsc
}\]
Here the left arrow is projection to $X$ and the bottom arrow corresponds to the $\tsc$-torsor $\cL$. \par 
The choice of $c$-th root $\cL'$ of $\cL$ defines a morphism $[\mathrm{ev}]_{\cL'}:\cA_\cL\times X\to[\fC_+/\tsc]$ lifting $[\mathrm{ev}]_\cL$. Then for each $w\in\mathrm{Cox}(W,S)$ (cf. Definition~\ref{coxeter-definition}), the composition of $[\mathrm{ev}]_{\cL'}$ and the section $\epsilon_{+,[c]}^w$ of $[\chi_+]_{[c]}$ (cf. Proposition~\ref{prop:twisted-Steinberg-section}) induces a section of $h_\cL$:
\[\epsilon^w_{\cL'}: \cA_\cL\to\cM_\cL^{\mathrm{reg}}\subset\cM_\cL.\]
We refer to $\epsilon^w_\cL$ as the \emph{global Steinberg section}. 
\subsection{Symmetries of Hitchin-Frenkel-Ng\^o fibration}
\begin{defn}
Let $\cP_\cL$ be the Picard stack over $\cA_\cL$ that associates to any $S$-point $a\in\cA_\cL(S)$ the Picard groupoid $\cP_a$ of $\cJ_a$ torsors on $X\times S$. Here $\cJ_a$ is the pull back of the universal centralizer $\cJ_\cL$ on $\fC_+^\cL$ along the map $a: X\times S\to\fC_+^\cL$.
\end{defn}
\begin{prop}\label{prop:P-smooth}
$\cP_\cL$ is a smooth Picard stack over $\cA_\cL$. 
\end{prop}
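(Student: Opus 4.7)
The plan is to verify smoothness via the infinitesimal lifting criterion, following the Lie algebra analog in \cite{Ngo10}. The essential input is Lemma~\ref{lem:reg-centralizer-Vin}: $\cJ$ is a smooth commutative group scheme on $\fC_+$, so after twisting by $\cL$ one obtains a smooth commutative group scheme $\cJ_\cL$ on $\fC_+^\cL$. For any $S$-point $a\colon X\times S\to\fC_+^\cL$ of $\cA_\cL$, the pullback $\cJ_a:=a^*\cJ_\cL$ is therefore smooth and commutative on $X\times S$. Standard results on relative Picard stacks of smooth commutative group schemes over a projective morphism (combined with Proposition~\ref{Galois-description-J-prop} to control global behavior) imply that $\cP_\cL$ is an algebraic Picard stack locally of finite presentation over $\cA_\cL$.

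It then remains to verify formal smoothness over $\cA_\cL$. First I would reduce to an affine test base: consider a square-zero closed immersion $i\colon S\hookrightarrow S'$ of affine $k$-schemes with ideal sheaf $I$, a morphism $a'\colon S'\to\cA_\cL$ with restriction $a=a'\circ i$, and a $\cJ_a$-torsor $E$ on $X\times S$; one must produce a $\cJ_{a'}$-torsor $E'$ on $X\times S'$ extending $E$. Since $\cJ_{a'}$ is smooth and commutative, classical deformation theory identifies the obstruction to such a lift with an element of
\[
H^{2}\bigl(X\times S,\; \mathrm{Lie}(\cJ_a)\otimes_{\cO_{X\times S}} p_S^{*}I\bigr),
\]
where $p_S\colon X\times S\to S$ denotes the projection.

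The key step is the vanishing of this obstruction. By the Leray spectral sequence for $p_S$, together with the facts that $X$ is a projective curve (so $R^{j}p_{S,*}$ vanishes on quasi-coherent sheaves for $j\ge 2$) and that $S$ is affine (so higher cohomology of quasi-coherent sheaves on $S$ vanishes), the $H^{2}$ above is zero. Hence a lift $E'$ exists, so $\cP_\cL\to\cA_\cL$ is formally smooth, and combined with algebraicity and local finite presentation this yields smoothness. The main obstacle I expect is essentially bookkeeping: verifying that $\cP_\cL$ is representable by an algebraic stack of locally finite presentation, and that the obstruction for lifting a torsor under a smooth commutative group scheme is correctly identified with the stated $H^{2}$. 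Both points are routine in the torsor setting but require some care to set up cleanly in our context, and this is exactly the shape of the argument used by Ng\^o in the Lie algebra case.
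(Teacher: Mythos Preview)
Your proposal is correct and follows essentially the same approach as the paper: the paper's proof simply cites \cite[Proposition~4.3.5]{Ngo10} and notes that the obstruction to deforming a $\cJ_a$-torsor lies in $H^2(X,\mathrm{Lie}(\cJ_a))$, which vanishes since $X$ is a curve. Your argument spells out this same reasoning in more detail, including the algebraicity step and the Leray spectral sequence computation over an affine base.
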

\begin{proof}
The argument of \cite[Proposition 4.3.5]{Ngo10} generalize \emph{verbatim} to our situation. The point is that $\cJ_a$ is a  smooth group scheme and the obstrction to deforming a $\cJ_a$-torsor lives in $H^2(X,\mathrm{Lie}(\cJ_a))$, which vanishes since $X$ is a curve. 
\end{proof}
The action of $\bB\cJ$ on $[\vin_{\gsc}/\mathrm{Ad}(G)]$ (resp. $[\vin_{\gsc}^{\mathrm{reg}}/\mathrm{Ad}(G)]$) induces action of $\cP_\cL$ on $\cM_\cL$ (resp. $\cM_\cL^{\mathrm{reg}}$).\par 
To understand the connected components of the fibres of $\cP_\cL$, we utilize cameral covers.
\begin{defn}
The \emph{cameral cover} associated to each $a\in\cA_\cL(k)$
is the finite flat cover $\pi_a:\widetilde{X}_a\to X$ defined by the following Cartesian diagram
\[\xymatrix{
\widetilde{X}_a\ar[r]\ar[d]_{\pi_a} & \overline{\tsc_+}^\cL\ar[d]\\
X\ar[r]^a & \fC_+^\cL
}\]
\end{defn}
For any closed point $a\in\cA_\cL^\heartsuit$, we define the \emph{discriminant divisor} for $a$ to be the effective divisor
\[\Delta_a:=a^{-1}(\fD_+^\cL)\]

Over the nonempty open subset $U_a:=X-\Delta_a$, the cameral cover $\pi_a$ is Galois \'etale with Galois group $W$. Choosing a point $\tilde{u}\in\widetilde{X}_a$ with $u:=\pi_a(\tilde{u})\in U_a$, we get a homomorphism
\[\rho_a:\pi_1(U_a,u)\to W\]
whose image is a subgroup $W_a\subset W$. Note that the conjugacy class of $W_a$ in $W$ is independant of the choice of base point $\tilde{u}$.

Let $\cJ_a^0\subset\cJ_a$ be the fibrewise neutral component and consider the Picard stack $\cP_a':=\mathrm{Bun}_{\cJ_a^0}$ of $\cJ_a^0$-torsors on $X$. Then there is a natural homomorphism of Picard stacks $\cP_a'\to\cP_a$. The following Lemma is parallel to \cite[Lemme 4.10.2]{Ngo10} with exactly the same proof. 
\begin{lem}
The homomorphism $\cP_a'\to\cP_a$ is surjective with finite kernel. Same is true for the induced homomorphism $\pi_0(\cP_a')\to\pi_0(\cP_a)$.
\end{lem}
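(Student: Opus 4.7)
The plan is to follow the strategy of \cite[Lemme 4.10.2]{Ngo10}, adapted to the Vinberg monoid setting. The key technical input is that the sheaf of components $\Phi_a := \cJ_a/\cJ_a^0$ is a skyscraper supported on the discriminant divisor, and this follows from the Galois description in Proposition~\ref{Galois-description-J-prop} together with Corollary~\ref{cor:ving-rs}: over $U_a = X \setminus \Delta_a$ the pullback $a^*\cJ$ agrees with $a^*\cJ^1 = (\prod_{\widetilde{X}_a/X} T \times \widetilde{X}_a)^W|_{U_a}$, and the cameral cover $\widetilde{X}_a \to X$ is Galois \'etale with group $W$ there, so $\cJ_a|_{U_a}$ is a torus (in particular fibrewise connected). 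Hence $\Phi_a$ is an \'etale sheaf concentrated on the finite set of closed points $\Delta_a \subset X$.

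Next I would write down the short exact sequence of smooth commutative group schemes (or \'etale sheaves of abelian groups) on $X$:
\[
1 \to \cJ_a^0 \to \cJ_a \to \Phi_a \to 1
\]
and take the associated long exact sequence in \'etale (equivalently fppf, since $\cJ_a^0$ is smooth) cohomology. Because $\Phi_a = i_* F$ for the closed immersion $i : \Delta_a \hookrightarrow X$ and a finite \'etale sheaf $F$ on $\Delta_a$, and $\Delta_a$ is a finite disjoint union of copies of $\spec k$ with $k$ algebraically closed, one has $H^0(X, \Phi_a)$ finite and $H^q(X, \Phi_a) = 0$ for $q \geq 1$. Therefore
\[
H^1(X, \cJ_a^0) \twoheadrightarrow H^1(X, \cJ_a),
\]
and the kernel is a subquotient of the finite group $H^0(X, \Phi_a)$. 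This proves surjectivity with finite kernel on isomorphism classes. Combined with the smoothness of $\cP_a'$ (by Proposition~\ref{prop:P-smooth} applied to $\cJ_a^0$) and of $\cP_a$, this upgrades to the statement that $\cP_a' \to \cP_a$ is a smooth surjective morphism of Picard stacks with finite kernel Picard stack, the kernel being represented by a finite group scheme (the cokernel of $H^0(X, \cJ_a) \to H^0(X, \Phi_a)$ in degree $0$, and finite in degree $-1$ as well since both Picard stacks have the same automorphism group generically).

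For the second assertion, I would pass to $\pi_0$. Since the neutral component $\cP_a^0 \subset \cP_a$ is by definition connected and the map $\cP_a' \to \cP_a$ is smooth surjective with finite kernel, it restricts to an isogeny $(\cP_a')^0 \to \cP_a^0$ between the neutral components. Taking the quotient yields a short exact sequence on component groups with kernel contained in the finite kernel already identified, hence $\pi_0(\cP_a') \to \pi_0(\cP_a)$ is surjective with finite kernel.

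The main obstacle is genuinely the first step — verifying that $\Phi_a$ is concentrated on $\Delta_a$ in the Vinberg setting. In the Lie algebra case treated in \cite{Ngo10}, one uses the explicit description of the regular centralizer at a boundary point of the discriminant divisor (where the centralizer acquires a finite component group of $2$-torsion coming from a reflection). Here the analogous local analysis is facilitated by Proposition~\ref{Galois-description-J-prop} and the fact that each fibre of $\chi_+$ over $\fC_+^{\mathrm{rs}}$ is a single $\mathrm{Ad}(G)$-orbit (Corollary~\ref{cor:ving-rs}), so away from $\Delta_a$ the group scheme $\cJ_a$ is indeed a torus. Everything after that is formal cohomological bookkeeping essentially identical to the Lie algebra argument.
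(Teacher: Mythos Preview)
Your proposal is correct and follows exactly the approach the paper intends: the paper's proof consists solely of the remark that the argument is identical to \cite[Lemme~4.10.2]{Ngo10}, and what you have written is precisely that argument, with the one Vinberg-specific check (that $\cJ_a$ is a torus over $U_a$, so $\Phi_a$ is supported on $\Delta_a$) made explicit via Proposition~\ref{Galois-description-J-prop} and Corollary~\ref{cor:ving-rs}.
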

\begin{cor}\label{cor:pi_0(P)-finite}
$\pi_0(\cP_a)$ is finite if and only if $T^{W_a}$ is finite.
\end{cor}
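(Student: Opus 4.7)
By the preceding lemma, $\pi_0(\cP_a')\to\pi_0(\cP_a)$ is surjective with finite kernel, so $\pi_0(\cP_a)$ is finite if and only if $\pi_0(\cP_a')$ is finite; we therefore work with $\cP_a'=\mathrm{Bun}_{\cJ_a^0}$. Over the dense open $U_a=X\setminus\Delta_a$ the fibrewise neutral component $\cJ_a^0|_{U_a}$ agrees with $\cJ_a|_{U_a}$ and is a torus. Combining Proposition~\ref{Galois-description-J-prop} with the fact that $\widetilde X_a|_{U_a}\to U_a$ is \'etale $W$-Galois with monodromy $\rho_a$ of image $W_a$, the generic fiber $J^{\mathrm{gen}}$ of $\cJ_a^0$ over $k(X)$ is a torus whose cocharacter lattice is $X_*(T)$ with Galois action factoring through $W_a$. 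Its maximal $k(X)$-split subtorus thus has dimension $\mathrm{rank}_{\bZ} X_*(T)^{W_a}=\dim T^{W_a,0}$, and $T^{W_a}$ is finite precisely when $J^{\mathrm{gen}}$ is anisotropic.

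The heart of the argument is then to show that $\pi_0(\cP_a')$ is finitely generated of rank equal to the split rank of $J^{\mathrm{gen}}$. Following the parallel Lie algebra argument of Ng\^o \cite[\S4.13]{Ngo10}, I would construct a degree homomorphism $\deg:\pi_0(\cP_a')\to\mathrm{Hom}_{\bZ}(X_*(T)^{W_a},\bZ)$ as follows. Each $\chi\in X_*(T)^{W_a}$ gives, via the Weil restriction presentation $\cJ\hookrightarrow\cJ^1$ of Proposition~\ref{Galois-description-J-prop}, a global homomorphism of commutative group schemes $\bG_{m,X}\to\cJ_a^0$, hence a pushforward operation on line bundles whose degree defines $\deg_\chi:\pi_0(\cP_a')\to\bZ$; packaging these for a basis of $X_*(T)^{W_a}$ yields $\deg$. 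The kernel of $\deg$ is controlled by the Jacobian-type connected part of $\cP_a'$ together with finitely many local contributions at the points of $\Delta_a$, both of finite type, and the image has finite index because one can realize prescribed degrees by pushing line bundles of suitable degree on $X$ through the chosen cocharacters. Hence $\mathrm{rank}\,\pi_0(\cP_a')=\dim T^{W_a,0}$, and since $\pi_0(\cP_a')$ is finitely generated abelian, finiteness is equivalent to vanishing of this rank, i.e.\ to finiteness of $T^{W_a}$.

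The main obstacle I expect is the canonical extension of generic cocharacters across the divisor $\Delta_a$ together with the corresponding kernel estimate: $\cJ_a^0$ need not be a torus over $\Delta_a$, so one must verify that neither additional free generators of $\pi_0(\cP_a')$ nor new elements of $\ker(\deg)$ arise from those bad fibers. The Galois-theoretic model $\cJ^1$ of Proposition~\ref{Galois-description-J-prop} is tailored for this: its Weil restriction description makes the $W_a$-invariant cocharacters globally visible on $X$ and reduces the local analysis at $\Delta_a$ to a finite Galois-module computation.
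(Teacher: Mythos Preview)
Your first reduction step coincides with the paper's: both invoke the preceding lemma to pass from $\pi_0(\cP_a)$ to $\pi_0(\cP_a')$. From there, however, the paper takes a one-line shortcut: it cites \cite[Corollaire 6.7]{Ngo06}, which directly identifies $\pi_0(\cP_a')\cong\hat T^{W_a}$, and then observes that $\hat T^{W_a}$ is finite if and only if $T^{W_a}$ is finite (both conditions being equivalent to $X_*(T)^{W_a}=0$). That is the entire proof.

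Your route is essentially an attempt to reprove that cited result from scratch via a degree map, which is reasonable in spirit but both unnecessary here and, as written, contains a directional error. You take $\chi\in X_*(T)^{W_a}$, obtain $\bG_{m,X}\to\cJ_a^0$, and then speak of a ``pushforward operation on line bundles whose degree defines $\deg_\chi:\pi_0(\cP_a')\to\bZ$.'' But pushforward along $\bG_{m,X}\to\cJ_a^0$ sends line bundles to $\cJ_a^0$-torsors, i.e.\ produces a map $\pi_0(\mathrm{Pic}(X))\to\pi_0(\cP_a')$, not the degree map you claim. To get homomorphisms $\pi_0(\cP_a')\to\bZ$ you need global \emph{characters} $\cJ_a^0\to\bG_{m,X}$, hence elements of $X^*(T)^{W_a}$ rather than $X_*(T)^{W_a}$. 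The ranks of these two lattices agree, so your final numerical conclusion survives, but the construction as stated does not make sense and the subsequent kernel/cokernel analysis inherits this confusion. If you want to carry out this argument rather than cite Ng\^o, fix the direction and then follow the norm-map/Kottwitz-style analysis in \cite[\S6]{Ngo06}; otherwise just cite the result, as the paper does.
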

\begin{proof}
By previous lemma, $\pi_0(\cP_a)$ is finite if and only if $\pi_0(\cP_a')$ is finite. By \cite[Corollaire 6.7]{Ngo06}, $\pi_0(\cP_a')=\hat{T}^{W_a}$. Since the finiteness of $T^{W_a}$ is equivalent to the finiteness of $\hat{T}^{W_a}$, the result follows.
\end{proof}
\begin{defn}
The \emph{anisotropic locus} is the subset $\cA_\cL^{\mathrm{ani}}\subset\cA_\cL^\heartsuit$ consisting of $a\in\cA_\cL^\heartsuit$ such that the component group $\pi_0(\cP_a)$ is finite.
\end{defn}

For each subset $I\subset\Delta$, we consider the invariant quotient $\overline{\tsc_+}^{W_I}$. Then the natural morphism $\overline{\tsc_+}^{W_I}\to\fC_+$ is finite and $Z^{\mathrm{sc}}_+=\tsc$ equivariant. Denote $\overline{\tsc_+}^{W_I,\cL}:=\overline{\tsc_+}^{W_I}\times^{Z_+^{\mathrm{sc}}}\cL$. \par 
Let $\cA_\cL^{W_I}:=H^0(X,\overline{\tsc_+}^{W_I,\cL})$ be the space of sections of the affine scheme $\overline{\tsc_+}^{W_I,\cL}$ over $X$. Consider the map
\[\nu_I: \cA_\cL^{W_I}\to\cA_\cL\]
induced by the finite morphism $\overline{\tsc_+}^{W_I,\cL}\to\fC_+^\cL$. Let $\cA_\cL^{W_I,\heartsuit}:=\nu_I^{-1}(\cA_\cL^\heartsuit)$. 

\begin{prop}
Suppose $G$ is semisimple. Then the complement of $\cA_\cL^{\mathrm{ani}}$ in $\cA_\cL^\heartsuit$ is a finite union
\[\cA_\cL^\heartsuit\setminus\cA_\cL^{\mathrm{ani}}=\bigcup_{I\subsetneqq\Delta}\nu_I(\cA_\cL^{W_I,\heartsuit}).\]
\end{prop}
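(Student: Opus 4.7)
The plan is to use Corollary~\ref{cor:pi_0(P)-finite} to rewrite the anisotropic condition as ``$T^{W_a}$ is finite,'' and then to identify, separately, the two conditions on $a\in\cA_\cL^\heartsuit$:
(i) $a\in\nu_I(\cA_\cL^{W_I,\heartsuit})$ for some $I\subsetneqq\Delta$, and
(ii) $T^{W_a}$ is infinite (equivalently, has positive dimension).
Under the hypothesis that $G$ is semisimple these two conditions turn out to coincide, which gives the proposition.

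For the equivalence between (i) and the monodromy condition ``$W_a$ is contained in a $W$-conjugate of some proper standard parabolic $W_I$,'' I would first note that a lift $\tilde a\in\cA_\cL^{W_I}$ of $a$ is by definition a section of the finite $X$-scheme $\widetilde X_a^{W_I}:=X\times_{\fC_+^\cL}\overline{\tsc_+}^{W_I,\cL}$, which is naturally the quotient $\widetilde X_a/W_I$ of the cameral cover. Restricted to $U_a=X-\Delta_a$, this is a $W/W_I$-cover, so after fixing $\tilde u$ a section over $U_a$ exists iff the monodromy group $W_a$ has a fixed point on $W/W_I$, iff $W_a$ is contained in a $W$-conjugate of $W_I$. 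A section of a finite morphism defined over the dense open $U_a$ of the smooth (hence normal) curve $X$ extends uniquely to $X$: the scheme-theoretic closure of the image is finite and birational over $X$, hence isomorphic to $X$ by Zariski's main theorem. Thus (i) holds iff $W_a$ is contained in a conjugate of some $W_I$ with $I\subsetneqq\Delta$; and the resulting $\tilde a$ automatically lies in $\cA_\cL^{W_I,\heartsuit}$ because $\nu_I(\tilde a)=a\in\cA_\cL^\heartsuit$.

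For the equivalence between (ii) and the same monodromy condition, the hypothesis that $G$ is semisimple enters. If $W_a\subset wW_Iw^{-1}$ for some $w\in W$ and $I\subsetneqq\Delta$, then $T^{W_a}\supset wT^{W_I}w^{-1}$, and $T^{W_I}$ has dimension $|\Delta|-|I|\ge 1$ since it is the neutral component of the center of the standard Levi $L_I$ of the semisimple group $G$. Conversely, if $T^{W_a}$ has positive dimension, choose a nonzero $v\in X_*(T)^{W_a}\otimes\bQ$; up to the $W$-action we may assume $v$ lies in the closed dominant chamber, and then its stabilizer in $W$ is the standard parabolic $W_I$ with $I=\{\alpha\in\Delta:\langle\alpha,v\rangle=0\}$, which is proper because $v\ne 0$. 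By construction (a conjugate of) $W_a$ is contained in this $W_I$.

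Combining the two equivalences with Corollary~\ref{cor:pi_0(P)-finite} yields the claimed decomposition, and the union is finite because $\Delta$ has only finitely many subsets. The main technical point is the extension step in the first paragraph — which is nevertheless routine given the normality of $X$ and finiteness of $\overline{\tsc_+}^{W_I,\cL}\to\fC_+^\cL$. Everything else is standard root-system combinatorics together with the previously established description $\pi_0(\cP_a)$ via Galois cohomology of $\widehat T$.
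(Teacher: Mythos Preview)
Your proof is correct and follows essentially the same approach as the paper. Both arguments use Corollary~\ref{cor:pi_0(P)-finite} to reduce to the condition on $T^{W_a}$, then identify non-anisotropy with the monodromy group $W_a$ lying (up to conjugacy) in some proper $W_I$, and finally produce/recognize a lift $a_I$ as a section of the intermediate cover $\widetilde X_a/W_I\to X$; the only cosmetic difference is that the paper constructs the section explicitly by taking the image in $Y_a$ of the components of $\widetilde X_a$ through $W_I\cdot\tilde u$, whereas you obtain it over $U_a$ from the fixed-point criterion and then extend via normality of $X$ and finiteness of the cover.
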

\begin{proof}
Let $a\in\cA_\cL^{\heartsuit}-\cA_\cL^{\mathrm{ani}}$. Then by Corollary~\ref{cor:pi_0(P)-finite}, $T^{W_a}$ contains a nontrivial torus $S$. Since $G$ is semisimple, the centralizer of $S$ is a \emph{proper} Levi subgroup of $G$ whose simple roots form a proper subset $I\subsetneqq W$. Then we have $W_a\subset W_I$.\par 
Consider the following diagram in which both squares are Cartesian:
\[\xymatrix{
\widetilde{X}_a\ar[r]\ar[d]\ar@/^1pc/[rr]^{\pi_a} & Y_a\ar[r]_{\pi_a^I}\ar[d] & X\ar[d]\\
\overline{\tsc_+}^\cL\ar[r] & \overline{\tsc_+}^{W_M,\cL}\ar[r] & \fC_+^\cL
}\]
Let $\widetilde{Y_a}\subset\widetilde{X_a}$ be the union of all irreducible components that contain a point in the $W_I$-orbit of $\tilde{u}$. Then the image of $\widetilde{Y}_a$ in $Y_a$ is isomorphic to $X$ and hence gives a section of the morphism $\pi_a^I$. In other words, there is a section $a_I:X\to\overline{\tsc_+}^{W_M,\cL}$ such that $\nu_I(a_I)=a$. This proves that 
\[\cA_\cL^\heartsuit\setminus\cA_\cL^{\mathrm{ani}}\subset\bigcup_{I\subsetneqq\Delta}\nu_I(\cA_\cL^{W_I,\heartsuit}).\] 
Conversely, for any $I\subsetneqq\Delta$ and $a_I\in\cA_\cL^{W_I,\heartsuit}$ with $\nu_I(a_I)=a$, the morphism $\pi_a^I$ in the diagram above has a section given by $a_I$. This implies that $W_a\subset W_I$ so that $T^{W_a}$ is not finite. By Corollary~\ref{cor:pi_0(P)-finite} again we see that $a\in\cA_\cL^{\mathrm{ani}}$.
\end{proof}
\begin{cor}\label{cor:ani-complement}
Suppose $G$ is semisimple. Then $\cA_\cL^{\mathrm{ani}}$ is an open subset of $\cA_\cL^\heartsuit$. Moreover, for any $b\in\cB_\cL^\circ$ and any integer $N$ with $N>\max\{2g-2,rg\}$, if $\deg\omega_i(\cL)>N$ for all $1\le i\le r$, then the complement of $\cA_{\cL,b}^{\mathrm{ani}}$ in $\cA_{\cL,b}^\heartsuit$ has codimension at least $N-rg$.
\end{cor}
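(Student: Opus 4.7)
For the openness, the preceding proposition writes
\begin{equation*}
\cA_\cL^\heartsuit\setminus\cA_\cL^{\mathrm{ani}}=\bigcup_{I\subsetneq\Delta}\nu_I(\cA_\cL^{W_I,\heartsuit}).
\end{equation*}
Although each $\nu_I$ need not be proper, any limit in $\cA_\cL^\heartsuit$ of points of $\nu_I(\cA_\cL^{W_I,\heartsuit})$ still has its monodromy group contained, up to conjugation, in some proper $W_{I'}$ by upper semicontinuity of the ramification of the cameral cover, hence remains in the non-anisotropic locus. So the complement of $\cA_\cL^{\mathrm{ani}}$ in $\cA_\cL^\heartsuit$ is a finite union of closed subsets, hence closed.

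For the codimension bound, fix $b\in\cB_\cL^\circ$ and use the splitting $\cA_\cL=\cB_\cL\oplus\bigoplus_{i=1}^r H^0(X,\omega_i(\cL))$ of \S\ref{sec:lambda-divisor} to identify $\cA_{\cL,b}\cong\bigoplus_{i=1}^r H^0(X,\omega_i(\cL))$. Riemann-Roch applies since $\deg\omega_i(\cL)>N>2g-2$, giving $\dim H^0(X,\omega_i(\cL))=\deg\omega_i(\cL)+1-g>N+1-g$. For each proper $I\subsetneq\Delta$, the morphism $\nu_I$ is generically quasi-finite (over the regular semisimple locus the cameral cover is \'etale of degree $[W:W_I]$), so $\dim\nu_I(\cA_{\cL,b}^{W_I,\heartsuit})=\dim\cA_{\cL,b}^{W_I,\heartsuit}$.

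The plan is then to choose $i_0\notin I$ (possible since $I$ is proper) and bound the image of the projection $\mathrm{pr}_{i_0}:\cA_{\cL,b}^{W_I,\heartsuit}\to H^0(X,\omega_{i_0}(\cL))$. By the proof of the preceding proposition, a lift $a_I$ encodes a $W_I$-stable factor of the cameral cover, so the $\omega_{i_0}$-coordinate of $\nu_I(a_I)$ is determined by a specific polynomial expression in the $W_I$-orbit-sum coordinates of $a_I$. For $\mathrm{SL}_2$ this is $x+y$ subject to $xy=b$, whose image is zero-dimensional, giving codimension $\dim H^0(X,\omega_1(\cL))>N+1-g>N-g=N-rg$. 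In general one expects analogously that the image of $\mathrm{pr}_{i_0}$ has codimension at least $\deg\omega_{i_0}(\cL)-rg+1$ in $H^0(X,\omega_{i_0}(\cL))$, which together with the elementary inequality that a factor codimension bounds the full codimension from below yields the required $N-rg$.

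The main obstacle is making this last codimension estimate uniform in $I\subsetneq\Delta$ and in $G$. Concretely, one must describe the toric variety $\overline{\tsc_+}^{W_I}$ through its $W_I$-orbit-sum generators and the relations inherited from the embedding in $\vin_\gsc$, and then apply a Newton-polytope degree count together with Riemann-Roch on $X$. The parameter $rg$ in the hypothesis appears to absorb up to $r$ Riemann-Roch error terms of size $\le g$ that arise along the way.
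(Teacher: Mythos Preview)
Your proposal has genuine gaps in both parts.

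\textbf{Openness.} You assert that $\nu_I$ ``need not be proper'' and then appeal to a vague semicontinuity of monodromy. In fact the paper shows the opposite: each $\nu_I$ \emph{is} proper, by the valuative criterion together with \cite[Lemme 7.3]{Ngo06}. Once this is established, the images $\nu_I(\cA_\cL^{W_I,\heartsuit})$ are closed in $\cA_\cL^\heartsuit$ and openness of $\cA_\cL^{\mathrm{ani}}$ is immediate. Your monodromy-specialization idea is in the right spirit, but as written it is not a proof: you would need to show that under a specialization within $\cA_\cL^\heartsuit$ the group $W_a$ can only become smaller up to conjugacy, and this is exactly the content of properness of $\nu_I$.

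\textbf{Codimension bound.} Your plan to project to a single coordinate $H^0(X,\omega_{i_0}(\cL))$ with $i_0\notin I$ does not lead to a clean bound beyond $\mathrm{SL}_2$, and you acknowledge this yourself. The paper avoids the problem entirely by controlling the whole of $\cA_\cL^{W_I,\heartsuit}$ at once. Label the fundamental weights so that $\omega_1,\dotsc,\omega_s$ (with $s=|I|<r$) are fundamental for the Levi $L_I$, and consider the morphism
\[
q^I:\overline{\tsc_+}^{W_I}\longrightarrow A_{\gsc}\times\bA^s
\]
given by the $W_I$-invariant functions $(\alpha_i,0)$ for $1\le i\le r$ and $(\omega_i,\chi^I_{\omega_i})$ for $1\le i\le s$, where $\chi^I_{\omega_i}$ is the character of the irreducible $L_I$-representation of highest weight $\omega_i$. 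Over the open part $\tad\times\bA^s$ the fibres of $q^I$ are copies of $\bG_m^{r-s}$. Passing to sections, the induced map
\[
q_X^I:\cA_\cL^{W_I,\heartsuit}\longrightarrow \cB_\cL^\circ\oplus\bigoplus_{i=1}^s H^0(X,\omega_i(\cL))
\]
therefore has nonempty fibres equal to $(k^\times)^{r-s}$, giving
\[
\dim\cA_\cL^{W_I,\heartsuit}\le\dim\cB_\cL+\sum_{i=1}^s(\deg\omega_i(\cL)+1-g)+(r-s).
\]
Since $\nu_I$ is finite (hence does not raise dimension), subtracting from $\dim\cA_{\cL,b}=\sum_{i=1}^r(\deg\omega_i(\cL)+1-g)$ yields the bound $\ge N-rg$ directly. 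This replaces your projection-and-Newton-polytope sketch with a single structural observation about $\overline{\tsc_+}^{W_I}$; no case analysis in $I$ or $G$ is needed, and the $rg$ term appears transparently from the $(r-s)$ Riemann--Roch contributions and the $(r-s)$-dimensional torus fibre.
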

\begin{proof}
By valuative criterion and \cite[Lemme 7.3]{Ngo06} we see that $\nu_I$ is proper. So the images $\nu_I(\cA_\cL^{W_I,\heartsuit})$ are closed subsets of $\cA_\cL^\heartsuit$ and their complement $\cA_\cL^{\mathrm{ani}}$ is open. It remains to calculate the dimension of $\cA_\cL^{W_I}$.\par 
Let $I\subsetneqq\Delta$ and $L_I$ a corresponding Levi subgroup of $\gsc$. We label the fundamental weights $\omega_1,\dotsc,\omega_r$
of $\gsc$ so that $\omega_1,\dotsc,\omega_s$ are fundamental weights for $L_I$ where $s=|I|<r$. There is a natural morphism
\[q^I: \overline{\tsc_+}^{W_I}\to A_{\gsc}\times\bA^s\]
given by the $W_I$-invariant functions $(\alpha_i,0)$ for $1\le i\le r$ and $(\omega_i,\chi^I_{\omega_i})$ for $1\le i\le s$, where $\chi^I_{\omega_i}$ is the character of the irreducible representation of $L_I$ with highest weight $\omega_i$. The map $q^I$ induces a map
\[q_X^I: \cA_\cL^{W_I,\heartsuit}\to
\cB_\cL^\circ\oplus\bigoplus_{i=1}^s H^0(X,\omega_i(\cL))\]
The fibres of $q^I$ over the open subset 
$\tad\times\bA^s\subset A_{\gsc}\times\bA^s$ are isomorphic to $\bG_m^{r-s}$. This implies that the nonempty fibres of $q_X^I$ are $(k^\times)^{r-s}$. Hence
\[\dim\cA_\cL^{W_I,\heartsuit}\le\dim\cB_\cL+\sum_{i=1}^s (\deg(\omega_i(\cL))+1-g)+r-s.\]
Therefore, the codimension of $\cA_{\cL,b}^{\heartsuit}-\cA_{\cL,b}^{\mathrm{ani}}$ is bounded below by
\[\sum_{i=1}^r(\deg(\omega_i(\cL))+1-g)-[\sum_{i=1}^s (\deg(\omega_i(\cL))+1-g)+r-s]\ge N-rg.\]
\end{proof}
Denote $\cM_\cL^{\mathrm{ani}}:=h_\cL^{-1}(\cA_\cL^{\mathrm{ani}})$ the anisotropic open substack. This is nonempty when $G$ is semisimple. Also, let $\cP_\cL^{\mathrm{ani}}$ be the restriction of $\cP_\cL$ to $\cA_\cL^{\mathrm{ani}}$.

\begin{prop}
$\cM_\cL^{\mathrm{ani}}$ and $\cP_\cL^{\mathrm{ani}}$ are  Deligne-Mumford stacks.
\end{prop}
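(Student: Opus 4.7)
The plan is to verify that the geometric stabilizer at every point of each stack is finite; combined with algebraicity (and smoothness of $\cP_\cL\to\cA_\cL$ from Proposition~\ref{prop:P-smooth}) together with the fact that $\cJ$ and $\cI$ are affine group schemes, this yields the Deligne--Mumford property.

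For $\cP_\cL^{\mathrm{ani}}$, the automorphism group at a $\cJ_a$-torsor is $H^0(X,\cJ_a)$. Using the open embedding $\cJ_a\hookrightarrow\cJ_a^1$ from Proposition~\ref{Galois-description-J-prop}, a global section of $\cJ_a$ is given, via the cameral cover $\pi_a:\widetilde X_a\to X$, by a $W$-equivariant regular map $\widetilde X_a\to T$. Since $\widetilde X_a$ is finite over the projective curve $X$ and $T$ is an affine group over $k$, any such map is locally constant, hence determined by a $W_a$-invariant element of $T$, i.e.\ an element of $T^{W_a}$. By the definition of the anisotropic locus combined with Corollary~\ref{cor:pi_0(P)-finite}, $T^{W_a}$ is finite, so $H^0(X,\cJ_a)$ is finite.

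For $\cM_\cL^{\mathrm{ani}}$, the automorphism group of a Higgs--Vinberg pair $(\cE,\varphi)$ lying over $a=h_\cL(\cE,\varphi)$ is $H^0(X,I_\varphi)$, where $I_\varphi$ is the twist of the centralizer group scheme $\cI$ along $(\cE,\varphi)$. The homomorphism $\chi_+^*\cJ\to\cI$ provided by Lemma~\ref{lem:reg-centralizer-Vin} induces a homomorphism $u:\cJ_a\to I_\varphi$ of affine group schemes on $X$ which is an isomorphism over the dense open subset $U\subset X$ where $\varphi$ takes values in $\vin_{\gsc}^{\mathrm{reg}}$ (nonempty because $a$ is generically regular semisimple). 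Since $I_\varphi$ is affine (hence separated) over $X$, restriction to the dense open $U$ is injective on global sections, so
\[
H^0(X,I_\varphi)\hookrightarrow H^0(U,I_\varphi|_U)=H^0(U,\cJ_a|_U).
\]
Repeating the Galois argument of the previous paragraph for the cover $\pi_a|_U$ then embeds $H^0(U,\cJ_a|_U)$ into $T^{W_a}$, which is finite.

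The main subtlety is guaranteeing that global sections of $\cJ_a$ and $I_\varphi$ are pinned down by their generic behavior in spite of the possible failure of $\varphi$ to be regular at finitely many points; this is handled by separatedness of the affine centralizer schemes, which forces any global section to be determined by its restriction to any dense open subscheme. Granted that, the anisotropy condition translates directly into finiteness of $T^{W_a}$ and hence of both automorphism groups.
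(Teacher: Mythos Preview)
Your overall strategy matches the paper's: embed the automorphism groups into $T^{W_a}$ via the cameral description, and conclude using finiteness of $T^{W_a}$ on the anisotropic locus. The argument for $\cP_\cL^{\mathrm{ani}}$ is essentially correct and is the same as the paper's (you use the global Galois description of $\cJ_a^1$, the paper restricts to the generic point; both exploit properness of the cameral cover and affineness of $T$).

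For $\cM_\cL^{\mathrm{ani}}$, however, there is a genuine gap. You correctly inject $H^0(X,I_\varphi)\hookrightarrow H^0(U,\cJ_a|_U)$ using separatedness, but the next sentence --- ``Repeating the Galois argument of the previous paragraph for the cover $\pi_a|_U$ then embeds $H^0(U,\cJ_a|_U)$ into $T^{W_a}$'' --- is false. That argument relied on $\widetilde X_a$ being \emph{proper} so that morphisms to the affine $T$ are locally constant; the restriction $\widetilde X_a|_U$ is an open curve, and $H^0(U,\cJ_a|_U)$ is typically infinite-dimensional. What you actually need is that the image of $H^0(X,I_\varphi)$, not all of $H^0(U,\cJ_a|_U)$, lands in $T^{W_a}$. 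The paper handles this by passing directly to the generic point $\eta$ of $X$: one chooses a trivialization of $\cE$ over $\eta$ carrying $\varphi$ into $T_+(k(X))$, so that $\mathrm{Aut}(\cE,\varphi)$ sits inside $T(k(X))$; the $W_a$-invariance and the constraint to $T(k)\subset T(k(X))$ then come from the fact that the automorphism is defined over all of $X$ together with the monodromy of the cameral cover. An alternative repair in your framework is to observe that the section of $\cJ_a|_U$ arising from a global section of $I_\varphi$ extends, by the N\'eron property, to a section of $\cJ_a^\flat$ over $X$, and then apply Lemma~\ref{J_a-flat-Galois-description-lem} with the \emph{smooth proper} curve $\widetilde X_a^\flat$.

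One further point you omit: to conclude Deligne--Mumford (unramified diagonal), finiteness of the automorphism $k$-group scheme is not enough in positive characteristic; one needs it to be \'etale. The paper notes that since $\mathrm{char}(k)\nmid|W|$, the finite $k$-group $T^{W_a}$ is unramified, which gives the required \'etaleness.
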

\begin{proof}
Let $(\cE,\varphi)\in\cM_\cL^{\mathrm{ani}}(k)$ and $a=h_\cL(\cE,\varphi)$. Then the $k$-group $\mathrm{Aut}(\cE,\varphi)$ classifies sections of the group scheme $\mathrm{Aut}_G(\cE)_\varphi$ over $X$, which is the closed subscheme of centralizer of $\varphi$ in the group scheme $\mathrm{Aut}_G(\cE)$.\par 
Choose a geometric point $\bar{\eta}$ over the generic point $\eta$ of $X$. Restricting the cameral cover to $\eta$ along $a$, we obtain a homomorphism $\rho_a^\eta:\mathrm{Gal}(\bar{\eta}/\eta)\to W$. Let $W_a$ be the image of $\rho_a^\eta$. Furthermore, choose a trivialization of $\cE$ over the generic point $\eta$ under which $\varphi$ maps to a regular semisimple element in $T_+(k(X))$. With these choice we get a closed embeddings $\mathrm{Aut}(\cE,\varphi)\subset T^{W_a}$ and $H^0(X,\cJ_a)\subset T^{W_a}$.\par 
Since $a\in\cA_\cL^{\mathrm{ani}}$, $T^{W_a}$ is finite. Since $\mathrm{char}(k)$ is coprime to the order of $W$, $T^{W_a}$ is finite unramified $k$-group. This shows that $\cM_\cL^{\mathrm{ani}}$ and $\cP_\cL^{\mathrm{ani}}$ are Deligne-Mumford stacks.
\end{proof}
\begin{thm}\label{thm:product-formula}
Assume that the $\tsc$-torsor $\cL$ admits a $c$-th root $\cL'$. Then for any $a\in\cA_\cL^\mathrm{ani}$, there is a homeomorphism of quotient stacks
\begin{equation}\label{eq:product-formula}
[\cM_a/\cP_a]\cong\prod_{x\in X-U_a}[\Sp_{a_x}/P_{a_x}]
\end{equation}
In particular, we have
\[\dim\cM_a-\dim\cP_a=\sum_{x\in\mathrm{Supp}(\Delta_a)}(\dim\Sp_{a_x}-P_{a_x}).\]
\end{thm}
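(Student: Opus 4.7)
The plan is to adapt Ng\^o's proof of the product formula for the Hitchin fibration (\cite[Proposition 4.15.1]{Ngo10}) to our monoid setting. The main idea is Beauville-Laszlo gluing along the nonempty open subset $U_a = X - \Delta_a$, exploiting the fact that $a$ is generically regular semisimple so that over $U_a$ the fibres of $\chi_+$ are single $\mathrm{Ad}(G)$-orbits (Lemma~\ref{lem:orbit-in-Steinberg-fibre} together with Corollary~\ref{cor:ving-rs}), while at each point of $\mathrm{Supp}(\Delta_a)$ the local structure is exactly what $\Sp_{a_x}$ records.

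First I would construct a morphism
\[
\Phi : \prod_{x \in X - U_a} [\Sp_{a_x}/P_{a_x}] \to [\cM_a / \cP_a].
\]
The assumption that $\cL$ has a $c$-th root $\cL'$ provides via \S\ref{sec:global-Steinberg} the global Steinberg section $\epsilon^w_{\cL'} : \cA_\cL \to \cM_\cL^{\mathrm{reg}}$, yielding a distinguished Higgs-Vinberg pair $(\cE_0, \varphi_0) \in \cM_a$. Given local data $\{(h_x, \iota_x)\}_{x \in \mathrm{Supp}(\Delta_a)}$ with $(h_x, \iota_x) \in \Sp_{a_x}$, the trivialization $\iota_x$ over the punctured formal disk identifies the restriction of $h_x$ with the restriction of $(\cE_0, \varphi_0)$, so one glues $(\cE_0, \varphi_0)|_{U_a}$ with the $h_x$ on the formal disks at each $x$ via Beauville-Laszlo for $G$-torsors and sections of the associated $\vin_{\gsc}^\cL$-bundle; the resulting global pair is canonical up to the $\cP_a$-action because replacing $\iota_x$ by $j_x \cdot \iota_x$ with $j_x \in P_{a_x}$ matches the action of a global $\cJ_a$-torsor. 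Lemma~\ref{lem:vinG-approx} controls how much of the local jet data is needed for the gluing to be unambiguous.

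Next I would show that $\Phi$ is a bijection on geometric points. For \emph{surjectivity}, take any $(\cE, \varphi) \in \cM_a(k)$; over $U_a$ both $(\cE, \varphi)$ and $(\cE_0, \varphi_0)$ are sections of the $\bB\cJ$-gerbe $[\vin_{\gsc}^{\mathrm{rs}}/\mathrm{Ad}(G)] \to \fC_+^{\mathrm{rs}}$ (Proposition~\ref{BJ-action-vin-G-prop}), so they differ by a $\cJ_a|_{U_a}$-torsor, which by Steinberg's theorem ($H^1(F, T_{\mathrm{sep}}) = 0$ at the generic point, $k$ algebraically closed) admits a generic trivialization; restricting this trivialization to each formal disk at $x \in \mathrm{Supp}(\Delta_a)$ and taking the restriction of $(\cE, \varphi)$ to the formal disk yields well-defined classes in $\Sp_{a_x}/P_{a_x}$. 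For \emph{injectivity}, two collections of local data producing the same global pair up to $\cP_a$ must differ by the restrictions of a single $\cJ_a$-torsor, i.e.\ by local $P_{a_x}$-actions, since a $\cJ_a$-torsor on $X$ is determined by its restriction to $U_a$ plus its restrictions to the formal disks together with compatible identifications. Proposition~\ref{prop:chi-lifting} ensures the relevant local lifting works to all orders.

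The homeomorphism (not an isomorphism of stacks) will then follow, and the dimension statement is obtained by comparing dimensions on both sides of \eqref{eq:product-formula}, using that $\cP_a$ acts on $\cM_a$ with finite stabilizers by the anisotropic hypothesis and the preceding proposition showing $\cM_a^{\mathrm{ani}}$ and $\cP_a^{\mathrm{ani}}$ are Deligne-Mumford. The main obstacle I expect is the gluing at the level of monoid-valued sections: unlike the Lie algebra case where one glues Higgs fields in a vector bundle, here one must glue sections of the non-linear space $\vin_{\gsc}^\cL$, and the matching over punctured disks must be checked against the $\mathrm{Ad}(G)$-orbit structure; this is handled by restricting attention to the regular locus (guaranteed generically by the anisotropic condition through the open substack $[\vin_{\gsc}/\mathrm{Ad}(G)]$) where $\bB\cJ$-gerbe theory applies cleanly, and by the conductor bounds from Proposition~\ref{prop:chi-lifting} which make the formal gluing effective.
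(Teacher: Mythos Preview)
Your approach is essentially the same as the paper's: both use the global Steinberg section afforded by the $c$-th root $\cL'$ to obtain a base point in $\cM_a^{\mathrm{reg}}$, then invoke Corollary~\ref{cor:ving-rs} (single $\mathrm{Ad}(G)$-orbits over $\fC_+^{\mathrm{rs}}$) to run Ng\^o's Beauville--Laszlo gluing argument from \cite{Ngo06,Ngo10} verbatim. Your invocations of Lemma~\ref{lem:vinG-approx} and Proposition~\ref{prop:chi-lifting} are superfluous here---those effective bounds are needed for local constancy and transversality elsewhere in the paper, but the Beauville--Laszlo gluing in the product formula is exact and requires no jet approximation or conductor estimate.
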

\begin{proof}
Choose a Coxeter element $w\in\mathrm{Cox}(W,S)$. The $c$-th root $\cL'$ of $\cL$ induces a global Steinberg section $\epsilon_{\cL'}^w$, in particular a base point $\epsilon_{\cL'}^w(a)\in\cM_a^{\mathrm{reg}}$. Using Corollary~\ref{cor:ving-rs}, we argue as in the proof of \cite[Th\'eor\`eme4.6]{Ngo06} to show that there is a morphism as \eqref{eq:product-formula} inducing equivalence of groupoids on $k$-points. Then the argument of \cite{Ngo10} shows that the map \eqref{eq:product-formula} is a homeomorphism.
\end{proof}
\subsection{Properness over the anisotropic locus}
Throughout this section, we assume $G$ is semisimple so that $\cA_\cL^{\mathrm{ani}}$ is nonempty. Our goal is to show that the morphism $h_\cL^{\mathrm{ani}}:\cM_\cL^{\mathrm{ani}}\to\cA_\cL^{\mathrm{ani}}$ is proper. 

\subsubsection{Finiteness properties}
We first show that the Hitchin-Frenkel-Ng\^o fibration is of finite type over the anisotropic locus.\par 
We start with a more general situation. Let $\rho:G\to\mathrm{GL}(V)$ a finite dimensional representation such that $\ker(\rho)$ is contained in the center of $G$. Fix a torus $T$ and a Borel subgroup $B$ containing $T$. Let $V^{(1)},\dotsc,V^{(m)}$ be the irreducible consitituents of $V$ (counted with multiplicity) and $\la^{(1)},\dotsc,\la^{(m)}$ be the corresponding highest weight.\par 
For each $V^{(j)}$, we choose a basis $\{e_i^{(j)},1\le i\le d_j\}$ (where $d_j=\dim V_j$) as follows. Each $e_i^{(j)}$ is a weight vector with weight $\la_i^{(j)}\in X^*(T)$. Then we can express $\la^{(j)}-\la_i^{(j)}$ as a linear combination of positive simple roots with non-negative integer coefficients and we call the sum of coefficients the \emph{height} of $e_i^{(j)}$. The basis elements $e_i^{(j)}$ are indexed so that the height is non-decreasing with respect to $i$. In particular, $e_1^{(j)}$ is a highest weight vector and $e_{d_j}^{(j)}$ is a lowest weight vector in $V_j$.\par 
Then under the basis $\{e_i^{(j)},1\le i\le d_j,1\le j\le m\}$, $\rho(B)$ consists of upper triangular matrices in $\prod_j\mathrm{End}(V_j)$, which are the stabilizers of the standard flags $0=L_0^{(j)}\subset L_1^{(j)}\subset\dotsm\subset L_{d_j}^{(j)}=V^{(j)}$ where $L_i^{(j)}=\mathrm{Span}(e_1^{(j)},\dotsc,e_{i}^{(j)})$ for $1\le i\le d_j$.

Let $I\subset\Delta$ be a subset of simple roots and $P_I\subset G$ the standard parabolic subgroup whose Levi factor has simple roots in $I$. Then there exists standard parabolic subalgebras $\mathfrak{p}_I^{(j)}\subset\mathrm{End}(V^{(j)})$ such that 
\[\rho(P_I)=\rho(G)\cap(\bigoplus_{j=1}^m\mathfrak{p}_I^{(j)}).\]
More precisely, $\mathfrak{p}_I^{(j)}$ is the stabilizer of the partial flag in $V^{(j)}$ obtained from the standard flag by replacing $L_i^{(j)}$ with the span of $L_i^{(j)}$ and all basis vectors whose corresponding weight differs from the weight of $e_i^{(j)}$ by a linear combination of simple roots in $I$.\par 
Fix a divisor $D$ on a smooth projective curve $X$. Consider the following stack
\[\cM_V:=\mathrm{Hom}(X,[(\prod_{j=1}^m\mathrm{End}(V^{(j)})(D))/G])\]
where the action of $G$ on $\prod_{j=1}^m\mathrm{End}(V^{(j)})$ is induced by $\rho$. More concretely, the moduli stack $\cM_V$ classifies tuples $(E,\varphi_j,1\le j\le m)$ where $E$ is a $G$-torsor and $\varphi_j:\rho_j E\to\rho_j E(D)$ is a meromorphic endomorphism of the vector bundle $\rho_j E:=E\wedge^{(G,\rho)}V^{(j)}$.\par 
From the definition, we have
\[\cM_V=\cM_1\times_{\mathrm{Bun}_G}\cM_2\times_{\mathrm{Bun}_G}\dotsm\times_{\mathrm{Bun}_G}\cM_m\]
where for each $1\le j\le m$, we define
\[\cM_j=\mathrm{Hom}(X,[(\mathrm{End}(V^{(j)})(D))/G]).\]
By ??? we know that there exists a constant $C>0$ such that for any $G$-torsor $E$ on $X$ there exists a Borel reduction $E_B$ of $E$ so that $\deg(E_B)$ belongs to 
\[\mathsf{C}:=\{H\in\La_\bQ, \alpha(H)\ge-c\;\forall \alpha\in\Delta\}.\]
Let $N$ be a positive integer which is larger than the sum of coefficients of $\la^{(j)}-\la_i^{(j)}$ under the basis $\Delta$ for all $i,j$. Let $d$ be an integer such that
\begin{equation}\label{eq:d-lower-bound}
d>\deg(D)+2Nc
\end{equation}

For each subset $I\subset\Delta$, consider the following cone
\[\mathsf{C}_I:=\{H\in\La_\bQ, \alpha(H)\le d\; \forall \alpha\in I\textrm{ and }\alpha(H)\ge d\; \forall \alpha\in\Delta-I\}.\]

\begin{lem}
Let $(E,\varphi_j)\in\cM_V$ and $E_B$ a $B$-reduction of $E$. Suppose that $\deg E_B\in\mathsf{C}\cap\mathsf{C}_I$, then we have
\[\varphi\in\mathfrak{p}(D)\wedge^BE_B\]
where $\mathfrak{p}=\bigoplus_{j=1}^m\mathfrak{p}_I^{(j)}$
\end{lem}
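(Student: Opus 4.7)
The plan is to reformulate the claim as a cohomological vanishing on the quotient bundle $\bigoplus_j(\mathrm{End}(V^{(j)})/\mathfrak{p}_I^{(j)})(D)\wedge^BE_B$, decompose it via the $B$-equivariant weight filtration, and then kill each $T$-weight summand by showing its associated line bundle has strictly negative degree. First I would observe that $\varphi\in\mathfrak{p}(D)\wedge^BE_B$ is equivalent, for each $j$, to the vanishing of the image $\bar\varphi_j$ of $\varphi_j$ under the projection $\mathrm{End}(V^{(j)})(D)\wedge^BE_B\epic(\mathrm{End}(V^{(j)})/\mathfrak{p}_I^{(j)})(D)\wedge^BE_B$. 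Since the unipotent radical of $B$ acts nilpotently on $\mathrm{End}(V^{(j)})/\mathfrak{p}_I^{(j)}$, this $B$-module carries an exhaustive filtration whose associated graded splits into one-dimensional $T$-weight spaces $k_\mu$, and on the bundle side each graded piece becomes a line bundle $\mu(E_B)\otimes\cO(D)$ on $X$ of degree $\langle\mu,\deg E_B\rangle+\deg D$. It thus suffices to show that every such line bundle has strictly negative degree.

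Second I would identify the relevant set of weights. These are $\mu=\lambda_i^{(j)}-\lambda_k^{(j)}$ for pairs $(i,k)$ with $E_{ik}^{(j)}\notin\mathfrak{p}_I^{(j)}$, which happens exactly when $e_i^{(j)}$ lies strictly later than $e_k^{(j)}$ in the $I$-saturated partial flag. Writing $\mu=\sum_{\alpha\in\Delta}n_\alpha\alpha$, the definition of $N$ gives $|n_\alpha|\le N$ and $\sum_\alpha|n_\alpha|\le 2N$, while the partial-flag ordering forces at least one $\alpha_0\in\Delta\setminus I$ with $n_{\alpha_0}\le-1$. Because the $I$-saturation groups together weights whose pairwise differences lie in $\bZ I$, the net $(\Delta\setminus I)$-contribution to $\mu$ is on the descent side; combined with $\alpha(\deg E_B)\ge d$ for $\alpha\in\Delta\setminus I$, one obtains
\[\sum_{\alpha\in\Delta\setminus I}n_\alpha\alpha(\deg E_B)\;\le\;-d.\]

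Third I would assemble the degree bound. The cone condition $\alpha(\deg E_B)\in[-c,d]$ for $\alpha\in I$, together with $|n_\alpha|\le N$ and the total height budget $\sum_\alpha|n_\alpha|\le 2N$, gives $\sum_{\alpha\in I}n_\alpha\alpha(\deg E_B)\le 2Nc$ after distributing the two possible signs of $n_\alpha$. Summing with the $(\Delta\setminus I)$-contribution and adding $\deg D$ yields
\[\langle\mu,\deg E_B\rangle+\deg D\;\le\;2Nc-d+\deg D\;<\;0\]
by the hypothesis $d>\deg D+2Nc$. Since a line bundle of strictly negative degree on $X$ has no nonzero global section, each weight component of $\bar\varphi_j$ vanishes, whence $\bar\varphi_j=0$ and therefore $\varphi\in\mathfrak{p}(D)\wedge^BE_B$.

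The hard part of this plan is the combinatorial sign statement at the end of the second step: one must verify, using the explicit construction of the partial flag by $I$-saturation of the standard flag and the height indexing of the basis, that the $(\Delta\setminus I)$-coefficients of every weight appearing in $\mathrm{End}(V^{(j)})/\mathfrak{p}_I^{(j)}$ are arranged so that any positive coefficient on a root in $\Delta\setminus I$ is dominated, after pairing with $\deg E_B$, by the strictly negative contribution of $n_{\alpha_0}\alpha_0(\deg E_B)$. Once this root-theoretic input is pinned down, the passage from weights to line bundles and the resulting cohomological vanishing proceed routinely.
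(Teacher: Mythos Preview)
Your overall strategy (project $\varphi$ to the quotient bundle, filter by $T$-weights, kill each line bundle summand by a degree count) is the same as the paper's. The gap is in your numerical estimates, and it is exactly the point you flag as ``hard.'' In step~2 you assert
\[
\sum_{\alpha\in\Delta\setminus I} n_\alpha\,\alpha(\deg E_B)\;\le\;-d
\]
from ``at least one $n_{\alpha_0}\le -1$'' together with $\alpha(\deg E_B)\ge d$ for $\alpha\in\Delta\setminus I$. This does not follow: another $\alpha_1\in\Delta\setminus I$ may have $n_{\alpha_1}>0$, and since $\mathsf{C}_I$ imposes \emph{no upper bound} on $\alpha_1(\deg E_B)$, the term $n_{\alpha_1}\alpha_1(\deg E_B)$ can overwhelm the $-d$ from $\alpha_0$. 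Likewise in step~3: for $\alpha\in I$ you only know $-c\le\alpha(\deg E_B)\le d$, so a positive $n_\alpha$ contributes up to $n_\alpha d$, not $|n_\alpha|c$; the bound $\sum_{\alpha\in I}n_\alpha\alpha(\deg E_B)\le 2Nc$ is therefore unjustified. In short, a single weight $\mu=\lambda_i^{(j)}-\lambda_k^{(j)}$ of $\mathrm{End}(V^{(j)})/\mathfrak{p}_I^{(j)}$ can have coefficients of both signs on $\Delta\setminus I$ and on $I$, and the one-sided constraints on $\alpha(\deg E_B)$ do not control $\langle\mu,\deg E_B\rangle$.

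The paper's proof avoids this by a different bookkeeping. Instead of bounding $\langle\mu,\deg E_B\rangle$ directly, it rewrites the condition ``$\varphi\in\mathfrak{p}(D)\wedge^B E_B$'' as ``the action of $\varphi$ sends $E_B\wedge^B\mathfrak{b}$ into $E_B\wedge^B\mathfrak{p}(D)$,'' and then argues by contradiction with a nonzero induced map of line bundles
\[
E_B\wedge^B(\mathfrak{b}_i/\mathfrak{b}_{i-1})\;\longrightarrow\;E_B\wedge^B(\mathfrak{p}_j/\mathfrak{p}_{j+1})(D),
\]
with source weight $\gamma$ and target weight $\delta$. The crucial point is that the paper arranges both $\gamma$ and $-\delta$ to be \emph{non-negative} combinations of simple roots of total height at most $N$, with $-\delta$ having a strictly positive coefficient on some $\alpha_0\in\Delta\setminus I$. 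Once both pieces are non-negative, only the \emph{lower} bounds $\alpha(\deg E_B)\ge -c$ (and $\alpha_0(\deg E_B)\ge d$) are ever used, giving $\langle\gamma,\deg E_B\rangle\ge -Nc$ and $\langle-\delta,\deg E_B\rangle\ge d-Nc$, hence $\langle\gamma-\delta,\deg E_B\rangle\ge d-2Nc>\deg D$. This splitting of the relevant weight into two non-negative summands is precisely the device your direct approach lacks; without it, the mixed signs of $n_\alpha$ against the one-sided bounds on $\alpha(\deg E_B)$ cannot be controlled.
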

\begin{proof}
We can treat each factor $\cM_j$ separately and assume that $V$ is irreducible. 
It suffices to prove that under the adjoint action of $\varphi$, $E_B\wedge^B\mathfrak{b}$ is sent into $E_B\wedge^B\mathfrak{p}(D)$. Consider a filtration of $\mathrm{End}(V^{(j)})$:
\[(0)=\mathfrak{b}_0\subset\mathfrak{b}_1\subset\dotsm\subset\mathfrak{b}_r=\mathfrak{b}\subset\mathfrak{p}=\mathfrak{p}_s\subset\mathfrak{p}_{s-1}\subset\dotsm\subset\mathfrak{p}_0=\mathrm{End}(V^{(j)}).\]
stable under adjoint action of $B$, with one-dimensional successive quotients.\par 
Suppose the image of $E_B\wedge^B\mathfrak{b}$ under $\mathrm{ad}(\varphi)$ is not contained in $E_B\wedge^B\mathfrak{p}(D)$. Then there exists $0<i\le r$ and $0\le j<s$ such that
 $\mathrm{ad}(\varphi)$ induces a \emph{non-zero} homomorphism of line bundles
\[E_B\wedge^B(\mathfrak{b}_i/\mathfrak{b}_{i-1})\to E_B\wedge^B(\mathfrak{p}_j/\mathfrak{p}_{j+1})(D).\]
In particular, the degree of the source is not larger than the degree of the target. More precisely, let $\ga$ be the weight of $B$ on $\mathfrak{b}_i/\mathfrak{b}_{i+1}$ and $\delta$ the weight of $B$ on $\mathfrak{p}_j/\mathfrak{p}_{j+1}$. Then we have the inequality
\[\langle\deg E_B,\ga-\delta \rangle\le\deg D.\]
Note that $\ga$ is the difference between the highest weight $\la^{(j)}$ and certain weight of the $G$-representation $V^{(j)}$, hence a non-negative linear combination of simple roots with the sum of coefficients bounded by $N$. Since $\deg E_B\in\mathsf{C}$, we then have
\[\langle\deg E_B,\ga\rangle\ge -Nc.\]
On the other hand, by definition of $\mathfrak{p}=\mathfrak{p}_I^{(j)}$, we see that $-\delta$ is a non-negative linear combination of simple roots such that the sum of coefficients is bounded by $N$ and the coefficient of some root in $\Delta-I$ is positive. Hence because $\deg E_B\in\mathsf{C}\cap\mathsf{C}_I$, we have
\[\langle\deg E_B,-\delta\rangle\ge d-Nc.\]
Combining the above two inequalities, we get $d-2Nc\le\deg D$ which contradicts \eqref{eq:d-lower-bound} and thus the lemma follows.
\end{proof}

\begin{prop}
The stack $\cM^{\mathrm{ani}}_\cL$ is of finite type.
\end{prop}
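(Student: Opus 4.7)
The plan is to reduce the finite-type claim to a boundedness statement on the underlying $G$-torsor, using the anisotropy hypothesis to rule out deeply unstable Harder-Narasimhan reductions. First, I would realize $\cM_\cL^{\mathrm{ani}}$ as a substack of an endomorphism Higgs moduli of the form $\cM_V$ studied in the preceding lemma, taking $V=\bigoplus_{i=1}^{r}V_{\omega_i}$ and using the closed embedding
\[
\vin_{\gsc}^\cL \hookrightarrow A_{\gsc}^\cL \times \prod_{i=1}^r \sheafend(V_{\omega_i})^{\omega_i(\cL)},
\]
where each $\omega_i(\cL)$ plays the role of the twisting divisor $D$ in the lemma. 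The hypothesis $d>\deg D+2Nc$ there becomes the statement that $\deg\omega_i(\cL)$ is sufficiently large for every $i$; for $\cL$ not satisfying this, we may reduce to the case of larger degree by tensoring with a fixed auxiliary $\tsc$-torsor, a twist which identifies the relevant anisotropic moduli in an essentially degree-preserving way.

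Second, given $(E,\varphi)\in\cM_\cL^{\mathrm{ani}}(k)$, by Behrend's theorem on canonical reductions of $G$-torsors on a smooth projective curve there exists a Borel reduction $E_B$ of $E$ with $\deg E_B\in\mathsf{C}$. I claim that $\deg E_B$ is then automatically confined to the \emph{bounded} region $\mathsf{C}\setminus\bigcup_{I\subsetneq\Delta}\mathsf{C}_I$. Indeed, if $\deg E_B\in\mathsf{C}\cap\mathsf{C}_I$ for some proper subset $I\subsetneq\Delta$, the key lemma forces $\varphi$ to factor through the associated parabolic subsheaf $E_B\wedge^B\mathfrak{p}_I(\cL)$ inside $E\wedge^G\vin_{\gsc}^\cL$, giving a reduction of the Higgs-Vinberg pair to the standard parabolic $P_I$.

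Third, such a parabolic factorization of $\varphi$ propagates to its invariants: projecting onto the Levi quotient yields a section of the Levi's analogue of the extended Steinberg base, which, when composed with the natural finite map, shows that $a=h_\cL(E,\varphi)$ lifts through $\nu_I:\cA_\cL^{W_I,\heartsuit}\to\cA_\cL^\heartsuit$. Hence $a\in\nu_I(\cA_\cL^{W_I,\heartsuit})$, contradicting the anisotropy of $a$ by the characterization $\cA_\cL^\heartsuit\setminus\cA_\cL^{\mathrm{ani}}=\bigcup_{I\subsetneq\Delta}\nu_I(\cA_\cL^{W_I,\heartsuit})$ established earlier.

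Finally, the boundedness of $\deg E_B$ confines $E$ to a quasi-compact open substack of $\mathrm{Bun}_G$; for each such $E$, the space of compatible Higgs fields $\varphi$ is a closed subscheme of the finite-dimensional affine space $H^0(X,E\wedge^G\vin_{\gsc}^\cL)$, so $\cM_\cL^{\mathrm{ani}}$ is of finite type. The main technical obstacle is the third step: making precise the implication from a parabolic factorization of $\varphi$ to a factorization of $a$ through $\overline{\tsc_+}^{W_I,\cL}$. This is a version of Chevalley's restriction compatibility with parabolic reductions, adapted to the Vinberg monoid, and requires care because the characteristic map $\chi_+$ mixes the genuine group-invariant data with the $Z_+^{\mathrm{sc}}$-abelianization coordinates.
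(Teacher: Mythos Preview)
Your proposal is correct and follows essentially the same strategy as the paper: bound the Harder--Narasimhan degree of a Borel reduction by using the key lemma to force a parabolic factorization of $\varphi$ whenever $\deg E_B\in\mathsf{C}\cap\mathsf{C}_I$, and then contradict anisotropy.

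The only substantive difference is in your third step, which you yourself flag as the main obstacle. The paper avoids the global lifting of $a$ through $\nu_I$ entirely: once $\varphi$ lands generically in the proper parabolic $P_{I,+}\subset\gsc_+$, one argues directly that the monodromy group $W_a$ of the cameral cover is contained in $W_I$, since over the generic point the centralizer torus of the (regular semisimple) Higgs field sits inside the Levi $L_I$. Then $T^{W_a}\supset T^{W_I}$ is infinite, contradicting anisotropy via Corollary~\ref{cor:pi_0(P)-finite}. This bypasses the Chevalley-restriction compatibility you were worried about; your route via $a\in\nu_I(\cA_\cL^{W_I,\heartsuit})$ would also work, but it is the harder direction of the earlier proposition and is unnecessary here. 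Your preliminary reduction by twisting $\cL$ to enlarge $\deg\omega_i(\cL)$ is likewise not needed: the integer $d$ in the definition of $\mathsf{C}_I$ is a free parameter and may simply be chosen large enough.
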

\begin{proof}
The natural morphism $\cM^{\mathrm{ani}}\to\mathrm{Bun}_G$ is of finite type. For each $\nu\in X^*(T)$, the moduli stack $\mathrm{Bun}_B^\nu$ of $B$-bundles on $X$ with degree $\nu$ is of finite type. It suffices to show that there is a finite subset $S\subset X^*(T)$ such that the image of $\cM^{\mathrm{ani}}$ in $\mathrm{Bun}_G$ is contained in the image of $\cup_{\nu\in S}\mathrm{Bun}_B^\nu$ in $\mathrm{Bun}_G$.\par 
Let $m=(\cE,\varphi)\in\cM_\cL^{\mathrm{ani}}(k)$ and $\cE_B$ a $B$-reduction of $\cE$ such that $\deg(\cE_B)\in\mathsf{C}$. Let $a=h_\cL(m)\in\cA_\cL^{\mathrm{ani}}$.
Suppose that $\deg(\cE_B)\in\mathsf{C}_I$ for some \emph{proper} subset $I\subset\Delta$. Then $\varphi$ maps the generic point of the curve into the proper parabolic subgroup $P_{I,+}$ of $\gsc_+$. This implies that $W_a$ is contained in the Weyl group of the Levi $L_I$ and hence $T^{W_a}$ is not finite, contradicting the fact that $a\in\cA_\cL^{\mathrm{ani}}(k)$. Consequently, we have $\det\cE_B$ lies in the intersection of $\mathsf{C}$ and the complement of $\mathsf{C}_I$ for any proper subgroup $I\subset\Delta$. This intersection is a bounded subset of $X^*(T)_\bR$ and hence the set of weights $S\subset X^*(T)$ lying in the intersection is a finite set. 
\end{proof}

\subsubsection{Valuative criterion}
First we have the existence part of the valuative criterion, which is true over the larger open subset $\cA_\cL^\heartsuit$.
\begin{prop}
Let $R$ be a complete discrete valuation ring with algebraically closed residue field containing $k$. Let $K$ be the fraction field of $R$. Then for all $a\in\cA_G^\heartsuit(R)$ and $m_K\in\cM_G^\heartsuit(K)$ such that $h_G(m_K)=a$, there exists a finite extension $K'$ of $K$ and $m\in\cM_G(R')$, where $R'$ is the integral closure of $R$ in $K'$, such that
\begin{enumerate}
\item The image of $m$ in $\cM^\heartsuit_G(K')$ is isomorphic to that of $m_K$;
\item $h_G(m)=a$.
\end{enumerate}
\end{prop}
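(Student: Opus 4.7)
The strategy is to reduce the extension problem to extending a torsor under the regular centralizer $\cJ_a$ and then to invoke smoothness. First, after replacing $K$ by a finite extension so that the $\tsc$-torsor $\cL$ admits a $c$-th root $\cL'$ with $c=|Z(G_{\mathrm{der}})|$, Propositions~\ref{extend-steinberg-section-prop} and \ref{prop:twisted-Steinberg-section} produce the global Steinberg section $\epsilon^w_{\cL'}:\cA_\cL\to\cM_\cL^{\mathrm{reg}}$ of \S\ref{sec:global-Steinberg}. The element $m_0:=\epsilon^w_{\cL'}(a)\in\cM_\cL^{\mathrm{reg}}(R)$ is then a canonical lift of $a$ whose restriction to every fiber of $X_R\to\spec R$ lands in the regular locus.

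Next, I will let $U\subset X_R$ be the complement of the discriminant divisor $\Delta_a$. Over $U$ the section $a$ factors through $\fC_+^{\mathrm{rs}}$, so by Corollary~\ref{cor:ving-rs} the pullback $[\chi_+^{-1}(a|_U)/\mathrm{Ad}(G)]$ is a $\cJ_a$-gerbe on $U$, canonically trivialized by $m_0|_U$; sections of this gerbe are therefore classified by $\cJ_a$-torsors. Since $m_K$ is generically regular semisimple, $m_K|_{U_K}$ (with $U_K:=U\times_R K$) determines a $\cJ_a$-torsor $\cT_K$ on $U_K$ characterized by the identity $m_K|_{U_K}=\cT_K\cdot m_0|_{U_K}$ under the $\cP_a$-action of Proposition~\ref{BJ-action-vin-G-prop}.

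The next step is to extend $\cT_K$ to a $\cJ_a$-torsor $\cT$ on $X_{R'}$ for a suitable finite extension $R'\supset R$. Because $\cJ_a$ is smooth and commutative by Lemma~\ref{lem:reg-centralizer-Vin}, the stack of $\cJ_a$-torsors on $X_R/R$ is a smooth algebraic stack over $\spec R$. The existence part of the valuative criterion then yields the extension after a finite base change, and a standard purity argument carries the torsor across the closure of $\Delta_a$ from $U$ to all of $X_{R'}$. Applying $\cT\in\cP_a(R')$ to $m_0\otimes R'$ gives the desired $m:=\cT\cdot(m_0\otimes R')\in\cM_\cL(R')$. By construction $h_\cL(m)=a$, and $m$ agrees with $m_K\otimes K'$ on the schematically dense open $U_{K'}$, hence on all of $X_{K'}$ because $\vin_{\gsc}$ is affine and so two Higgs-Vinberg pairs over $X_{K'}$ lying above the same $a|_{X_{K'}}$ that coincide on a dense open must coincide globally.

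The main technical obstacle I anticipate is the extension step for $\cT_K$: one must combine smoothness of $\cJ_a$ with a purity argument that handles both the vertical special fiber of $X_R\to\spec R$ and the horizontal closure of $\Delta_a$. A cleaner alternative, available over the anisotropic locus, would be to localize via the product formula of Theorem~\ref{thm:product-formula} to the finitely many points in $\mathrm{Supp}(\Delta_a)$, reducing the global extension to a local statement about Kottwitz-Viehmann varieties where the required analysis is already in place.
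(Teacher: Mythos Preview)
Your approach differs substantially from the paper's, and it contains a genuine gap. The paper simply follows Chaudouard--Laumon \cite[\S8.4]{ChLau10}: after a finite extension of $K$ one extends the $G$-torsor $\cE_K$ to the complement of finitely many closed points of the special fiber of $X_{R'}$, a codimension-$2$ locus; then one invokes (i) that $G$-torsors extend uniquely across codimension $2$, and (ii) that sections of the \emph{affine} twisted monoid $\vin_{\gsc}^\cL$ extend across codimension $2$ as well. No Steinberg section, no $\cJ_a$-torsor, and no regularity hypothesis on $m_K$ are used.

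The flaw in your argument is in the last step. Your $m=\cT\cdot(m_0\otimes R')$ lies in $\cM^{\mathrm{reg}}(R')$, because $m_0=\epsilon^w_{\cL'}(a)$ is regular and the $\cP_a$-action preserves the regular locus (Proposition~\ref{BJ-action-vin-G-prop}). But $m_K$ is only assumed to lie in $\cM^\heartsuit$, which means merely that its characteristic $a$ is generically regular semisimple; the Higgs--Vinberg field of $m_K$ need not be regular at points of $\Delta_a\cap X_K$. Hence $m|_{K'}$ and $m_K\otimes K'$ are in general \emph{not} isomorphic, even though they agree on $U_{K'}$. Your claimed principle ``two Higgs--Vinberg pairs above the same $a$ that coincide on a dense open must coincide globally'' is false: the datum includes a $G$-torsor, and an isomorphism of $G$-torsors over a dense open of a curve has no reason to extend across the remaining codimension-$1$ points. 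Equivalently, over points of $\fD_+$ the fiber $\chi_+^{-1}(a(x))$ contains several $\mathrm{Ad}(G)$-orbits (Lemma~\ref{lem:orbit-in-Steinberg-fibre}), so the stack $[\chi_+^{-1}(a)/\mathrm{Ad}(G)]$ is not separated there. Your alternative via the product formula is also unavailable, since Theorem~\ref{thm:product-formula} requires $a\in\cA^{\mathrm{ani}}$, whereas the present proposition is asserted over the larger locus $\cA^\heartsuit$.
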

\begin{proof}
The argument is the same as \cite[\S8.4]{ChLau10}. The key points are: 1. Any $G$-torsor extends uniquely over a codimension 2 subset; 2. the universal twisted monoid $\mathbb{V}_G$ over $\cA_G\times X$ is affine, so that Higgs fields extends over any codimension 2 subset. 
\end{proof}

\begin{prop}
Suppose $G$ is semisimple. Let $R$ be a complete discrete valuation ring with algebraically closed residue field $\kappa$ containing $k$. Let $m,m'\in\cM^{\mathrm{ani}}(R)$ be two elements and $m_K,m_K'\in\cM^{\mathrm{ani}}(K)$ their base change. Suppose that the following two conditions are satisfied:
\begin{enumerate}
\item $h(m)=h(m')$;
\item there exists an isomorphism $\iota_K:m_K\to m_K'$.
\end{enumerate}
Then there exists a unique isomorphism $\iota:m\to m'$ extending $\iota_K$.
\end{prop}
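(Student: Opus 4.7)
This is the uniqueness half of the valuative criterion for $h^{\mathrm{ani}}:\cM^{\mathrm{ani}}\to\cA^{\mathrm{ani}}$; together with the existence proved in the previous proposition, it yields properness of $h^{\mathrm{ani}}$. My strategy is to represent the Isom functor $I:=\underline{\mathrm{Isom}}_R(m,m')$ by a finite $R$-scheme, so that the given $K$-point $\iota_K$ extends uniquely by the valuative criterion for finite (hence proper) morphisms.

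Set $a:=h(m)=h(m')\in\cA^{\mathrm{ani}}(R)$; both geometric fibers of $a$ then lie in $\cA^{\mathrm{ani}}$. Writing $X_R:=X\times\spec R$, the functor $I$ is represented by a closed subscheme of the affine $R$-scheme $H^0(X_R,\underline{\mathrm{Isom}}_G(\cE,\cE'))$---sections of an affine $X_R$-scheme over the projective $X_R$---cut out by the requirement that $\iota$ intertwine $\varphi$ and $\varphi'$. Likewise $G_m:=\underline{\mathrm{Aut}}_R(m)$ is an affine $R$-group scheme and $I$ is a pseudo-torsor under $G_m$; since $\iota_K\in I(K)$ makes $I_K$ a trivial torsor, it suffices to show $G_m$ is finite over $R$. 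Fiberwise finiteness is immediate from the argument establishing that $\cM^{\mathrm{ani}}$ is Deligne-Mumford: over each fiber of $\spec R$, a trivialization of $\cE$ at a geometric generic point of $X$ bringing $\varphi$ into $T_+$ embeds the corresponding fiber of $G_m$ as a closed subgroup of the finite unramified group $T^{W_a}$---finite by the anisotropy of the corresponding fiber of $a$ via Corollary~\ref{cor:pi_0(P)-finite}, and unramified because $\mathrm{char}(k)\nmid|W|$.

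The main obstacle is to promote this fiberwise finiteness to finiteness of $G_m$ over $R$: quasi-finiteness alone would permit components to specialize badly across the closed fiber of $\spec R$, and this must be ruled out. My plan is to embed $G_m$ as a closed $R$-subgroup of a finite $R$-group constructed from the regular centralizer. Pulling back the canonical homomorphism $\chi_+^*\cJ\to\cI$ of Lemma~\ref{lem:reg-centralizer-Vin} along the composition $X_R\xrightarrow{m}[\vin_{\gsc}/\mathrm{Ad}(G)]$ and taking global sections yields a morphism $H^0(X_R,\cJ_a)\to G_m$; composing with the inclusion into the finite-type N\'eron model $\cJ_a^\flat$ and exploiting Corollary~\ref{cor:pi_0(P)-finite}---which uses precisely the simultaneous anisotropy of both fibers of $a$---one controls $G_m$ by an $R$-group scheme whose fiberwise component groups are finite. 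The group-scheme axioms for $G_m$ (in particular the existence of an identity section over $R$) should then force $G_m$ to be globally finite, completing the argument.
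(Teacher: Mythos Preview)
Your strategy---represent $I=\underline{\mathrm{Isom}}_R(m,m')$ and prove it is \emph{finite} over $R$, so that the valuative criterion for finite morphisms extends $\iota_K$---is different from the paper's, and as written it has a genuine gap at the key reduction step.

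The sentence ``since $\iota_K\in I(K)$ makes $I_K$ a trivial torsor, it suffices to show $G_m$ is finite over $R$'' is not justified. A pseudo-torsor $I$ under a finite $R$-group $G_m$ with $I_K\neq\varnothing$ need not be finite over $R$: if $I_\kappa=\varnothing$ then $I$ is supported over $\spec K$ and is certainly not proper over $R$. Concretely, when $G_m$ is finite \'etale (hence constant, since $R$ is henselian with algebraically closed residue field), the decomposition $I=I^f\sqcup I'$ of a quasi-finite separated $R$-scheme shows that either $I=I^f$ is finite or $I_\kappa=\varnothing$. Ruling out the second alternative means proving $m_\kappa\cong m'_\kappa$, which is exactly the existence assertion you are trying to establish; so the reduction is circular. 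Your last paragraph does not address this, and the map $H^0(X_R,\cJ_a)\to G_m$ you construct goes the wrong way for embedding $G_m$ into a finite $R$-scheme; the phrase ``should then force $G_m$ to be globally finite'' is where an actual argument is needed and none is given.

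The paper avoids this by a direct local argument following \cite[\S9]{ChLau10}. One reduces to extending $\iota_K$ across the local ring $B$ of the generic point of the special fibre of $X_R$ (a DVR with fraction field the function field $F$ of $X_R$). After a finite extension of $K$ one may trivialize $\cE,\cE'$ over $\spec B$ so that both Higgs fields become the \emph{same} element $\gamma\in\vin_{G}^{\mathrm{rs}}(B)$; under these trivializations $\iota_K$ becomes an element $g\in G_\gamma(F)$. The decisive point is that anisotropy of $a$ forces $G_\gamma$ to be an \emph{anisotropic} torus over $B$, whence $G_\gamma(B)=G_\gamma(F)$; thus $g\in G(B)$ and $\iota_K$ extends. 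This uses anisotropy in a sharper way than the fiberwise finiteness of $T^{W_a}$, namely the integrality of rational points of an anisotropic torus over a DVR, and it simultaneously gives existence and uniqueness without ever needing to know that $I$ is finite.
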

\begin{proof}
We follow the argument in \cite[\S9]{ChLau10}. Let $m=(\cE,\phi)$ and $m'=(\cE',\phi')$. 
Consider the local ring $B$ of the generic point of the special fiber of $X_R$. Then $B$ is a discrete valuation ring whose residue field is the function field $\kappa(X)$ of $X_\kappa$ and whose fraction field is the function field $F$ of $X_R$.\par  
 By \S9.2 of \emph{loc. cit.}, it suffices to extend $\iota_K$ to an isomorphism of $G$-torsors $\iota:\cE\to\cE'$ over $\spec B$. As in \S9.3 of \emph{loc. cit.}, it suffices to show that for some finie extension $K'$, the base change $\iota_{K'}$ of $\iota_K$ extends to an isomorphism between $\cE,\cE'$ over $\spec B'$. Here $B'$ is the integral closure of $B$ in the function field $F'$ of $X_{R'}$ where $R'$ is the integral closure of $R$ in $K'$.\par 
 To achieve this, after taking a finite extension $K'/K$ one can assume that $\cE,\cE'$ are trivial over $\spec B$ (since by \cite[Theorem 2]{DS}, they will be trivial in a Zariski open neighbourhood of the generic point of the special fibre of $X_{R}$ after a finite extension of $K$). Moreover, as in \cite[Lemme 9.3.1]{ChLau10}, one can choose trivialization of $\cE$ and $\cE'$ over $\spec B$ such that they map the ``Higgs fields" $\phi$ and $\phi'$ to some element $\ga\in \vin_{G}^{\mathrm{rs}}(B)$. Under these trivializations, the isomorphism $\iota_K$ is identified with an element $g\in G(F)$ such that $g^{-1}\ga g=\ga$. In other words, $g\in G_\ga(F)$. Since $m,m'$ lies in the anisotropic open substack and $\ga\in \vin_{G}^{\mathrm{rs}}(B)$,  $G_\ga$ is an anisotropic torus over $\spec B$ and hence $G_\ga(B)=G_\ga(F)$. Thus in particular, $g\in G(B)$ and the isomorphism $\iota_K$ extends.
\end{proof}
\begin{thm}\label{thm:ani-proper}
The morphism $h_\cL^{\mathrm{ani}}:\cM^{\mathrm{ani}}_\cL\to\cA^{\mathrm{ani}}_\cL$ is proper.  
\end{thm}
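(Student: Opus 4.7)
My plan is to deduce the theorem from the valuative criterion of properness for Deligne--Mumford stacks, using the three ingredients already assembled in this subsection: finite-typeness of $\cM_\cL^{\mathrm{ani}}$, the existence part of the valuative criterion over $\cA_\cL^\heartsuit$, and the uniqueness of extensions of isomorphisms over $\cA_\cL^{\mathrm{ani}}$.

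First I would recall that $h_\cL^{\mathrm{ani}}$ is of finite type (this was established in the finiteness proposition above using the reduction theory argument with the cones $\mathsf{C}_I$). Combined with the fact that $\cM_\cL^{\mathrm{ani}}$ and $\cA_\cL^{\mathrm{ani}}$ are Deligne--Mumford stacks, it then suffices to verify the valuative criterion for $h_\cL^{\mathrm{ani}}$ with respect to complete discrete valuation rings $R$ with algebraically closed residue field containing $k$ (cf. \cite[Tag 0CL5]{stacks-project} or the analogous statement used in \cite{ChLau10}).

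For separatedness I would apply the uniqueness proposition: given two $R$-points $m, m'\in\cM_\cL^{\mathrm{ani}}(R)$ with $h_\cL(m)=h_\cL(m')=a\in\cA_\cL^{\mathrm{ani}}(R)$ and an isomorphism $\iota_K:m_K\xrightarrow{\sim}m_K'$ on the generic fibre, that proposition produces a unique extension $\iota:m\to m'$. This is precisely the separatedness criterion. For universal closedness, I would combine the two valuative-criterion propositions: given $a\in\cA_\cL^{\mathrm{ani}}(R)\subset\cA_\cL^\heartsuit(R)$ and $m_K\in\cM_\cL^{\mathrm{ani}}(K)$ with $h_\cL(m_K)=a_K$, the existence proposition yields, after passing to a finite extension $K'/K$ with integral closure $R'$, an $R'$-point $m\in\cM_\cL(R')$ extending $m_K\otimes_K K'$ with $h_\cL(m)=a_{R'}$. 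Since $a_{R'}\in\cA_\cL^{\mathrm{ani}}(R')$ (anisotropy is preserved under base change, as it is defined pointwise on $\cA_\cL$), the extension $m$ automatically lies in $\cM_\cL^{\mathrm{ani}}(R')$. This verifies universal closedness.

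The only mild subtlety is that the existence proposition allows a finite extension $K'/K$, which is exactly what the valuative criterion for Deligne--Mumford stacks permits. I do not expect any serious obstacle: all the real work — the reduction-theoretic bound that gives finite type, the extension of the Higgs--Vinberg field across codimension-$2$ loci, and the anisotropy argument using $G_\ga(B)=G_\ga(F)$ for anisotropic tori — has already been done in the preceding propositions. Assembling these three ingredients gives properness of $h_\cL^{\mathrm{ani}}$ directly.
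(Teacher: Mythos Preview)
Your proposal is correct and matches the paper's approach exactly: the paper's proof is a one-line appeal to the valuative criterion of properness for algebraic stacks, combining the finite-type result and the two valuative-criterion propositions established earlier in the section. You have simply spelled out in more detail how these three ingredients fit together.
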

\begin{proof}
This follows from what have been proved in this section and the valuative criterion of properness for algebraic stacks. 
\end{proof}

\subsection{Singularities of restricted Hitchin-Frenkel-Ng\^o moduli stack}\label{sec:singularity-HFN}
Later when proving equi-dimansionality of Kottwitz-Viehmann varieties, we will need the transversality theorem of Bouthier in \cite{Bou17}, where it was shown that the singularities of certain open substack of restricted Hitchin-Frenkel-Ng\^o moduli stack are the same as some closed Schubert varieties in the affine Grassmanian. The method of Bouthier was later simplified by Yun in \cite{Yun}. In \cite{Bou17} and \cite{Yun} it is assumed that the group is simply-connected but the argument works without this assumption. For the reader's convenience we review this result following \cite{Yun}. 
\subsubsection{}
Fix a $X_*(\tad)_+$-valued divisor $\la=\sum_{i=1}^m\la_i x_i$ on the curve $X$. Then $\la$ defines a $\tad$-torsor $\cL_\la$. We assume that $\cL_\la$ can be lifted to a $\tsc$-torsor $\cL$. Then $\la$ can be identified with a closed point of $\cB_\cL=H^0(X,A_{\gsc}^\cL)$. Let $\cM_{\le\la}:=\alpha_\cL^{-1}(\la)$ be the corresponding restricted Hitchin-Frenkel-Ng\^o moduli stack. Let $\cA_{\le\la}:=\beta_\cL^{-1}(\la)$ and $h_{\le\la}: \cM_{\le\la}\to\cA_{\le\la}$ be the restricted Hitchin-Frenkel-Ng\^o fibration. Let $\cM_\la:=\cM_{\le\la}\cap\cM_\cL^0$ be the open substack where the Higgs-Vinberg field lands in $[\vin_{\gsc}^0/\tsc\times\mathrm{Ad}(G)]$.\par 
Assume moreover that $\cL$ admits a $c$-th root $\cL'$ where $c=|Z(G_{\mathrm{der}})|$. Then by the discussion in \S~\ref{sec:global-Steinberg}, 
there exists global Steinberg section $\epsilon_{\cL'}^w:\cA_{\le\la}\to\cM_{\le\la}^{\mathrm{reg}}$ for each choice of Coxeter element $w\in\mathrm{Cox}(W,S)$.
\subsubsection{}
For each $a\in\cA_{\le\la}^\heartsuit$, we write the associated discriminant divisor as
\[\Delta(a)=\Delta(a)_{\mathrm{sing}}+\Delta(a)_{\mathrm{triv}}\]
where $\Delta(a)_{\mathrm{triv}}$ is multiplicity free and the multiplicity of $\Delta(a)_{\mathrm{sing}}$ at each point is at least 2. 
\begin{defn}\label{def:transversal-open}
Let $S\subset\mathrm{Supp}(\la)$ be a nonempty subset. The \emph{transversal subset} $\cA_{\le\la}^\flat\subset\cA_{\le\la}$ consists of $a\in \cA_{\le\la}^\heartsuit$ satisfying the following two conditions
\begin{itemize}
\item $\mathrm{Supp}(\Delta(a))\cap\mathrm{Supp}(\la)\subset S$
\item  For each $1\le i\le r$, 
\[2g-2+m_0(\deg\Delta(a)_\mathrm{sing}+|S|)+\sum_{s\in S}b(\la_s)<\deg\omega_i(\cL)\]
where $m_0$ is the positive integer defined in the paragraph before Proposition~\ref{prop:chi-lifting} and $b(\la_s)$ is the non-negative integer in Lemma~\ref{lem:vinG-approx}.
\end{itemize}
\end{defn}
We call $\cM_{\le\la}^\flat:=h_{\le\la}^{-1}(\cA_{\le\la}^\flat)$ the \emph{transversal open substack} and denote $\cM_\la^\flat:=\cM_{\le\la}^\flat\cap\cM_\la$.

\subsubsection{Local evaluation map}
For each $s\in S$, the arc space $L^+G$ acts by left multiplication on $\mathrm{Gr}_{\le\la_s}:=\mathrm{Gr}^{\gad}_{\le\la_s}$ and the action factors through $L^+\gad$. We let $N$ be a positive integer such that for all $s\in S$, the action of $L^+G$ on $\mathrm{Gr}_{\le\la_s}$ factors through the $N$-th jet space $L_N^+ G$. Then the product group $L^+_{NS}G:=\prod_{s\in S}L^+_NG$ acts naturally on $\prod\limits_{s\in S}\mathrm{Gr}_{\le\la_s}$ and we define the local evaluation map
\[\mathrm{ev}_{NS}:\cM_{\le\la}\to [L_{NS}^+ G\backslash\prod_{s\in S}\mathrm{Gr}_{\le\la_s}]\]
by choosing trivialisations of $G$-torsors on the $N$-th infinitesimal neighbourhood of points $s\in S$. Let $\mathrm{ev}_{NS}^\flat$ be the restriction of $\mathrm{ev}_{NS}$ to $\cM_{\le\la}^\flat$. From the first condition in Definition~\ref{def:transversal-open}, we see that for any $(\cE,\varphi)\in\cM_{\le\la}^\flat$ the restriction of the Higgs-Vinberg field $\varphi$ to points in $\mathrm{Supp}(\la)\setminus S$ lands in the open substack $[\vin_{\gsc}^\mathrm{rs}/\tsc\times\mathrm{Ad}(G)]$, which is contained in $[\vin_{\gsc}^0/\tsc\times\mathrm{Ad}(G)]$ by Corollary~\ref{cor:vin-reg-in-v0}. Hence the inverse image of the open strata $[L_{NS}^+G\backslash\prod_{s\in S}\mathrm{Gr}_{\la_s}]$ under $\mathrm{ev}_{NS}^\flat$ is precisely $\cM_\la^\flat=\cM_{\le\la}\cap\cM_\la$.

\begin{thm}[\cite{Bou17},\cite{Yun}]\label{thm:transversality}
The morphism 
\[\mathrm{ev}_{NS}^\flat:\cM_{\le\la}^\flat\to [L_{NS}^+ G\backslash\prod_{s\in S}\mathrm{Gr}_{\le\la_s}]\] 
is smooth.
\end{thm}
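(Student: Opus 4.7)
The plan is to verify the smoothness of $\mathrm{ev}_{NS}^\flat$ via the infinitesimal lifting criterion. After pulling back along the smooth atlas $\prod_{s\in S}\mathrm{Gr}_{\le\la_s}\to[L_{NS}^+ G\backslash\prod_{s\in S}\mathrm{Gr}_{\le\la_s}]$, it suffices to consider the following situation: $R$ is an artin local $k$-algebra with residue field $k$ and maximal ideal $\fm$, $I\subset R$ is an ideal with $\fm\cdot I=0$, we are given $(\cE_0,\varphi_0)\in\cM_{\le\la}^\flat(R/I)$ equipped with trivializations of $\cE_0$ on the $N$-th infinitesimal neighbourhood of each $s\in S$, together with a lift of $\mathrm{ev}_{NS}(\cE_0,\varphi_0)$ to an $R$-point of $\prod_{s\in S}\mathrm{Gr}_{\le\la_s}$. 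We must produce $(\cE,\varphi)\in\cM_{\le\la}^\flat(R)$ extending all this data. The $G$-torsor $\cE_0$ lifts to $\cE$ compatibly with the chosen trivializations over $S$: the relevant obstruction is an $H^1$ of a coherent sheaf on the curve $X$ tensored with $I$, which is killed by the degree hypotheses. Thus the problem reduces to lifting the Higgs-Vinberg field $\varphi_0$.

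The lift of $\varphi$ is produced in two stages. First, I would lift the characteristic polynomial $a_0=h_{\le\la}(\cE_0,\varphi_0)\in\cA_{\le\la}^\flat(R/I)$ to some $a\in\cA_{\le\la}(R)$ whose Taylor expansion at each $s\in S$ agrees with $\chi_+$ of the prescribed local Higgs datum to high order, and whose Taylor expansion at each point $x\in\mathrm{Supp}(\Delta(a_0)_\mathrm{sing})$ agrees with $\chi_+(\varphi_0)$ to order $m_0 d_x$, where $d_x$ is the local discriminant valuation. Because $\cA_{\le\la}$ is an affine space cut out inside a direct sum of global sections of the line bundles $\omega_i(\cL)$ and $\alpha_i(\cL)$, prescribing such Taylor data amounts to surjecting onto sections of a torsion quotient supported on a divisor $D$ whose degree is bounded by $m_0(\deg\Delta(a_0)_{\mathrm{sing}}+|S|)+\sum_{s\in S}b(\la_s)$. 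The obstruction to this surjectivity is a direct sum of groups $H^1(X,\omega_i(\cL)(-D))\otimes I$ and similar for the $\alpha_i(\cL)$ factors, and the strict inequality in Definition~\ref{def:transversal-open} is exactly the Serre-dual condition that kills these $H^1$'s. The open condition $a\in\cA_{\le\la}^\flat$ is automatic since it only depends on the reduction $a_0$.

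Second, for this $a$, I would lift $\varphi_0$ over the formal neighbourhood of each point of $\mathrm{Supp}(\Delta(a))\cup S$ so that the local characteristic polynomial is $a$ and the local data at each $s\in S$ matches the prescribed $R$-point. Away from $\mathrm{Supp}(\Delta(a))$ the morphism $\chi_+$ is smooth by Corollary~\ref{cor:ving-rs}, hence the lifting is unobstructed. At each $x\in\mathrm{Supp}(\Delta(a))\setminus S$ (where by the first condition of Definition~\ref{def:transversal-open} the discriminant $\Delta(a)$ has multiplicity one, so lifting is trivial on the nose) and at each $x\in\mathrm{Supp}(\Delta(a))\cap S$, I apply Proposition~\ref{prop:chi-lifting} with the integer $N$ chosen in the first step: starting from $\varphi_0$ (or, at $s\in S$, from the prescribed local Higgs field $\delta\in L^+\vin_{\gsc}^{\la_s}(R)$), the proposition adjusts it to land above $a$ while only modifying it modulo $\varpi^{N-m_0 d_x}I$. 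Lemma~\ref{lem:vinG-approx} then guarantees that the adjustment at each $s\in S$ differs from $\delta$ only by right multiplication by an element of $L_N^+ G$, so the adjusted Higgs field still represents the prescribed point of $\mathrm{Gr}_{\le\la_s}$. The local lifts glue to a global $\varphi$ because, over the regular locus, $[\chi_+^{\mathrm{reg}}]$ is a $\bB\cJ$-gerbe (Proposition~\ref{BJ-action-vin-G-prop}): the gluing ambiguity is controlled by a $\cJ_a$-torsor on $X_R$ relative to $X_{R/I}$, whose obstruction again fits into an $H^1$ killed by the same degree hypotheses.

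The main obstacle will be the bookkeeping that matches three numerical thresholds: the $H^1$-vanishing bound needed to lift $a$ globally with the prescribed Taylor data at $S$ and at the singular points of the discriminant; the conductor $m_0 d$ appearing in Proposition~\ref{prop:chi-lifting}, which governs how much precision is lost when projecting along $\chi_+$; and the approximation bound $b(\la_s)$ from Lemma~\ref{lem:vinG-approx}, which controls the ambiguity in reading off a point of $\mathrm{Gr}_{\le\la_s}$ from a local Higgs field. These three contributions are packaged exactly as the three summands $2g-2$, $m_0(\deg\Delta(a)_\mathrm{sing}+|S|)$, and $\sum_{s\in S}b(\la_s)$ of the inequality defining $\cA_{\le\la}^\flat$, so once the orders of vanishing are tracked correctly through the three steps the argument closes up.
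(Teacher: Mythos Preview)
Your overall strategy matches the paper's --- infinitesimal lifting criterion, lift the base $a$ with prescribed jets, apply Proposition~\ref{prop:chi-lifting} locally, then glue --- and your final paragraph correctly identifies the three numerical thresholds packaged in Definition~\ref{def:transversal-open}. But two concrete errors would prevent the argument from closing.

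First, you misread the first condition of Definition~\ref{def:transversal-open}. It says $\mathrm{Supp}(\Delta(a))\cap\mathrm{Supp}(\la)\subset S$, which guarantees that at points of $\mathrm{Supp}(\la)\setminus S$ the Higgs field is regular semisimple. It does \emph{not} say that $\Delta(a)$ has multiplicity one outside $S$; there may well be points of $\mathrm{Supp}(\Delta(a_0)_{\mathrm{sing}})$ lying away from both $S$ and $\mathrm{Supp}(\la)$. The paper calls this set $T$ and treats $S'=S\sqcup T$ as the full set of bad points: Proposition~\ref{prop:chi-lifting} must be applied at every $v\in S'$, not just at points of $S$. You anticipated this correctly in your first stage (prescribing Taylor data at singular discriminant points) but then contradicted yourself in the second.

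Second, your gluing step cites Proposition~\ref{BJ-action-vin-G-prop} for the claim that $[\chi_+^{\mathrm{reg}}]$ is a $\bB\cJ$-gerbe. This is false in general --- see the remark immediately following that proposition. What the paper uses is that on $(X\setminus S')\otimes_k\bar R$ the Higgs field lands in $[(\ving^{\mathrm{rs}}\cup G_+^{\mathrm{sc,reg}})/\tsc\times\mathrm{Ad}(G)]$, which \emph{is} a $\bB\cJ$-gerbe neutralized by a global Steinberg section; the ingredient ensuring regularity at the multiplicity-one discriminant points off $\mathrm{Supp}(\la)$ is Corollary~\ref{cor:0-dim-reg}. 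The paper then constructs the lifted pair $(\cE',\varphi')$ on $(X\setminus S')\otimes R$ directly from this gerbe description and glues it with the trivial-torsor local models $(\cE_0,\theta_v)$ at each $v\in S'$ via Beauville--Laszlo, lifting the $\bar R$-gluing datum to $R$ by smoothness of the transporter (a $\cJ_a$-torsor). Your preliminary step of lifting $\cE_0$ on its own is neither used nor sufficient: the torsor and the Higgs field must be produced together, and a randomly chosen lift of $\cE_0$ need not carry any Higgs field with the prescribed local behaviour.
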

The proof proceeds in several steps which occupy the rest of this section. 
\subsubsection{}
Let $\cM_{\le\la, NS}^\flat$ be the stack classifying triples $(\cE,\varphi,\tau_{NS})$ where $(\cE,\varphi)$ is a point in $\cM_\la^\flat$ and $\tau_{NS}$ is a trivialisation of $\cE$ on the $N$-th infinitesimal neibourhoods of $s$ for all $s\in S$. Then $\cM_{\le\la,NS}$ is a $L^+_{NS}G$-torsor over $\cM_{\le\la}$ and
to prove the smoothness of $\mathrm{ev}_{NS}^\flat$, it suffices to prove the smoothness of its base change
\[\widetilde{\mathrm{ev}}_{NS}^\flat:\cM_{\le\la,NS}^\flat\to\prod_{s\in S}\mathrm{Gr}_{\le\la_s}.\]
Notice that the source and target of $\tilde{\mathrm{ev}}_{NS}^\flat$ are locally of finite type. Hence it suffices to show that $\widetilde{\mathrm{ev}}_{NS}^\flat$ is formally smooth. In other words, we need to check the infinitesimal lifting property.
\subsubsection{}
Let $R$ be an artin local $k$-algebra with maximal ideal $\fm$ and let $I\subset R$ be an ideal with $I\cdot\fm=0$. Denote $\bar{R}:=R/I$. 
Consider a triple $(\bar\cE,\bar\varphi,\bar\tau_{NS})\in\cM_{\la,NS}^\flat(\bar R)$ whose image under the map $\widetilde{\mathrm{ev}}_{NS}^\flat$ is $([\bar\ga_s])_{s\in S}\in\prod_{s\in S}\mathrm{Gr}_{\le\la_s}(\bar{R})$. Let
$([\ga_s])_{s\in S}\in\prod_{s\in S}\mathrm{Gr}_{\le\la_s}(R)$ be a lifting of $([\bar{\ga}_s])_{s\in S}$. We need to find a lifting of $(\bar\cE,\bar\varphi,\bar\tau_{NS})$ to a point in $\cM_{\le\la,NS}(R)$ whose image under $\widetilde{\mathrm{ev}}_{NS}^\flat$ coincides with $([\ga_s])_{s\in S}$.\par 
Extend $\bar\tau_{NS}$ to a trivialisation $\bar\tau_{\infty S}$ of $\bar\cE$ on $\prod_{s\in S}D_s\hat\otimes\bar{R}$ where $D_s$ denotes the formal neighbourhood of $s$ in $X$. It suffices to construct a Higgs-Vinberg pair $(\cE,\varphi)\in\cM_{\le\la}^\flat(R)$ lifting $(\bar\cE,\bar\varphi)\in\cM_{\le\la}^\flat(\bar R)$ and a trivialisation $\tau_{\infty S}$ of $\cE$ on $\prod_{s\in S}D_s\hat\otimes R$ lifting $\bar\tau_{\infty S}$.
\subsubsection{}
Let $\bar{a}:=h_{\le\la}(\bar{\cE},\bar{\varphi})\in\cA^\flat_{\le\la}(\bar{R})$ and $a_0\in\cA^\flat_{\le\la}(k)$ its reduction mod $\fm$. We have the discriminant divisor $\Delta(a_0)$ associated to $a_0$. For each $v\in X$, let $d_v$ be the multiplicity of $\Delta(a_0)$ at $v$.
Let $S':=S\cup\mathrm{Supp}(\Delta(a_0)_{\mathrm{sing}})$ and $T:=S'\setminus S$ so that $S'=S\sqcup T$. Note that the first condition in Definition~\ref{def:transversal-open} implies that $T\cap\mathrm{Supp}(\la)=\varnothing$.\par 
For each $s\in S$, under the trivialisation $\tau_{\infty S}$, the Taylor expansion of $\bar{\varphi}$ at $s$ corresponds to an element $\bar\ga_s\in L^+\vin^\la(\bar{R})$ whose image in $\mathrm{Gr}_{\le\la_s}(\bar{R})$ is $[\bar\ga_s]$. Since the morphism $L^+\vin^\la\to\mathrm{Gr}_{\le\la_s}$ is formally smooth, there exists a lifting $\ga_s\in L^+\vin^\la(R)$ of $\bar{\ga_s}$ whose image in $\mathrm{Gr}_{\le\la_s}$ equals to the $[\ga_s]$ given above. Let $a_s:=\chi_+(\ga_s)\in\fC_+(\hat{\cO}_s\hat\otimes R)$. Then $\bar{a}_s=\bar{a}\in\fC_+(\hat{\cO}_s\hat\otimes \bar{R})$.\par 
For each $t\in T=S'\setminus S$, choose a trivialisation $\bar\tau_{\infty t}$ of $\bar{\cE}$ on $D_t\hat\otimes\bar R$, under which the Taylor expansion of $\varphi$ at $t$ corresponds to an element $\bar\ga_t\in L^+\gsc_+(\bar{R})$. We lift $\bar\ga_t$ arbitrarily to an element $\ga_t\in \gsc_+(\hat{\cO}_t\hat\otimes R)$ and let $a_t:=\chi_+(\ga_t)\in\fC_+(\hat{\cO}_t\hat\otimes R)$. Then in particular $\bar{a}_t=\bar{a}\in\fC_+(\hat{\cO}_t\hat\otimes\bar{R})$.
\subsubsection{}
Consider the local evaluation map of the base space
\[\cA_{\le\la}\to\bigoplus_{s\in S}\fC_+^{\la_s}(\cO_s/\varpi_s^{m_0d_s+b(\la_s)})\times\bigoplus_{t\in T}\fC(\cO_t/\varpi_t^{m_0d_t})\]

By the inequality in Definition~\ref{def:transversal-open}, this is a surjective linear map between $k$-vector spaces, hence it is smooth when viewed as morphisms between affine $k$-schemes. So there exists $a\in\cA_{\le\la}(R)$ lifting $\bar{a}\in\cA_{\le\la}(\bar{R})$ such that $a\equiv a_v\mod\varpi_v^{n_v}$ for all $v\in S'$.\par
Then for each $s\in S$, we have $a\equiv\chi_+(\ga_s)\mod\varpi_s^{m_0d_s+b(\la_s)}$. By Proposition~\ref{prop:chi-lifting} there exists $\theta_s\in \ving(R[[\varpi_s]])$ such that $\chi_+(\theta_s)=a$ and $\theta_s\equiv\ga_s\mod\varpi_s^{b(\la_s)}$. By Lemma~\ref{lem:vinG-approx}, this implies that the image of $\theta_s$ in $\mathrm{Gr}_{\la_s}(R)$ coincides with $[\ga_s]$.\par
For each $t\in T$, by Proposition~\ref{prop:chi-lifting} again, there exists $\theta_t\in\gsc(\hat{\cO}_t\hat\otimes R)$ such that $\chi_+(\theta_t)=a$.
\subsubsection{}
For each $v\in\mathrm{Supp}(\la)\setminus S$, the restriction of the Higgs-Vinberg field $\bar{\varphi}$ to $v$ lands in $[\ving^{\mathrm{rs}}/T\times\mathrm{Ad}(G)]$ by the first condition in Definition~\ref{def:transversal-open}. For each point $v$ in the complement of $\mathrm{Supp}(\la)\cup S'=\mathrm{Supp}(\la)\sqcup T$, the restriction of $\bar{\varphi}$ to $v$ lands in $[G_+^{\mathrm{sc,reg}}/T\times\mathrm{Ad}(G)]$ by Corollary~\ref{cor:0-dim-reg}.\par   
Therefore the restriction of $(\bar\cE,\bar\varphi)$ to $(X-S')\otimes_k\bar{R}$ lands in the stack 
\[[(\ving^{\mathrm{rs}}\cup G_+^{\mathrm{sc,reg}})/T\times\mathrm{Ad}(G)]\]
 which is a $\bB\cJ$ gerbe neutralized by a global Steinberg section $\epsilon_{\cL'}^w$. By the same reasoning as in Proposition~\ref{prop:P-smooth}, there exists a Higgs-Vinberg pairs $(\cE',\varphi')$ over $(X-S')\otimes_k R$ together with trivialisations $\tau_{\infty v}^\bullet$ of $\cE'$ over the formal punctured disc at each $v\in S'$ that
lifts $(\bar\cE,\bar\varphi)|_{(X-S')\otimes_k\bar{R}}$ and the restrictions of the trivialisations $\tau_{\infty v}$ to the punctured disc and moreover $\chi_+(\cE',\varphi')=a\in\Hom(X-S',\fC_+^\cL)$. 
\subsubsection{}
Finally we construct the desired lifting $(\cE,\varphi,\tau_{\infty S})$ by Beauville-Laszlo gluing lemma.  
For each $v\in S'$, restricting $\varphi'$ to the formal punctured disc $D_v^\bullet\hat\otimes R$ and using the trivialisation $\tau_{\infty v}^\bullet$, we obtain an element $\theta_v'\in\gsc_+(R((\varpi_v)))$ with $\chi_+(\theta_v')=a$. Recall that we have constructed elements $\theta_v\in L^+\vin^{\la_v}(R)\subset\gsc_+(R((\varpi_v)))$ with $\chi_+(\theta_v)=a$. Since $a\in\fC_+^{\mathrm{rs}}(R((\varpi_v)))$, the transporter $\mathrm{Isom}(\theta_v,\theta_v')$ 
from $\theta_v$ to $\theta_v'$ is a torsor under the smooth centralizer $\cJ_a|_{D_v^\bullet\hat\otimes R}$. After reduction mod $I$, we know that $\bar\theta_v$ and $\bar\theta_v'$ comes from a globally defined Higgs-Vinberg pair $(\cE',\varphi')$. In other words, $\mathrm{Isom}(\theta_v,\theta_v')$ has a $\bar{R}$-point. By smoothness, this $\bar{R}$-point lifts to an $R$-point of $\mathrm{Isom}(\theta_v,\theta_v')$. Consequently by Beauville-Laszlo lemma, the Higgs-Vinberg pairs $(\cE',\varphi')$ over $(X-S')\otimes_k R$ with the trivialisations $\tau_{\infty v}^\bullet$ over the formal punctured dics $D_v^\bullet\hat\otimes_kR$ can be glued with the Higgs-Vinberg pairs $(\cE_0,\theta_v)$ (where $\cE_0$ is the trivial $G$-torsor) on the formal discs $D_v\hat\otimes R$ to get a Higgs-Vinberg pair $(\cE,\varphi)\in\cM_{\le\la}^\flat(R)$. By construction it comes with tautalogical trivialisations $\tau_{\infty S}$ on $\sqcup_{s\in S}D_s\hat\otimes R$ lifting $(\bar\cE,\bar\varphi,\bar\tau_{\infty S})$ and its image under the evaluation map $\widetilde{\mathrm{ev}}_{NS}^\flat$ is the $R$-point $([\theta_s]=[\ga_s])_{s\in S}$ of $\prod_{s\in S}\mathrm{Gr}_{\le\la_s}$.
\begin{cor}\label{cor:CM}
The stack $\cM_{\le\la}^\flat$ is Cohen-MaCaulay and its open substack $\cM_\la^\flat$ is smooth.
\end{cor}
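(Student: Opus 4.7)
The plan is to transport the geometric properties of affine Schubert varieties across the smooth local evaluation map provided by \thmref{thm:transversality}. Recall that the theorem gives a smooth morphism
\[\mathrm{ev}_{NS}^\flat:\cM_{\le\la}^\flat\to\Bigl[L_{NS}^+ G\backslash\prod_{s\in S}\mathrm{Gr}_{\le\la_s}\Bigr]\]
and, by construction, the open substack $\cM_\la^\flat$ is precisely the preimage of the open substack $\bigl[L_{NS}^+ G\backslash\prod_{s\in S}\mathrm{Gr}_{\la_s}\bigr]$ corresponding to the open Schubert cells. So everything is reduced to controlling the singularities of the target of $\mathrm{ev}_{NS}^\flat$.

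First I would invoke the classical fact that, for each $s\in S$, the closed affine Schubert variety $\mathrm{Gr}_{\le\la_s}\subset\mathrm{Gr}_{\gad}$ is Cohen--Macaulay (this is due, in characteristic zero, to the global Springer/convolution resolution by Bott--Samelson varieties, and in arbitrary characteristic to Frobenius-splitting results of Mathieu and Faltings). The product $\prod_{s\in S}\mathrm{Gr}_{\le\la_s}$ is therefore Cohen--Macaulay, and since $L_{NS}^+ G$ is a smooth group scheme of finite type acting on it, the quotient stack $\bigl[L_{NS}^+G\backslash\prod_{s\in S}\mathrm{Gr}_{\le\la_s}\bigr]$ inherits the Cohen--Macaulay property (smooth morphisms preserve Cohen--Macaulayness in both directions, and the quotient presentation exhibits the stack and its atlas as related by a smooth morphism). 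Smoothness of $\mathrm{ev}_{NS}^\flat$ then propagates Cohen--Macaulayness back to $\cM_{\le\la}^\flat$, since a smooth morphism with Cohen--Macaulay target has Cohen--Macaulay source.

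For the second assertion, I would use that the open Schubert stratum $\mathrm{Gr}_{\la_s}$ is a single $L^+\gad$-orbit in the affine Grassmannian, hence smooth; its image inside $[L_{NS}^+G\backslash\mathrm{Gr}_{\le\la_s}]$ is therefore a smooth open substack. The product over $s\in S$ is smooth, and pulling back by the smooth map $\mathrm{ev}_{NS}^\flat$ yields smoothness of $\cM_\la^\flat=(\mathrm{ev}_{NS}^\flat)^{-1}\bigl[L_{NS}^+G\backslash\prod_{s\in S}\mathrm{Gr}_{\la_s}\bigr]$.

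The only delicate point is really the citation-level fact that closed affine Schubert varieties are Cohen--Macaulay; once that is granted, the argument is a routine descent of properties along a smooth morphism of stacks. No serious obstacle is expected beyond ensuring that the quotient stack presentation is set up correctly so that ``Cohen--Macaulay'' is unambiguous (i.e., equivalent to Cohen--Macaulayness of any smooth atlas), which is immediate here because $L_{NS}^+G$ is smooth.
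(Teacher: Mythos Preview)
Your proposal is correct and follows exactly the paper's approach: the paper's proof is a one-line citation of Theorem~\ref{thm:transversality} together with the Cohen--Macaulayness of $\mathrm{Gr}_{\le\la_s}$ and smoothness of $\mathrm{Gr}_{\la_s}$, and you have simply unpacked that argument with appropriate references and the routine verification that these properties pass through quotient stacks and smooth morphisms.
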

\begin{proof}
This follows from Theorem~\ref{thm:transversality} and the fact that $\mathrm{Gr}_{\le\la_s}$ is Cohen-MaCaulay and $\mathrm{Gr}_{\la_s}$ is smooth for all $s\in S$.
\end{proof}

\section{From global to local}\label{chapter:global-to-local}
In this section we finish the proof of Theorem~\ref{thm:dim-formula}.\par 

Let $\la\in X_*(T)_+$ and $\ga\in G(F)^{\mathrm{rs}}$. Suppose that $\kappa_G(\ga)=p_G(\la)$ and $\nu_\ga\le_\bQ\la$ so that the generalized affine Springer fibres $X_\ga^\la$ and $X_\ga^{\le\la}$ are nonempty. Let $a:=\chi_+(\ga_\la)\in\fC_+(\cO)\cap\fC_{\gsc_+}^{\mathrm{rs}}(F)$ where $\ga_\la\in G_+^\mathrm{sc,rs}(F)$ is defined in Lemma~\ref{lem:ga-la-lem}. Then we have isomorphisms
\[X_\ga^{\le\la}\cong\Sp_a,\quad X_\ga^\la\cong\Sp_a^0.\]
Moreover, the local Picard $P_a$ acts on $\Sp_a$ and $\Sp_a^{\mathrm{reg}}$ is the union of open orbits.

\subsection{Local constancy of Kottwitz-Viehmann varieties}
This subsection is devoted to the proof of the following:
\begin{thm}\label{thm:local-constancy}
There exists an integer $N$ such that for all $a'\in\fC_+(\cO_x)\cap\fC_{G_+}^{\mathrm{rs}}(F_x)$ with $a'\equiv a\mod\varpi^N$, $\Sp_{a'}$ equipped with the action of $P_{a'}$ is isomorphic to $\Sp_a$ equipped with the action of $P_a$.
\end{thm}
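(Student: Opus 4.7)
The plan is to construct an explicit isomorphism $\Psi : \Sp_a \xrightarrow{\sim} \Sp_{a'}$ using the local lifting property of $\chi_+$ established in Proposition~\ref{prop:chi-lifting}, and to verify that it intertwines the $P_a$- and $P_{a'}$-actions. Let $d := d_+(a) = \mathrm{val}(a^*\fD_+)$; this is finite since $a$ is generically regular semisimple. I will show that the theorem holds for $N := 2 m_0 d + b(\la) + h + 1$, where $m_0$ is the constant preceding Proposition~\ref{prop:chi-lifting}, $b(\la)$ is the non-negative integer from Lemma~\ref{lem:vinG-approx}, and $h$ is a conductor-type bound introduced below.

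An $R$-point of $\Sp_a$ is represented, after a faithfully flat base change on $R$ trivializing the $G$-torsor on $\spec R[[\varpi]]$, by an element $\delta \in \vin_{\gsc}(R[[\varpi]])$ with $\chi_+(\delta) = a$, considered modulo the $\mathrm{Ad}(G(R[[\varpi]]))$-action. Applied to the pair $(a,a')$, Proposition~\ref{prop:chi-lifting} produces a lift $\delta' \in \vin_{\gsc}(R[[\varpi]])$ with $\chi_+(\delta') = a'$ and $\delta' \equiv \delta \mod \varpi^{N - m_0 d}$. Since $N - m_0 d \ge b(\la)$, Lemma~\ref{lem:vinG-approx} shows that the $\mathrm{Ad}(G(R[[\varpi]]))$-orbit of $\delta'$ is determined by its residue modulo $\varpi^{b(\la)}$; in particular it is independent of the auxiliary choices in Proposition~\ref{prop:chi-lifting}. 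Moreover $\chi_+(\delta') \in \fC_+^{\mathrm{rs}}(F)$, so by Corollary~\ref{cor:ving-rs} the generic trivialization used to define $\Sp_{a'}$ is automatically matched. This defines a morphism of functors $\Psi : \Sp_a \to \Sp_{a'}$ which descends to a morphism of the associated stacks. Running the same procedure with $(a', a)$ swapped yields $\Psi':\Sp_{a'}\to\Sp_a$; the composition sends $\delta$ to some $\delta''$ with $\chi_+(\delta'')=a$ and $\delta''\equiv\delta \mod \varpi^{N-2m_0 d}\supset\varpi^{b(\la)}$, so Lemma~\ref{lem:vinG-approx} identifies $\delta''$ and $\delta$ in $\Sp_a(R)$. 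Hence $\Psi$ is an isomorphism of stacks.

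The main obstacle is the $P$-equivariance of $\Psi$, which requires a canonical isomorphism $P_a \cong P_{a'}$. The Galois description of the regular centralizer (Proposition~\ref{Galois-description-J-prop}) expresses $\cJ^1$ as $(\prod_{\overline{T_+}/\fC_+}(T\times\overline{T_+}))^W$, so the group scheme $J_a$ is determined by the pullback cameral cover $\spec R_a$ along $a:\spec\cO\to\fC_+$, and similarly for $J_{a'}$. A Hensel-lifting argument applied to the finite flat cover $\overline{T_+}\to\fC_+$ (which is generically \'etale with Galois group $W$) shows that for $N$ greater than a conductor bound $h = h(a)$ controlling the ramification of this cover along the image of $a$, the $\cO$-algebras $R_a$ and $R_{a'}$ are canonically isomorphic as $W$-modules; this induces canonical isomorphisms $J_a \cong J_{a'}$ and hence $P_a \cong P_{a'}$. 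Under these identifications, the $P$-actions on $\Sp_a$ and $\Sp_{a'}$ are both obtained from the universal $\bB\cJ$-action on $[\vin_{\gsc}/\mathrm{Ad}(G)]$ from Proposition~\ref{BJ-action-vin-G-prop}, and an inspection of the construction of $\Psi$ shows that it commutes with these actions. Absorbing $h$ into the constant $N$ gives the desired integer for which the theorem holds.
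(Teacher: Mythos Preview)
Your approach has two concrete gaps that prevent it from going through as written.

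First, Proposition~\ref{prop:chi-lifting} is an \emph{infinitesimal} lifting statement: the ideal $I\subset R$ is required to satisfy $I\cdot\fm=0$, and the congruence hypothesis is $a\equiv\chi_+(\delta)\bmod\varpi^N I$. When you attempt to deform from $a$ to $a'$ functorially in $R$, the difference $a'-a$ is a fixed nonzero element of $\varpi^N k[[\varpi]]$; it does not lie in $\varpi^N I\cdot R[[\varpi]]$ for any proper ideal $I\subset R$. So the proposition only applies when $R=k$ (taking $I=k$, $\fm=0$), and your construction of $\Psi$ does not define a morphism of functors on artin local $R$.

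Second, and more seriously, Lemma~\ref{lem:vinG-approx} does not say what you claim. It asserts that two elements of $L^+\ving^\la$ with the same residue mod $\varpi^{b(\la)}$ differ by \emph{right multiplication} by some $g\in G_+(\cO)$; this pins down the image in $\mathrm{Gr}_{\le\la}$, not the $\mathrm{Ad}(G(\cO))$-orbit. Even granting the Ad-orbit, a point of $\Sp_{a'}$ carries the extra datum $\iota'$ (the identification over $\spec F$ with the fixed $\gamma'_+$), and your construction never produces it: Corollary~\ref{cor:ving-rs} only says such an identification exists, not that there is a canonical one. Without $\iota'$ the lift $\delta'$ determines at best a $P_{a'}$-orbit in $\Sp_{a'}$, so $\Psi$ collapses to a map $\Sp_a\to[\Sp_{a'}/P_{a'}]$.

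The paper avoids both issues by a different mechanism. Rather than lifting each $\delta$ individually, it fixes the Steinberg sections $\gamma_0=\epsilon_+^w(a)$ and $\gamma_0'=\epsilon_+^w(a')$, shows (Lemma preceding the theorem) that membership of $g$ in $\Sp_a$ is detected by the single coset $\gamma_0 I_{\gamma_0}(\cO)$, and then proves that a $W$-equivariant isomorphism of cameral covers (available for $N$ large) yields a single $g\in G(\cO)$ with $\mathrm{Ad}(g)^{-1}(\gamma_0 I_{\gamma_0}(\cO))=\gamma_0' I_{\gamma_0'}(\cO)$. The isomorphism $\Sp_a\xrightarrow{\sim}\Sp_{a'}$ is then simply left translation $h\mapsto g^{-1}h$ on the affine Grassmannian, which is manifestly a morphism of schemes and carries the generic trivialization along automatically; $P$-equivariance follows from the same centralizer identification. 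Your final paragraph on the cameral-cover identification $J_a\cong J_{a'}$ is in fact the heart of the paper's argument, but it needs to be combined with the coset characterization rather than with pointwise lifting.
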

First we make some standard reductions. Notice that for any $a'\in\fC_+(\cO_x)\cap\fC_{G_+}^{\mathrm{rs}}(F_x)$,
$\Sp_{G,a'}$ is a union of certain connected components of $\Sp_{\gad,a'}$, ther latter of which is isomorphic to the quotient of $\Sp_{\gsc_+,a'}$ by the coweight lattice $X_*(\tsc)$ of the central torus of $\gsc_+$. Hence we may assume that $G=\gsc_+$ and for simplicity omit $G$ in the notation.\par 

Fix a Coxeter element $w\in\mathrm{Cox}(W,S)$, cf. \ref{coxeter-definition}. Let $\ga_0:=\epsilon_+^w(a)$ (resp. $\ga_0':=\epsilon_+^w(a')$) be the extended Steinberg sections for $a$ (resp. $a'$). Then we have canonical isomorphism between groups schemes over $\spec\cO$:
\[J_a\cong I_{\ga_0},\quad J_{a'}\cong I_{\ga_0'}.\]
\begin{lem}
For any $g\in G(F)$, we have $\mathrm{Ad}(g)^{-1}(\ga_0)\in \vin_{\gsc}(\cO)$ if and only if 
\[\mathrm{Ad}(g)^{-1}(\ga_0 I_{\ga_0}(\cO))\subset \vin_{\gsc}(\cO).\]
\end{lem}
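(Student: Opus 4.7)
The plan is to reduce the claim to an identification of the regular centralizer, via the canonical homomorphism $\chi_+^*\cJ \to \cI$ from Lemma~\ref{lem:reg-centralizer-Vin}. The ``if'' direction of the biconditional is immediate: evaluating the containment $\mathrm{Ad}(g)^{-1}(\ga_0 I_{\ga_0}(\cO)) \subset \vin_{\gsc}(\cO)$ at the identity section $1 \in I_{\ga_0}(\cO)$ recovers $\mathrm{Ad}(g)^{-1}(\ga_0) \in \vin_{\gsc}(\cO)$. So the content is in the forward direction.

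For the forward direction, I would take any $h \in I_{\ga_0}(\cO)$ and rewrite
\[
\mathrm{Ad}(g)^{-1}(\ga_0 h) \;=\; g^{-1}\ga_0 h g \;=\; (g^{-1}\ga_0 g)\cdot(g^{-1}h g),
\]
using that multiplication in the monoid $\vin_{\gsc}$ is a morphism of schemes and $G \subset \vin_{\gsc}$ as the unit group. The first factor $g^{-1}\ga_0 g$ is in $\vin_{\gsc}(\cO)$ by assumption. Since $\vin_{\gsc}(\cO)$ is closed under multiplication by $G(\cO)$, it remains to show that $g^{-1}hg \in G(\cO)$.

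This is where the key observation enters. Because $\ga_0 = \epsilon_+^w(a)$ factors through $\vin_{\gsc}^{\mathrm{reg}}$ (Proposition~\ref{extend-steinberg-section-prop}) and regularity is stable under the adjoint action of $G$, both $\ga_0$ and $g^{-1}\ga_0 g$ are $\cO$-points of $\vin_{\gsc}^{\mathrm{reg}}$. By Lemma~\ref{lem:reg-centralizer-Vin}, the canonical $G$-equivariant isomorphism $(\chi_+^{\mathrm{reg}})^*\cJ \cong \cI|_{\vin_{\gsc}^{\mathrm{reg}}}$ produces two identifications of the group scheme $J_a$ over $\spec\cO$: one with $I_{\ga_0}$ (via the point $\ga_0$) and one with $I_{g^{-1}\ga_0 g}$ (via the point $g^{-1}\ga_0 g$). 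The $G$-equivariance precisely asserts that the two identifications differ by conjugation by $g$. In particular, conjugation by $g$ sends $I_{\ga_0}(\cO)$ isomorphically onto $I_{g^{-1}\ga_0 g}(\cO) \subset G(\cO)$, so $g^{-1}hg \in G(\cO)$ as desired.

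The only subtle point is verifying that the $G$-equivariance of the isomorphism in Lemma~\ref{lem:reg-centralizer-Vin} really translates into the statement that the two identifications of $J_a$ differ by conjugation by $g$; but this is built into the definition of $G$-equivariance, since $G$ acts trivially on $\chi_+^*\cJ$ along the fibres of $\chi_+$ while it acts by conjugation on $\cI$. No further infinitesimal or dimension-theoretic input is needed, so the proof should be short.
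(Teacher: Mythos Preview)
Your argument has a genuine gap. You assert that ``regularity is stable under the adjoint action of $G$'' and conclude that $g^{-1}\ga_0 g$ is an $\cO$-point of $\vin_{\gsc}^{\mathrm{reg}}$. Stability of the regular locus under $\mathrm{Ad}(G)$ is correct as a statement about $R$-points for a fixed ring $R$, but here $g\in G(F)$ while $\ga_0\in\vin_{\gsc}^{\mathrm{reg}}(\cO)$: all you can conclude is $g^{-1}\ga_0 g\in\vin_{\gsc}^{\mathrm{reg}}(F)\cap\vin_{\gsc}(\cO)$, not that the $\cO$-point factors through the open $\vin_{\gsc}^{\mathrm{reg}}$. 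In fact it typically does not: the locus where it does is precisely $\Sp_a^{\mathrm{reg}}$, which is in general a proper open subset of $\Sp_a$ (this is the whole theme of the paper, cf.\ Remark~\ref{regular-components-rem}). So you cannot use the isomorphism $(\chi_+^{\mathrm{reg}})^*\cJ\cong\cI|_{\vin_{\gsc}^{\mathrm{reg}}}$ at the point $\ga:=g^{-1}\ga_0 g$ over $\spec\cO$.

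The repair is to use the \emph{second} clause of Lemma~\ref{lem:reg-centralizer-Vin}: the isomorphism over the regular locus extends uniquely to a $G$-equivariant homomorphism $\chi_+^*\cJ\to\cI$ on all of $\vin_{\gsc}$. Pulled back along $\ga\in\vin_{\gsc}(\cO)$ this gives a homomorphism $J_a\to I_\ga$ of group schemes over $\spec\cO$. Over $F$ the point $\ga$ is regular, so the homomorphism is an isomorphism there, and by $G$-equivariance it agrees over $F$ with the composition $J_{a,F}\cong I_{\ga_0,F}\xrightarrow{\mathrm{Ad}(g)^{-1}} I_{\ga,F}$. Hence for $h\in I_{\ga_0}(\cO)=J_a(\cO)$ the element $\mathrm{Ad}(g)^{-1}(h)\in G(F)$ coincides with the image of $h$ under $J_a(\cO)\to I_\ga(\cO)\subset G(\cO)$, and your computation $\mathrm{Ad}(g)^{-1}(\ga_0 h)=\ga\cdot\mathrm{Ad}(g)^{-1}(h)\in\vin_{\gsc}(\cO)$ goes through. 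This is exactly the paper's proof.
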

\begin{proof}
Since $\ga_0\in \ga_0 I_{\ga_0}(\cO)$, the condition is sufficient. Now assume that $\ga:=\mathrm{Ad}(g)^{-1}(\ga_0)\in \vin_{\gsc}(\cO)$. Then the centralizer $I_\ga$ is a group scheme over $\spec\cO$.
By Lemma~\ref{lem:reg-centralizer-Vin}, the isomorphism of $F$ groups
\[\mathrm{Ad}(g)^{-1}: J_{a,F}=I_{\ga_0,F}\to I_{\ga,F}\]
extends to $\spec\cO$. Thus we have
\[\mathrm{Ad}(g)^{-1}(I_{\ga_0}(\cO))\subset I_\ga(\cO)\subset G(\cO)\]
from which we obtain
\[\mathrm{Ad}(g)^{-1}(\ga_0 I_{\ga_0}(\cO))=\ga\mathrm{Ad}(g)^{-1}(I_{\ga_0}(\cO))\subset\vin_{\gsc}(\cO).\]
\end{proof}

\begin{lem}
Let $a,a'\in\fC_+(\cO)\cap\fC_{\gsc_+}^{\mathrm{rs}}(F)$ with $a\equiv a'\mod\varpi^N$. Suppose that there exists a $W$-equivariant isomorphism between the cameral covers $\widetilde{X}_a$ and $\widetilde{X}_{a'}$ lifting the identity modulo $\varpi^N$. Let $\ga_0:=\epsilon_+^w(a)$ and $\ga_0'=\epsilon_+^w(a')$. Then there exists $g\in G(\cO)$ such that
\[\mathrm{Ad}(g)^{-1}(\ga_0 I_{\ga_0}(\cO))=\ga_0' I_{\ga_0'}(\cO)\] 
\end{lem}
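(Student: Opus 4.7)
The plan is to exhibit an integral element $g\in G(\cO)$ witnessing the claimed equality of cosets by combining a smoothness argument for the orbit map with the identification of centralizers furnished by the cameral-cover hypothesis.

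First, since $\epsilon_+^w$ is a morphism of $\cO$-schemes restricted to $\cO$-valued points (Proposition~\ref{extend-steinberg-section-prop}), the hypothesis $a\equiv a'\pmod{\varpi^N}$ yields $\ga_0\equiv\ga_0'\pmod{\varpi^N}$ in $\vin_{\gsc}(\cO)$, and both elements lie in $\vin_{\gsc}^w(\cO)$. Next, apply the Galois description of the regular centralizer (Proposition~\ref{Galois-description-J-prop} and Lemma~\ref{J_a-flat-Galois-description-lem}) to promote the given $W$-equivariant isomorphism $\widetilde{X}_a\cong\widetilde{X}_{a'}$ to an isomorphism $\iota\colon\cJ_a\xrightarrow{\sim}\cJ_{a'}$ of smooth commutative $\cO$-group schemes lifting the identity modulo $\varpi^N$. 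Combining $\iota$ with the identifications $\cJ_a\cong I_{\ga_0}$ and $\cJ_{a'}\cong I_{\ga_0'}$ coming from the Steinberg section via Lemma~\ref{lem:reg-centralizer-Vin} produces an $\cO$-isomorphism $I_{\ga_0}\cong I_{\ga_0'}$ congruent to the tautological identification (through the common reduction) modulo $\varpi^N$.

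Second, construct $g$ by Hensel lifting. Consider the closed subscheme
\[Y:=\{\,g\in G\,:\,g^{-1}\ga_0 g\in\ga_0'I_{\ga_0'}\,\}\]
over $\cO$, viewed as the pullback of the smooth closed subscheme $\ga_0'I_{\ga_0'}\subset\vin_{\gsc}$ (a translate of the smooth centralizer group scheme) along the conjugation morphism $\sigma\colon G\to\vin_{\gsc}^w$, $g\mapsto g^{-1}\ga_0 g$. By the $\bB\cJ$-gerbe description in Proposition~\ref{BJ-action-vin-G-prop}, $\sigma$ is smooth; the transversality of $\sigma$ with the closed subscheme $\ga_0'I_{\ga_0'}$ at points close to $(1,\ga_0')$ follows because the tangent space to $\ga_0'I_{\ga_0'}$ at $\ga_0'$ is $\mathrm{Lie}(I_{\ga_0'})$, which is a complement to the image of $\mathrm{Lie}(G)\to T_{\ga_0'}\vin_{\gsc}^w$ modulo this direction. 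The isomorphism $\iota$ produced in the first phase ensures that this complementarity persists to the full $\cO$-thickening, not just on the special fibre, so $Y$ is smooth over $\cO$ of the expected relative dimension in a neighbourhood of the identity section.

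Third, the congruence $\ga_0\equiv\ga_0'\pmod{\varpi^N}$ shows that $1\in G(\cO/\varpi^N\cO)$ satisfies the defining equation of $Y$ modulo $\varpi^N$. By Hensel's lemma applied to the smooth scheme $Y$, this approximate point lifts to a genuine $\cO$-point $g\in Y(\cO)$. By construction one has $g^{-1}\ga_0 g=\ga_0'h$ for some $h\in I_{\ga_0'}(\cO)$, and the commutativity of the regular centralizer gives $I_{g^{-1}\ga_0 g}=I_{\ga_0'h}=I_{\ga_0'}$. Therefore
\[\mathrm{Ad}(g)^{-1}\bigl(\ga_0\,I_{\ga_0}(\cO)\bigr)=(g^{-1}\ga_0 g)\,I_{g^{-1}\ga_0 g}(\cO)=\ga_0'h\,I_{\ga_0'}(\cO)=\ga_0'\,I_{\ga_0'}(\cO),\]
which is the desired equality.

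The main obstacle is the transversality/smoothness step: it is essential that the cameral identification $\iota$ make the tangent-space matching consistent throughout all infinitesimal neighbourhoods in $\spec\cO/\varpi^N$, not merely on the closed fibre; without the cameral hypothesis the scheme $Y$ could fail to be smooth at the order required and the Hensel lifting would break down.
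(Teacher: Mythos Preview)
Your proof has a genuine gap in the smoothness claim for $Y$. Two of your assertions are false as stated:

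\emph{First}, the conjugation map $\sigma\colon G\to\vin_{\gsc}^w$, $g\mapsto g^{-1}\gamma_0 g$, is \emph{not} smooth. Its image is contained in the single fibre $\chi_+^{-1}(a)\cap\vin_{\gsc}^w$, which has codimension $2r$ in $\vin_{\gsc}^w$. The $\bB\cJ$-gerbe description (Proposition~\ref{BJ-action-vin-G-prop}) only gives that $G\times\fC_+\to\vin_{\gsc}^w$ is a $\cJ$-torsor; restricting to $G\times\{a\}$ destroys smoothness to the ambient space.

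\emph{Second}, the coset $\gamma_0' I_{\gamma_0'}$ is \emph{not} a smooth closed subscheme of $\vin_{\gsc}$ in general. Over the special fibre, $\bar\gamma_0'\in\vin_{\gsc}^{\mathrm{reg}}(k)$ may lie in the nilpotent cone, in which case left multiplication by $\bar\gamma_0'$ is not injective. Concretely, for $\gsc=\mathrm{SL}_2$ with $\bar\gamma_0=\bar\gamma_0'=\begin{pmatrix}0&1\\0&0\end{pmatrix}$, one has $I_{\bar\gamma_0}=\{\begin{pmatrix}a&b\\0&a\end{pmatrix}\}$ but $\bar\gamma_0 I_{\bar\gamma_0}=\{\begin{pmatrix}0&a\\0&0\end{pmatrix}:a\neq 0\}$ is only one-dimensional. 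In this example your $Y$ has generic fibre of dimension $2$ (a normaliser of a torus) but special fibre of dimension $3$ (a Borel), so $Y$ is not even flat over $\cO$, and Hensel cannot be applied. Your sentence ``the isomorphism $\iota$ \ldots\ ensures that this complementarity persists'' does no actual work: you never use $\iota$ beyond constructing it.

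The paper's argument circumvents exactly this degeneracy. It applies $\iota_I$ to the element $\gamma_0\in I_{\gamma_0}(F)$ itself, producing $\iota_I(\gamma_0)\in I_{\gamma_0'}(F)\cap\vin_{\gsc}(\cO)$ with the crucial property $\chi_+(\iota_I(\gamma_0))=\chi_+(\gamma_0)=a$ (because $\iota_I$ is realised by a conjugation over $\bar F$). Now $(\gamma_0,\iota_I(\gamma_0))$ is an $\cO$-point of $\vin_{\gsc}^w\times_{\fC_+}\vin_{\gsc}^w$, and the map $G\times\vin_{\gsc}^w\to\vin_{\gsc}^w\times_{\fC_+}\vin_{\gsc}^w$ is a genuine $\cJ$-torsor, hence smooth; Hensel then produces $g\in G(\cO)$ with $\mathrm{Ad}(g)^{-1}\gamma_0=\iota_I(\gamma_0)$. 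The remaining step, that $\iota_I(\gamma_0)\in\gamma_0' I_{\gamma_0'}(\cO)$, is handled separately by the approximation Lemma~\ref{lem:vinG-approx}. The point you are missing is that the cameral isomorphism must be used to manufacture a target element \emph{in the same $\chi_+$-fibre as $\gamma_0$}, so that the only smoothness needed is that of the action map relative to $\fC_+$.
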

\begin{proof}
We follow the argument of \cite[Lemme 3.5.4]{Ngo10}. 
Let $\widetilde{X}_a=\spec R_a$ and $\widetilde{X}_{a'}=\spec R_{a'}$ where $R_a$, $R_{a'}$ are finite flat $\cO$-algebras. Let $F_a:=R_a\otimes_\cO F$ (resp. $F_{a'}:=F_{a'}\otimes_\cO F$) and $R_a^\flat$ (resp. $R_{a'}^\flat$) be the normalization of $R_a$ (resp. $R_{a'}$) in $F_a$ (resp. $F_{a'}$). \par 
By assumption, we have $R_a/\varpi^N=R_{a'}/\varpi^N$ and there exists a $W$-equivariant $\cO$-isomorphism 
\[\iota: R_a\xrightarrow{\sim} R_{a'}\]
that lifts the identity modulo $\varpi^N$.

By Proposition~\ref{Galois-description-J-prop}, the isomorphism $\iota: R_a\cong R_{a'}$ induces an isomorphism $\iota_I: I_{\ga_0}\to I_{\ga_0'}$ between group schemes over $\spec\cO$. Since $\ga_0\in I_{\ga_0}(F)$, we have $\iota_I(\ga_0)\in I_{\ga_0'}(F)$. We can choose $h\in G(R_a^\flat)$ and $h'\in G(R_{a'}^\flat)$ such that on $F$-points, the map $\iota_I$ is given by the following composition
\begin{equation}\label{eq:I-gamma-0-isom}
I_{\ga_0}(F)\xrightarrow{\sim} T(F_a)^W\xrightarrow{\iota} T(F_{a'})^W\xrightarrow{\sim} I_{\ga_0'}(F).
\end{equation}
where the first map is $\mathrm{Ad}(h)$ the and third map is $\mathrm{Ad}(h')^{-1}$. In other words, $\iota_I=\mathrm{Ad}(h'^{-1}\iota(h))$ on $F$-points. In particular, we have
\[\chi_+(\iota_I(\ga_0))=\chi_+(\ga_0)=a.\]
The assumption that $\iota$ is identity modulo $\varpi^N$ implies that $\mathrm{Ad}(h'^{-1}\iota(h))\equiv\mathrm{Id}\mod\varpi^N$. Thus we get
\[\iota (I_{\ga_0}(F)\cap\vin_{\gsc}(\cO))\subset I_{\ga_0'}(F)\cap\vin_{\gsc}(\cO).\]
In particular, we have $\iota(\ga_0)\in I_{\ga_0'}\cap\vin_{\gsc}(\cO)$ and moreover 
\[\iota_I(\ga_0)=\ga_0=\ga_0'\text{ in }\vin_{\gsc}^w(\cO/\varpi^N).\]   
Since the map 
\[G\times\vin_{G}^w\to\vin_{G}^w\times_{\fC_+}\vin_{G}^w\]
is smooth and surjective, there exists $g\in G(\cO)$ with $g\equiv 1\mod\varpi^N$ such that $\mathrm{Ad}(g)^{-1}(\ga_0)=\iota_I(\ga_0)$. Therefore 
\[\mathrm{Ad}(g)^{-1}(I_{\ga_0})=I_{\iota(\ga_0)}=I_{\ga_0'}.\]
Finally by Lemma~\ref{lem:vinG-approx}, we have $(\ga_0')^{-1}\iota_I(\ga_0)\in G(\cO)\cap I_{\ga_0'}(F)=I_{\ga'}(\cO)$ which implies that $\iota_I(\ga_0)\in\ga_0' I_{\ga_0'}(\cO)$ and hence we are done.
\end{proof}

\subsection{Dimension of Kottwitz-Viehmann varieties}
By Theorem~\ref{dim-reg-locus-thm}, the dimension formula for $X_\ga^\la\cong\Sp_a$ is reduced to the following statement which we prove in this subsection:
\begin{thm}\label{thm:dim-equal}
$\dim\Sp_a=\dim P_a$.
\end{thm}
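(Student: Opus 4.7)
The strategy is to globalize $a$ into a Hitchin-Frenkel-Ng\^o datum on a projective curve and apply the product formula (Theorem~\ref{thm:product-formula}), arranging the global datum so that every local contribution away from a distinguished point is trivial, and then deducing the local dimension equality from a global one. Standard lattice considerations reduce to the case $G=\gsc_+$: $\Sp_{\gad,a}$ is the quotient of $\Sp_{\gsc_+,a}$ by a free action of the discrete lattice $X_*(\tsc)$, and $\Sp_{G,a}$ is a union of connected components of $\Sp_{\gad,a}$, so neither operation affects the dimensions in the statement, and the corresponding claim for $P_a$ is analogous. By local constancy (Theorem~\ref{thm:local-constancy}), I may further assume that $a$ is the formal germ at a chosen closed point $x_0$ of a global section $a^{\mathrm{glob}}\colon X\to\fC_+^\cL$ on some smooth projective curve $X$ with a $\tsc$-torsor $\cL$ admitting a $c$-th root $\cL'$.

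By inflating $\deg\omega_i(\cL)$ sufficiently and varying $a^{\mathrm{glob}}$ inside the fibre of $\beta_\cL$ over a generic $b\in\cB_\cL^\circ$ whose zero divisor is supported at $x_0$, Corollary~\ref{cor:ani-complement} combined with a Bertini-type argument on $\cA_{\le\la}^\heartsuit$ allows one to arrange that $a^{\mathrm{glob}}\in\cA^{\mathrm{ani}}_\cL\cap\cA^\flat_{\le\la}$ with $S=\{x_0\}\subset\mathrm{Supp}(\la)$; that the discriminant divisor $\Delta_{a^{\mathrm{glob}}}$ has multiplicity $1$ at every $x\ne x_0$; and that the germ of $a^{\mathrm{glob}}$ at $x_0$ agrees with $a$ to the required order. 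At each point $x\ne x_0$ the local coweight is trivial and $d(a^{\mathrm{glob}}_x)=1$, so Corollary~\ref{cor:0-dim-reg} (applied with central coweight $\la=0$) yields $\dim\Sp_{a^{\mathrm{glob}}_x}=\dim P_{a^{\mathrm{glob}}_x}=0$. Substituting into the product formula collapses it to
\[
\dim\cM_{a^{\mathrm{glob}}}-\dim\cP_{a^{\mathrm{glob}}}=\dim\Sp_a-\dim P_a,
\]
so it remains to prove the global equality $\dim\cM_{a^{\mathrm{glob}}}=\dim\cP_{a^{\mathrm{glob}}}$.

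The main obstacle is this global equality, and I would extract it from Cohen-Macaulay-ness of the transversal locus combined with the $\bB\cJ$-gerbe structure on the regular locus. For each Coxeter element $w\in\mathrm{Cox}(W,S)$, the global analogue of Proposition~\ref{BJ-action-vin-G-prop} (applied pointwise on $X$) shows that the open substack $\cM_\cL^w\cap\cM_\cL^{\mathrm{reg}}$ is a $\cP_\cL$-gerbe over $\cA_\cL$ neutralized by the global Steinberg section $\epsilon^w_{\cL'}$ of \S\ref{sec:global-Steinberg}; hence it has the same dimension as $\cP_\cL$ over $\cA_\cL$. Since $\cM_{\le\la}^\flat$ is Cohen-Macaulay by Corollary~\ref{cor:CM} and contains the union $\bigcup_{w\in\mathrm{Cox}(W,S)}(\cM_\cL^w\cap\cM_\cL^{\mathrm{reg}})\cap\cM_{\le\la}^\flat$ as a dense open substack, equidimensionality yields
\[
\dim\cM_{\le\la}^\flat=\dim\bigl(\cP_\cL\times_{\cA_\cL}\cA_{\le\la}^\flat\bigr).
\]
Since $\cA_{\le\la}^\flat$ is smooth and $\cP_\cL$ is smooth by Proposition~\ref{prop:P-smooth}, miracle flatness applied to $h_{\le\la}^\flat$ (Cohen-Macaulay source, regular base, total dimensions matching) forces $h_{\le\la}^\flat$ to be flat with fibres equidimensional of dimension $\dim P_\bullet$; the properness of $h^{\mathrm{ani}}$ (Theorem~\ref{thm:ani-proper}) ensures this is realised at every $a^{\mathrm{glob}}\in\cA^\flat_{\le\la}\cap\cA^{\mathrm{ani}}_\cL$. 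Thus $\dim\cM_{a^{\mathrm{glob}}}=\dim\cP_{a^{\mathrm{glob}}}$, and combining with the collapsed product formula completes the proof.
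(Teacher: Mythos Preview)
Your globalization and the reduction via the product formula to the single global equality $\dim\cM_{a^{\mathrm{glob}}}=\dim\cP_{a^{\mathrm{glob}}}$ are correct and match the paper's strategy. The gap is in your proof of that global equality: you invoke ``miracle flatness'' with hypotheses ``Cohen--Macaulay source, regular base, total dimensions matching'', but this is not what miracle flatness says. The statement (EGA IV, 6.1.5, or Matsumura 23.1) requires as \emph{input} that every fibre has dimension $\dim\cM_{\le\la}^\flat-\dim\cA_{\le\la}^\flat$; only then does it output flatness. Knowing that the total space has the right dimension (from density of the regular locus) together with the lower bound $\dim\cM_{a_+}\ge\dim\cP_{a_+}$ coming from the $\cP$-torsor structure does \emph{not} prevent a special fibre from having strictly larger dimension: the blow-up of a smooth surface at a point maps to the surface with Cohen--Macaulay (indeed smooth) irreducible source, regular base, generic fibre a point, and yet one special fibre is a $\bP^1$. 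So your argument, as written, does not bound $\dim\cM_{a^{\mathrm{glob}}}$ from above, and that upper bound is precisely the content of the theorem. (A secondary point: Cohen--Macaulay alone does not give global equidimensionality without connectedness, and the density of $\cM^{\mathrm{reg}}$ in $\cM_{\le\la}^\flat$ is not established in the paper and is not obvious, since the introduction explicitly notes that the regular locus can fail to be dense in individual fibres.)

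The paper supplies the missing upper bound by a deformation argument that anchors to a case where the dimension is already known. One introduces the one-parameter family $a_s:=a(s\varpi_x+(1-s)\varpi_x^e)$, so that $a_1=a$ while $a_0=a(\varpi_x^e)$ is \emph{split}. One then globalizes the whole family to a curve $C'\subset\cA_\cL^{\mathrm{ani}}$ lying in the locus where away from $x$ the discriminant is reduced and disjoint from $\mathrm{Supp}(\la)$. At the split endpoint, Corollary~\ref{dim-unr-cor} gives $\dim\Sp_{a_0}=\dim P_{a_0}$, so the product formula yields $\dim\cM_{\sigma(a_0)}=\dim\cP_{\sigma(a_0)}$. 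Now properness of $h^{\mathrm{ani}}$ (Theorem~\ref{thm:ani-proper}) plus upper semicontinuity of fibre dimension forces $\dim\cM_{\sigma(a_s)}\le\dim\cM_{\sigma(a_0)}=\dim\cP_{\sigma(a_0)}=\dim\cP_{\sigma(a_s)}$ for $s\ne 0$, the last equality by smoothness of $\cP$. Combined with the lower bound from the regular locus this gives equality, and the product formula at $\sigma(a_s)$ then yields $\dim\Sp_a=\dim P_a$. The key point your argument lacks is precisely this anchor: an \emph{a priori} fibre (the split one) where the equality is known, from which upper semicontinuity propagates the bound.
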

If $C\subset G$ is the maximal torus in the center of $G$, then $\Sp_{G/C,a}\cong\Sp_{G,a}/X_*(C)$ and similar isomorphism holds for the local Picard $P_a$. Thus we may assume that $G$ is semisimple.\par 
Let $X$ be a projective smooth curve over $k$ and $x\in X$ a closed point. Let $\cO_x$ be the completed local ring at $x$ and $F_x$ its fraction field. Choose a uniformiser $\varpi_x$ at $x$ so that we have $\cO_x=k[[\varpi_x]]$ and $F_x=k((\varpi_x))$. Also we let $X'=X-\{x\}$ be the open curve.\par 

We view $a\in\fC_+(\cO_x)$ as a power series in $\varpi_x$ with coefficients in $\fC_+$. Form the Cartesian diagram
\[\xymatrix{
X_{a}\ar[r]\ar[d]_{\pi_a} & \overline{T_+}\ar[d]^{\pi}\\
\spec\cO\ar[r]^a & \fC_+
}\]
where $X_{a}=\spec R_a$ for a finite flat $\cO$ algebra $R_a$. Moreover, $F_a=R_a\otimes_\cO F$ is a product of finite tamely ramified extension of $F$ of degree $e$ by our assumption that $\mathrm{char}(k)$ is coprime to the order of Weyl group. Then $a(\varpi_x^e)\in\fC_+(\cO_x)\cap\fC_{\gsc_+}^{\mathrm{rs}}(F_x)$ will be a split conjugacy class. \par 
For each $s\in k$ we define

\begin{equation}\label{eq:local-perturbation}
a_s:= a(s\varpi_x+(1-s)\varpi_x^e)\in\fC_+(\cO)\cap\fC_{G_+}(F)^{\mathrm{rs}}.
\end{equation}

Then $a_1=a$ and $a_0=a(\varpi_x^e)$. For each $s\ne0$, $\Sp_{a_s}$ is isomorphic to $\Sp_{a}$ since $a_s$ is obtained from $a=a_1$ by changing uniformizer.\par 
\subsubsection{}
Let $N>0$ be a positive integer such that both $\Sp_{a}$ and $\Sp_{a_0}$ only depends on $a$ (resp. $a_0$) modulo $\varpi_x^N$.  Then for all $s\in k$, $\Sp_{a_s}$ only depends on $a_s$ modulo $\varpi_x^N$.  \par 
Now we choose a $\tsc$-torsor $\cL$ on $X$ trivialized on the formal neighbourhood of $x$ such that
\begin{enumerate}
\item There exists a $\tsc$-torsor $\cL'$ and an isomorhpism $(\cL')^{\otimes c}\cong\cL$;
\item For all $y\in X'=X-x$, choosing a trivialisation of $\cL$ on a formal neighbourhood of $y$, the local evaluation map
\begin{equation}\label{eq:ev-C+}
\cA_\cL=H^0(X,\fC_+^\cL)\to\fC_+(\cO_x/\varpi_x^N)\times\fC_+(\cO_y/\varpi_y^2)
\end{equation}
is surjective. 
\end{enumerate}
By Riemann-Roch, condition 2 is satisfied if for all $1\le i\le r$ we have $\deg(\alpha_i(\cL))\ge 2g+N$ and $\deg(\omega_i(\cL))>2g+N$.\par
Recall that for each $a_+\in\cA_\cL^\heartsuit$, we associate an $X_*(\tad)_+$-valued divisor $\la_{a_+}$ on $X$ as in \S~\ref{sec:lambda-divisor}.
\begin{lem}\label{lem:constructible-A-Sigma}
Let $\Sigma\subset X$ be a finite subset. The subset $\cA_\cL^\Sigma\subset\cA_\cL^\heartsuit$ consisting of $a_+\in\cA_\cL^\heartsuit$ such that 
\[\mathrm{Supp}(\la_{a_+})\cap\mathrm{Supp}(\Delta_{a_+})\subset\Sigma\]
is constructible.
\end{lem}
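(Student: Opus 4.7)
The plan is to exhibit $\cA_\cL^\Sigma$ as the complement, inside $\cA_\cL^\heartsuit$, of the image of a closed subscheme under a finite-type projection, and then invoke Chevalley's theorem on constructible sets.

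Set $U:=X\setminus\Sigma$ and consider the product $\cA_\cL^\heartsuit\times U$, which is a scheme of finite type over $k$ (since $\cA_\cL\cong\cB_\cL\oplus\bigoplus_{i=1}^rH^0(X,\omega_i(\cL))$ is a finite-dimensional affine space and $\cA_\cL^\heartsuit$ is open in it). Inside this product I define
\[
Z:=\{(a_+,x)\in\cA_\cL^\heartsuit\times U\,:\,x\in\mathrm{Supp}(\la_{a_+})\cap\mathrm{Supp}(\Delta_{a_+})\}.
\]
The first step is to check that $Z$ is closed. The condition $x\in\mathrm{Supp}(\la_{a_+})$ means that one of the coordinates $b_i$ of $\beta_\cL(a_+)=(b_1,\dotsc,b_r)$ vanishes at $x$; this is cut out in $\cB_\cL\times X$ by the universal evaluation map, so pulling back along $\beta_\cL\times\mathrm{id}_U$ yields a closed subset of $\cA_\cL^\heartsuit\times U$. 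The condition $x\in\mathrm{Supp}(\Delta_{a_+})$ means that $a_+(x)$ lands in the extended discriminant divisor $\fD_+^\cL$, which is the vanishing locus of $\mathrm{Disc}_+$; again this is closed via the universal evaluation $\cA_\cL\times X\to\fC_+^\cL$. The intersection of these two closed subsets is $Z$, so $Z$ is closed.

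For the second step, I take the first projection $\mathrm{pr}_1:\cA_\cL^\heartsuit\times U\to\cA_\cL^\heartsuit$, which is of finite type (the fibres are the finite-type scheme $U$). Chevalley's theorem on the image of constructible sets under finite-type morphisms of noetherian schemes then gives that $\mathrm{pr}_1(Z)$ is a constructible subset of $\cA_\cL^\heartsuit$. But unwinding the definitions, $\mathrm{pr}_1(Z)$ is exactly the set of those $a_+\in\cA_\cL^\heartsuit$ such that $\mathrm{Supp}(\la_{a_+})\cap\mathrm{Supp}(\Delta_{a_+})$ meets $U$, i.e.\ is not contained in $\Sigma$. Hence
\[
\cA_\cL^\Sigma=\cA_\cL^\heartsuit\setminus\mathrm{pr}_1(Z)
\]
is the complement of a constructible set in the noetherian scheme $\cA_\cL^\heartsuit$, and is therefore itself constructible. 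I expect no serious obstacle here; the only mildly delicate point is identifying the two support conditions with the zero loci of explicit algebraic sections (so that the closedness of $Z$ is genuinely algebraic and not just set-theoretic), but this is immediate from the descriptions of $\la_{a_+}$ and $\fD_+^\cL$ given earlier in the paper.
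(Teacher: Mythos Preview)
Your proposal is correct and follows essentially the same route as the paper: both define an incidence locus in $\cA_\cL^\heartsuit\times(X\setminus\Sigma)$ recording points lying in both $\mathrm{Supp}(\la_{a_+})$ and $\mathrm{Supp}(\Delta_{a_+})$, and then project to $\cA_\cL^\heartsuit$ using Chevalley's theorem. Your write-up is in fact slightly more careful, since you correctly identify the projected image as the \emph{complement} of $\cA_\cL^\Sigma$ (the paper's text states it is $\cA_\cL^\Sigma$ itself, an evident slip that does not affect the conclusion).
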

\begin{proof}
For each $1\le i\le r$, consider the closed subscheme $\cD_i\subset \cA_\cL^\heartsuit \times X$ whose fibre over $a_+\in\cA_\cL^\heartsuit$ is the effective divisor $D(b_i)$ where $b_i$ is the $i$-th coordinate of $\beta_\cL(a_+)$ as above. Similarly, we have the closed subscheme $\Delta\subset \cA_\cL^\heartsuit\times X$ whose fibre over $a_+$ is the discriminant divisor $\Delta_{a_+}$. Let $\cD_i^\Sigma=\cD_i\cap (\cA_\cL^\heartsuit\times (X-\Sigma))$ and $\Delta^\Sigma:=\Delta\cap (\cA_\cL^\heartsuit\times (X-\Sigma))$. Then $\cD_i^\Sigma\cap\Delta^\Sigma$ is a locally closed subset of $\cA_\cL^\heartsuit\times X$. By construction $\cA_\cL^\Sigma$ is the image of $\bigcup\limits_{1\le i\le r}(\cD_i^\Sigma\cap\Delta^\Sigma)$ in $\cA_\cL^\heartsuit$, hence constructible. 
\end{proof}
\subsubsection{}
The one-parameter family \eqref{eq:local-perturbation} defines a curve $C$ in $\fC_+(\cO_x/\varpi_x^N)$. Let $L_C\subset\cA_\cL$ be the closed subset defined as the inverse image of $C$ under the map \eqref{eq:ev-C+}. For all $s\in k$, let $L_{a_s}\subset\cA_\cL$ be the inverse image of $a_s$ under the map \eqref{eq:ev-C+}. Since $a_s\in\fC_{\gsc}^{\mathrm{rs}}(F)$ for all $s\in k$, we have $L_C\subset\cA_\cL^\heartsuit$.
\begin{defn}\label{def:Z-C}
Let $Z_C\subset L_C$ be the subset consisting of $a_+\in L_C$ with $b=\beta_\cL(a_+)$ such that
\begin{itemize}
\item $a_+\in\cA_\cL^{\mathrm{ani}}$;
\item $\mathrm{Supp}(\la_{a_+})\cap\mathrm{Supp}(\Delta_{a_+})\subset\{x\}$;
\item $a_+(X')$ intersects the discriminant divisor $\fD_+^\cL$ transversally, where $X'=X-\{x\}$.
\end{itemize}
\end{defn}
\begin{lem}\label{lem:Z-C}
$Z_C$ is a constructible subset of $L_C$ that is fibrewise dense with respect to the projection $L_C\to C$. In particular, there exists a fibrewise dense open subset $U_C$ of $L_C$ such that $U_C\subset Z_C$.
\end{lem}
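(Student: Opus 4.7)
The plan is to treat constructibility and fibrewise density separately.

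\textbf{Constructibility.} This is a straightforward assembly from earlier results. The anisotropy condition is open on $\cA_\cL^\heartsuit$ by Corollary~\ref{cor:ani-complement}, hence cuts out an open subset of $L_C$. The support condition $\mathrm{Supp}(\la_{a_+})\cap\mathrm{Supp}(\Delta_{a_+})\subset\{x\}$ is constructible by Lemma~\ref{lem:constructible-A-Sigma} applied with $\Sigma=\{x\}$. The transversality condition is open, since its failure means the pullback divisor $a_+^*\fD_+^\cL|_{X'}$ acquires a point of multiplicity $\ge 2$, which is a closed condition in the parameter. Thus $Z_C$ is the intersection of two opens with a constructible subset of $L_C$, hence constructible.

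\textbf{Fibrewise density.} Fix $s\in k$ and consider the fibre $L_{a_s}\subset L_C$. Since the evaluation map \eqref{eq:ev-C+} is a surjective linear map between vector spaces, $L_{a_s}$ is an affine space. The plan is to show that the failure locus of each of the three defining conditions of $Z_C$ has positive codimension in $L_{a_s}$. For the anisotropy condition, I would invoke the codimension bound of $N-rg$ from Corollary~\ref{cor:ani-complement} on the complement of the anisotropic locus in each fibre of $\cA_\cL^\heartsuit\to\cB_\cL^\circ$; by choosing $\cL$ with $\deg\omega_i(\cL)$ sufficiently large at the outset (while preserving the surjectivity of local evaluation at $x$ and at each $y\in X'$), one arranges this codimension to survive restriction to $L_{a_s}$.

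For the support condition, I would use the surjectivity of the total local evaluation map $\cA_\cL\to\fC_+(\cO_x/\varpi_x^N)\times\fC_+(\cO_y/\varpi_y^2)$ at each $y\ne x$, which means that fixing the value at $x$ still permits independent perturbation at $y$. Consequently, for each fixed $y\in X'$ and each $i$, the conditions ``$y\in\mathrm{Supp}(D(b_i))$'' and ``$a_+(y)\in\fD_+^\cL$'' each cut out codimension-one subvarieties of $L_{a_s}$. The corresponding incidence locus in $L_{a_s}\times X'$ is therefore of codimension at least two, so its image under projection to $L_{a_s}$ is a proper subvariety. For the transversality condition on $X'$, a parallel Bertini-type argument applies: after conditioning on $a_+(y)\in\fD_+^\cL$, the surjectivity of the $2$-jet evaluation at $y$ allows the $1$-jet of $a_+$ at $y$ to be made generic, avoiding the tangent cone of $\fD_+^\cL$ and yielding transversal intersection. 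Combining these, $L_{a_s}\cap Z_C$ is dense in $L_{a_s}$. Once constructibility and fibrewise density are established, $U_C$ can be taken to be the interior of $Z_C$ in $L_C$, which remains fibrewise dense since each fibre of $L_C\to C$ is irreducible.

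\textbf{Main obstacle.} The delicate step will be the anisotropy condition, because Corollary~\ref{cor:ani-complement} provides its codimension bound along fibres of $\cA_\cL^\heartsuit\to\cB_\cL^\circ$ rather than along fibres of $L_C\to C$; one must carefully choose the degree of $\cL$ at the very beginning to absorb the codimension loss incurred by restriction to $L_{a_s}$. The other two conditions are more robust and follow from the flexibility provided by the surjectivity of local evaluation away from~$x$.
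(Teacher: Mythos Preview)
Your proposal is correct and follows essentially the same route as the paper: constructibility is assembled from the three defining conditions, and fibrewise density is obtained from surjectivity of the $2$-jet evaluation away from $x$ (Bertini-type transversality), codimension-one avoidance for the support condition, and the codimension estimate for the anisotropic complement. The paper handles the support condition slightly differently---it fixes the finitely many points of $\mathrm{Supp}(\la_b)\setminus\{x\}$ and removes the codimension-one loci $\Sigma_y=\mathrm{ev}_y^{-1}(\fD_+^\cL)$ directly, rather than running an incidence argument over $L_{a_s}\times X'$---but the content is the same. Your identification of the anisotropy step as the delicate one is accurate; the paper resolves it exactly as you suggest, by noting that $L_{a_s}$ has codimension $2rN$ in $\cA_\cL^\heartsuit$ while the complement of $\cA_\cL^{\mathrm{ani}}$ has strictly larger codimension, so the intersection with the irreducible affine space $L_{a_s}$ is proper.

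One small point: your final sentence, that taking $U_C$ to be the interior of $Z_C$ automatically yields a fibrewise dense open, is not quite justified---the interior of a fibrewise dense constructible set need not meet every fibre. The paper does not spell out this ``in particular'' either; in the application that follows, what is actually used is only that $Z_C\cap L_{a_0}$ contains a dense open of $L_{a_0}$ (so one can choose $\sigma(a_0)$ there) and that a generic section $\sigma$ meets $Z_C$ over a nonempty open of $C$, both of which follow directly from constructibility and fibrewise density.
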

\begin{proof}
First we show that $Z_C$ is constructible. The first condition in Definition~\ref{def:Z-C} defines an open subset of $L_C$. By Lemma~\ref{lem:constructible-A-Sigma}, the set $L_C^x:=L_C\cap\cA_\cL^{x}$ determined by the second condition in Definition~\ref{def:Z-C} is a constructible subset of $L_C$.\par 
Let $U\subset X'\times L_C$ be the open subset whose fibre over $a_+\in L_C$ is the open curve $X'-\mathrm{Supp}(\la_{a_+})$. The local evaluation maps define a morphism
\[U\to \bT\fC_+^\cL\]
where $\bT\fC_+^\cL$ is the relative tangent bundle of $\fC_+^\cL$ over $X$. Let $U_1$ be the inverse image of 
\[\bT\fD_+^{\cL,\mathrm{sm}}\cup \bT\fC_+^\cL\times_{\fC_+^\cL}\fD_+^{\cL,\mathrm{sing}}.\]
Then the image of $U_1$ in $L_C$ is a constructible subset that satisfies the third condition in Definition~\ref{def:Z-C}. Hence $Z_C$ is a constructible subset of $L_C$. \par
Next we show that $Z_C$ is fibrewise dense with respect to the map $L_C\to C$. We fix a point $a_s\in C$.\par 
For any closed point $y\in X'$, the map
\[L_{a_s}\to \bT\fC_{+,y}^\cL=\fC_+^\cL\otimes_{\cO_y}\cO_y/\fm_y^2\]
is surjective by our choice of $\cL$.\par  
Let $X'':=X'\setminus\mathrm{Supp}(\la_b)$. By the same argument as in \cite[Lemme 4.7.2]{Ngo10}, we know that the subset $Z\subset L_{a_s}$ consisting of $a_+\in L_{a_s}$ such that $a_+(X'')$ intersects $\fD_+^\cL$ transversally is dense in $L_{a_s}$. \par 
For each $y\in\mathrm{Supp}(\la_b)-\{x\}$, since the map
$\mathrm{ev}_y: L_{a_s} \to \fC_{+,y}^\cL$
is surjective, the subset $\Sigma_y:=\mathrm{ev}_y^{-1}(\fD_+^\cL)\subset L_{a_s}$ has codimension 1.\par
Finally, since $L_{a_s}$ has codimension $2rN$ in $\cA_\cL^\heartsuit$ and the complement of $\cA_\cL^{\mathrm{ani}}$ in $\cA_\cL^\heartsuit$ has codimension strictly larger than $2rN$, we see that
\[Z_{a_s}=(Z-\bigcup_{y\in\mathrm{Supp(\la_b)}}\Sigma_y)\cap\cA_\cL^{\mathrm{ani}}\]
is dense in $L_{a_s}$.
\end{proof}
\subsubsection{}
Thus we can choose a section $\sigma$ of the surjective linear map \eqref{eq:ev-C+} such that $C':=\sigma(C)\cap U_C$ is nonempty and contains the point $\sigma(a_0)$.\par 
By the product formula \ref{thm:product-formula}, we have
\[\dim\cM_{\sigma(a_0)}-\cP_{\sigma(a_0)}=\sum_{v\in\mathrm{Supp}(\Delta_a)\cup\{x\}}(\dim\Sp_{\sigma(a_0)_v}-\dim P_{\sigma(a_0)_v})\]
where $\sigma(a_0)_v$ denotes the image of $\sigma(a_0)$ in $\fC_+(\cO_v)$.\par 
For summands with $v\ne x$, since $\sigma(a_0)\in Z_C$ we have in particular $\la_{\sigma(a_0),v}=0$ and hence by 
Corollary~\ref{cor:dim-central-coweight} $\dim\Sp_{\sigma(a_0)_v}=\dim P_{\sigma(a_0)_v}$. On the other hand, for the term $v=x$, we know that $\sigma(a_0)_x=a_0$ is split and hence by Corollary~\ref{dim-unr-cor} $\dim\Sp_{a_0}=\dim P_{a_0}$. Thus the above equality simplifies to
\[\dim\cM_{\sigma(a_0)}-\dim\cP_{\sigma(a_0)}=0.\]
Since $C'\subset\cA^{\mathrm{ani}}_\cL$, the restriction of the Hitchin-Frenkel-Ng\^o fibration to $C'$ is proper. Hence by upper semicontinuity of fibre dimension we have for 
\[\dim\cM_{\sigma(a_s)}\le\dim\cM_{\sigma(a_0)}=\dim\cP_{\sigma(a_0)}\]
for all $\sigma(a_s)\in C'$ with $s\ne 0$. Since $\cP$ is smooth over $\cA_\cL$ by Proposition~\ref{prop:P-smooth}, we have $\dim\cP_{\sigma(a_s)}=\cP_{\sigma(a_0)}$, which forces 
\[\dim\cM_{\sigma(a_s)}=\dim\cP_{\sigma(a_s)}\]
Apply product formula \ref{thm:product-formula}
again we get
\[0=\dim\cM_{\sigma(a_s)}-\dim\cP_{\sigma(a_s)}=\sum_{v\in\mathrm{Supp}(\Delta_{a_s})\cup\{x\}}(\dim\Sp_{\sigma(a_s),v}-\dim P_{\sigma(a_s),v})
\]
By similar reasoning as above, all terms in the right hand side where $v\ne x$ are zero; at $v=x$ notice that $\sigma(a_s)_x=a_s$
 and then we get
\[\dim\Sp_{a_s}-\dim P_{a_s}=0.\]
Since $s\ne0$, we have $\Sp_{a_s}\cong\Sp_a$ and hence
\[\dim\Sp_a=\dim P_a\]
This finishes the proof of Theorem~\ref{thm:dim-equal} and hence
the dimension formula in
Theorem~\ref{thm:dim-formula}.
\subsection{Equidimensionality}
To finish the proof of Theorem~\ref{thm:dim-formula} it remains to show the equi-dimensionality statement. Again our argument is of global nature, this time using a restricted Hitchin-Frenkel-Ng\^o moduli stack instead of the whole moduli stack. As in the previous subsection, we may assume that $G$ is semisimple. 
\subsubsection{}\label{sec:choose-la0}
Recall that by Theorem~\ref{thm:local-constancy} there exists a positive integer $N>0$ such that the isomorphism class of $\Sp_a$ equipped with the action of $P_a$ only depends on $a$ modulo $\varpi^N$. Let $X$ be the projective smooth curve as in the previous section. Fix two distinct closed points $x,x_0\in X$. We consider an $X_*(\tad)_+$-valued divisor on $X$ of the form $\la[x]+\la_0[x_0]$, where $\la_0\in X_*(\tad)_+$ is chosen such that the following properties are satisfied:
\begin{itemize}
\item The $\tad$-torsor associated to the divisor $\la[x]+\la_0[x_0]$ lifts to a $\tsc$-torsor $\cL$ and there exists a $\tsc$-torsor $\cL'$ together with an isomorphism $(\cL')^{\otimes c}\cong\cL$.
\item For each $1\le i\le r$, the following three inequalities are satisfied:
\[\langle\omega_i,\la+\la_0\rangle >2g-2+(N+3)r\]
\[\langle\omega_i,\la+\la_0\rangle>2g-2+m_0(d_a+1)+b(\la)\]
\[\langle\omega_i,\la+\la_0\rangle>\max\{Nr,2g-2,rg\}+1+rg\]
\end{itemize}
where $d_a=d_\ga+\langle\rho,\la\rangle$ is the valuation of extended discriminant divisor of $a=\chi_(\ga_\la)\in\fC_+(\cO)$ and the numbers $m_0$, $b(\la)$ are as in Definition~\ref{def:transversal-open}.
\subsubsection{}
Let $h_{\le\la}:\cM_{\le\la}\to\cA_{\le\la}$ be the restricted Hitchin-Frenkel-Ng\^o moduli stack associated to the divisor $\la[x]+\la_0[x_0]$. For simplicity we have omitted $\la_0$ from the notation.\par 
We apply the result of ~\ref{sec:singularity-HFN} to the our current situation. The set $S$ in Definition~\ref{def:transversal-open} is taken to be $\{x\}$ in the current situation. Then we get open subset $\cA_{\le\la}^\flat$ and open substack $\cM_{\le\la}^{\flat,\mathrm{ani}}$.
For some positive integer $n>0$ large enough, there is a local evaluation map
\[\mathrm{ev}_{nx}^\flat:\cM_{\le\la}^\flat\to [L^+_n G\backslash\mathrm{Gr}_{\le\la}]\]
which is smooth by Theorem~\ref{thm:transversality}. Moreover, the inverse image of $[L^+_n G\backslash\mathrm{Gr}_{\la}]$ under $\mathrm{ev}_{nx}^\flat$ is the open substack $\cM^\flat_\la$.

\begin{cor}\label{cor:flat-HFN}
Consider the restriction of the Hitchin-Frenkel-Ng\^o to the transversal anisotropic open substack $h_{\le\la}^{\flat,\mathrm{ani}}:\cM_{\le\la}^{\flat,\mathrm{ani}}\to\cA_{\le\la}^{\flat,\mathrm{ani}}$ is flat. 
\end{cor}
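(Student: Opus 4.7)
The plan is to deduce flatness from a miracle flatness argument (\emph{cf.} Matsumura, Commutative Ring Theory, Theorem 23.1): a morphism from a Cohen--Macaulay source to a regular target is flat once one verifies that every fibre has the expected dimension $\dim(\text{source})-\dim(\text{target})$ locally at each point. So I would first record the regularity of $\cA_{\le\la}^{\flat,\mathrm{ani}}$: it is an open substack of the vector space $\cA_{\le\la}$, hence smooth over $k$. Next, $\cM_{\le\la}^{\flat,\mathrm{ani}}$ is Cohen--Macaulay, being an open substack of $\cM_{\le\la}^{\flat}$ whose Cohen--Macaulayness is Corollary~\ref{cor:CM}. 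Thus the setup for miracle flatness is in place once the fibre dimension is controlled.

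The heart of the argument is the verification of the fibre dimension, and this is where I would invoke the two local inputs proved in this chapter. For any closed point $a\in\cA_{\le\la}^{\flat,\mathrm{ani}}$, the product formula Theorem~\ref{thm:product-formula} (applicable because we arranged in \S\ref{sec:choose-la0} that $\cL$ admits a $c$-th root $\cL'$) gives
\[
\dim\cM_a-\dim\cP_a=\sum_{v\in X-U_a}\bigl(\dim\Sp_{a_v}-\dim P_{a_v}\bigr).
\]
Each local summand vanishes by Theorem~\ref{thm:dim-equal}, so $\dim\cM_a=\dim\cP_a$. Since $\cP_\cL$ is smooth over $\cA_\cL$ by Proposition~\ref{prop:P-smooth}, the function $a\mapsto\dim\cP_a$ is locally constant on $\cA_{\le\la}^{\flat,\mathrm{ani}}$, hence so is $a\mapsto\dim\cM_a$. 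On the other hand, $h_{\le\la}^{\flat,\mathrm{ani}}$ is proper by Theorem~\ref{thm:ani-proper}, so fibre dimension is upper semicontinuous; combined with the constancy established above, all fibres of $h_{\le\la}^{\flat,\mathrm{ani}}$ have the same dimension $d$.

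Finally, I would match $d$ with the expected value $\dim\cM_{\le\la}^{\flat,\mathrm{ani}}-\dim\cA_{\le\la}^{\flat,\mathrm{ani}}$. The smooth local evaluation map
\[
\mathrm{ev}_{nx}^{\flat}:\cM_{\le\la}^{\flat}\to[L_n^+G\backslash \mathrm{Gr}_{\le\la}]
\]
of Theorem~\ref{thm:transversality}, combined with the equidimensionality of closed Schubert varieties $\mathrm{Gr}_{\le\la}$, forces $\cM_{\le\la}^{\flat,\mathrm{ani}}$ to be equidimensional, and its dimension is $\dim\cA_{\le\la}^{\flat,\mathrm{ani}}+d$ at the image of any Steinberg section $\epsilon_{\cL'}^w$ (where the fibre is a torsor under $\cP_a$, of dimension exactly $d$). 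Hence every geometric fibre has dimension equal to $\dim\cM_{\le\la}^{\flat,\mathrm{ani}}-\dim\cA_{\le\la}^{\flat,\mathrm{ani}}$ at every point, and miracle flatness concludes the proof.

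The only real obstacle I anticipate is making sure the miracle flatness criterion is applied at the level of Deligne--Mumford stacks rather than schemes; this should be handled by passing to a smooth atlas, using that Cohen--Macaulayness, regularity and fibre dimensions are all smooth-local. Aside from this technical point, the proof is a direct combination of the product formula, the local dimension equality $\dim\Sp_a=\dim P_a$, the smoothness of $\cP_\cL\to\cA_\cL$, and the transversality theorem.
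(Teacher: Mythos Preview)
Your proposal is correct and follows essentially the same approach as the paper: constant fibre dimension via the product formula, Theorem~\ref{thm:dim-equal}, and smoothness of $\cP_\cL\to\cA_\cL$, then miracle flatness using the Cohen--Macaulayness of Corollary~\ref{cor:CM}. Your appeal to properness and upper semicontinuity is redundant (you have already shown $a\mapsto\dim\cM_a$ is locally constant, which is all miracle flatness needs), but your explicit verification that the constant fibre dimension matches $\dim(\text{source})-\dim(\text{target})$ via equidimensionality of the source is a point the paper leaves implicit.
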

\begin{proof}
By product formula (Theorem~\ref{thm:product-formula}) and Theorem~\ref{thm:dim-equal} we have $\dim\cM_a=\dim\cP_a$ for each $a\in\cA_{\le\la}^{\flat,\mathrm{ani}}$. In particular, the fibre dimension of $h_{\le\la}^{\flat,\mathrm{ani}}$ is constant since $\cP$ is a smooth Deligne-Mumford stack over $\cA_\cL^\heartsuit$. By Corollary~\ref{cor:CM}, the source $\cM_{\le\la}^{\flat,\mathrm{ani}}$ is Cohen-MaCaulay and hence we conclude that the morphism is flat. 
\end{proof}

\begin{lem}
There exists a point $a_+\in\cA_{\le\la}^{\flat,\mathrm{ani}}$ such that 
\begin{itemize}
\item $x_0\notin\mathrm{Supp}(\Delta(a_+))$,
\item $\mathrm{Supp}(\Delta(a_+)_{\mathrm{sing}})\subset\{x\}$, in other words, $a_+(X-x)$ is transversal to the discriminant divisor. 
\item $a_+\equiv a\mod\varpi_x^N$.
\end{itemize}
\end{lem}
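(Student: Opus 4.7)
The plan is to produce $a_+$ by a Bertini-type argument on the fibre of the local evaluation at $x$, mirroring the proof of Lemma~\ref{lem:Z-C} but with the one-point constraint there replaced by an $N$-th order jet constraint at $x$.

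First I set up the fibre. Let $\mathrm{ev}_x^N\colon\cA_{\le\la}\to \fC_+(\cO_x/\varpi_x^N)$ denote the $N$-th order local evaluation at $x$. By the first inequality in the choice of $\la_0$ in \S\ref{sec:choose-la0}, $\deg\omega_i(\cL)>2g-2+N$ for all $i$, and Riemann-Roch applied to the line bundles $\omega_i(\cL)$ shows that $\mathrm{ev}_x^N$ is surjective. Let $L\subset\cA_{\le\la}$ be the fibre of $\mathrm{ev}_x^N$ over $a\bmod\varpi_x^N$; it is an affine subspace of codimension $rN$ in $\cA_{\le\la}$. By the same argument, the combined evaluation at $x$ modulo $\varpi_x^N$ together with evaluation at any $y\in X-x$ modulo $\varpi_y^2$ is surjective, so the restriction to $L$ of evaluation at $y$ modulo $\varpi_y^2$ is surjective onto $\fC_+(\cO_y/\varpi_y^2)$.

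I now verify that three generic conditions each cut out a proper subset of $L$, so that their common complement is nonempty. \emph{Anisotropy}: by Corollary~\ref{cor:ani-complement} together with the third inequality in \S\ref{sec:choose-la0}, the complement of $\cA_\cL^{\mathrm{ani}}$ in $\cA_\cL^{\heartsuit}$ has codimension at least $Nr+1>rN$ in $\cA_\cL^\heartsuit$, so its intersection with $L$ is a proper closed subset. \emph{Avoidance of $x_0$}: since evaluation at $x_0$ restricted to $L$ is surjective onto $\fC_{+,x_0}^\cL$ and $\fD_+^\cL$ is a proper hypersurface, the preimage of $\fD_+^\cL$ has codimension one in $L$, so a generic $a_+\in L$ satisfies $a_+(x_0)\notin\fD_+^\cL$. \emph{Transversality on $X-x$}: by a Bertini-type argument as in the proof of Lemma~\ref{lem:Z-C} (which follows \cite[Lemme 4.7.2]{Ngo10}), the locus of $a_+\in L$ for which $a_+(X-x)$ fails to be transversal to $\fD_+^\cL$ is contained in a proper closed subset, using the first-order surjectivity on $L$ just established.

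A generic $a_+\in L$ therefore satisfies the three bullet points of the lemma. It remains to observe that such $a_+$ lies in $\cA_{\le\la}^{\flat}$ (Definition~\ref{def:transversal-open}) with $S=\{x\}$: the support condition is immediate from the above, and since transversality off $x$ forces $\mathrm{Supp}(\Delta(a_+)_{\mathrm{sing}})\subset\{x\}$, the degree $\deg\Delta(a_+)_{\mathrm{sing}}$ equals the local multiplicity at $x$, which by Theorem~\ref{thm:local-constancy} (and the choice of $N$) coincides with $d_a$; the required bound $2g-2+m_0(d_a+1)+b(\la)<\deg\omega_i(\cL)$ is then precisely the second inequality in our choice of $\la_0$. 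The main technical hurdle is the anisotropy step, whose success hinges on the precise codimension balance dictated by the third inequality in \S\ref{sec:choose-la0}; the remaining conditions are standard Bertini-type transversality statements.
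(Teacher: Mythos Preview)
Your proposal is correct and follows essentially the same approach as the paper: both define $L$ as the affine fibre of the $N$-th jet evaluation at $x$, use the first inequality in \S\ref{sec:choose-la0} for surjectivity of the combined local evaluation, run the Bertini-type argument of Lemma~\ref{lem:Z-C} (via \cite[Lemme 4.7.2]{Ngo10}) to obtain transversality on $X-x$ and avoidance of $x_0$, invoke the second inequality to land in $\cA_{\le\la}^\flat$, and use the third inequality together with Corollary~\ref{cor:ani-complement} to ensure the codimension of the non-anisotropic locus exceeds $rN$. One cosmetic remark: your appeal to Theorem~\ref{thm:local-constancy} to identify the local multiplicity of $\Delta(a_+)$ at $x$ with $d_a$ is an overcitation---this follows directly from $a_+\equiv a\bmod\varpi_x^N$ with $N>d_a$, which is implicit in the choice of $N$.
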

\begin{proof}
The proof is similar to Lemma~\ref{lem:Z-C}. Let $L\subset\cA_{\le\la}$ be the linear subspace consisting of $a_+\in\cA_{\le\la}$ such that $a_+\equiv a\mod\varpi_x^N$. Since $a\in\fC_+^\la(\cO)$ is generically regular semisimple, we have $L\subset\cA_{\le\la}^\heartsuit$. \par 
The first inequality in \S~\ref{sec:choose-la0} implies that for any point $y\in X-\{x,x_0\}$, the local evaluation map
\[\cA_{\le\la}\to\fC_+^\la(\cO_x/\varpi_x^N)\times\fC(\cO_y/\varpi_y^2)\times\fC_+^{\la_0}(\cO_{x_0}/\varpi_{x_0})\]
is surjective. By similar argument as in the proof of Lemma~\ref{lem:Z-C}, this implies that there is dense subset $Z\subset L$ consisting of points $a_+\in\cA_{\le\la}^\heartsuit$ such that $a_+(X-x)$ intersects the discriminant divisor transversally and $a_+(x_0)$ does not intersect with the discriminant divisor.\par
Then for each $a_+\in Z$, we have $\Delta(a_+)_{\mathrm{sing}}=d_a[x]$ and hence
the second inequality in \S~\ref{sec:choose-la0} ensures that $Z\subset\cA_{\le\la}^\flat$.\par
Finally since $Z\subset\cA_{\le\la}$ is a subset of codimension $Nr$,
the third inequality in \S~\ref{sec:choose-la0} ensures that $Z$ has nonempty intersection with $\cA_{\le\la}^{\mathrm{ani}}$ by Corollary~\ref{cor:ani-complement}. Then any point $a_+\in Z\cap\cA_{\le\la}^{\mathrm{ani}}$ satisfies the condition we want.
\end{proof}
Choose $a_+\in\cA_{\le\la}^{\flat,\mathrm{ani}}$ as in the Lemma above. Then by Theorem~\ref{thm:product-formula} and Corollary~\ref{cor:0-dim-reg}, there is a homeomorphism of stacks
\[[\cM_{\le\la,a_+}/\cP_{a_+}]\cong [\Sp_a/P_a].\]
Corollary~\ref{cor:flat-HFN} implies that $\cM_{\le\la,a_+}$ is equi-dimensional. Therefore $\Sp_a$ and its open subset $\Sp_a^0$ are also equidimensional. This finishes the proof of Theorem~\ref{thm:dim-formula}.
\bibliographystyle{alpha}
\bibliography{myref}
\end{document}